\numberwithin{equation}{section}
\newtheorem{theorem}{Theorem}[section]
\newtheorem{prop}[theorem]{Proposition}
\newtheorem{proposition}[theorem]{Proposition}
\newtheorem{lemma}[theorem]{Lemma}
\newtheorem{cor}[theorem]{Corollary}
\newtheorem{defn}[theorem]{Definition}
\newtheorem{remark}[theorem]{Remark}
\newenvironment{rem}{\begin{remark}\rm}{\end{remark}}
\newtheorem{example}[theorem]{Example}
\newenvironment{ex}{\begin{example}\rm}{\end{example}}
\newtheorem*{theorem*}{Theorem}
\newtheorem*{structuretheorem*}{Structure Theorem}
\def\be{\begin{equation}}
\def\ee{\end{equation}}
\def\bear{\begin{eqnarray}}
\def\eear{\end{eqnarray}}
\def\best{\begin{eqnarray*}}
\def\eest{\end{eqnarray*}}
 \def\non{\noindent}
 \def\bd{\partial}
 \def\ra{\rightarrow}
\def\lra{\longrightarrow}
\def\rg{\rangle}
\def\lg{\langle}
\def\r#1{\right#1}
\def\l#1{\left#1}
\def\ti{\times}
\def\del{\overline \partial}
\def\ma#1{\mathop {#1} \limits}
\def\al{\alpha}
\def\de{\delta}
\def\ep{\varepsilon}
\def\la{\lambda}
\def\si{\sigma}
\def\Si{\Sigma}
\def\De{\Delta}
\def\La{\Lambda}
\def\Om{\Omega}
\def\ra{\rightarrow}
\def\Z{{ \mathbb Z}}
\def\R{{ \mathbb  R}}
\def\P{{\mathbb  P}}
\def\Q{{ \mathbb Q}}
\def\cx{{ \mathbb C}}
\def\cal{\mathcal}
\def\M{{\mathcal M}}
\def\O{\mathcal{O}}
\def\wt#1{\widetilde{#1}}
\def\ov#1{\overline{#1}}
\def\ind{\mathrm{ind\,}}
\def\Ind{\mathrm{Ind\,}}
\def\End{\mathrm{End}}
\def\Hom{\mathrm{Hom}}
\def\sign{\mathrm{sign}\,}
\def\vir{{\rm vir}}
\def\cob{2\mathrm{\bf Cob}}
\def\kcob{2\mathrm{\bf KCob}}
\def\SymRiem{2\mathrm{\bf SymCob}}
\def\fo{\mathfrak{o}}
 \def\dbar{\bar\partial}
 \def\dimh{{\rm dimh}}
\let\oldtocsection=\tocsection
\let\oldtocsubsection=\tocsubsection
\renewcommand{\tocsection}[2]{\hspace{0em}\oldtocsection{#1}{#2}}
\renewcommand{\tocsubsection}[2]{\hspace{1em}\oldtocsubsection{#1}{#2}}
\title{A Klein TQFT: the local Real Gromov-Witten theory of curves} 
\author{Penka Georgieva}
\address{Institut de Math\`ematiques de Jussieu - Paris Rive Gauche, Sorbonne Universit\'e}
\email{penka.georgieva@imj-prg.fr}
\author{Eleny-Nicoleta Ionel}
\address{Department of  Mathematics,  Stanford University}
\email{ionel@math.stanford.edu}
\begin{document}

\begin{abstract} In this paper we study the Real Gromov-Witten theory of local 3-folds over Real curves. We show that this gives rise to a 2-dimensional Klein TQFT defined on an extension of the category of unorientable surfaces. We use this structure to completely solve the theory by providing a closed formula for the local RGW invariants in terms of representation theoretic data, extending earlier results of Bryan and Pandharipande. As a consequence we obtain the local version of the real Gopakumar-Vafa formula that expresses the connected real Gromov-Witten invariants in terms of integer invariants. In the case of the resolved conifold the partition function of the RGW invariants agrees with that of the  SO/Sp Chern-Simons theory.
\end{abstract}

\maketitle 

\tableofcontents

\setcounter{equation}{0}

\vspace{-.2in}
\section{Introduction} 
 \vskip.1in 
 
 A central problem in Gromov-Witten theory is understanding the structure of the Gromov-Witten invariants. Of special interest is the case  when the target manifold is a Calabi-Yau threefold. Often, studying local versions of these theories (i.e. for non-compact targets) reveals much of the structure in the general case.  The Gromov-Witten invariants come in several flavors: (a) closed (counting closed curves), (b) open (counting curves with boundary on a Lagrangian or SFT-type curves), and (c) real (counting closed curves invariant under an anti-symplectic involution).  

 In this paper we consider Real Gromov-Witten (RGW) invariants and we prove a structure result for the local RGW invariants of Real\footnote{We use Real with capital R for spaces with anti-J-invariant involutions, following Atiyah.}   3-folds that are the total space of bundles over curves with an anti-symplectic involution (also referred to as a real structure).  We show that the local RGW invariants give rise to a semi-simple 2d Klein TQFT which allows us to completely solve the theory. The motivation for considering  3-folds of this type comes from the virtual contribution to the real GW invariants  of a Real elementary curve in a  compact Real Calabi-Yau 3-fold, sometimes referred to as multiple-covers contribution, and the real Gopakumar-Vafa conjecture.  The Gopakumar-Vafa conjecture \cite{gv} and its extension proved in \cite{ip-gv} has an analogue in the setting of Real Calabi-Yau 3-folds, cf. \cite{Wal}.  The local version of the real GV conjecture is obtained in this paper as a consequence of the structure result. The case of compact 3-folds will be discussed in a subsequent  paper.

 \medskip
 A symmetric (or Real) Riemann surface is a Riemann surface $\Si$ together with an anti-holomorphic involution $c:\Si\ra \Si$. If $L\ra \Si$ is a holomorphic line bundle, then the total space of 
 \bear\label{local.3fold}
 L\oplus c^*\ov L\ra \Si
 \eear
 is a Real manifold with an anti-holomorphic involution 
 \best\label{tw.invol} 
 c_{tw} (z;u,v)= (c(z); v, u). 
 \eest
 These are the Real 3-folds we consider in this paper, and we refer to them as \textsf{local Real curves};  note however that any rank 2 Real bundle $(V,\phi)\ra (\Si,c)$ whose fixed locus $V^\phi$ is orientable is isomorphic to a Real bundle \eqref{local.3fold} for $L$ a complex line bundle with $c_1(L)=\frac 12 c_1(V)$, cf. \cite[\S4.1]{BHH}. Moreover, an 
$U(1)$-action on the line bundle $L\ra\Si$ induces an action on the  3-fold \eqref{local.3fold} compatible with the Real structure.  In \S \ref{LocalRGW} we define \textsf{local RGW invariants} associated to the Real 3-fold  \eqref{local.3fold} as pairings between the $U(1)$-equivariant Euler class of the index bundle $\mathrm{ Ind}\; \del_L$ (regarded as an element in $K$-theory) and
  the virtual fundamental class of the real moduli space $\ov\M^{c,\bullet}_{d,\chi}(\Si)$ of degree $d$ real maps $f:C\ra \Si$ from (possibly disconnected) domains of Euler characteristic $\chi$.  The real moduli space $\ov\M^{c,\bullet}_{d,\chi}(\Si)$ is orientable, but not a priori canonically oriented; the orientation depends on a choice of orientation data $\mathfrak{o}$ 
  discussed in \S\ref{LocalRGW} and in the Appendix. The (shifted) generating functions for the local RGW invariants
  $$
  RGW^{c,\mathfrak o}_d(\Si,L) \in \Q(t)((u))
  $$
take values in the localized equivariant cohomology ring of $U(1)$ generated by $t$; here $u$ keeps track of the Euler characteristic of the domain.
 
 We also consider a relative version of the RGW invariants for a branching divisor on $(\Si,c)$ consisting of pairs of conjugate points. For the purposes of this paper, we can restrict attention to the case when none of the marked points or special points are real. The splitting formula of \cite{GI} then allows us to relate the local RGW invariants of $\Si$ with the local RGW invariants of a decomposition of $\Si$ along pairs of conjugate circles; see \S\ref{Gluing}. 
 \smallskip
 
 A priori, the local RGW invariants depend on the choice of an orientation data $\mathfrak o$ and the topological type of the real structure $c$ on $\Si$. In \S \ref{S.Can.TO}, we show that there is a canonical choice of orientation for the local RGW invariants,  and moreover these do not depend on the real structure $c$. We therefore use the notation 
 $$
 RGW_d(\Si,L) \in \Q(t)((u))
 $$ 
 afterwards, when the canonical choice is assumed. Any other choice of a twisted orientation data changes $RGW_d$ by $(\pm 1)^d$.
   \smallskip

 In \S \ref{KTQFTforRGW} we show that the local RGW invariants determine an extension ${\bf RGW}_d$ of a 2-dimensional Klein TQFT.  As we review in \S\ref{KTQFT}, a 2d Klein TQFT is a symmetric monoidal functor from the  cobordism category $\kcob$ of unoriented surfaces to the category of $R$-modules for some ring $R$. Since $\kcob$ naturally contains the oriented cobordism category $\cob$,  a Klein TQFT is an extension of a classical TQFT; it is equivalent to a Frobenius algebra with an involution $\Omega$, which is the image of the orientation reversing tube, and a special element $U$, which is the image of the crosscap (M\"obius band), cf. \S\ref{KTQFT}.  
    \smallskip
    
 The connection with real Gromov-Witten theory is obtained by considering an equivalent category $\SymRiem$ whose objects are pairs of closed oriented 1-dimensional manifolds and the cobordisms are symmetric (Real) surfaces.  It is obtained from 
 $\kcob$ by passing to the orientation double cover. Then the involution $\Omega$ is the image of the symmetric cobordism swapping the components of an object, while $U$ is the image of a symmetric sphere with a pair of (disjoint) conjugate disks removed. This perspective allows us  to define  in \S\ref{S.kcob.categ} an extension $\SymRiem^L$ which has the same objects but where the cobordisms also carry a complex vector bundle trivialized along the boundary.  As we prove in \S\ref{KTQFTforRGW}, the local RGW invariants give rise to a symmetric monoidal functor  ${\bf RGW}_d$ on $\SymRiem^L$; up to factors due to differing conventions, this extends the TQFT considered by Bryan and Pandharipande in \cite{bp1} for the anti-diagonal action. In turn, the 
 Bryan-Pandharipande construction similarly extends a classical construction studied by Dijkgraaf-Witten \cite{DW} and 
 Freed-Quinn \cite{FQ}.
    \smallskip
    
In \S\ref{Semisimple} we discuss semi-simple KTQFTs,  i.e. those for which the associated Frobenius algebra has an idempotent basis. Their restriction to the oriented cobordism category $\cob$ is determined by the eigenvalues $\{\la_\rho\}$ of the genus adding operator (which is diagonalized in the idempotent basis). To completely determine the KTQFT it then suffices to find the coefficients of $\Omega$ and $U$ in the idempotent basis. We show that $\Omega$ restricts to an involution $v_\rho \mapsto v_{\rho^*}$ on the idempotent basis and that each coefficient $U_\rho$ of $U$ is 0 when $\rho\ne \rho^*$ and otherwise is equal to a squareroot $\pm \sqrt {\la_\rho}$ of the eigenvalue $\la_\rho$. 
 
    \smallskip
In \S\ref{Solving} we prove that the KTQFT determined by the level 0 local RGW invariants is semisimple. It corresponds in fact to {\em signed} counts of degree $d$ real Hurwitz covers. The idempotent basis is indexed by irreducible representations of the symmetric group $S_d$ and $\Omega(v_\rho)=v_{\rho'}$ where $\rho'$ is the conjugate representation. In order to calculate the coefficients of $U$ in the idempotent basis, we introduce in \S\ref{SFSsection} the \textsf{signed Frobenius-Schur indicator (SFS)}. The SFS takes values 0, $\pm 1$ on irreducible real representations, unlike the standard FS indicator which is +1 on them. The SFS is 0 if and only if the representation is {\em not} self-conjugate and the sign of a self-conjugate representation is given as a function of its characters. While these considerations are valid for real representations of any finite group, in the case of the symmetric group we find a simpler expression for the latter function using the Weyl formula. In particular, for an irreducible self-conjugate representation 
 $\rho$ of $S_d$, 
 $$
 SFS(\rho)= (-1)^{(d-r(\rho))/2},
 $$ 
 where $r(\rho)$ is the rank of $\rho$, i.e. the length of the main diagonal of the Young diagram associated to $\rho$. This is precisely the sign that appears in the partition function of the SO/Sp Chern-Simons theory \cite[(6.1)]{BFM}; in the case of the resolved conifold, Theorems \ref{MainCY} and \ref{MainGV} below recover the partition function \cite[(6.3)]{BFM} and the free energy \cite[(3.2)]{BFM}, respectively.
  
Combining these results we obtain in \S\ref{Solving} a closed expression for the local RGW theory of the 3-fold \eqref{local.3fold} in terms of representation theoretic data, cf. Theorem~\ref{T.str.gen}. In the Calabi-Yau case it takes the following form: 
 \begin{theorem}[Local CY] \label{MainCY}Let $\Si$ be a connected genus $g$ symmetric surface and $L\ra \Si$  a holomorphic line bundle with Chern number $g-1$. Then the generating function of the degree $d$ local RGW invariants is equal to 
 	\best\label{R-g.partCY}
 	RGW_d(\Si, L)&=&
 	\sum_{\rho=\rho'}\Big( (-1)^{\frac{d-r(\rho)}{2}} 
	\prod_{\square\in \rho} 2 \sinh \tfrac{h(\square)u} 2 \Big)^{g-1} . 
 	\eest
 	Here the sum is over all self-conjugate partitions $\rho$ of $d$, the product is over all boxes $\square$ in the Young diagram of $\rho$, $h(\square)$ is the hooklength of $\square$, and $r(\rho)$ is the length of the main diagonal of the Young diagram of $\rho$. 
 \end{theorem}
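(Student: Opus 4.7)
My plan is to evaluate $RGW_d(\Sigma,L)$ using the semisimple KTQFT structure of ${\bf RGW}_d$ established in \S\ref{KTQFTforRGW}--\S\ref{Semisimple} and \S\ref{Solving}. First, by the independence-of-real-structure result from \S\ref{S.Can.TO} I may choose $c$ to be a free antiholomorphic involution on $\Sigma$; such a $c$ exists for every $g\ge 0$ because $\Sigma$ is the orientation double cover of the closed non-orientable surface $N_{g+1}$. Under the equivalence $\SymRiem\simeq\kcob$ from \S\ref{S.kcob.categ}, the closed symmetric surface $(\Sigma,c)$ becomes $N_{g+1}=S^{2}\#(g+1)\mathbb{RP}^{2}$, which decomposes in the underlying Frobenius algebra $A$ of ${\bf RGW}_d$ as a sphere with $g+1$ crosscaps attached. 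This gives
\[
RGW_d(\Sigma,L)\;=\;\epsilon\bigl(U^{\,g+1}\bigr),
\]
where $\epsilon$ is the counit and $U\in A$ is the crosscap element.

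Next I would use the semi-simple idempotent basis $\{v_\rho\}$ indexed by irreducible representations $\rho$ of $S_d$, coming from the identification of the level 0 theory with signed real Hurwitz counts in \S\ref{Solving}. The genus-adding operator has eigenvalues $\lambda_\rho$ with $\epsilon(v_\rho)=\lambda_\rho^{-1}$, and in the Calabi-Yau specialization $\deg L=g-1$ the structure theorem \ref{T.str.gen} gives
\[
\lambda_\rho=\mu_\rho^{2},\qquad \mu_\rho=\prod_{\square\in\rho}2\sinh\tfrac{h(\square)u}{2},
\]
consistent with the Bryan--Pandharipande formula on the oriented subcategory $\cob^{L}\subset\SymRiem^{L}$ (which gives $\mu_\rho^{2g-2}$ for a closed oriented $\Sigma_g$ in the level $(g-1,g-1)$ Calabi-Yau case). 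By \S\ref{Semisimple}, $U=\sum_{\rho=\rho'}U_\rho v_\rho$ with $U_\rho=\pm\sqrt{\lambda_\rho}$, and the signs are identified with the signed Frobenius-Schur indicator $SFS(\rho)=(-1)^{(d-r(\rho))/2}$ in \S\ref{SFSsection}; thus $U=\sum_{\rho=\rho'}SFS(\rho)\mu_\rho\, v_\rho$.

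Finally, using $v_\rho^{2}=v_\rho$ I get $U^{g+1}=\sum_{\rho=\rho'}(SFS(\rho)\mu_\rho)^{g+1}v_\rho$, and applying $\epsilon$ together with $SFS(\rho)^{2}=1$ yields
\[
RGW_d(\Sigma,L)=\sum_{\rho=\rho'}SFS(\rho)^{g+1}\mu_\rho^{g-1}=\sum_{\rho=\rho'}\bigl(SFS(\rho)\mu_\rho\bigr)^{g-1},
\]
which is the claimed formula. The main obstacle is the sign determination in \S\ref{SFSsection}: the semi-simple framework of \S\ref{Semisimple} only controls $U_\rho$ up to a sign, and extracting exactly $(-1)^{(d-r(\rho))/2}$ requires the Hurwitz-theoretic model of the level 0 KTQFT together with the explicit Weyl/character-formula evaluation of the signed Frobenius-Schur indicator on the irreducibles of $S_d$. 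A secondary technical point is verifying that the Calabi-Yau doubling $\lambda_\rho=\mu_\rho^{2}$ comes out consistently with the canonical orientation data of \S\ref{S.Can.TO}, so that no extra sign $(\pm 1)^{d}$ spoils the final expression.
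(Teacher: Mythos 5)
Your overall strategy — pass to the semisimple Klein TQFT, work in the idempotent basis $\{v_\rho\}$, decompose the closed symmetric surface using crosscaps, and identify the crosscap signs with the signed Frobenius--Schur indicator — is exactly the route the paper takes (Corollary~\ref{cor.calc.ktft}, equation~\eqref{Si.k}, Corollary~\ref{CCsign2}, and Theorem~\ref{T.str.gen}). Your final formula is also correct. However, there is a genuine gap in the way you account for the line bundle $L$ of degree $g-1$.

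Your identity $RGW_d(\Sigma,L)=\epsilon(U^{g+1})$ is only the \emph{level-zero} evaluation $CK^g(U)=C(U^{g+1})$, i.e.\ $RGW_d(g|0)$ as in \eqref{rgw=level.0}; it does not see $L$ at all. In the level-$k$ theory one must insert the level-decreasing operator $A^{-k}$, giving $RGW_d(g|k)=CA^{-k}K^g(U)=C(\eta^{-k}U^{g+1})$ as in \eqref{Si.k}. Relatedly, the assignments $\lambda_\rho=\mu_\rho^2$ and $U_\rho=SFS(\rho)\mu_\rho$ with $\mu_\rho=\prod_{\square}2\sinh\tfrac{h(\square)u}{2}$ are not the structure constants of the level-zero Frobenius algebra: from \eqref{la+eta} and Corollary~\ref{CCsign2} one has $\lambda_\rho=t^{2d}\big(\tfrac{d!}{\dim\rho}\big)^{2}$ and $U_\rho=(-1)^{(d-r(\rho))/2}t^{d}\tfrac{d!}{\dim\rho}$, and the $\sinh$-hooklength factor enters only through the level operator via $\eta_\rho=t^{d}\tfrac{\dimh_Q\rho}{\dim\rho}$ (for $\rho=\rho'$, where $c_\rho=0$). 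The reason your bottom line nonetheless comes out right is that the KTQFT axiom $U_\rho^2=\lambda_\rho$ forces $\epsilon(U^{g+1})=\sum_{\rho=\rho'}U_\rho^{g-1}$ \emph{whatever} the values of $U_\rho$ and $\lambda_\rho$, so your unjustified re-definitions cancel; but they are not computing the same invariant. The correct chain is $RGW_d(g|g-1)=\sum_{\rho=\rho'}\eta_\rho^{-(g-1)}U_\rho^{g-1}=\sum_{\rho=\rho'}(U_\rho/\eta_\rho)^{g-1}$, and then $U_\rho/\eta_\rho=(-1)^{(d-r(\rho))/2}\,d!/\dimh_Q\rho=(-1)^{(d-r(\rho))/2}\prod_{\square\in\rho}2\sinh\tfrac{h(\square)u}{2}$ gives the claimed formula. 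To repair your proof, state explicitly that the $g+1$ crosscaps carry a trivial bundle, introduce the $(g-1)$ copies of the level $(-1)$-tube, and keep the eigenvalues $\lambda_\rho$, $U_\rho$, $\eta_\rho$ separate until the final combination.
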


The local RGW invariants correspond to possibly disconnected counts. As usual they can be expressed in terms of more basic invariants.  In the real GW theory these basic counts come in two flavors, $CRGW_d(\Si,L)$ and $DRGW_d(\Si,L)$, corresponding to maps from connected Real domains and respectively from doublet domains i.e. domains consisting of two copies of a connected surface with opposite complex structures and the real structure exchanging the two copies. 
In fact 
$$
1+\sum_{d=1}^\infty RGW_d(\Si,L) q^d= \exp\left(\sum_{d=1}^\infty  CRGW_d(\Si,L)q^d+\sum_{d=1}^\infty  DRGW_{2d}(\Si,L)q^{2d}\right).
$$
Furthermore, the doublet invariants are related to half of the complex GW invariants whenever the target $\Si$ is connected:
$$
DRGW_{2d}(\Si,L)(u,t) =(-1)^{d(k+1-g)} \frac{1}{2} GW^{conn}_{d}(g|k,k)(iu,it),
$$
where $g$ is the genus of $\Si$,  $k=c_1(L)[\Si]$ is the degree of $L$, and $ GW^{conn}_d(g|k,k)$ are the connected invariants defined in \cite{bp1} for the anti-diagonal action; see Corollary \ref{C.Z.part.anti-diag-balanced}.
  
     \smallskip
 As a consequence of the structure result provided by Theorem \ref{MainCY}, in \S\ref{LocalGV} we obtain the local real Gopakumar-Vafa formula; for a complete statement, see Theorem \ref{T.local.GV}.
  \begin{theorem}[Local real GV formula]\label{MainGV} Let $L\oplus c^*\ov L\lra\Si$ be a local Real Calabi-Yau 3-fold with base a genus $g$ symmetric surface $(\Si, c)$. Then the generating function for the connected real Gromov-Witten invariants has the form:
 	\best
 	\sum_{d=1}^\infty CRGW_d(\Si|L)(u)q^d = \sum_{d=1}^\infty\sum_{h=0}^\infty n^{\R}_{d,h}(g)\sum_{k\; \rm odd }\tfrac{1}{k}(2\sinh(\tfrac{ku}{2}))^{h-1}q^{kd},
 	\eest
where the coefficients $n^{\R}_{d,h}(g)$, called the real BPS states, satisfy (i) (integrality) $n^{\R}_{d,h}(g)\in \Z$, (ii) (finiteness) for each $d$, $n^{\R}_{d,h}(g)=0$ for large $h$,  and  (iii) (parity) $n^\R_{d,h}(g)$ has same parity as the complex BPS states $n^\cx_{d,h}(g)$. 
 \end{theorem}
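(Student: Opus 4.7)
The plan is to deduce Theorem \ref{MainGV} from the closed formula of Theorem \ref{MainCY} together with the complex local GV formula of \cite{ip-gv} and the Bryan--Pandharipande formula for $GW^{conn}_d(g|k,k)$. First I would extract the connected generating series from the disconnected one via
$$\sum_{d=1}^\infty CRGW_d(\Si,L)q^d \;=\; \log\Big(1+\sum_{d=1}^\infty RGW_d(\Si,L)q^d\Big)\;-\;\sum_{d=1}^\infty DRGW_{2d}(\Si,L)q^{2d},$$
substitute the explicit formula of Theorem \ref{MainCY} into the logarithm, and use the identification $DRGW_{2d}(\Si,L)(u,t)=(-1)^{d(k+1-g)}\tfrac12 GW^{conn}_d(g|k,k)(iu,it)$ to rewrite the doublet sum. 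Applying the complex GV formula of \cite{ip-gv} converts the doublet contribution into the standard GV-type series
$$\sum_{d=1}^\infty\sum_{h=0}^\infty n^\cx_{d,h}(g)\sum_{k\ge 1}\tfrac{1}{2k}\bigl(2\sinh(\tfrac{ku}{2})\bigr)^{h-1}q^{2kd},$$
indexed by all positive $k$, with integer coefficients $n^\cx_{d,h}(g)$.

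Next I would expand $\log(1+\sum_d RGW_d q^d)$ into a similar GV-style series indexed by all positive $k$. The formula of Theorem \ref{MainCY} is a sum over self-conjugate partitions $\rho$ of traces of the form $[\varepsilon_\rho\prod_\square 2\sinh(h(\square)u/2)]^{g-1}$, which is precisely the Frobenius structure needed to apply the combinatorial/Möbius argument of \cite{ip-gv}: taking the logarithm, using $\log(1-x) = -\sum_k x^k/k$, and performing Möbius inversion yields an expansion
$$\sum_{d,h}\sum_{k\ge 1} a_{d,h}(g)\tfrac{1}{k}\bigl(2\sinh(\tfrac{ku}{2})\bigr)^{h-1}q^{kd}$$
with integer coefficients $a_{d,h}(g)$. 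Subtracting the doublet part (which already accounts for the even-$k$ contributions, matched precisely after the $k\mapsto 2k$ reindexing) leaves only the odd-$k$ terms, and I would define $n^\R_{d,h}(g)$ to be the resulting coefficients.

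The integrality claim (i) then follows from the Möbius inversion step, as in the complex case. The finiteness (ii) follows because, for each fixed $d$, only finitely many self-conjugate partitions $\rho\vdash d$ contribute and each $\prod_\square 2\sinh(h(\square)u/2)$ is a Laurent polynomial of bounded degree in $2\sinh(u/2)$; thus the $h$-degree is uniformly bounded. The parity statement (iii) is the delicate point: after matching coefficients one must show $n^\R_{d,h}(g)\equiv n^\cx_{d,h}(g)\pmod 2$, and this comes from the observation that the complex sum over all partitions of $d$ decomposes into self-conjugate partitions plus conjugate pairs $\{\rho,\rho'\}$, the latter contributing an even multiple. The main obstacle I anticipate is controlling the SFS signs $(-1)^{(d-r(\rho))/2}$ through the logarithm/Möbius manipulation and verifying that the even-$k$ part of the real expansion agrees on the nose with the doublet contribution coming from the complex GV series; organizing this bookkeeping so that the residual odd-$k$ coefficients are manifestly integers and satisfy the parity relation is where the calculation must be done carefully.
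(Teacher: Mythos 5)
The overall strategy you sketch is the same as the paper's: write $\log Z_g = C_g + D_g$, convert the doublet sum $D_g$ via Corollary~\ref{C.Z.part.anti-diag-balanced} and the complex local GV formula \eqref{Cx.GV}, substitute the closed formula for $Z_g$ from Theorem~\ref{MainCY}, and compare. However, your argument for integrality (i) has a genuine gap, and you defer the parity claim (iii) precisely at the point where the paper supplies its key mechanism.

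The central difficulty you have not addressed is the factor $\tfrac12$ coming from the doublet contribution: $\sum_d DRGW_{2d}\,q^{2d} = \tfrac12\sum_d GW^{conn}_d(iu)\,q^{2d}$. After the complex GV formula is applied, the doublet term contributes $\tfrac12 n^\cx_{d,h}(g)$ times GV-type sums, so subtracting it from $\log Z_g$ has no reason to leave integer coefficients unless something cancels the half-integers. "Möbius inversion as in the complex case" is not what fixes this — the complex proof of \cite{ip-gv} never faces a $\tfrac12$ from doublets. The paper instead proves (i) and (iii) \emph{simultaneously} by induction on the degree $p$: it rewrites $Z_g$ as a product of factors $(1\pm Q^l q^d)$ with exponents $\tfrac12(n^\R_{d,h}\phi^{h-1}_l \mp n^\cx_{d,h}\psi^{h-1}_l)$ (cf.~\eqref{Z=2nd}), where $f(Q)^{h-1}=\sum_l\phi^{h-1}_lQ^l$ and $F(Q)^{h-1}=\sum_l\psi^{h-1}_lQ^l$ with $f(Q)=Q-Q^{-1}$, $F(Q)=2-Q-Q^{-1}$. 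These exponents are integers precisely because of the two congruences $\phi^{h-1}_l\equiv\psi^{h-1}_l\pmod 2$ (from $f\equiv F\pmod 2$) \emph{and} the inductive hypothesis $n^\R_{d,h}\equiv n^\cx_{d,h}\pmod 2$ for $d<p$. Then the fact that $[Z_g]_{q^p}$ is an integer Laurent polynomial in $Q=e^{u/2}$ (from \eqref{Z=sum}) forces $\sum_h n^\R_{p,h}f(Q)^{h-1}$ to also be one, giving integrality, and the parity at degree $p$ is extracted by comparing the real and complex expansions mod~2 using the self-conjugate vs.\ conjugate-pair decomposition you mention. Without the explicit $f\equiv F\pmod 2$ identity and without coupling integrality to parity inductively, the $\tfrac12$ does not go away and the argument does not close.

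Your finiteness argument is essentially correct and matches the paper. Your observation that the parity should come from the complex sum splitting into self-conjugate partitions plus conjugate pairs is the right idea, but you flag it as "where the calculation must be done carefully" rather than carrying it out, and the calculation is in fact the heart of the theorem: it requires the inductive setup above, not a post-hoc check.
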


\bigskip

\non{\bf Acknowledgments.} This paper is partially based upon work supported by the NSF grant DMS-1440140 while the authors were in residence at MSRI during the Spring 2018 program ``Enumerative Geometry Beyond Numbers". The authors would like to thank MSRI for the hospitality. P.G. would also like to thank the IHES for the hospitality during Fall 2018 and E.I. the Mittag-Leffler Institute during Fall 2015. 
The research of E.I. was partially supported by the Simons Foundation Fellowship \#340899 and that of P.G. is partially supported by ANR grant ANR-18-CE40-0009. 

\vskip.2in 
\section{Local Real Gromov-Witten   invariants}\label{LocalRGW}
\vskip.1in

\subsection{Real GW invariants} We start with a brief overview of the real Gromov-Witten invariants. Let $(X,\omega)$ be a symplectic manifold and $\phi$ an anti-symplectic involution on $X$. A symmetric Riemann surface $(C,\si)$ is a closed, oriented, possibly nodal, possibly disconnected  Riemann surface $\Si$ with an anti-holomorphic involution~$\si$.   A \textsf{real map}
$$
f:(C,\si)\longrightarrow (X,\phi)
$$
is a map $f: C \ra X$ such that $u\circ \si = \phi\circ u$. Let $\mathcal{J}^\phi_\omega$  denote the space of $\omega$-compatible almost complex structures $J$ on $X$ which satisfy $\phi^*J=-J$. For $\chi \in \Z$ and $B\in H_2(X,\Z)$, denote by 
$$
\ov \M_{B, \chi}^{\phi, \bullet}(X)
$$
the moduli space of equivalence classes (up to reparametrization of the domain) of stable degree $B$ $J$-holomorphic real maps from symmetric Riemann surfaces of Euler characteristic $\chi$. We will consider only the case when the restriction of the maps to each connected component  of the domain is {\em nontrivial}.  
\smallskip

In this paper we restrict ourselves to target manifolds which are themselves symmetric Riemann surfaces. We will use $(\Si, c)$ to denote the target curve and $d$ for the degree of the map. The real moduli space is denoted
$$
\ov \M_{d, \chi}^{c, \bullet}(\Si),
$$
and consists of real maps $f:(C, \si)\ra (\Si, c)$ whose domain may be disconnected. The involution on the domain decomposes the domain into   real components and pairs of conjugate components. Following  \cite[(1.7)]{GZ2}, an \textsf{$h$-doublet} is a real surface  
 \bear\label{def.doublet}
 (C,\si)=(C_1 \sqcup C_2, \si)=(C \sqcup \ov C, \si),  \mbox{ where}  \quad \si|_{C} =id:C \lra \ov C, 
 \eear
$C$ is a genus $h$ Riemann surface, and $\ov C$ denotes the curve $C$ but with the opposite complex structure. Note that every  real curve that has two components swapped by the involution is equivalent (up to reparametrization) to a doublet.

When $\Si$ is connected, it is therefore convenient to consider the following two moduli   spaces: 
 \bear\label{doublet.moduli}
 \ov \M_{d, h}^{c}(\Si) \quad \mbox{ and } \quad \ov{D\M}_{d, h}^{c}(\Si), 
 \eear
where  the first one consists of maps with connected domains of genus $h$   and the second one consists of maps whose  domains are $h$-doublets.
 Let 
 $$
 \ov{\mathcal{M}}_{d,\chi}^\bullet(\Si)
 $$
denote the classical moduli space of holomorphic maps from possibly nodal, possibly disconnected domains of Euler characteristic $\chi$ and degree $d$ to $\Si$ (whose restrictions to each connected component is nontrivial). Finally, denote by 
\bear\label{real.DM}
\ov{\R\M}^\bullet_{\chi, \ell}
\eear 
the real Deligne-Mumford moduli space parametrizing  (possibly disconnected)  symmetric surfaces $(C, \si)$ of Euler characteristic $\chi$ with 
$\ell$ pairs of conjugate marked points 
\bear\label{marked.pts}
\{(y_1^+,y_1^-),  \dots, (y_\ell^+,y_\ell^-)\}, \quad \mbox{ where } y_i^-=\si(y_i^+).
\eear
The corresponding moduli spaces of connected real and doublet domains are denoted by $\ov{\R\M}_{g, \ell}$ and $\ov{D\M}_{g, \ell}$ respectively. 
 
 \subsection{Twisted real orientations} The real moduli spaces are not in general orientable. In  \cite[Definition 1.2]{gz} a notion of real orientation was introduced whose existence ensures the orientability of the real moduli spaces when the target has odd complex dimension, cf. \cite[Theorem 1.3]{gz}. This notion can be extended to a twisted orientation as in Definition \ref{TRO} below when the target is a surface; see Definition~\ref{TRO.gen} for a general target.  In the appendix we show that \cite[Theorem 1.3]{gz} extends to this setting: a choice of twisted real orientation on an odd dimensional target determines a canonical orientation of the moduli spaces of real maps to that target. While a real orientation in the sense of \cite{gz} does not exist on a symmetric surface of even genus and fixed-point free involution, a twisted orientation exists on every symmetric surface.
 
As in \eqref{local.3fold}, when $L\ra (\Si,c)$ is a complex bundle then   
\bear\label{L.plus.twL}
(L\oplus c^*\ov L, c_{tw})\longrightarrow (\Si,c), \quad \mbox{ where } \quad  c_{tw} (z; u,v) = (c(z);  v, u),
\eear
is a Real bundle (i.e. a real bundle pair in the sense of \cite[\S1.1]{gz}).  Note that the projection onto the first factor identifies the fixed locus of $c_{tw}$ with
\best
(L\oplus c^*\ov L)^ {c_{tw}} \cong L_{|\Si^c}, 
\eest
where $\Si^c$ is the fixed locus of $c$.

\begin{defn}\label{TRO}  Assume $(\Si,c)$ is a symmetric surface. A \textsf{twisted (real) orientation  data}  
\bear\label{twist.or}
\fo=(\Theta, \psi,\mathfrak{s})
\eear 
for $(\Si,c)$ consists of 
\begin{enumerate}[(i)]
	\item a complex line bundle $\Theta$ over $\Si$ such that $c_1(\Theta\otimes c^* \ov \Theta)= -\chi(T\Si)$, 
	\item  a homotopy class of isomorphisms 
	\bear\label{real.isom.det}
	\Lambda^{\text{top}}(T\Si\oplus (\Theta\oplus c^*\ov\Theta), dc\oplus c_{tw})\overset{\psi}{\cong} (\Si\ti \cx, c\ti c_{std})
	\eear
	where $c_{std}:\cx \ra \cx$ is the standard complex conjugation.  
	\item  a spin structure $\mathfrak{s}$ on the fixed locus $T\Si^c\oplus \Theta_{|\Si^c}$, compatible with the orientation induced by \eqref{real.isom.det}.
\end{enumerate}	
\end{defn}
Up to deformation, rank $r$ complex or holomorphic bundles on a Riemann surface are determined by their first Chern class. Similarly,  rank $r$ Real bundles $(V,\phi)\lra (\Si,c)$ are classified by $c_1(V)$ and $w_1(V^\phi)$, cf. \cite{BHH}. 
In particular, condition (i) above ensures the existence of an isomorphism \eqref{real.isom.det}.  

\begin{ex}\label{R.or.choice} 
\begin{enumerate}[(a)]
		\item When $\Si^c$ is empty, there is no spin structure $\mathfrak{s}$ involved. Thus a choice of twisted orientation  in this case corresponds only to a choice $(\Theta, \psi)$. 
		
		\item When $(\Si,c)$ is a $g$-doublet, $\Theta$ restricts to a line bundle on each component $\Si_i$; let  $m_i=c_1(\Theta)[\Si_i] $ denote the degrees.  Since $(c^*\ov \Theta)_{|\Si_1}=c^*(\ov \Theta_{|\Si_2})$, condition (i) restricts the total degree 
		\bear\label{m=sum.mi}
		m=m_1+m_2=2g-2.
		\eear 		
For any fixed complex line bundle $\Theta$ over the doublet satisfying \eqref{m=sum.mi}, there is a unique isomorphism \eqref{real.isom.det} up to homotopy (determined by the restriction to $\Si_1$). Moreover, $\Si^c$ is empty for a doublet. Thus a twisted orientation for a doublet consists of a choice of the degrees $m_i\in \Z$ satisfying \eqref{m=sum.mi}.
		
		\item When $(\Si, c)$ is a connected genus $g$ surface, the degrees of $c^*\ov\Theta$ and $\Theta$ are equal. In this case, condition (i) implies that the degree of $\Theta$ is $m=g-1$; up to isomorphism, there is only one such complex line bundle. 
	\end{enumerate}
\end{ex}

A twisted orientation $\mathfrak{o}$ on $(\Si, c)$ equips the Real moduli spaces $\ov \M_{d, \chi}^{c, \bullet}(\Si)$ with a canonical orientation, cf. Appendix. In particular, it  gives rise to a virtual fundamental class 
$$[\ov \M_{d, \chi}^{c, \bullet}(\Si)]^{\vir,\mathfrak{o}}$$ in dimension  $b= d\chi(\Si) -\chi$.

\subsection{Absolute RGW invariants}  Consider a holomorphic bundle $E$ over a complex curve $\Si$. Then the operator $\del_E$ determines a family of complex operators over moduli spaces of maps to $\Si$; the fiber at a stable map $f:C\ra \Si$ is the pullback operator 
$\del_{f^* E}$. Denote by  $\mathrm{Ind}\;  \del_{E}$ the index bundle associated to this family of operators, regarded as an element in K-theory. 

Assume next $L$ is a holomorphic line bundle over a symmetric surface $(\Si, c)$, and let $E= L\oplus c^*\ov L$.  It is a rank 2 holomorphic bundle over $\Si$ which has a Real structure $c_{tw}$ given by \eqref{L.plus.twL}. Let $\dbar_{(L\oplus c^*\bar L, c_{tw})}$ denote the restriction of 
$\dbar_{L\oplus c^*\bar L}$ to the invariant part of its domain and target, cf. \cite[\S4.3]{gz}. Via the projection onto the first factor, the kernel and cokernel of $\dbar_{(L\oplus c^*\bar L, c_{tw})}$ are canonically identified with the kernel and cokernel of $\dbar_{L}$.

Similarly $\dbar_{(L\oplus c^*\bar L, c_{tw})}$ determines a family of pullback operators over the real moduli space of maps to 
$(\Si, c)$, and the projection onto the first factor identifies 
\bear\label{ind.E}
\Ind \dbar_{(L\oplus c^*\bar L, c_{tw})}\overset{\pi_1}\cong \Ind \dbar_{L}.
\eear
The right hand side carries a natural complex structure, which pulls back to one the left hand side.
An $U(1)$-action on $L$ induces one on $(L\oplus c^*\bar L, c_{tw})$, compatible with the real structure. In turn, these induce $U(1)$-actions on $\Ind \del_L$ and $\Ind \del_{(L\oplus c^*\bar L, c_{tw})}$ and the isomorphism \eqref{ind.E} identifies their equivariant Euler classes.

\smallskip

Motivated by the Bryan-Pandharipande construction \cite[\S2.2]{bp1}, we consider the following real version, associated to a local Real 3-fold $(L\oplus c^*\ov L,c_{tw})\ra (\Si, c)$ defined by \eqref{L.plus.twL}. 
\begin{defn}\label{D.local.real.invar} Assume $(\Si,c)$ is a symmetric surface, $L$ a holomorphic line bundle over $\Si$ and 
$\mathfrak{o}$ a twisted orientation data \eqref{twist.or} for $(\Si,c)$. The \textsf{local real GW invariants} are defined by the equivariant pairings:
	\bear\label{defn.Z.real}
	RZ^{c,\mathfrak{o}}_{d, \chi} (\Si, L)=  \int_{[\ov \M_{d, \chi}^{c, \bullet}(\Si)]^{\vir,\mathfrak{o}}} e_{U(1)}(-\mathrm{Ind}\;  \del_{(L\oplus c^*\bar L,  c_{tw})})=
	 \int_{[\ov \M_{d, \chi}^{c, \bullet}(\Si)]^{\vir,\mathfrak{o}}} e_{U(1)}(-\mathrm{Ind} \; \del_{L}).
	\eear
	Here $e_{U(1)}$ denotes the $U(1)$-equivariant Euler class. 
\end{defn}	
As in \cite[\S2.2]{bp1}, we will primarily consider the shifted partition function:  
\bear\label{defn.RGW.real}
RGW^{c,\mathfrak{o}}_{d} (\Si,L)= \sum_{\chi}u^{d(\frac{\chi(\Si)}{2 }+c_1(L)[\Si]) -\frac{\chi}{2 }} RZ^{c,\mathfrak{o}}_{d, \chi} (\Si, L).
\eear
Intrinsically, \eqref{defn.Z.real} takes values in the equivariant cohomology of a point: 
\best
RZ^{c,\mathfrak{o}}_{d, \chi} (\Si, L) \in H^*_{U(1)}(pt)= H^*(\cx\P^\infty)= \Q[t]. 
\eest
Here $t$ is the equivariant first Chern class of the standard representation of $U(1)$. Then the local invariant \eqref{defn.Z.real} can be expressed in terms of the equivariant parameter $t$ and an {\em ordinary} integral: 
\bear\label{defn.Z.real.nonequiv}
RZ^{c,\mathfrak{o}}_{d, \chi} (\Si, L)=  t^{\iota-b/2}\int_{[\ov \M_{d, \chi}^{c, \bullet}(\Si)]^{\vir,{\mathfrak{o}}}} c_{b/2}(-\mathrm{Ind} \; \del_{L}).
\eear
Here $b$ is the dimension of $\ov \M_{d, \chi}^{c, \bullet}(\Si)$ and $\iota$ the index (virtual complex rank) of 
$-\mathrm{Ind} \; \del_{L}$, given by:  
\bear\label{rank.ind.L}
\iota=\mathrm{rank}_\cx(- \; \mathrm{Ind}\;  \del_{L})= - dc_1(L)[\Si]- \tfrac12 \chi.  
\eear
\begin{rem}\label{ConnRGW}
	The invariants $RGW^{c,\mathfrak{o}}_{d} (\Si,L)$ count maps from possibly disconnected real domains.  The real structure $\si$ acts on the components of the domain decomposing them into `real components' (preserved by $\si$) and `doublets' (pairs of conjugate components swapped by $\si$). 
	When $\Si$ is connected, we denote the connected domain invariants by
\bear\label{conn.invar}
CRGW^{c,\mathfrak{o}}_{d} (\Si,L) = \sum_{h=0}^\infty u^{d(\frac{\chi(\Si)}{2 }+ c_1(L)[\Si])+h -1}  \int_{[\ov \M_{d, h}^{c}(\Si)]^{\vir,\mathfrak{o}}} e_{U(1)}(-\mathrm{Ind} \; \del_{L})
\eear
and the doublet domain invariants (which appear only in even degree when $\Si$ is connected) by
\bear\label{doublet.invar}
DRGW^{c, \mathfrak{o}}_{d} (\Si,L) = \sum_{h=0}^\infty u^{d(\frac{\chi(\Si)}{2 }+  c_1(L)[\Si])+2h-2}  
\int_{[\ov {D\M}_{d, h}^{c}(\Si)]^{\vir,  \mathfrak{o}}} 
e_{U(1)}(-\mathrm{Ind} \; \del_{L}).
\eear
Here $\ov \M_{d, h}^{c}(\Si)$ and $\ov {D\M}_{d, h}^{c}(\Si)$ are the moduli spaces \eqref{doublet.moduli} of degree $d$ maps with connected genus $h$ domain and $h$-doublet domain, respectively.  Then
\bear\label{rgw=exp}
1+\sum_{d=1}^\infty RGW^{c,\mathfrak{o}}_{d} (\Si,L)q^d = 
\exp\left(\sum_{d=1}^\infty CRGW^{c,\mathfrak{o}}_{d} (\Si,L)q^d+\sum_{d=1}^\infty DRGW^{c,\mathfrak{o}}_{2d} (\Si,L)q^{2d}\right).
\eear
\end{rem}
\subsection{Notation for partitions} A partition $\la$ is a finite sequence of positive integers 
$\la=(\la_1\ge \dots\ge \la_\ell)$. A partition of $d$, denoted $\la\vdash d$, is a partition such that the sum of its parts, denoted 
$|\la|$, is equal to $d$. Its length (number of parts  $\ell$) is denoted $\ell(\la)$. We can also write a partition in the form $\la=( 1^{m_1} 2^{m_2} \dots )$ where $m_k$ is the number of parts of $\la$ equal to $k$. Then 
\best
d=|\la|= \sum_{i=1}^\ell \la_i =\sum_{k=1}^\infty  k m_k \quad \mbox{ and } \quad \ell(\la)=\ell=\sum_{k=1}^\infty m_k. 
\eest
We will also consider the following combinatorial factor
\bear\label{mult.la}
\zeta(\lambda) = \prod m_k! k^{m_k}.
\eear
A partition $\la$ is uniquely determined by its Young diagram and the conjugate partition $\la'$ is obtained by reflecting $\la$ across the main diagonal.  The rank 
\bear\label{rank.la}
r(\la)
\eear of a partition is the length of the main diagonal of its Young diagram, cf. \cite[\S4.1]{FH}. 

\subsection{Relative RGW invariants} Assume next that $(\Si, c)$ is a {\em marked} symmetric surface, with $r$ pairs of marked points 
\bear\label{marked.points}
P_\Si=\{ (x_1^+,x_1^-), \dots, (x_r^+,x_r^-)\}, \quad \mbox{ where} \quad  x_i^-=c(x_i^+), 
\eear
cf. \eqref{marked.pts}. So in particular we have a preferred marked point $x_i^+$ (the first element of a pair) in each pair of conjugate points. 

We consider next the moduli spaces of real maps to $(\Si,c)$ that have fixed ramification pattern over the marked points of $\Si$. This moduli space is a version of \cite[Definition~3.1]{bp1}, adapted to the Real setting. The ramification pattern over each point is described by a partition $\la$. 

Let $\vec \la=(\la^1, \dots, \la^r)$ be a collection of $r$ partitions of $d$. 
\begin{defn}\label{D.rel,real} Denote by 
\bear\label{rel.M.space}
	\ov  \M^{\bullet, c}_{d, \chi}  (\Si)_{\la^1, \dots, \la^r}
	\eear 
the relative real moduli space of degree $d$ stable real maps $f:(C,\si)\ra (\Si, c)$ such that 
	\begin{itemize} 
		\item $f$ has ramification pattern $\la^i$ over $x_i^+$ (and thus also over $x_i^-=c(x_i^+)$), for all $i=1, \dots, r$;
		\item  the domain $C$ is possibly disconnected and has total Euler characteristic $\chi$;
		\item $f$ is nontrivial on each connected component of $C$.
	\end{itemize} 
\end{defn} 
 
Here, as in \cite[Definition~3.1]{bp1}, the inverse images of the marked points of the target are {\em not} ordered; in particular, an automorphism of $f$ may permute domain components or points in the inverse image of the marked points of the target. It is straightforward to express these moduli spaces in terms of unions, products, and finite quotients of the relative moduli spaces where the points in the inverse images $f^{-1}(x_i^\pm)=\{ y_{ij}^\pm\}_{j=1, \dots, \ell(\la^i)}$ are all marked, the points $y_{ij}^\pm$ are conjugate,  $f(y_{ij}^+)=x_i^+$,  
and the  ramification order of $f$ at $y_{ij}^\pm$ is $\la_j^i$, for $j=1, \dots, \ell(\la^i)$ and $i=1, \dots, r$. The moduli space 
$\ov  \M^{\bullet, c}_{d, \chi}  (\Si)_{\la^1, \dots, \la^r}$ has virtual dimension~$b$, where 
\bear\label{dim.M=b}
b=d\chi(\Si)- \chi-2\de(\vec \la) \quad \text{ and }\quad  \de(\vec \la)= \sum_{i=1}^r  ( d-\ell(\la^i)). 
\eear
Here $\ell(\la^i)$ is the length of the partition $\la^i$, i.e. the cardinality of $f^{-1}(x_i^+)$.\\

The relative real moduli space is oriented using a twisted orientation $\mathfrak{o}$ as in Definition \ref{TRO} but where $T\Si$ is the relative tangent space to the {\em marked} curve $\Si=(S, j, x_1^\pm, \dots x_r^\pm)$, i.e. 
\bear\label{T.punctured}
T\Si= TS\otimes \O\Big(-\ma{\textstyle\sum} _i x_i^+ -\ma{\textstyle\sum}_i x_i^-\Big); 
\eear
see Appendix. Definition~\ref{D.local.real.invar} then extends to the relative setting.
\begin{defn} Assume $(\Si,c)$ is a symmetric surface with $r$ pairs of marked points. Let $L\ra \Si$ be a holomorphic line bundle, 
$\mathfrak{o}$ a twisted orientation data for $(\Si,c)$, and $\vec \la=(\la^1, \dots, \la^r)$ a collection of $r$ partitions of $d$. The \textsf{local real relative GW invariants} associated with the Real 3-fold  $(L\oplus c^*\ov L,c_{tw})\ra (\Si, c)$ and the orientation data $\fo$ are the equivariant pairings:
	\bear\label{defn.Z.real.rel}
 	RZ^{c,\mathfrak{o}}_{d, \chi} (\Si, L)_{\vec \la}= \hskip-.2in
	\ma\int_{[\ov \M_{d, \chi}^{c, \bullet}(\Si)_{\vec \la}]^{\vir,\mathfrak{o}}} \hskip-.2in e_{U(1)}(-
 	\mathrm{Ind}\;  \del_{(L\oplus c^*\bar L, c_{tw})})= \hskip-.2in
 	 \ma\int_{[\ov \M_{d, \chi}^{c, \bullet}(\Si)_{\vec \la}]^{\vir,\mathfrak{o}}}  \hskip-.2in  e_{U(1)}(-\mathrm{Ind} \; \del_{L})
 	.
	\eear
\end{defn} 
The shifted partition function \eqref{defn.RGW.real} extends to the relative setting as
	\bear\label{defn.RGW.real.rel}
	RGW^{c,\mathfrak{o}}_{d} (\Si,L)_{\vec \la}= 
	 \sum_{\chi}u^{d(\frac{\chi(\Si)}{2 }+ c_1(L)[\Si]) -\frac{\chi}{2 }-\delta(\vec \la)} 
	RZ^{c,\mathfrak{o}}_{d, \chi} (\Si, L)_{\vec \la },
	\eear
	where $\delta(\vec \la)$ is as in \eqref{dim.M=b}. Note that the power of $u$ is $b/2+dk$, where $b$ is the dimension \eqref{dim.M=b} of $\ov \M_{d, \chi}^{c, \bullet}(\Si)_{\vec \la}$ and $k=c_1(L)[\Si]$. 
	
	The quantity  \eqref{defn.RGW.real.rel} is invariant under (smooth) deformations, so it depends only on the topological type of $(\Si, c, \fo)$, on $c_1(L)$, and on how the $r$ partitions $\la^1,\dots, \la^r$ are distributed on the components of $\Si$. We use the notation 
\bear\label{rgw.conn}
	RGW^{c,\mathfrak{o}}_{d} (g|k)_{\vec \la}
\eear
	for the case $\Si$ is a connected genus $g$ surface and $k=c_1(L)[\Si]$, and
\bear\label{rgw.doublet}
	RGW^{c,\mathfrak{o}}_{d} (g,g|k_1,k_2)_{\vec \la}
\eear
for the case $\Si$ is a  $g$-doublet, all the positive marked points are on the same component 
$\Si_1$ of $\Si$, and $k_i=c_1(L)[\Si_i]$ are the degrees of $L$ on the two components.\\


As before, the local invariant \eqref{defn.Z.real.rel} can be expressed in terms of the equivariant parameter $t$ and an {\em ordinary} integral: 
\bear\label{defn.Z.real.rel.nonequiv}
RZ^{c,\mathfrak{o}}_{d, \chi} (\Si, L)_{\la^1.. \la^r}=  t^{\iota-b/2}\int_{[\ov \M_{d, \chi}^{c, \bullet}(\Si)_{\la^1.. \la^r}]^{\vir,\mathfrak{o}}} c_{b/2}(-\mathrm{Ind} \; \del_{L}).
\eear
Here $b$ is the dimension \eqref{dim.M=b} of $\ov \M_{d, \chi}^{c, \bullet}(\Si)_{\la^1.. \la^r}$ and $\iota$ the index (virtual complex rank) of 
$-\mathrm{Ind} \; \del_{L}$, given respectively by \eqref{dim.M=b} and \eqref{rank.ind.L}, so the power of $t$ in \eqref{defn.Z.real.rel.nonequiv} is
\bear\label{power.t}
\iota-b/2&=&-d(\chi(\Si)/2+c_1(L)[\Si])+\de(\vec \la). 
\eear
As in the absolute case, we use similar notions for the connected and doublet relative invariants and their moduli spaces, cf. Remark~\ref{ConnRGW}.

 \vskip.2in 
\section{Doublet vs complex invariants}\label{Doublets}
 \vskip.1in 

The doublet invariants \eqref{doublet.invar} (and their extension to the relative setting) are real invariants associated with the moduli space of maps whose domain is a doublet \eqref{def.doublet}. In this section we consider two situations: (a) when the target curve is a doublet and (b) when the target curve is connected. In both cases, we relate the doublet invariants to the residue invariants defined by Bryan and Pandharipande in \cite{bp1} (for the anti-diagonal action). The latter are reviewed in  \S\ref{S.cxBP}. 

Roughly speaking, the main idea is that a doublet can be identified with a complex curve by restricting to one of the components. This defines an identification $\cal P$ between the doublet moduli space and the usual (complex) moduli space, with matching deformation obstruction theories; moreover, a bundle on a doublet corresponds to two bundles, one for each component of the doublet. 

The main results in this rather technical section are Corollaries~\ref{corR=GW.part.rel} and 
\ref{corcdefn.Z.part.anti-diag-balanced}, comparing the doublet invariants to the BP-invariants. They follow from the fact that in both cases (i) the VFC of the doublet moduli space is equal up to a scalar multiple to that of the corresponding complex moduli space, cf. Lemmas~\ref{double.vfc.lemma}  and \ref{lemcdouble.vfc} and (ii) the equivariant Euler classes of the index bundles are also equal up to sign, cf. Lemmas~\ref{P.res=} and \ref{cP.res=}. 

\subsection{Complex GW invariants}\label{S.cxBP} We begin with a brief review of the complex moduli space and the residue GW-invariants defined by Bryan and Pandharipande in \cite{bp1}. Assume $\Si$ is a complex curve with $r$ marked points $P=\{ x_1, \dots x_r\}$, and let $\vec \la=(\la^1, \dots \la^r)$ be a collection of $r$ partitions $\la^i$ of $d$. Let
\bear\label{cx.mod.bp}
\ov \M_{d, \chi}^\bullet(\Si)_{\vec \la}
\eear denote the usual (complex) relative moduli space \cite[Definition~3.1]{bp1} of degree $d$  stable maps $f:C\ra \Si$  from an  Euler characteristic $\chi$ domain having ramification prescribed by $\vec \la$ over the points $P$ (such that moreover the restriction of $f$ to each connected component of the domain is nontrivial). Here the inverse images of the marked points of  the target are {\em unordered}. The moduli space \eqref{cx.mod.bp} is canonically oriented and carries a virtual fundamental class in dimension $2b$,  where 
\best
b= d\chi(\Si)- \chi-\de(\vec \la)
\eest
and $\delta(\vec \la)$ is as in \eqref{dim.M=b}. 

If $L_1,L_2$ are two holomorphic bundles over $\Si$, the total space of 
\bear 
E=L_1\oplus L_2\ra \Si
\eear 
is a local holomorphic 3-fold with a $T=(\cx^*)^2$ action.  In \cite[\S3.2]{bp1} Bryan-Pandharipande consider residue invariants by integrating a $T$-equivariant Euler class.  When restricted to the anti-diagonal $U(1)$ action, the BP residue invariants are given by:
\bear\label{defn.Z.part.anti-diag-balanced}
Z_{d, \chi}(\Si | L_1, L_2)_{\vec \la}=  \int_{[\ov \M_{d, \chi}^{\bullet}(\Si)_{\vec \la}]^\vir} e_{U(1)}(-\mathrm{Ind} \; \del_{L_1\oplus L_2}).
\eear 
Their (shifted) generating function (cf. \cite[\S3.2]{bp1}) is 
\bear\label{shifted.GW}
GW_d(\Si| L_1, L_2)_{\vec \la} =
\sum_{\chi} u^{d(\chi(\Si)+k_1+k_2) -\chi -\delta(\vec \la)} Z_{d,\chi}(\Si |L_1,L_2)_{\vec \la}, 
\eear
where $k_i=c_1(L_i)[\Si]$ and  $\delta(\vec \la)$ is as in \eqref{dim.M=b}. We denote by  $GW^{conn}$ the corresponding invariants associated to the moduli spaces of maps with {\em connected} domains.

\subsection{Doublets and Halves} For any doublet $(C=C_1\sqcup C_2, \si)$, with $r$ pairs of marked points $P_{C}$ as in  \eqref{marked.points} the 'half' $C_1$ is a complex curve with $r$ marked points and each of these marked points inherits a  decoration of a  $\pm$ sign. 
This process defines a map
\bear\label{doublet.to.half}
(C=C_1\sqcup C_2, \si) \mapsto C_1, 
\eear
that takes a doublet to a connected complex curve with {\em signed} marked points. Formally,  a complex curve with \textsf{signed marked} points is a complex curve $\Si$ with marked points $P_{\Si}=\{x_1, \dots, x_r\}$ together with a choice $\ep: P_{\Si} \ra  \{\pm  1\}$ of a sign associated to each point.

Conversely, to every  complex curve $C$ we can associate a doublet \eqref{def.doublet} via
\bear\label{double.of.signed}
(DC, \si)\,, \quad \mbox{ where } \quad DC=C\sqcup \ov C=C_1\sqcup C_2\quad \mbox{and }\quad \si_{|C}=id: C \ra \ov C.
\eear
Note that $DC$ is the orientation double cover of $C$. When $C$ has $r$ signed marked points $P_C$, the double $DC$ is marked: it has $r$ pairs of conjugate points, and the sign $\ep$ of a marked point in $P_C$ determines 
whether it is the first or second element of the corresponding pair in $DC$, with $+$ corresponding to first. 

Therefore \eqref{doublet.to.half} is a correspondence. 
 
\subsection{Real maps to a doublet} \label{S.maps.to.doublet} Fix $\Si$ a complex marked surface (with signed marked points) and let $D\Si=(\Si\sqcup \ov\Si, c)=(\Si_1\sqcup \Si_2, c)$ denote its double \eqref{double.of.signed}. We next relate the local    $RGW$ invariants \eqref{defn.Z.real} of the double $D\Si$ to the BP-residue invariants \eqref{defn.Z.part.anti-diag-balanced} of $\Si$. 

For any real map $f:(C, \si) \ra D\Si= (\Si_1\sqcup \Si_2, c)$, let 
\bear\label{half.f} 
f_i:C_i \ra \Si_i\,,\quad i=1,2,
\eear
denote its restriction to $C_i=f^{-1}(\Si_i)$, $i=1,2$. Conversely, any map $f: C\ra \Si$ doubles to a real map
\best\label{double.both} 
\tilde f:  DC \ra D\Si\,,  \quad \mbox{ with } \quad \tilde f_1 = f.
\eest
The signs of the marked points on $\Si$ determine signs on the marked points of the domain~$C$ which are compatible under the doubling procedure \eqref{double.of.signed}. 
This defines a morphism
\bear\label{defn.double.both}
\mathcal{D}:\ov \M_{d, \chi}^\bullet(\Si)_{\la^1.. \la^r}\lra\ov \M_{d, 2\chi}^{c,\bullet}(D\Si )_{\la^1.. \la^r}, \qquad f\mapsto \wt f,
\eear
between the moduli spaces, whose inverse is
\bear\label{defn.doublet}
\cal P( f)= f_1
\eear
where $f_1$ is given by \eqref{half.f}.  

\begin{lemma}\label{double.vfc.lemma} Fix an orientation data $\mathfrak{o}$ as in \eqref{twist.or} for the 
doublet $D\Si=\Si_1\sqcup \Si_2$. With the notation above, the identification \eqref{defn.doublet} has degree 
$(-1)^{dm_2+\ell_2}$, i.e. 
	\bear\label{double.vfc}
	[\ov \M_{d,2 \chi }^{c, \bullet}(D\Si)_{\la^1.. \la^r} ]^{\vir,\mathfrak{o}}
	=(-1)^{dm_2+\ell_2}  {\cal D}_{*}[\ov \M_{d, \chi}^\bullet(\Si)_{\la^1.. \la^r}]^\vir,
	\eear
where $m_2$ is the degree  of $\Theta_{|\Si_2}$   and $\ell_2$ is the sum of the lengths 
of the partitions associated to the positive points on $\Si_2$: 
	\bear\label{ep+s}
	m_2=c_1(\Theta)[\Si_2] \quad \mbox{ and } \quad \ell_2=\sum_{x_i^+\in \Si_2} \ell (\lambda^i).
	\eear
	\end{lemma}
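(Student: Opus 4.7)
The plan is threefold: (1) establish $\cal D$ as a canonical isomorphism of moduli stacks, (2) match the deformation--obstruction complexes so that $\cal D$ identifies the virtual classes up to a sign, and (3) compute the orientation discrepancy.

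For (1), a real map $\wt f:(C,\si)\to (D\Si,c)$ must send $\si$-conjugate components of $C$ to $c$-conjugate components of $D\Si$, so setting $C_i=\wt f^{-1}(\Si_i)$ exhibits $C$ as a doublet with $\si:C_1\to C_2$. Thus $\wt f$ is determined by $f_1=\wt f_{|C_1}:C_1\to \Si_1\cong \Si$, and conversely any $f:C\to\Si$ doubles canonically to a real map. The ramification pattern $\la^i$ over $(x_i^+,x_i^-)$ matches the ramification over the underlying signed marked point on $\Si$, and stability transfers, so $\cal D$ is an isomorphism preserving the relative structure.

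For (2), the complex deformation--obstruction complex of $\wt f$ splits as a direct sum of those of $f_1$ and $f_2=c\circ f_1\circ\si$; the $\si$-invariant subcomplex governing the real deformation theory is identified via projection to the first factor with the complex deformation--obstruction complex of $f_1$. The same identification holds in the relative setting after replacing $T\Si$ by the log-twist \eqref{T.punctured}. Hence $\cal D$ identifies the virtual fundamental classes as \emph{unoriented} cycles, and it remains to compare orientations.

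The heart of the proof is (3). By Example \ref{R.or.choice}(b), a twisted orientation on $D\Si$ amounts to a choice of degrees $(m_1,m_2)$ with $m_1+m_2=2g-2$; no spin structure enters since $(D\Si)^c=\emptyset$. The Appendix's construction orients the real moduli space by trivializing, via $\psi$, the determinant line of the real $\dbar$ on $T\Si\oplus(\Theta\oplus c^*\ov\Theta)$ pulled back to the moduli space. Under the projection identification from (2), this real determinant line is isomorphic to the complex determinant line of $\dbar_{Tf_1^*\Si\oplus f_1^*\Theta_{|\Si_1}}$, and the orientation comparison reduces to a finite-dimensional linear-algebra statement: the real orientation of the $\si$-invariant part of $V\oplus\ov{V'}$ versus the complex orientation of $V$, where $V, V'$ are the virtual complex deformation spaces associated to $\Si_1, \Si_2$. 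The discrepancy is $(-1)^{\dim_\cx V'}$, and Riemann--Roch applied to $f_2^*(T\Si_2\oplus\Theta_{|\Si_2})$ gives $\dim_\cx V'\equiv dm_2\pmod 2$, all other contributions being even. In the relative setting there is an additional contribution at the marked points on $\Si_2$: the real orientation convention orders preimages as pairs $(y^+_{ij},y^-_{ij})$ with a distinguished ``positive'' factor on $C_1$, but for each positive target marked point $x_i^+\in\Si_2$ the preimages in $C_1$ sit over $x_i^-$, forcing a swap via $\si$ that introduces $(-1)^{\ell(\la^i)}$ for each such $i$ and totals $(-1)^{\ell_2}$.

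The main obstacle is step (3): accurately tracking the Appendix's orientation procedure in the presence of the log-twisting at marked points, so as to cleanly isolate the $(-1)^{dm_2}$ contribution from the bulk deformation theory from the $(-1)^{\ell_2}$ contribution from the marked points. Once this linear-algebra bookkeeping is pinned down, the formula follows from Riemann--Roch.
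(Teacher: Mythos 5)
Your three-step architecture (identification of moduli, matching deformation-obstruction theories, orientation comparison) is the same as the paper's, and steps (1) and (2) are fine. The gap is in step (3), where the actual bookkeeping does not go through as claimed.

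The paper's orientation comparison works in two decoupled pieces, both of which you blur. First, the index-bundle comparison: the restriction of the trivialization \eqref{oriso} of $f^*(T\Si\oplus\Theta\oplus c^*\ov\Theta)$ to $C_1$ is a \emph{complex-linear} isomorphism, hence orientation-preserving at the determinant level. Consequently the $T\Si$ part contributes nothing; the only nontrivial discrepancy is between the two orientations on $\det\dbar_{f^*(\Theta\oplus c^*\ov\Theta,c_{tw})}$ — the one via the bundle projection $\pi_1$ used in the $\fo$-orientation, and the one via restriction to $C_1$ used in the complex orientation — and Lemma~\ref{compor} gives this as $(-1)^{\iota_2}$ with $\iota_2=dm_2+\chi/2$. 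You instead include $T\Si_2$ in the index $V'$. That introduces an extraneous factor $(-1)^{dc_1(T\Si_2)[\Si_2]}$; since $\Si_2$ carries $r$ marked points (one from each conjugate pair) and $T\Si$ is the twisted relative tangent \eqref{T.punctured}, $c_1(T\Si_2)[\Si_2]=2-2g_2-r$ is not even in general, and your claim ``$\dim_\cx V'\equiv dm_2\pmod 2$, all other contributions being even'' fails when $d$ and $r$ are both odd.

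Second, the Deligne--Mumford comparison: the paper invokes \cite[Lemma~3.2]{GZ2} to get a factor $(-1)^{\chi/2+\ell_2}$ from $\det T\ov{D\mathcal M}\otimes\det\dbar_{(\cx,c_{std})}$; the $\chi/2$ here is essential and cancels the $\chi/2$ inside $\iota_2$. Your proposal accounts only for the $(-1)^{\ell_2}$ from preimage ordering and omits the $(-1)^{\chi/2}$ from the DM side entirely. Since $\chi/2=1-h$ is not always even, you cannot sweep this under ``all other contributions being even'': you need the explicit cancellation. In short, the conceptual picture is right but you would need to (a) exclude $T\Si_2$ from the index discrepancy by citing the complex-linearity of the restricted trivialization, and (b) track the $\chi/2$ from the DM comparison so it cancels the $\chi/2$ from $\iota_2$, to actually recover the stated sign $(-1)^{dm_2+\ell_2}$.
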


\begin{proof} The map \eqref{defn.double.both} and its inverse \eqref{defn.doublet} define an identification between the two moduli spaces, with matching deformation-obstruction theories (relative the domains). Thus it remains to compare the orientations. The argument is similar to that of  \cite[Theorem~1.3]{GZ2}  taking into account the difference in the orientations induced by a twisted orientation data and a real orientation data in the sense of  \cite{gz}. 

 We first recall the procedures for orienting the complex and the real moduli spaces; the Appendix contains a more detailed discussion of the real case. The orientation sheaf of the real  moduli space, (after stabilization of the domain if necessary), is canonically identified with 
	\bear\label{or.moduli.rel} 
	\det  T\ov  \M^{c, \bullet}_{d, \chi}  (\Si)_{\la^1..\la^r} =\det \del_{(T\Si, dc)} \otimes  
	\mathfrak{f}^*\det T \ov{ \R\M}^\bullet_{\chi, \ell}.
	\eear
Here $\mathfrak{f}$ is the map to the real Deligne-Mumford moduli space parametrizing real curves of Euler characteristic $\chi$ and $\ell$ pairs of conjugate marked points, and $\ell=\ma\sum_{i=1}^r \ell(\la^i)$;  see \eqref{or.moduli.rel.a}.
	Let   
	$$
	f:(C,\si)\lra (\Si,c)
	$$
	 be a point in the real moduli space. A choice of twisted orientation data $\mathfrak o=(\Theta,\psi, \mathfrak s)$ determines a homotopy class of isomorphisms 
\bear\label{oriso}
f^*(T\Si\oplus \Theta\oplus c^*\ov \Theta, dc\oplus c_{tw})\ma\lra^{\phi_\fo} (C\times \cx^{\oplus 3}, \si\oplus c_{std}^{\oplus 3}).
\eear
	Here $T\Si$ denotes the relative tangent bundle \eqref{T.punctured} of the marked curve. This induces an isomorphism
	\bear\label{detTS=twist}
	\det \dbar_{f^*(T\Si, dc)} =\det \dbar_{(\cx, c_{std})}
	\eear
by using the canonical orientation on twice a bundle and the canonical complex orientation induced by the right-hand side of  the identification
	\best
	\det \dbar_{f^*(\Theta\oplus c^*\ov\Theta, c_{tw})}\overset{\pi_1}{=} \det \dbar_{f^*\Theta}
	\eest
	as in \eqref{ind.E}. 
	By \cite[Theorem 1.3]{gz}, there is also a canonical isomorphism
	\bear\label{or.DM} 
	\det (T\ov{\R\mathcal{ M}}_{h, \ell})= \det \dbar_{(\cx, c_{std})},
	\eear
where the forgetful morphism of a pair of marked points is oriented via the first elements in the pairs.  Then the orientation on the real moduli space is obtained by combining \eqref{detTS=twist} and \eqref{or.DM}  within  \eqref{or.moduli.rel}.

Similarly the complex moduli space at $f:C\ra\Si$ is oriented via the complex orientation of $\det \dbar_{T{\Si}}$ and the complex orientation on the corresponding Deligne-Mumford moduli space as in \eqref{or.moduli.rel}. 

 Since the map $\cal D$ is compatible with the forgetful morphism to the corresponding DM spaces, its sign is determined by the comparison on the level of DM spaces and on the level of the index bundles. 

When $(C,\si)=(C_1\sqcup C_2, \si)$ is a doublet and $(V,\phi)=(V_1\sqcup V_2, \phi)\ra(C,\si)$ is a Real bundle, its index bundle 
$\Ind\dbar_{(V,\phi)}$ has a natural complex structure induced by the isomorphism:
\bear\label{detTS=cx}
\Ind \dbar_{(V,\phi)} \overset{p_1}{\cong}  \Ind  \dbar_{V_1}.
\eear
Here $p_1$ takes an invariant section $\xi=(\xi_1,\xi_2)$ of $(V_1\sqcup V_2,\phi)$ to its restriction $\xi_1$ to $C_1$. In particular, $\det \dbar_{(V,\phi)}$ has an induced orientation, which we refer to as the \textsf{complex orientation},  cf. \cite[\S3.1]{GZ2}.

On the level of Deligne-Mumford spaces, the doubling map $\mathcal{D}$ from the complex moduli space (with signed marked points) to the real moduli induces an orientation on $\ov{D\mathcal{ M}}_{h, \ell}$ which we call the \textsf{complex orientation},  cf. \cite[\S3.1]{GZ2}. By \cite[Lemma 3.2]{GZ2}, the comparison between the orientation on 
$$
\det (T\ov{D\mathcal{ M}}_{h, \ell})\otimes \det \dbar_{(\cx, c_{std})},
$$ induced by \eqref{or.DM} and by the complex orientations on the two factors is $(-1)^{\chi/2+s}$, where $\chi$ is the Euler characteristic of $C_1$ and $s$ is the number of negative marked points on the component $C_1$. Because $C$ is a doublet, $s$ is also equal to the number of positive marked points on the component $C_2$, i.e. the number $\ell_2$ of points in the inverse image of marked points $x_i^+$ that lie on $\Si_2$.

We now turn to the comparison at the level of index bundles. The twisted orientation $\fo$ determines an orientation on
 $$
	\det\dbar_{ f^*(T{\Si}, dc)}\otimes\det \dbar_{(\cx, c_{std})}
$$
via \eqref{oriso} and \eqref{detTS=twist}. In the case when the domain is a doublet, the two factors in this tensor product also have complex orientation as in \eqref{detTS=cx}. To understand the difference between the two orientations on the tensor product we consider the restriction of (\ref{oriso})  to $C_1$. This restriction is a complex isomorphism of complex bundles and  thus the induced isomorphism on the corresponding determinant bundles is orientation preserving.  
	Therefore the difference between the two orientations on the tensor product corresponds to the difference between   the complex orientation on $\det \dbar_{ f^*(\Theta \oplus c^*\ov\Theta, c_{tw})}$ induced by \eqref{detTS=cx} 
and the orientation \eqref{ind.E} on 
\best
\det \dbar_{ f^*(\Theta\oplus c^*\ov\Theta, c_{tw})}\overset{\pi_1}{=}\det \dbar_{ f^*\Theta|_{\Si_1\sqcup \Si_2}}
\eest
used in the transition from \eqref{oriso} to \eqref{detTS=twist}.

 By Lemma \ref{compor}  below,   the difference between  these orientations   is $(-1)^{\iota_2}$, where 
\best\label{ind.theta.2}
\iota_2= c_1(f^* \Theta_{|\Si_2})+\chi/2=dc_1(\Theta)[\Si_2]+ \chi/2= dm_2+ \chi/2
\eest
is the complex rank of the index bundle associated to $\Theta_{|\Si_2}$. Combined with the change $(-1)^{\chi/2+\ell_2}$ at the level of the DM moduli spaces this completes the proof. 
\end{proof}

\begin{lemma}\label{compor} 
	The index bundle of $(L\oplus c^*\ov L ,  c_{tw})\lra (C_1\sqcup  C_2,c)$ has two natural  orientations: 
	\begin{enumerate} [(i)]  
	\item one induced by the isomorphism with $\Ind \dbar_{L_{|C_1\sqcup C_2}}$  via the projection \eqref{ind.E} onto the first bundle.
	\item another one induced by the isomorphism with $\Ind\dbar_{(L\oplus c^*\ov L)_{|C_1}}$ via the restriction \eqref{detTS=cx} to $C_1$.
	\end{enumerate} These orientations differ by a factor of $(-1)^{\iota_2}$, where
	\bear\label{iota2}
\iota_2=\mathrm{rank}_\cx\dbar_{L_{ |C_2}}= c_1(L)[ C_2]+\chi/2,
	\eear
	 and $\chi$ is the Euler characteristic of $C_2$.
\end{lemma}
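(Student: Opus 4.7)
The plan is to trace through the two identifications of $\mathrm{Ind}\,\bar\partial_{(L\oplus c^*\bar L,c_{tw})}$ at the level of sections, observe that the transition between them is $\mathbb C$-linear on one summand and $\mathbb C$-antilinear on another, and then compute the orientation change caused by the antilinearity using a standard fact about complex conjugation of an index bundle.

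First I would unpack what an invariant section of $(L\oplus c^*\bar L, c_{tw})\to (C_1\sqcup C_2,c)$ looks like. Writing such a section as $s=(u,v)$ with $u\in\Gamma(L)$ and $v\in\Gamma(c^*\bar L)$, the constraint $s\circ c=c_{tw}\circ s$ reads $u\circ c=v$ and $v\circ c=u$. Hence $s$ is freely determined by $(u_1,v_1):=(u|_{C_1},v|_{C_1})$, which is identification (ii); alternatively $s$ is freely determined by $(u_1,u_2):=(u|_{C_1},u|_{C_2})$, which is identification (i). The transition $(u_1,v_1)\mapsto (u_1,u_2)$ is the identity on the first summand and sends $v_1$ to the section $w\mapsto v_1(c^{-1}(w))$ of $L|_{C_2}$ on the second.

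Second, I would observe that this transition map is $\mathbb C$-linear on the $L|_{C_1}$ factor but $\mathbb C$-antilinear on the $c^*\bar L|_{C_1}\to L|_{C_2}$ factor: the fiberwise map $(c^*\bar L)_z=\bar L_{c(z)}\to L_{c(z)}$ underlying $v_1\mapsto u_2$ is the canonical antilinear identification between a line and its conjugate. Consequently, as a family of operators, $\bar\partial_{(L\oplus c^*\bar L,c_{tw})}$ inherits from (i) and (ii) two complex structures on the same underlying real virtual bundle that agree on $\mathrm{Ind}\,\bar\partial_{L|_{C_1}}$ and differ by $-1$ on the complementary summand, which is $\mathrm{Ind}\,\bar\partial_{L|_{C_2}}$ under (i) and $\mathrm{Ind}\,\bar\partial_{(c^*\bar L)|_{C_1}}$ under (ii).

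Third, I would convert the complex-structure comparison to an orientation comparison. Complex conjugation on an $n$-dimensional complex vector space reverses its canonical orientation by $(-1)^n$; therefore, for a virtual complex index bundle, changing the complex structure by conjugation multiplies the orientation of its determinant line by $(-1)^{\dim\ker+\dim\coker}=(-1)^{\iota}$, where $\iota$ is the virtual complex rank. Applied to the summand above, whose virtual complex rank is $\iota_2=c_1(L)[C_2]+\chi/2$ by Riemann–Roch (with $\chi=\chi(C_2)$), this yields exactly the sign $(-1)^{\iota_2}$ claimed in the lemma. The main obstacle is not any hard analysis but careful bookkeeping of the distinction between the complex-linear and complex-antilinear pieces of the transition and the resulting effect on the canonical orientation of a virtual complex bundle; once this is nailed down the statement follows.
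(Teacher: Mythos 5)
Your argument is correct and matches the paper's proof essentially step by step: you identify invariant sections with pairs $(\xi_1,\xi_2)$ (identification (i)) or $(\xi_1,\eta_1)$ (identification (ii)), observe that the transition $\xi_2\mapsto\eta_1=\xi_2\circ c$ is $\mathbb{C}$-antilinear (what the paper encodes by writing the isomorphism \eqref{proj.switch.12} with a conjugation bar on the target), and conclude that reversing the complex structure on a virtual bundle of complex rank $\iota_2$ flips the orientation by $(-1)^{\iota_2}$. This is precisely the paper's argument; the bookkeeping of $(-1)^{\dim\ker+\dim\mathrm{coker}}=(-1)^{\iota_2}$ is also correct since these have the same parity.
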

\begin{proof} Holomorphic sections of $L\oplus c^*\ov L\lra C_1 \sqcup  C_2$ invariant under the involutions $c,c_{tw}$ have the form $(\xi,\eta)$ where  $\xi_i= \xi_{|C_i}$ are holomorphic sections of $L|_{C_i}$ while $\eta_i= \eta_{|C_i}$ are holomorphic sections of $(c^*\ov L)|_{C_i}$, and 
	\bear\label{eta=xi.c}
	\eta_1=c^* \xi_2 \quad  \mbox{ and }\quad \eta_2=c^*\xi_1.
	\eear 
Note that $\eta_1$ is a section of $(c^*\ov L)_{|C_1}= c^* (\ov L_{|C_2})$. Thus we have two natural isomorphisms from the space of $(c,c_{tw})$-invariant sections:
	\bear\label{proj.1.2}
	\Ind\dbar_{(L\oplus c^*\ov L, c_{tw})}\lra \Ind \dbar_{L_{|C_1\sqcup C_2}}
	=\Ind  \dbar_{L_{|C_1}}\oplus \Ind  \dbar_{L_{|C_2}}, \quad (\xi,\eta) \mapsto \xi=(\xi_1, \xi_2)
	\eear
	and
	\bear\label{cxstr2}
	\Ind\dbar_{(L\oplus c^*\ov L, c_{tw})}\lra \Ind \dbar_{ (L\oplus c^*\ov L)_{|C_1}} \hskip-.1in =
	 \Ind \dbar_{L_{|C_1}} \oplus  \Ind \dbar_{(c^*\ov L)_{|C_1}}, 
	 \quad (\xi,\eta) \mapsto  
	(\xi,\eta)_{|C_1}= (\xi_1, \eta_1).
	\eear
Both $\Ind \dbar_L $ and  $\Ind \dbar_{ (L\oplus c^*\ov L)_{|C_1}}$ have natural complex structures and therefore induce two complex structures on $\Ind\dbar_{(L\oplus c^*\ov L, c_{tw})}$ which we want to compare. 

There is a complex bundle isomorphism:
\bear\label{proj.switch.12}
\Ind  \dbar_{L_{|C_2}}  \lra \ov{\vphantom{\sum}\Ind \dbar_{c^*(\ov L_{|C_2})}}, \qquad \xi_2  \mapsto \xi_2\circ c. 
\eear
(using the fact that $\ind \del_{L_2 \ra C_2}$ and $\ind \del_{\ov L_2\ra \ov C_2}$ have opposite complex structures). 
This combined with \eqref{eta=xi.c} implies that the orientations induced by \eqref{proj.1.2} and \eqref{cxstr2} differ by $(-1)^{\iota_2}$, where $\iota_2$ is the complex rank of the index of $L_{|C_2}$, given by \eqref{iota2}.
\end{proof}
\medskip

Next, given two complex line bundles $L_1, L_2\ra\Si$ over a complex curve, we obtain a complex line bundle $L \ra D\Si$ over the double $(D\Si,c)=(\Si\sqcup\ov\Si,c)=(\Si_1\sqcup\Si_2,c)$ defined by 
\bear\label{double.L}
 L_{|\Si_1}=L_1 \quad  \mbox{ and } \quad  L_{|\Si_2}=c^*\ov {L}_2.  
\eear
We denote such $L$ by 
$$
D(L_1,L_2)\lra D\Si. 
$$ 
Note that if $L_1, L_2\ra \Si$ have degrees $k_1, k_2$, then $L_{|\Si_i}$ also has degree $k_i$, $i=1,2$.

\begin{lemma}\label{P.res=} With the notation above, the morphism \eqref{defn.doublet} satisfies: 
	\bear\label{res.cx=res.real}
	e_{U(1)}(-\mathrm{Ind}\; \del_{D(L_1,L_2 )}) =	(-1)^{dc_1(L_2)[\Si]+\chi/2}
	\mathcal{P}^*e_{U(1)}(-\mathrm{Ind} \; \del_{L_1\oplus L_2}),
	\eear
	with  the anti-diagonal action on $L_1\oplus L_2$ used  for the equivariant Euler class in the last expression. 
\end{lemma}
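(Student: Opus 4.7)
The plan is to verify the identity pointwise on the identified moduli spaces. Given a real map $f:(C=C_1\sqcup C_2,\sigma)\to (D\Si,c)$ with $f_1=\mathcal{P}(f):C_1\to\Si$, the bundle $D(L_1,L_2)$ restricts to $L_1$ on $\Si_1$ and to $c^*\ov L_2$ on $\Si_2$, so the index bundle splits as
\[
\Ind \del_{D(L_1,L_2)} \;=\; \Ind \del_{L_1\to C_1}\;\oplus\; \Ind \del_{c^*\ov L_2\to C_2},
\]
while $\mathcal{P}^*\Ind\del_{L_1\oplus L_2}=\Ind\del_{L_1\to C_1}\oplus \Ind\del_{L_2\to C_1}$. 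The first summands coincide as $U(1)$-equivariant bundles (both are built directly from $L_1$), so all of the work lies in comparing the second summands.

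Next I would invoke the same antiholomorphic identification used in the proof of Lemma~\ref{compor}: the real-map condition $c\circ f_2 = f_1\circ\sigma|_{C_2}$, combined with the fact that $\del$ on $(\ov L_2,\ov C_1)$ is the conjugate operator of $\del$ on $(L_2,C_1)$, produces a natural complex-linear isomorphism
\[
\Ind\del_{c^*\ov L_2\to C_2}\;\cong\;\ov{\Ind\del_{L_2\to C_1}}.
\]
To see that this is $U(1)$-equivariant I would track weights. The anti-diagonal action on $L_1\oplus L_2$ assigns weight $+1$ to $L_1$ and weight $-1$ to $L_2$; since conjugation flips weights, $\ov L_2$ and hence $c^*\ov L_2$ carries weight $+1$, matching $L_1$. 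Thus both summands of $\Ind\del_{D(L_1,L_2)}$ are of weight $+1$, and the displayed isomorphism matches the weight $+1$ action on the left with the action of weight $+1$ on $\ov{V_2}$, where $V_2:=\Ind\del_{L_2\to C_1}$ carries the weight $-1$ action.

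Finally, for any complex $U(1)$-equivariant bundle $V$ of complex rank $r$, the equivariant Chern roots of $\ov V$ are the negatives of those of $V$, giving $e_{U(1)}(\ov V)=(-1)^r\,e_{U(1)}(V)$. Applied to $V_2$, whose complex rank is
\[
\iota_2 \;=\; d\,c_1(L_2)[\Si]+\chi/2
\]
by Riemann--Roch (with $\chi=\chi(C_1)$), this sign propagates through the K-theoretic identity $e_{U(1)}(-A-B)=1/(e_{U(1)}(A)e_{U(1)}(B))$ to produce the claimed prefactor $(-1)^{d\,c_1(L_2)[\Si]+\chi/2}$ in front of $\mathcal{P}^*e_{U(1)}(-\Ind\del_{L_1\oplus L_2})$.

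The main technical point throughout is the $U(1)$-weight bookkeeping: the bar appearing in $c^*\ov L_2$ flips the weight of $L_2$, so the anti-diagonal weights $(+1,-1)$ on $L_1\oplus L_2$ turn into the diagonal weights $(+1,+1)$ on $D(L_1,L_2)$. This weight mismatch is exactly what manifests as the sign $(-1)^{\iota_2}$ in the comparison of equivariant Euler classes; everything else parallels the orientation comparison already carried out in Lemma~\ref{compor}.
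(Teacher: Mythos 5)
Your proof is correct and essentially follows the paper's own argument: both decompose $\Ind\del_{D(L_1,L_2)}$ over the two domain components, identify the $C_2$-summand with the conjugate of $\Ind\del_{L_2}$ via the antiholomorphic correspondence coming from the real-map condition, and read off the resulting sign $(-1)^{\iota_2}$ from the conjugation. The paper carries out the final step as a term-by-term comparison of the Chern-class expansions in $t$ (using $c_l(\ov V)=(-1)^l c_l(V)$ together with the $(-t)$ on the $L_2$-factor coming from the anti-diagonal weights), whereas you compress it into the single equivariant identity $e_{U(1)}(\ov V)=(-1)^{\mathrm{rk}_\cx V}\,e_{U(1)}(V)$ after checking that the $U(1)$-weights match; these are equivalent.
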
 

\begin{proof} When $L \ra D\Si$ is a line bundle over a doublet $D\Si=\Si_1\sqcup \Si_2$, the identification \eqref{proj.1.2} induces an 
isomorphism 
\best 
\Ind \del_L \lra \mathcal {P}^*_1 \Ind  \del_{L_{|\Si_1}}\oplus \mathcal {P}^*_2 \Ind \del_{L_{|\Si_2}}
\eest
where $\mathcal {P}_i(f)=f_i$ are the restrictions \eqref{half.f} to the $i$-th component of the domain; 
in particular $\mathcal {P}_1=\mathcal{P}$. Therefore
	\begin{gather*}
	e_{U(1)}(-\mathrm{Ind} \; \del_{L})= \sum_{m=0}^{\iota}t^mc_{\iota-m}(-\mathrm{Ind} \; \del_{L})
	=\sum_{m+k+l=\iota}t^m \mathcal {P}^*_1c_k(-\mathrm{Ind} \; \del_{L_{|\Si_1}})
	\mathcal {P}^*_2c_l(-\mathrm{Ind} \; \del_{L_{|\Si_2}}).
	\end{gather*}	
where $\iota=\text{rank}_\cx( -\mathrm{Ind} \; \del_{L} )=dc_1(L)[D\Si]-\tfrac{2\chi}2$ on 
$\ov \M_{d, 2\chi}^{c,\bullet}(D\Si )_{\la^1..\la^r}$.

On the other hand, for the anti-diagonal action on $L_1\oplus L_2$, we have
	\begin{gather*}
	e_{U(1)}(-\mathrm{Ind} \; \del_{L_1\oplus L_2})=\Big(\sum_{k=0}^{\iota_1}   c_k(-\mathrm{Ind} \; \del_{L_1}) t^{\iota_1-k}\Big)\Big(\sum_{l=0}^{\iota_2} c_l(-\mathrm{Ind} \; \del_{L_2})(-t)^{\iota_2-l} \Big) = \\
	=\sum_{k+l+m=\iota_1+\iota_2} t^m c_k(-\mathrm{Ind} \; \del_{L_1})c_l(-\mathrm{Ind} \; \del_{L_2})(-1)^{\iota_2-l}.
	\end{gather*}
Here $\iota_i = \text{rank}_\cx( -\mathrm{Ind} \; \del_{L_i} )=- dc_1(L_i)[\Si]-\tfrac{\chi}{2}$ on 
$\ov \M_{d, \chi}^\bullet(\Si)_{\la^1.. \la^r}$ for $i=1,2$. 
	Note that $\iota=\iota_1+\iota_2$. 
	
Since  \eqref{double.L} implies that $L_2=c^*(\ov L_{|\Si_2})\lra \Si_1$, then as in \eqref{proj.switch.12}, we have 
	$$
	\mathcal {P}_2^*( -\mathrm{Ind} \; \del_{L_{|\Si_2}})= \mathcal{P}_1^*(-\ov{\mathrm{Ind} \; \del_{L_2}}). 
	$$
Thus
	$$
	\mathcal{P}^*_{ 2}c_l (-\mathrm{Ind} \; \del_{L_{|\Si_2}})=(-1)^l\mathcal{P}^*_1c_l (-\mathrm{Ind} \; \del_{L_2}),
	$$
	and the claim follows. 	
\end{proof}

Since $\mathcal{P}$ and $\mathcal{D}$ are inverse morphisms, combining Lemmas~\ref{double.vfc.lemma} and \ref{P.res=} gives: 
\begin{cor}\label{corR=GW.part.rel} With the notation above, the local RGW invariants 
of a doublet and the BP invariants \eqref{defn.Z.part.anti-diag-balanced} of its half are related by: 
	\bear\label{R=GW.part.rel}
	RZ_{d, 2\chi} ^\fo(D \Si \; |D(L_1,L_2))_{\la^1.. \la^r}=(-1)^{d(k_2+m_2)+\chi/2+\ell_2} Z_{d, \chi} (\Si\; |L_1, L_2)_{\la^1.. \la^r}, 
	\eear
	where $m_2, \;\ell_2$ are as in (\ref{double.vfc}), and $k_2=c_1(L_2)[\Si]$ is the degree of $L_2$. 
\end{cor}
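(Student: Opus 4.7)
My plan is to chain together Lemmas~\ref{double.vfc.lemma} and~\ref{P.res=}, which respectively compare the virtual fundamental classes and the equivariant Euler classes on the doublet side versus the complex side. Since $\mathcal{D}$ and $\mathcal{P}$ are inverse isomorphisms between $\ov\M_{d,\chi}^\bullet(\Si)_{\vec\la}$ and $\ov\M_{d,2\chi}^{c,\bullet}(D\Si)_{\vec\la}$ with matching deformation-obstruction theories, the argument reduces to bookkeeping of signs and an application of the projection formula.

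First I would start from the definition \eqref{defn.Z.real.rel} of the left-hand side,
\[
RZ^{c,\fo}_{d,2\chi}(D\Si \mid D(L_1,L_2))_{\vec\la} = \int_{[\ov\M_{d,2\chi}^{c,\bullet}(D\Si)_{\vec\la}]^{\vir,\fo}} e_{U(1)}\bigl(-\Ind\del_{D(L_1,L_2)}\bigr),
\]
and substitute \eqref{double.vfc} from Lemma~\ref{double.vfc.lemma} to rewrite the virtual fundamental class as $(-1)^{dm_2+\ell_2}\,\mathcal{D}_*[\ov\M_{d,\chi}^\bullet(\Si)_{\vec\la}]^{\vir}$. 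By the projection formula the integral then becomes
\[
(-1)^{dm_2+\ell_2}\int_{[\ov\M_{d,\chi}^\bullet(\Si)_{\vec\la}]^{\vir}} \mathcal{D}^*\,e_{U(1)}\bigl(-\Ind\del_{D(L_1,L_2)}\bigr).
\]

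Next I would apply Lemma~\ref{P.res=}. Since $\mathcal{D}=\mathcal{P}^{-1}$, and pulling back along $\mathcal{P}$ commutes with the Euler class, the identity \eqref{res.cx=res.real} reads
\[
\mathcal{D}^*\,e_{U(1)}\bigl(-\Ind\del_{D(L_1,L_2)}\bigr) = (-1)^{dc_1(L_2)[\Si]+\chi/2}\,e_{U(1)}\bigl(-\Ind\del_{L_1\oplus L_2}\bigr),
\]
with the anti-diagonal $U(1)$-action on $L_1\oplus L_2$. Substituting this back and recognizing the resulting integral as the Bryan–Pandharipande residue invariant \eqref{defn.Z.part.anti-diag-balanced} yields
\[
RZ^{c,\fo}_{d,2\chi}(D\Si \mid D(L_1,L_2))_{\vec\la} = (-1)^{dm_2+\ell_2+dk_2+\chi/2}\,Z_{d,\chi}(\Si \mid L_1,L_2)_{\vec\la},
\]
where $k_2=c_1(L_2)[\Si]$. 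Combining the exponents gives $d(k_2+m_2)+\chi/2+\ell_2$, as claimed.

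There is essentially no obstacle beyond the careful sign accounting, since the heavy lifting is already in the two preceding lemmas; the only point requiring a moment of care is to verify that the two signs combine additively (mod~$2$) and that the $U(1)$-action and Euler-class conventions match between the definition \eqref{defn.Z.real.rel} on the doublet side and the BP definition \eqref{defn.Z.part.anti-diag-balanced} on the complex side. Both conventions were set up earlier to be compatible via the identification \eqref{ind.E} and the anti-diagonal choice made in \S\ref{S.cxBP}, so no additional input is needed.
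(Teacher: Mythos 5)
Your proof is correct and takes exactly the route the paper intends: the paper states the corollary as an immediate consequence of combining Lemmas~\ref{double.vfc.lemma} and~\ref{P.res=} (via the fact that $\mathcal{P}$ and $\mathcal{D}$ are inverse), and you have simply spelled out the projection-formula bookkeeping that this combination requires. The sign accounting $(-1)^{dm_2+\ell_2}\cdot(-1)^{dk_2+\chi/2}=(-1)^{d(k_2+m_2)+\chi/2+\ell_2}$ is correct.
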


\begin{rem}\label{remR=GW.part.rel} For a doublet target, the invariants \eqref{defn.RGW.real.rel} and the equality \eqref{R=GW.part.rel} are independent of the choice of first and second component of the target doublet. This can be seen as follows. The map $\mathcal{P}$ to the complex moduli space  \eqref{defn.doublet} is defined using the first component. Choosing the second component instead corresponds to switching the order of $L_1, L_2$ on the complex GW side. This switch results only in a  change of the sign of the equivariant complex GW invariant by the parity of  $\iota-b/2$, where $\iota$ is the complex rank of $-\Ind \del_{L_1\oplus L_2}$ and $b$ is the dimension of the moduli space, cf. \eqref{defn.Z.real.rel.nonequiv}.     The quantity $\iota-b/2 \mod 2$ is also the parity of the sum of the powers of $(-1)$ in (\ref{R=GW.part.rel}) for the two choices. 
\end{rem}

\subsection{The doublet moduli space to a connected target}\label{S.maps.to.conn} Assume next $(\Si,c)$ is a genus $g$  connected  symmetric Riemann surface with $r$ pairs of conjugate marked points, and  $\vec \la=(\lambda^1,\dots, \lambda^r)$ is a collection of $r$ partitions of $2d$. Recall that a reparametrization of a doublet domain $C$ may swap its two components. So it is convenient to consider the two fold cover of the doublet moduli space
\bear\label{map.q}
q:\wt {D\M}_{2d,h}^{c}(\Si )_{\la} \lra\ov {D\M}_{2d, h}^{c}(\Si )_{\la} 
\eear
consisting of real maps  whose domain is a doublet, up to reparametrizations {\em preserving} the    order of its components. In particular,
\bear\label{DM=1/2}
[\ov {D\M}_{2d,h }^{c}(\Si)_{\vec \la} ]^{\vir, \fo}= \tfrac 12 q_*  [\wt {D\M}_{2d,h }^{c}(\Si)_{\vec \la} ]^{\vir,\fo}.
\eear

Every real map $f: (C_1\sqcup C_2, \si) \ra (\Si,c)$ from a doublet domain restricts to a pair of maps 
\bear\label{half.domain.map}
f_i=f|_{C_i}:C_i\ra \Si \quad \mbox{ where }\quad f_2=c\circ f_1\circ \si_{|C_2}. 
\eear 
The ramification points of $f$ get distributed on the two components of the domain:  if $f$ has ramification profile~$\lambda^i$ over~$x_i^+$ (and therefore also over $x_i^-$), let $\la^i_\pm$ denote the ramification profile of its restriction $f_1$, cf.   \eqref{half.domain.map}. Since $f_2=c\circ f_1\circ \si$ then $f_2$ has ramification $\la^i_-$ over $x_i^+$ and ramification $\la^i_+$ over $x_i^-$. 

This decomposes $\la^i$ into
\best
\la^i=  \la^i_+\sqcup \la^i_-, \quad \mbox{ where } \la^i_\pm \mbox{ are partitions of $d$.} 
\eest
Note that if for example $\la^i$ has parts $4, 3, 3, 2, 1$ then $\la^i_+$, $\la^i_-$ could have parts $4, 2, 1$ and $3,3,1$ respectively. Denote such decompositions $\vec\la=\vec{\la}_+\sqcup \vec{\la}_-$ where $\vec \la_\pm= (\la^1_\pm, \dots, \la^r_\pm)$ and let 
\bear\label{doublet.decomp}
\wt{D\M}_{2d,h}^{c}(\Si )_{\vec{\la}_+| \vec {\la}_-} 
\eear
denote the corresponding relative moduli space of real maps from doublet domains, with {\em ordered} components, and so that the restriction to the first component has  ramification $\la^i_+$ over $x_i^+$ and ramification $\la^i_-$ over $x_i^-$, for 
all $i=1,  \dots r$.
Therefore 
\bear
\wt {D\M}_{2d,h}^{c}(\Si )_{\vec \la}  = \bigsqcup_{\vec\la=\vec{\la}_+\sqcup \vec{\la}_-} 
\wt {D\M}_{2d,h}^{c}(\Si )_{\vec{\la}_+| \vec {\la}_-}.  
\eear

Furthermore there is a morphism
\bear\label{cor-defn.doublet}
\mathcal{P}:\wt {D\M}_{2d,h}^{c}(\Si )_{\vec{\la}_+| \vec {\la}_-} \lra \ov \M_{d, h}(\Si)_{\vec{\la}_+, \vec {\la}_-},
 \qquad f\mapsto f_1
 \eear
cf. \eqref{half.domain.map}, where $\ov \M_{d, h}(\Si)_{\vec{\la}_+, \vec {\la}_-}$ denotes the classical  moduli space of maps from a connected domain with ramification $\la^i_+$ over $x_i^+$ and ramification $\la^i_-$ over $x_i^-$, for  $i=1,  \dots r$.

Conversely, every map $f: C\ra \Si$ from a complex curve  induces a real map 
\bear\label{double.domain}
\wt f:  (C\sqcup \ov C,\si)\ra (\Si,c), \quad \mbox{ where }\quad \wt f|_{C} = f,  \; \wt f|_{\ov C}=c\circ f\circ \si|_{\ov C}
\eear
from the double of $C$ to $\Si$. This defines the inverse $\mathcal {D}$  of \eqref{cor-defn.doublet}.
 
\begin{lemma} \label{lemcdouble.vfc} Assume $(\Si, c)$ is a connected symmetric marked curve with $r$ pairs of conjugate points and let $\mathfrak{o}=(\Theta, \psi, \mathfrak{s})$ be twisted orientation data for it. Let $\la^i_\pm$, $i=1,\dots, r,$ be $2r$ partitions of $d$.   Then, with the notation above,
\bear\label{cdouble.vfc}
[\wt {D\M}_{2d,h }^{c}(\Si)_{\vec{\la}_+| \vec {\la}_-} ]^{\vir, \fo}= (-1)^{dm+ \ell^-}  
\mathcal{D}_*[\ov \M_{d, h}(\Si)_{{\vec{\la}_+,  \vec {\la}_-}}]^\vir,  
\eear
where $m$ is the degree of $\Theta$ and $\ell^-$ is the sum of the lengths of the partitions in $\vec \la_-$: 
\bear\label{m+ell}
	m =c_1(\Theta)[\Si]=g(\Si)-1+r \quad \mbox{ and } \quad \ell^-=\sum_{i=1}^r \ell(\lambda^i_-).
\eear 	 
\end{lemma}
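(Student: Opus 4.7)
The plan is to adapt the argument of Lemma \ref{double.vfc.lemma} to the setting where the target $(\Si,c)$ is connected while the domain is a doublet. First, observe that $\cal D$ and $\cal P$ as in \eqref{cor-defn.doublet} are mutually inverse morphisms identifying the doublet moduli space $\wt{D\M}^c_{2d,h}(\Si)_{\vec\la_+|\vec\la_-}$ with the complex moduli space $\ov\M_{d,h}(\Si)_{\vec\la_+,\vec\la_-}$ and that their deformation-obstruction theories match: a real deformation of $\wt f=(f_1,c\circ f_1\circ\si)$ invariant under $c_{tw}$ is the same data as a complex deformation of $f_1$. Therefore it suffices to compare orientations. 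As in Lemma \ref{double.vfc.lemma}, the sign of $\cal D$ splits into a sign coming from the comparison on real/doublet Deligne--Mumford spaces via \eqref{or.moduli.rel} and a sign coming from the comparison of the two orientations on $\det\dbar_{f^*(T\Si,dc)}$.

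At the Deligne--Mumford level, the complex orientation on $\ov{D\M}_{h,\ell}$ induced via the map to the first component (with its signed marked points) compares to the canonical orientation \eqref{or.DM} by \cite[Lemma~3.2]{GZ2} with the factor $(-1)^{\chi(C_1)/2+s}$, where $s$ is the number of negative signed marked points on $C_1$. In the present setup, the marked points of $C_1$ come from the preimages under $f_1$ of the target points $x_i^{\pm}$: the preimages of $x_i^+$ carry sign $+$ (with multiplicities $\la^i_+$) and those of $x_i^-$ carry sign $-$ (with multiplicities $\la^i_-$). Thus $s=\ell^-=\sum_i\ell(\la^i_-)$ and $\chi(C_1)/2=1-h$, giving the DM contribution $(-1)^{1-h+\ell^-}$.

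At the level of $\det\dbar_{f^*(T\Si,dc)}$, the twisted orientation $\fo$ provides the isomorphism $\phi_\fo$ of \eqref{oriso}, whose restriction to $C_1$ is a complex isomorphism of complex bundles and hence orientation preserving on determinants. Consequently the required sign is the difference between the two natural orientations on $\det\dbar_{f^*(\Theta\oplus c^*\ov\Theta,c_{tw})}$: the one coming from \eqref{ind.E} by projection to the first factor, used to define the orientation induced by $\fo$, and the complex orientation coming from restriction to $C_1$ via \eqref{detTS=cx}. By Lemma \ref{compor} applied to $L=f^*\Theta$ on the doublet domain $C_1\sqcup C_2$, this sign is $(-1)^{\iota_2}$ with
\best
\iota_2=c_1(f^*\Theta)[C_2]+\chi(C_2)/2=dm+(1-h),
\eest
since $f|_{C_2}$ has degree $d$ and $m=c_1(\Theta)[\Si]$.

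Combining the two contributions yields the overall sign
\best
(-1)^{\iota_2+\chi(C_1)/2+\ell^-}=(-1)^{dm+(1-h)+(1-h)+\ell^-}=(-1)^{dm+\ell^-},
\eest
which is exactly \eqref{cdouble.vfc}. The formula $m=g-1+r$ in \eqref{m+ell} is forced by condition (i) of Definition~\ref{TRO} applied to the relative tangent bundle $T\Si=TS\otimes\O(-\sum x_i^+-\sum x_i^-)$, whose Euler number is $2-2g-2r$. The main bookkeeping obstacle is making sure that the sign conventions of the first-component complex orientation on $\ov{D\M}$ and of the projection-to-first-factor orientation of \eqref{ind.E} are consistent with those used in Lemma \ref{double.vfc.lemma}; once this is pinned down, the parity cancellation from $\chi(C_1)\equiv\chi(C_2)\equiv 0\pmod 2$ produces the stated answer cleanly.
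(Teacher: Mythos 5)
Your proof is correct and follows essentially the same route as the paper: reduce the sign to a comparison at the Deligne--Mumford level (via \cite[Lemma~3.2]{GZ2}, giving $(-1)^{\chi(C_1)/2+\ell^-}$) and at the index-bundle level (noting the restriction of $\phi_\fo$ to $C_1$ is complex-linear and then applying Lemma~\ref{compor} to $L=f^*\Theta$), and then combine. The only step you leave implicit — that the paper makes explicit — is the identification $f^*(\Theta\oplus c^*\ov\Theta,c_{tw})=(f^*\Theta\oplus\si^*\ov{f^*\Theta},\si_{tw})$ (using $f\circ\si=c\circ f$), which is what lets Lemma~\ref{compor}, stated for bundles over the doublet domain, apply to the pullback of the twisted bundle on the target.
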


\begin{proof} The proof is the same as that of Lemma \ref{double.vfc.lemma}. To compare orientations, it suffices to compare them on the level of DM spaces and on the level of index bundles. Let $f:(C_1\sqcup C_2, \si) \ra (\Si, c)$ denote 
an element of $\wt {D\M}_{2d,h }^{c}(\Si)_{\vec{\la}_+| \vec {\la}_-}$. Since $f\circ \si = c\circ f$ then 
\best
f^*(\Theta \oplus c^*\ov\Theta, c_{tw})=(f^*\Theta \oplus \si^*(\ov {f^*\Theta}), \si_{tw})
\eest
where the involution $\si_{tw}$ is given by \eqref{L.plus.twL} for the bundle $L=f^*\Theta \ra (C_1\sqcup C_2, \si)$. Moreover, $C_1$ has $\ell^-$ negative points and Euler characteristic $\chi=2-2h$ thus the difference in orientations on the level of the DM spaces is $(-1)^{\chi/2+\ell^-}$ as before. On the level of index bundles, it similarly comes from the difference between the complex orientation on 
\best
\det \dbar_{ f^*(\Theta \oplus c^*\ov\Theta, c_{tw})}=\det \dbar_{(f^*\Theta \oplus \si^*f^*\ov\Theta, \si_{tw})} 
\overset{p_1}{\cong} 
\det \dbar_{(f^*\Theta \oplus \si^*f^*\ov\Theta)_{|C_1}} 
\eest
induced by \eqref{detTS=cx} and the orientation \eqref{ind.E} on 
\best
\det \dbar_{ f^*(\Theta\oplus c^*\ov\Theta, c_{tw})}\overset{\pi_1}{\cong}\det \dbar_{ (f^*\Theta)_{|C_1\sqcup C_2}}.
\eest
By Lemma  \ref{compor}, this difference is $(-1)^\iota$, where $\iota=c_1(f^*\Theta)[C_2]+\chi/2=dc_1(\Theta)[\Si]+\chi/2$. Finally, the fact that $m=c_1(\Theta)[\Si]=-\chi(T\Si)=g(\Si)-1+r$ is obtained as in Example~\ref{R.or.choice}(c), but for the relative tangent bundle $T\Si$, cf. \eqref{T.punctured}. 
\end{proof}

\begin{lemma}\label{cP.res=} Assume $L\ra \Si$ is a holomorphic line bundle over a connected symmetric surface $(\Si, c)$. Then the morphism \eqref{cor-defn.doublet} satisfies: 
	\bear\label{cres.cx=res.real}
	e_{U(1)}(-\mathrm{Ind}\; \del_{L}\lra \wt{D\M}^c_{2d, h}( \Si)_{\vec{\la}_+| \vec {\la}_-}) =	(-1)^{\iota}
	\mathcal{P}^*e_{U(1)}(-\mathrm{Ind} \; \del_{L\oplus L}\lra \ov \M_{d, h}(\Si)_{\vec{\la}_+, \vec {\la}_-}),
	\eear
	where $\iota= dc_1(L)[\Si]+1-h$ and the anti-diagonal action on $L\oplus L$ is used on the right hand side. 
\end{lemma}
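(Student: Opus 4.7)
The plan is to adapt the proof of Lemma~\ref{P.res=} to the case of a connected target. The key observation is that on the doublet moduli space, applying the projection \eqref{proj.1.2} to the pullback Real bundle $(f^*L\oplus\si^*\ov{f^*L},\si_{tw})$ on $(C_1\sqcup C_2,\si)$ gives an equivariant splitting $\Ind\del_L\cong\Ind\del_{f_1^*L}\oplus\Ind\del_{f_2^*L}$ with $U(1)$-weight $+1$ on each summand, while pulling back along $\mathcal{P}$ the anti-diagonal splitting $\Ind\del_{L\oplus L}=\Ind\del_L\oplus\Ind\del_L$ yields $\Ind\del_{f_1^*L}\oplus\Ind\del_{f_1^*L}$ with weights $+1$ and $-1$. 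Thus, after cancelling the common factor $e_{U(1)}(-\Ind\del_{f_1^*L};+1)$ from both sides of \eqref{cres.cx=res.real}, it suffices to compare $\Ind\del_{f_2^*L}$ (weight $+1$) on the doublet side with $\Ind\del_{f_1^*L}$ (weight $-1$) on the complex side.

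For this comparison I will apply the conjugation identification \eqref{proj.switch.12} to the source bundle $f^*L$ on $(C_1\sqcup C_2,\si)$: using $f\circ\si=c\circ f$, one checks that $\si|_{C_1}^*\ov{f_2^*L}=f_1^*(c^*\ov L)$ as holomorphic line bundles on $C_1$, producing an equivariant isomorphism $\Ind\del_{f_2^*L}\cong\ov{\Ind\del_{f_1^*(c^*\ov L)}}$ with weight $+1$ on both sides (conjugation flips the natural $-1$ weight on $\Ind\del_{f_1^*(c^*\ov L)}$ to $+1$). The Chern-root expression then gives the standard identity, valid for any virtual bundle $E$ of rank $r$ and $U(1)$-weight $w$,
$$e_{U(1)}(-\ov E;-w)=(-1)^r\, e_{U(1)}(-E;w),$$
and applying it with $E=\Ind\del_{f_1^*(c^*\ov L)}$ (whose rank by Riemann-Roch is $\iota=dc_1(L)[\Si]+1-h$) yields $e_{U(1)}(-\Ind\del_{f_2^*L};+1)=(-1)^\iota\, e_{U(1)}(-\Ind\del_{f_1^*(c^*\ov L)};-1)$.

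Finally, I will use that $c^*\ov L$ and $L$ are smoothly isomorphic complex line bundles on $\Si$ (both of Chern class $k$), so deformation invariance of the family index identifies their $\del$-index bundles as non-equivariant K-theory classes on the moduli space; consequently their equivariant Euler classes at the common weight $-1$ coincide, i.e.\ $e_{U(1)}(-\Ind\del_{f_1^*(c^*\ov L)};-1)=e_{U(1)}(-\Ind\del_{f_1^*L};-1)$. Reinstating the common factor then yields \eqref{cres.cx=res.real}. The main technical care throughout is the consistent tracking of $U(1)$-weights under conjugation: this is precisely the mechanism that converts the two weight-$+1$ summands on the doublet side into the anti-diagonal pair on the complex side and produces the sign $(-1)^\iota$.
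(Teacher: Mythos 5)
Your proof is correct and follows the same essential path as the paper's: split $\Ind\del_L$ on the doublet side into the restrictions to $C_1$ and $C_2$, use the conjugation isomorphism $\Ind\del_{f_2^*L}\cong\ov{\Ind\del_{f_1^*(c^*\ov L)}}$ (the family version of \eqref{proj.switch.12}), and invoke that $c^*\ov L$ and $L$ have the same degree on a connected target so their index bundles agree up to homotopy. Your packaging via multiplicativity of equivariant Euler classes and the identity $e_{U(1)}(-\ov E;-w)=(-1)^{\rank E}\,e_{U(1)}(-E;w)$ is a tidier way of stating what the paper does by the term-by-term relation $\mathcal{P}_2^*c_l(-\Ind\del_L)=(-1)^l\mathcal{P}_1^*c_l(-\Ind\del_L)$, but it is the same computation.
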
 

\begin{proof}Denote $\wt{D\M}= \wt{D\M}^c_{2d, h}( \Si)_{\vec{\la}_+| \vec {\la}_-}$ and 
$\ov\M=\ov \M_{d, h}(\Si)_{\vec{\la}_+, \vec {\la}_-}$. Then  $\iota = - dc_1(L)[\Si]+h-1$ is the complex rank of  
$-\mathrm{Ind} \; \del_{L}$ over  $ \ov \M$; the complex rank of $-\mathrm{Ind}\; \del_{L}$ over $\wt{D\M}$ is $2\iota$.  

As in the proof of Lemma~\ref{P.res=}, 
	\begin{gather*}
	e_{U(1)}(-\mathrm{Ind} \; \del_{L}\ra \wt{D\M}) = \sum_{m=0}^{2\iota}t^m c_{2\iota-m}(-\mathrm{Ind} \; \del_{L} \ra \wt{D\M})=
	\\
	=\sum_{k+l+m=2\iota } t^m \mathcal{P}_1^*c_k(-\mathrm{Ind} \; \del_{L}\ra \ov \M)
	\mathcal{P}_2^*c_l(-\mathrm{Ind} \; \del_{L}\ra \ov \M)
	\end{gather*}
where $\mathcal{P}_i(f)=f_i$ is the restriction to the $i$-th component of the domain, cf. \eqref{half.domain.map}.  

On the other hand, for the anti-diagonal action on $L\oplus L\ra \Si$, for the index bundle over $ \ov \M$,  
	\begin{gather*}
	e_{U(1)}(-\mathrm{Ind} \; \del_{L\oplus L})=
	\Big(\sum_{k=0}^{\iota}   c_k(-\mathrm{Ind} \; \del_{L}) t^{\iota-k}\Big)
	\Big(\sum_{l=0}^{\iota} c_l(-\mathrm{Ind} \; \del_{L})(-t)^{\iota-l} \Big) = \\
	=\sum_{k+l+m=2\iota} t^m c_k(-\mathrm{Ind} \; \del_{L})c_l(-\mathrm{Ind} \; \del_{L})(-1)^{\iota-l}.
	\end{gather*}
But as in \eqref{proj.switch.12}, we have a canonical isomorphism
 $\mathrm{Ind} \; \del_{f_2^*L}\cong \ov {\mathrm{Ind} \; \del_{\si^*f_2^*\ov L}}=
\ov {\mathrm{Ind} \; \del_{f_1^*c^*\ov L}}$ that varies continuously in $f$, and therefore 
$$
\mathcal{P}_2^*( -\mathrm{Ind} \; \del_{L})\cong \mathcal{P}_1^*( -\ov{\mathrm{Ind} \; \del_{c^*\ov L}})\cong \mathcal{P}_1^*( -\ov{\mathrm{Ind} \; \del_{ L}}).
$$
The last isomorphism follows because $c^*\ov L$ has the same degree as $L$ on a connected surface, thus can be deformed to $L$, and the Euler class is deformation invariant. Therefore
	$$
	\mathcal{P}^*_{ 2}c_l (-\mathrm{Ind} \; \del_{L})= 
	(-1)^l \mathcal{P}_1^*c_l( -\mathrm{Ind} \; \del_{L}).
	$$
 Substituting into the first displayed equation and comparing it with the second one gives \eqref{cres.cx=res.real} (recall that $\mathcal{P}=\mathcal{P}_1$). 
\end{proof}

Combining Lemmas~\ref{lemcdouble.vfc} and \ref{cP.res=} we obtain:
\begin{cor}\label{corcdefn.Z.part.anti-diag-balanced} 
 Assume $(\Si,c)$ is a connected symmetric genus $g$ surface with $r$ pairs of conjugate marked points, and $L\ra \Si$ a complex line bundle.  With the notation above, 
\bear\label{cdefn.Z.part.anti-diag-balanced}
  \ma \int_{[\wt{D\M}_{2d,h}^c(\Si)_{\vec \la_+| \vec \la_-}]^{\vir, \fo}} e_{U(1)}(-\mathrm{Ind} \; \del_{L})
 = 
 (-1)^{s^-}\ma\int_{[\ov \M_{d, h}(\Si)_{\vec \la_+, \vec \la_-}]^\vir} e_{U(1)}(-\mathrm{Ind} \; \del_{L\oplus L}), 
 \eear  
 where  $s^-=dc_1(L)[\Si]+h-1+dm+\ell^-$ and $m,\; \ell^-$ are as in \eqref{m+ell}. 
 \end{cor}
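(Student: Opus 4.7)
The plan is to combine Lemma~\ref{lemcdouble.vfc} with Lemma~\ref{cP.res=}, exploiting the fact that the morphisms $\mathcal{P}$ and $\mathcal{D}$ from \eqref{cor-defn.doublet} and \eqref{double.domain} are mutually inverse isomorphisms between $\wt{D\M}_{2d,h}^c(\Si)_{\vec\la_+|\vec\la_-}$ and $\ov\M_{d,h}(\Si)_{\vec\la_+,\vec\la_-}$, with compatible deformation-obstruction theories.

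First, I would rewrite the left-hand side of \eqref{cdefn.Z.part.anti-diag-balanced} using Lemma~\ref{lemcdouble.vfc}. Since $[\wt{D\M}]^{\vir,\fo} = (-1)^{dm+\ell^-}\mathcal{D}_*[\ov\M]^\vir$, the projection formula gives
\[
\int_{[\wt{D\M}]^{\vir,\fo}} e_{U(1)}(-\mathrm{Ind}\,\dbar_L) \;=\; (-1)^{dm+\ell^-}\int_{[\ov\M]^\vir}\mathcal{D}^* e_{U(1)}(-\mathrm{Ind}\,\dbar_L).
\]
Next, I would invoke Lemma~\ref{cP.res=}, which states $e_{U(1)}(-\mathrm{Ind}\,\dbar_L) = (-1)^{\iota}\mathcal{P}^* e_{U(1)}(-\mathrm{Ind}\,\dbar_{L\oplus L})$ with $\iota = dc_1(L)[\Si]+1-h$. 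Since $\mathcal{P}\circ\mathcal{D}=\mathrm{id}$, we have $\mathcal{D}^*\mathcal{P}^*=\mathrm{id}$ on cohomology, so applying $\mathcal{D}^*$ to both sides yields
\[
\mathcal{D}^* e_{U(1)}(-\mathrm{Ind}\,\dbar_L) \;=\; (-1)^\iota\, e_{U(1)}(-\mathrm{Ind}\,\dbar_{L\oplus L}).
\]

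Combining the two displays produces a total sign of $(-1)^{dm+\ell^- + \iota}$. It remains to observe that modulo $2$,
\[
dm + \ell^- + \iota \;=\; dm + \ell^- + dc_1(L)[\Si] + 1 - h \;\equiv\; dc_1(L)[\Si] + h - 1 + dm + \ell^- \;=\; s^-,
\]
since $1-h$ and $h-1$ have the same parity. This matches the exponent in \eqref{cdefn.Z.part.anti-diag-balanced} and completes the argument. There is no substantive obstacle here, as both the comparison of virtual fundamental classes and the comparison of equivariant Euler classes of index bundles under the anti-diagonal $U(1)$-action have already been carried out in the two preceding lemmas; the corollary amounts to bookkeeping of signs.
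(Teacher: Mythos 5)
Your argument is correct and is precisely the one the paper intends: the corollary is presented as an immediate consequence of Lemmas~\ref{lemcdouble.vfc} and \ref{cP.res=}, and you have simply supplied the short bookkeeping (projection formula, $\mathcal{D}^*\mathcal{P}^*=\mathrm{id}$, and the parity $1-h\equiv h-1$) that the paper leaves implicit.
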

The right hand side of \eqref{cdefn.Z.part.anti-diag-balanced} corresponds to the connected GW invariants defined in  \cite{bp1}, cf. \S\ref{S.cxBP}. In particular, combining it with \eqref{DM=1/2} we obtain the following corollary. 
\begin{cor}\label{C.Z.part.anti-diag-balanced} 
When $\Si$ is a connected genus $g$ symmetric surface with $r$ pairs of conjugate marked points and $L\ra \Si$ is a complex line bundle with $c_1(L)[\Si]=k$,
 the doublet invariants and the connected GW invariants of \cite{bp1} are related via
 \best\label{cdefn.Z.part.anti-diag-balanced.conn}
	DRGW_{2d}^{c, \fo}(\Si,L)(u,t)_{\vec\la}=  \tfrac 12 (-1)^{d(k+g-1+r)}\hskip-.1in
	\sum_{\vec\la_+\sqcup\vec\la_-=\vec\la} (-1)^{\ell^-} GW^{conn}_d(g|k,k)(iu,it)_{\vec\la_+,\vec\la_-},
\eest
with $\ell^-$ as in \eqref{m+ell}. 
\end{cor}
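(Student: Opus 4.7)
The plan is to unfold the definition of $DRGW^{c,\fo}_{2d}(\Si,L)_{\vec\la}$, apply \eqref{DM=1/2} together with the stratification of $\wt{D\M}^c_{2d,h}(\Si)_{\vec\la}$ by the ramification distribution $\vec\la=\vec\la_+\sqcup\vec\la_-$, invoke Corollary~\ref{corcdefn.Z.part.anti-diag-balanced} on each stratum, and finally recognize the resulting sum as the shifted $GW^{conn}_d(g|k,k)$ series under the substitution $(u,t)\to(iu,it)$.

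Writing \eqref{defn.RGW.real.rel} in the doublet case yields
$$DRGW^{c,\fo}_{2d}(\Si,L)_{\vec\la}=\sum_h u^{N_h}\int_{[\ov{D\M}^c_{2d,h}(\Si)_{\vec\la}]^{\vir,\fo}}e_{U(1)}(-\mathrm{Ind}\;\del_L),$$
with $N_h=2d(1-g+k)+2h-2-\delta(\vec\la)$. Using \eqref{DM=1/2} replaces the integral by $\tfrac12$ times the integral over $\wt{D\M}$, and decomposing along $\vec\la=\vec\la_+\sqcup\vec\la_-$ reduces the problem to strata to which Corollary~\ref{corcdefn.Z.part.anti-diag-balanced} applies. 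Each stratum contributes $(-1)^{s^-}$ times the complex integral on $[\ov\M_{d,h}(\Si)_{\vec\la_+,\vec\la_-}]^{\vir}$ against the equivariant Euler class of $-\mathrm{Ind}\;\del_{L\oplus L}$ for the anti-diagonal action, where $s^-=d(k+g-1+r)+h-1+\ell^-$. Pulling the $h$- and $\vec\la_\pm$-independent factor $(-1)^{d(k+g-1+r)}$ outside all sums and $(-1)^{\ell^-}$ outside the $h$-sum leaves the residual factor $(-1)^{h-1}$ inside.

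The last step is to identify, for each fixed $(\vec\la_+,\vec\la_-)$,
$$\sum_h (-1)^{h-1}u^{N_h}\int_{[\ov\M_{d,h}(\Si)_{\vec\la_+,\vec\la_-}]^{\vir}}e_{U(1)}(-\mathrm{Ind}\;\del_{L\oplus L})=GW^{conn}_d(g|k,k)(iu,it)_{\vec\la_+,\vec\la_-}.$$
Because the anti-diagonal equivariant integrand is polynomial in $t$ and its integral over $[\ov\M_{d,h}(\Si)_{\vec\la_+,\vec\la_-}]^{\vir}$ concentrates on the unique monomial $t^{m_t}$ with $m_t=2\iota^{\mathrm{cx}}_L-b^{\mathrm{cx}}$, the substitution $(u,t)\to(iu,it)$ inserts the global factor $i^{N_h+m_t}$. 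A direct calculation using $\iota^{\mathrm{cx}}_L=-dk+h-1$ and $b^{\mathrm{cx}}=2d(1-g)+2h-2-\delta(\vec\la)$ gives $N_h+m_t=2h-2$, hence $i^{N_h+m_t}=(-1)^{h-1}$, which matches the residual sign exactly. Collecting all factors produces the stated formula.

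The main obstacle is the bookkeeping of several signs and exponents: verifying that the substitution $(iu,it)$ absorbs both the $h$-dependent sign produced by Corollary~\ref{corcdefn.Z.part.anti-diag-balanced} and the phase coming from the anti-diagonal Euler class. Once the exponents are aligned, the conclusion is immediate from \eqref{DM=1/2} and Corollary~\ref{corcdefn.Z.part.anti-diag-balanced}.
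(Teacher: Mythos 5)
Your argument is correct and follows the same route as the paper: apply \eqref{DM=1/2}, decompose $\wt{D\M}$ by $\vec\la_+\sqcup\vec\la_-=\vec\la$, invoke Corollary~\ref{corcdefn.Z.part.anti-diag-balanced}, and absorb the residual $(-1)^{h-1}$ via the substitution $(u,t)\mapsto(iu,it)$. The paper merely asserts that this substitution scales $GW_{d,\chi}(g|k,k)$ by $(-1)^{\chi/2}$ with $\chi=2-2h$, whereas you supply the explicit power-counting that verifies it; the two computations agree.
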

\begin{proof} In this case $s^-=d(k+m)+h-1+\ell^-$,  $m=g-1+r$, and the substitution $(u,t)\mapsto(iu,it)$ in \eqref{shifted.GW} changes the coefficient $GW_{d, \chi}(g | k, k)$ by $(-1)^{\chi/2}$, where $\chi=2-2h$.
\end{proof}

 \vskip.2in 
 \section{Splitting formulas} \label{Gluing}
  \vskip.1in 
  
To every symmetric surface $(\Si, c)$ with $r$ pairs of conjugate marked points, every complex line bundle $L$ over $\Si$, and every choice of twisted orientation data $\fo$ on $(\Si, c)$,  \eqref{defn.RGW.real.rel} associates a collection of invariants 
\best
RGW_d^{c,\mathfrak{o}}(\Si, L)_{\mu^1\dots \mu^r}= \sum_{\chi} u^{b/2+k}
 \int_{[\ov \M_{d, \chi}^{c, \bullet}(\Si)_{\mu^1\dots  \mu^r}]^{\vir,\mathfrak{o}}} e_{U(1)}(-\mathrm{Ind} \; \del_{L}), 
\eest
where $\mu^1, \dots , \mu^r$ are partitions of $d$. 
These are invariant not only under smooth deformations of the data $(\Si, c, L, \fo)$, but also under deformations as the symmetric curve $\Si$ pinches to acquire a pair of conjugate nodes as follows. 

\medskip
Recall that if $\Si_0$ is a nodal curve, then it has (a) a smooth resolution (normalization) $\wt \Si$  that replaces each node by a pair of marked points and (b) a family of deformations smoothing out the nodes. 

This extends to symmetric surfaces as in \cite[\S4.2]{GZ2}. More precisely, assume $(\Si_0, c_0)$ is a nodal symmetric surface with a pair of conjugate nodes and $r$ pairs of conjugate marked points. It has a normalization $(\wt \Si, \wt c)$ which has $r+2$ pairs of conjugate marked points. Similarly, $(\Si_0, c_0)$ has a family of smooth deformations, simultaneously smoothing out the conjugate nodes using complex conjugate gluing parameters. The generic fiber $(\Si, c)$ of the family is a symmetric surface with $r$ pairs of conjugate marked points, and a pair of `splitting circles'  (disjoint vanishing cycles) swapped by the involution; as the gluing parameters converge to 0, these circles pinch to produce the two complex conjugate nodes of $\Si_0$. 
\smallskip

A complex line bundle over the nodal curve extends to a line bundle over the family of deformations and lifts to a line bundle on the normalization. The relative tangent bundle to the family of marked curves restricts to the   tangent bundle  \eqref{T.punctured} of each fiber and gives rise to the   tangent bundle of the normalization (regarded as a marked curve).  Finally, a choice of orientation data  as in Definition~\ref{TRO} on the nodal curve extends to orientation data over the family and lifts to orientation data on the normalization.

\begin{figure}[h!]
\begin{centering}
\begin{tikzpicture}[baseline={(current bounding box.center)}, scale=1/2,  rotate=90]
\begin{scope}
\pic[
  transform shape,
  name=a,
  tqft,
  cobordism edge/.style={draw},
  between incoming and outgoing/.style={draw},
  incoming boundary components=0,
  outgoing boundary components=2,
];
\pic[
  transform shape,
  tqft,
  incoming boundary components=2,
  outgoing boundary components=2,
  cobordism edge/.style={draw},
  genus=2,
  name=b,
  at=(a-outgoing boundary),
  anchor=incoming boundary,
];
\pic [ transform shape, tqft, 
	incoming boundary components=1, outgoing boundary components=1, 
	every outgoing upper boundary component/.style={draw,blue, thick},
  	every outgoing lower boundary component/.style={draw, dotted,blue, thick},
	at={(0,-4)}, cobordism height=.5, cobordism edge/.style={draw}];  
\pic [ transform shape, tqft, 
  	incoming boundary components=1, outgoing boundary components=1, 
	at={(0,-4.5)}, cobordism height=.5, cobordism edge/.style={draw}];  
\pic [ transform shape, tqft, 
	incoming boundary components=1, outgoing boundary components=1, 
	every outgoing upper boundary component/.style={draw,blue,  thick},
  	every outgoing lower boundary component/.style={draw, dotted, blue,  thick},
	at={(2,-4)}, cobordism height=.5, cobordism edge/.style={draw}];  
\pic [ transform shape, tqft, 
	incoming boundary components=1, outgoing boundary components=1, 
	at={(2,-4.5)}, cobordism height=.5, cobordism edge/.style={draw}]; 

\node at (-1,-4.5) {$\textcolor{blue}{\gamma^-}$};
\node at (3,-4.5) {$\textcolor{blue}{\gamma^+}$};
\pic[
  transform shape,
  tqft,
  cobordism edge/.style={draw},
  incoming boundary components=2,
  outgoing boundary components=2,
  genus=1,
   at={(0,-5)}, 
  name=c,
  anchor=incoming boundary,
];
\pic[
  transform shape,
  name=a,
  tqft,
  cobordism edge/.style={draw},
  incoming boundary components=2,
  outgoing boundary components=0,
   at={(0,-7)}, 
];
		\draw[<->, rounded corners=10pt,  solid] (0,0)--(1, .5)--(2,0);
		\node at (1,1) {$c$};
		\node at (1,-9) {$\Sigma$};
\end{scope}		
\begin{scope}[rotate=0, shift={(0,-14)}]
\pic[
  transform shape,
  name=a,
  tqft,
  cobordism edge/.style={draw},
  between incoming and outgoing/.style={draw},
  incoming boundary components=0,
  outgoing boundary components=2,
   at={(0,0)},
];
\pic[
  transform shape,
  tqft,
  incoming boundary components=2,
  outgoing boundary components=2,
  cobordism edge/.style={draw},
  edge between incoming 1 and 2/.style={thick},
  genus=2,
  name=b,
  at=(a-outgoing boundary),
];
\pic [ transform shape, tqft/cup, at={(0,-4)}, cobordism edge/.style={draw}];  
\pic [ transform shape, tqft/cup, at={(2,-4)}, cobordism edge/.style={draw}];  
\node at (-1,-4.5) {$\textcolor{blue}{x^-_1}$};
\node at (3,-4.5) {$\textcolor{blue}{x^+_1}$};
\node at (-1,-6.5) {$\textcolor{blue}{x^-_2}$};
\node at (3,-6.5) {$\textcolor{blue}{x^+_2}$};
\node at (0,-4.5) {$\textcolor{blue}{\bullet}$};
\node at (2,-4.5) {$\textcolor{blue}{\bullet}$};
\node at (0,-6.5) {$\textcolor{blue}{\bullet}$};
\node at (2,-6.5) {$\textcolor{blue}{\bullet}$};
\pic [ transform shape, tqft/cap, at={(0,-5)}, cobordism edge/.style={draw}];  
\pic [ transform shape, tqft/cap, at={(2,-5)}, cobordism edge/.style={draw}];  

\pic[
  transform shape,
  tqft,
  cobordism edge/.style={draw},
  incoming boundary components=2,
  outgoing boundary components=2,
  genus=1,
  at={(0,-7)}, ];
\pic[
  transform shape,
  name=a,
  tqft,
  cobordism edge/.style={draw},
  incoming boundary components=2,
  outgoing boundary components=0,
  at={(0,-9)},
];
\draw[<->, rounded corners=10pt,  solid] (0,0)--(1, .5)--(2,0);
		\node at (1,1) {$\widetilde c$};
		\node at (1,-11) {$\widetilde \Sigma$};

\end{scope}
\begin{scope}[rotate=0, shift={(-6,-7)}]
\pic[
  transform shape,
  name=a,
  tqft,
  cobordism edge/.style={draw},
  between incoming and outgoing/.style={draw},
  incoming boundary components=0,
  outgoing boundary components=2,
   at={(0,0)},
];
\pic[
  transform shape,
  tqft,
  incoming boundary components=2,
  outgoing boundary components=2, 
  cobordism edge/.style={draw},
  genus=2,
  name=b,
  at=(a-outgoing boundary)
];
\pic [ transform shape, tqft/cup, at={(0,-4)}, cobordism edge/.style={draw}];  
\pic [ transform shape, tqft/cup, at={(2,-4)}, cobordism edge/.style={draw}];  
\pic [ transform shape, tqft/cap, at={(0,-3)}, cobordism edge/.style={draw}];  
\pic [ transform shape, tqft/cap, at={(2,-3)}, cobordism edge/.style={draw}];  
\node at (-1,-4.5) {$\textcolor{blue}{x^-_1=x_2^-}$};
\node at (3,-4.5) {$\textcolor{blue}{x^+_1=x_2^+}$};
\node at (0,-4.5) {$\textcolor{blue}{\bullet}$};
\node at (2,-4.5) {$\textcolor{blue}{\bullet}$};
\pic[
  transform shape,
  tqft,
  cobordism edge/.style={draw},
   incoming boundary components=2,
  outgoing boundary components=2,
  genus=1,
  at={(0,-5)}, ];
\pic[
  transform shape,
  name=a,
  tqft,
  cobordism edge/.style={draw},
  incoming boundary components=2,
  outgoing boundary components=0,
  at={(0,-7)},
];
		\draw[<->, rounded corners=10pt,  solid] (0,0)--(1, .5)--(2,0);
		\node at (1,1) {$c_0$};
		\node at (1,-9) {$\Sigma_0$};
\end{scope}		
\draw[->, solid] (-1.5,-5)--(-2.5,-7);
\draw[<-, densely  dashed] (-1.5,-6)--(-2.5,-8);
\draw[->, solid] (-2,-18.5)--(-3,-16.5);
\draw[<-, densely  dashed] (-2,-17.5)--(-3,-15.5);
\node at (-2.5,-5) {pinch};
\node at (-1.5,-9) {deform};
\node at (-2,-15) {resolve};
\node at (-3,-19) {attach};
\draw[->, rounded corners=20pt, densely dashed] (2, -9)--(2.5, -11)--(2,-13);
\node at (3,-11) {split};
\end{tikzpicture}
\end{centering} 
\end{figure}

\medskip
Furthermore, assume  $(\Si, c)$ is a marked symmetric surface with a pair of {\em conjugate splitting circles}, i.e. two embedded, disjoint circles 
$\gamma^\pm$ swapped by the involution and containing no marked points. Then $(\Si, c)$ can be 'split' along these circles, i.e. it can be deformed to a nodal symmetric surface $(\Si_0, c_0)$ which then has a smooth normalization $(\wt \Si, \wt c)$. Any complex line bundle $L$ over $\Si$ and choice $\fo$ of twisted orientation data for $(\Si, c)$ can  be
deformed to the nodal surface and then lifted to its normalization to give a line bundle $\wt L$ over $\wt \Si$ and a choice of orientation data $\wt \fo$ on the normalization $(\wt \Si, \wt c)$. Lastly, every line bundle $\wt L$ and orientation data $\wt \fo$ on 
$\wt \Si$   descend to $\Si_0$ and can be deformed to a line bundle $L$ and orientation data $\fo$ on $\Si$.

\smallskip

The splitting formula \cite[Theorem~3.2]{bp1}  extends to the Real setting (cf. \cite{GI}) as follows:
 \begin{theorem}[\cite{GI}]\label{gluing} Assume $(\Si, c)$ is a marked symmetric surface with $r$ pairs of conjugate points, $L$ is a complex line bundle over $\Si$, and $\fo$ is an orientation data for $(\Si,c)$. Let $(\wt \Si, \wt c)$ denote the symmetric surface obtained as described above from $(\Si, c)$ by splitting it along two conjugate splitting circles, and let $\wt L$ and $\wt \fo$ be the corresponding line bundle and orientation data on $\wt \Si$. 
 
Then for any collection $\vec \mu= (\mu^1, \dots , \mu^r)$ of $r$ partitions of $d$, the RGW invariants 
\eqref{defn.RGW.real.rel} satisfy: 
 \bear\label{split.formula}
RGW^{c, \fo}_d(\Si, L)_{\vec \mu} 
= 
\sum_{\la\vdash d} \zeta(\la) t^{2 \ell(\la)} RGW^{\wt c, \wt \fo}_d(\wt \Si, \wt L)_{\vec \mu, \la, \la},
\eear
where $\zeta(\la)$ is given by \eqref{mult.la}, $t$ is the equivariant parameter, and $\ell(\la)$ is the length of the partition $\la$. 
\end{theorem}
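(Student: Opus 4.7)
The plan is to follow the strategy of the Real symplectic sum formula developed in \cite{GI}: deform $(\Si,c)$ to the nodal symmetric surface $(\Si_0,c_0)$ obtained by pinching the pair of conjugate circles $\gamma^\pm$, stratify the Real moduli space of maps to $\Si_0$ by ramification profile at the nodes, and then compare orientations and equivariant contributions with those of the relative moduli space of the normalization $(\wt\Si,\wt c)$. At its heart the argument is the Real analog of the complex Bryan--Pandharipande splitting formula, combined with a careful accounting of the extra data introduced by the Real structure.

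First, I would extend the line bundle $L$, the twisted orientation data $\fo$, and the $U(1)$-action over the one-parameter family of Real deformations whose general fiber is $(\Si,c)$ and central fiber is $(\Si_0,c_0)$. Deformation invariance of the virtual fundamental class of $\ov\M^{c,\bullet}_{d,\chi}(\Si)_{\vec\mu}$, together with the continuity of $\Ind\,\dbar_L$ as a $U(1)$-equivariant family, yields
$$
RGW^{c,\fo}_d(\Si,L)_{\vec\mu} \,=\, RGW^{c_0,\fo_0}_d(\Si_0,L_0)_{\vec\mu}.
$$
Next I would decompose the moduli space of Real maps to $(\Si_0,c_0)$. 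A stable Real map $f:(C,\si)\lra(\Si_0,c_0)$ is predeformable at each node, and lifting to the normalization produces a Real map whose ramification profiles at the two new pairs of marked points (one pair replacing $\gamma^+$, the other replacing $\gamma^-$) are exchanged by the Real involution and so are recorded by a single partition $\la\vdash d$. This stratifies the nodal moduli space by $\la$, and the stratum indexed by $\la$ fits into a diagram mapping to $\ov\M^{\wt c,\bullet}_{d,\chi+2\ell(\la)}(\wt\Si)_{\vec\mu,\la,\la}$ whose image encodes exactly the relative condition on the right side of \eqref{split.formula}.

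Then I would compute the contribution of each stratum. The combinatorial factor $\zeta(\la)=\prod m_k!\,k^{m_k}$ arises from the automorphisms of a splitting datum: the $\prod m_k!$ permutations of unordered ramification branches of equal degree, together with the $\prod k^{m_k}$ cyclic symmetries of the local model of a degree-$k$ cover near a branch point. The equivariant factor $t^{2\ell(\la)}$ is the Euler class of the equivariant obstruction to smoothing the pair of conjugate nodes: each of the $\ell(\la)$ nodes over $\gamma^+$ contributes a factor of $t$ from the normal direction to the relative divisor in the equivariant expanded target, and the conjugate $\ell(\la)$ nodes over $\gamma^-$ contribute another factor of $t^{\ell(\la)}$, doubling the exponent compared to the complex splitting formula for a single node.

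The main technical obstacle is verifying that the orientation induced on the stratum by $\fo_0$ agrees with the orientation induced by $\wt\fo$ on the moduli space of maps to $(\wt\Si,\wt c)$. Tracking the determinant line isomorphism \eqref{real.isom.det} through the normalization, and using the fact that the two branches at a conjugate pair of nodes are swapped rather than fixed by the Real involution, the comparison reduces to a complex computation on one branch, in the spirit of Lemma \ref{compor}. As a consequence no additional sign arises beyond those already explicit in \eqref{split.formula}, and the splitting formula follows.
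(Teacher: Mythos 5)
Your overall strategy matches the paper's: deform $(\Si,c)$ to a nodal symmetric surface, stratify the nodal moduli space by ramification profile $\la$ at the conjugate pair of nodes, compute the contribution of each stratum, and compare orientations using the twisted-orientation variant of the argument in \cite{GZ2}. Your treatment of the combinatorial factor $\zeta(\la)$ and of the orientation comparison is essentially the intended one.

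However, your account of the equivariant factor $t^{2\ell(\la)}$ is incorrect. You attribute it to ``the normal direction to the relative divisor in the equivariant expanded target,'' i.e.\ to a smoothing obstruction at the nodes of the target curve. But the target here is $\Si$, a curve, and the normal direction to a node of $\Si$ is the tangent line $T\Si$, which carries \emph{no} $U(1)$-weight: the torus action is on the auxiliary bundle $L$, not on $\Si$. If you tried to carry out your computation with this interpretation, the ``normal direction'' would contribute a weight-zero class, not $t^{\ell(\la)}$. The actual source of $t^{2\ell(\la)}$ is the normalization exact sequence for $\dbar_L$: the operator $\Ind\,\dbar_L$ over the stratum of nodal maps differs from $\Ind\,\dbar_L$ over the relative moduli space of the normalization $\wt\Si$ by a trivial bundle of rank $2\ell(\la)$, obtained by restricting $L$ to the conjugate pair of nodes of $\Si_0$ and pulling back over the $2\ell(\la)$ nodes of the domain (namely, $\ell(\la)$ over $\gamma^+$ and $\ell(\la)$ over $\gamma^-$). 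This trivial bundle carries weight $1$ in each factor, so its $U(1)$-equivariant Euler class is $t^{2\ell(\la)}$. Note also that your phrasing makes it sound as though $t^{2\ell(\la)}$ and $\zeta(\la)$ enter through two independent geometric mechanisms; in fact both arise, exactly as in \cite[Theorem~3.2]{bp1}, from a single comparison of the obstruction theories and index bundles of the nodal and relative moduli spaces, and it is important for the argument that you identify the correct equivariant bundle whose Euler class is $t^{2\ell(\la)}$, since the same trivial bundle also figures into the dimension count \eqref{dim.M=b} that makes the formula consistent in powers of $u$.
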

The basic idea of the proof comes from considering the family of moduli spaces of maps with values in $\Si$, as $\Si$ deforms to become a nodal curve, cf. \cite[Appendix A]{bp-TQFT}. When regarded as maps into the total space of the family of deformations of $\Si$, maps with values in $\Si$ limit to maps $f_0$ with values in $\Si_0$ that lift to maps with values in $\wt \Si$ having matching ramification pattern $\la$ over the nodes of $\Si_0$. Since we are splitting along a pair of conjugate nodes, the local analysis of this deformation is the same as in the complex case, and the only difference is that the gluing at one of the nodes determines the gluing at the conjugate node. As in the proof of \cite[Theorem~3.2]{bp1}, the multiplicity  $\zeta(\la)$ comes from the number of ways such a map $f_0$ deforms to a map with values in $\Si$, and  $t^{2 \ell(\la)} $ comes from the difference in the Euler class of the index bundles (the index bundles differ by a trivial rank $2\ell(\la)$ bundle obtained by pulling back over the nodes of the domain the restriction of $L$ to the nodes of the target). 
The comparison of the orientations is similar to that of \cite[Theorem~1.2]{GZ2}, except it uses the twisted orientation instead of the real orientation of \cite{gz}.

 \medskip

Define the raising of the indices by the formula
\begin{equation}\label{modified.metric}
RGW^{c,\mathfrak{o}}(\Si, L)_{\mu^1\dots \mu^r}^{\nu^1\dots \nu^s}=
RGW^{c,\mathfrak{o}}(\Si, L)_{\mu^1\dots \mu^r,\nu^1\dots \nu^s}\left(\prod_{i=1}^s\zeta(\nu^i)t^{2\ell(\nu^i)}\right).
\end{equation}
With this convention, \eqref{split.formula} implies that for any splitting $(\wt \Si, \wt c, \wt L, \wt \fo)$ of $(\Si, c, L, \fo)$ 
along a pair of conjugate splitting circles, 
\bear\label{split.formula.2}
RGW^{c, \fo}_d(\Si, L)_{\mu^1\dots\mu^r}^{\nu^1\dots\nu^s}= \sum_{\la\vdash d} 
RGW^{\wt c, \wt \fo}_d(\wt \Si, \wt L)_{\mu^1\dots \mu^r, \la} ^ {\nu^1\dots\nu^s, \la}. 
\eear
In particular, for a splitting $(\wt\Si, \wt c)$ of $(\Si, c)$ along a pair of non-separating conjugated circles, 
 \bear\label{split.nonseparate}
 RGW^{c,\mathfrak{o}}(\Si, L)_{\mu^1\dots\mu^r} = \sum_{\lambda\vdash d} 
 RGW^{\wt c,\wt{\mathfrak{o}}}(\wt\Si, L)_{\mu^1\dots \mu^r\lambda}^\lambda,
 \eear
while for a splitting along a pair of separating conjugated circles into $(\Si',c')$ and  $(\Si'',c'')$ we have 
\bear\label{split.separate}
  RGW^{c,\mathfrak{o}}(\Si,L)_{\mu^1\dots\mu^r}^{\nu^1\dots\nu^s}=\sum_{\lambda \vdash d } RGW^{c',\mathfrak{o'}}(\Si',L')^\lambda_{\mu^1\dots\mu^r}RGW^{c'',\mathfrak{o''}}(\Si'',L'')_\lambda^{\nu^1\dots\nu^s}
  \eear
where 
$L', L''$ and $\fo', \fo''$ denote the restrictions of $\wt L$ and $\wt \fo$ to $\Si'$ and $\Si''$ respectively.

This will allow us to construct a Klein TQFT associated to these invariants in \S\ref{KTQFTforRGW}.

 \vskip.2in 
 \section{The level 0 theory} \label{level 0}
  \vskip.1in 
  
The main result in this section is a calculation of the level 0 theory for a symmetric sphere relative a pair of conjugate points, cf. Proposition~\ref{crosscap}. We  start with the following preliminary result, for the level 0 theory, i.e. corresponding to the case when the line bundle $L$ in  \eqref{L.plus.twL} is trivial.
\begin{lemma}\label{L.level.0}
	The level 0 RGW series \eqref{defn.RGW.real.rel} have no nonzero terms of positive degree in $u$. 
	\end{lemma}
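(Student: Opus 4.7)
The plan is to make the Euler class $e_{U(1)}(-\Ind\,\bar\partial_\O)$ explicit using the triviality of $L=\O_\Sigma$, reduce the lemma to the vanishing of a Hodge integral on the real moduli space, and establish that vanishing via a degeneration argument.

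First, for $L=\O_\Sigma$ the pullback $f^*L=\O_C$ is canonically trivial for every stable real map $f$, so the relative cohomology bundle $H^0=R^0\pi_*(f^*\O)$ is canonically the trivial complex bundle of (locally constant) rank $c=\pi_0(C)$ over $\ov\M^{c,\bullet}_{d,\chi}(\Sigma)_{\vec\lambda}$, and the $U(1)$-action on $L$ induces weight $1$ on every fiber. Thus $e_{U(1)}(H^0)=t^{c}$, and since $-\Ind\,\bar\partial_\O=H^1-H^0$ in equivariant $K$-theory,
\begin{equation*}
e_{U(1)}\bigl(-\Ind\,\bar\partial_\O\bigr)=\frac{e_{U(1)}(H^1)}{t^c}=\sum_{i=0}^{g}t^{\iota-i}\,c_i(H^1),
\end{equation*}
where $g$ is the total genus of the source and $\iota=g-c=-\chi/2$ is the virtual complex rank from \eqref{rank.ind.L}.

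Second, pairing against the virtual fundamental class of real dimension $b$ extracts only the term of complex degree $b/2$, giving
\begin{equation*}
RZ^{c,\mathfrak o}_{d,\chi}(\Sigma,\O)_{\vec\lambda}=t^{\iota-b/2}\int_{[\ov\M^{c,\bullet}_{d,\chi}(\Sigma)_{\vec\lambda}]^{\vir,\mathfrak o}}c_{b/2}(H^1).
\end{equation*}
By \eqref{defn.RGW.real.rel} this is precisely the coefficient of $u^{b/2}$ in the generating function, so the lemma reduces to the vanishing of the Hodge integral $\int c_{b/2}(H^1)$ whenever $b>0$.

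The third step, which I expect to be the main obstacle, is this Hodge vanishing. The plan is to use the splitting formula \eqref{split.formula.2} to pinch $\Sigma$ along pairs of conjugate non-separating circles and reduce to a disjoint union of genus-$0$ building blocks, i.e.\ symmetric spheres with additional pairs of conjugate marked points; for such building blocks a Riemann--Hurwitz count combined with the rank bound $\rank H^1 = g_{\mathrm{tot}}$ forces $c_{b/2}(H^1)$ to vanish for purely dimensional reasons in the relevant regimes, and the trivial-line subbundle of $H^0$ coming from the constant function $1\in\O$ allows one to absorb the remaining cases by iterating the pinch. Because the splitting formula introduces only $t$- and $\lambda$-dependent weights and no $u$-dependence, this spherical vanishing propagates to arbitrary $(\Sigma,c)$. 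The delicate point is to track the twisted orientation data $\fo$ faithfully under the pinch and its lift to the normalization, following \S\ref{LocalRGW}, so that the sign contributions at each stage do not accidentally produce a nontrivial sum.
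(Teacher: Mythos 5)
Your setup through Step 2 matches the paper exactly: with $L=\O$, the equivariant Euler class reduces to a power of $t$ times a Chern class of the dual Hodge bundle $\mathbb{E}^\vee$, and the lemma reduces to the vanishing of $\int c_{b/2}(\mathbb{E}^\vee)$ on any positive-dimensional real moduli space. The gap is in Step 3.

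The degeneration-plus-dimension-count scheme does not establish the Hodge vanishing. After pinching $\Sigma$ down to a sphere $\P^1$ with conjugate marked points, the relative moduli spaces still parametrize real maps from \emph{arbitrarily high genus} domains $C$, so $\rank\mathbb{E}=g(C)$ can be as large as you like. If you run the dimension count for, say, $(\P^1,c)$ with $r$ pairs of points, you find that the rank constraint $b/2\le\rank\mathbb{E}$ is automatically satisfied for the moduli components that contribute; it never forces $c_{b/2}(\mathbb{E}^\vee)=0$. The "trivial-line subbundle of $H^0$" is already fully accounted for in your Step 1 and gives no further leverage on $H^1$. So the spherical building blocks do not have "vanishing for purely dimensional reasons," and the iteration has nothing to propagate.

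The actual mechanism in the paper is structural, not dimensional, and does not require any degeneration. Over the real Deligne--Mumford space $\ov{\R\M}_{g,\ell}$, the anti-holomorphic involution $\si$ on a real curve acts on holomorphic $1$-forms by $\omega\mapsto\ov{\si^*\omega}$, which is a conjugate-linear involution of the Hodge fiber $H^0(C,\omega_C)$. Thus the Hodge bundle $\mathbb{E}$ (pulled back from the complex DM space) is the complexification of a real bundle $\mathbb{E}^\R$ over $\ov{\R\M}_{g,\ell}$, and hence $c_{\text{odd}}(\mathbb{E})=0$. Feeding this into Mumford's relation $0=c_i(\mathbb{E}\oplus\ov{\mathbb{E}})=\sum_j(-1)^jc_{i-j}(\mathbb{E})c_j(\mathbb{E})$ and inducting on even degrees gives $c_i(\mathbb{E})=0$ for all $i>0$ on $\ov{\R\M}_{g,\ell}$. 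This kills the Hodge integral on every positive-dimensional moduli component with connected real domain. (For the doublet components the paper instead identifies the integral with a connected Bryan--Pandharipande invariant and quotes their level-$(0,0)$ vanishing.) You need an argument of this type; the pinching scheme by itself does not close the gap.
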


\begin{proof} The level 0 RGW series are built from the following integrals:
\best\nonumber
	RZ^{c,\mathfrak{o}}_{d, \chi} (\Si, \mathcal{O})_{\la^1\dots \la^r}&=& 
	t^{\iota-b/2}\int_{[\ov \M_{d, \chi}^{c, \bullet}(\Si)_{\la^1\dots \la^r}]^{\vir,\mathfrak{o}}}
	 c_{b/2}(-\mathrm{Ind} \; \del_{\mathcal{O}})
	\\ \label{rz.int}
	&=&
	t^{\iota-b/2}\int_{[\ov \M_{d, \chi}^{c, \bullet}(\Si)_{\la^1\dots \la^r}]^{\vir,\mathfrak{o}}} c_{b/2}(\mathbb{E}^\vee),
\eest
where $\mathbb{E}^\vee$ denotes the dual of the Hodge bundle, and $b$ is the dimension of the moduli space \eqref{dim.M=b}. Since the power of $u$ in the level 0 RGW invariants \eqref{defn.RGW.real.rel} is $b/2$, it suffices to show that the only nonzero contribution to $RZ^{c,\mathfrak{o}}_{d, \chi} (\Si, \mathcal{O})_{\la^1.. \la^r}$ comes from 0-dimensional moduli spaces. It suffices to show this is the case for the doublet and connected invariants of $(\Si, c)$,  when $\Si$ is itself either a doublet or connected.

By Corollaries \ref{corR=GW.part.rel}, \ref{C.Z.part.anti-diag-balanced}, the doublet invariants for a connected or a doublet target are equal up to a scalar to the connected BP invariants. By the proof of \cite[Lemma~7.5]{bp1}, for the antidiagonal action and level $(0, 0)$, the connected BP invariants vanish unless the dimension of the moduli space is 0. 

So it remains to consider the case when both the domain and target are connected. Let $\ov{\R\M}_{g,\ell}$ and $\ov\M_{g,2\ell}$ denote the real and the complex Deligne-Mumford moduli spaces of connected genus $g$ Riemann surfaces with $\ell$ pairs of conjugate and $2\ell$ marked points, correspondingly. Consider the map
\bear\label{forget.real}
\ov{\R\M}_{g,\ell}\lra \ov\M_{g,2\ell} 
\eear	
forgetting the real structure on the curve. The image of this map falls into the fixed locus of the involution on $\ov\M_{g,2\ell}$ given by 
$$
[ S, j,y_1,\dots, y_{2\ell}]\mapsto [S, -j, y_2,y_1,\dots y_{2\ell},y_{2\ell-1}].
$$
In general, the map \eqref{forget.real} is neither injective nor surjective onto the fixed locus. However, the Hodge bundle $\mathbb{E}$ over the real Deligne-Mumford space is the pull-back via \eqref{forget.real} of the Hodge bundle over the complex space. 
Over the real Deligne-Mumford space, the real structure $\si$ on a Riemann surface representing a point in the space induces a complex conjugation on the fiber of the Hodge bundle over it. Therefore the Hodge bundle  splits into invariant and anti-invariant parts of equal dimensions i.e. 
$$
\mathbb{E}\cong \mathbb{E}^\R\otimes_\R \cx\lra \ov{\R\M}_{g,\ell}.
$$ 
This implies that 
$$
c_{2k+1}(\mathbb{E})=0\in H^{4k+2}(\ov{\R\M}_{g,\ell},\Q).
$$
By Mumford's relations 
$$
0=c_i(\mathbb{E}\otimes_\R\cx)=\sum_{j=0}^i (-1)^jc_{i-j}(\mathbb{E})c_j(\mathbb{E}). 
$$
In particular, for even index, $2 c_{2k}(\mathbb{E})+ \ma \sum_{j=1}^{2k-1} (-1)^jc_{2k-j}(\mathbb{E})c_j(\mathbb{E})=0$. By induction on $k$, using the vanishing of the odd classes over the real moduli space we get
$$
c_{i}(\mathbb{E})=0\in H^{2i}(\ov{\R\M}_{g,\ell},\Q) \quad  \mbox{ for all $i\ne 0$.}
$$
Thus the only nonzero contributions to $RZ^{c,\mathfrak{o}}_{d, \chi} (\Si, \mathcal{O})_{\la^1.. \la^r}$ can come from integrating $1$ over a 0-dimensional moduli space.
\end{proof}

\subsection{Level 0 theory for a sphere relative two points} \label{S.0sphere}   Consider next $(\Si, c)$ a real sphere with a pair of conjugate points $x^\pm$. Up to reparametrization, there are only two real structures on $\Si=(\P^1, x^\pm)$:  
 \best
c_-(w)= -1/\ov w \quad  \mbox{ and } \quad \quad c_+ (w)= 1/\ov w.
  \eest
The real locus $\Si^c$ is empty for the first one and non-empty for the second one. 

For the remainder of this section, we regard $\P^1$ as $\cx\cup \infty$ with coordinate $w$, such that the preferred point $x^+$ corresponds to $w=0$ and 
$x^-$ to $w=\infty$. Let  $S^1$ be the unit circle $|w|=1$, which separates $\P^1$, and is oriented as the boundary of the component containing $x^+=0$. Then $S^1$ is the fixed locus when $c= c_+$ and is a cross-cap when $c=c_-$ (i.e. $c_-(w)=-w$ for all $w\in S^1$).

 The relative tangent bundle $T\Si$, given by \eqref{T.punctured}, is trivial for $\Si=(\P^1, x^\pm)$.
Therefore a twisted orientation data $\fo=(\Theta, \psi, s)$ for $(\P^1, x^\pm, c)$ consists of a trivial complex line bundle 
$\Theta=\Si\times \cx$ over $\Si=(\P^1, x^\pm)$, a choice of a homotopy class of Real isomorphisms \eqref{real.isom.det}, i.e. 
\bear\label{isom.psi}
\psi: \Lambda^{\text{top}}(T\Si\oplus \Theta\oplus c^*\ov\Theta, dc\oplus c_{tw}) \ma\lra^{\cong} 
	(\Si\ti \cx, c\ti c_{std}), 
\eear
and a spin structure $\mathfrak s$ on $T\Si^c\oplus \cx_{|\Si^c}$ over the real part of the target, compatible with the orientation induced by the isomorphism \eqref{isom.psi}.

Note also that, up to homotopy, there is a unique trivialization
\bear\label{choice.phi}
\phi:(T\Si,dc)\cong (\Si \times \cx, c\times c_{std})  \quad \mbox{such that it restricts to}
\eear
\bear\label{choice.phi.restr}
(T\Si,d c)_{|S^1}=(TS^1\oplus JTS^1,d c)=(S^1\times(\R\oplus j\R), c\times c_{std}). 
\eear 
This is because  there are two classes of trivializations \eqref{choice.phi} and they are distinguished by their restriction to  $S^1$, cf. \cite[Lemma 2.4]{F}; we choose the one that satisfies \eqref{choice.phi.restr}.

 Finally, to each partition $\la=(1^{m_1}2^{m_2}3^{m_3}\dots)$, associate the monomial
\bear\label{p-la}
p_\la = \prod_{k=1}^\infty p_k^{m_k}.
\eear
 With this notation, our main result in this section is:
 \begin{prop}\label{crosscap} Consider a Real sphere $\Si=(\P^1, x^\pm)$ with a pair of marked points and real structure  $c$. Let $\fo$ be an orientation data for $(\Si,c)$. Then for any partition $\la$ of $d$, 
 \bear\label{level0caps}
 	RGW^{c,\mathfrak{o}}_{d} (0|0)_{\la}=\exp\left(\ep_\mathfrak{o}\sum_{k=0}^\infty \frac{p_{2k+1}}{(2k+1)t}-\sum_{m=1}^\infty\frac{p_{m}^2}{2mt^2}\right)_{[p_\la]}, 
\eear
	where $\ep_\fo=\pm 1$ is independent of $d$. Here $[p_\lambda]$ denotes the coefficient of the monomial $p_\la$.  
	
Moreover, for each $\ep=\pm 1$ there exists a choice of a twisted orientation data $\fo$ such that $\ep_\fo=\ep$.
\end{prop}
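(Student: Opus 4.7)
My plan is to combine the vanishing Lemma~\ref{L.level.0} with the exponential formula \eqref{rgw=exp} (in its relative, partition-indexed version) to reduce the statement to computing two families of basic invariants: the connected invariants $CRGW_d(0|0)_{(d)}$ and the doublet invariants $DRGW_{2m}(0|0)_{(m,m)}$. The claimed formula is precisely the exponential of these two series, so proving it amounts to (i) identifying which moduli spaces contribute, (ii) computing each contribution, and (iii) showing that the overall sign ambiguity is controlled by a single $\epsilon_{\mathfrak{o}} = \pm 1$ depending on the orientation data.

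\textbf{Step 1: Cutting down to $0$-dimensional moduli.} Apply Lemma~\ref{L.level.0}: only integrals over $b = 0$ components survive. Using \eqref{dim.M=b} with $\chi(\Sigma) = 2$ and one pair of marked points, for a connected genus $h$ real domain I get $b = 2h - 2 + 2\ell(\lambda)$, forcing $h=0$ and $\lambda = (d)$. For a doublet domain of genus $h$, I get $b = 4h - 4 + 2\ell(\lambda_+) + 2\ell(\lambda_-)$, forcing $h = 0$ and $\lambda_+ = \lambda_- = (m)$, so $\lambda = (m,m)$. Thus the only moduli points are totally ramified covers $z \mapsto z^d$ (connected) and pairs of such covers exchanged by the involution (doublet). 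This matches the monomial structure $p_{2k+1}$ (single parts) and $p_m^2$ (paired parts) in the claimed formula.

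\textbf{Step 2: Computing the connected piece.} On $\ov{\M}^c_{d,0}(\Sigma)_{(d)}$ the underlying complex map is unique up to automorphism $\Z/d$, so the integrand reduces to $1/(dt)$ (the index bundle is $\mathbb{C}$ with weight $1$ under $U(1)$). The key point is to enumerate real structures on the domain $\mathbb{P}^1$ that intertwine with $c$: a direct check shows that for odd $d$ exactly one lift exists (for either $c_\pm$), and for even $d$ either none exist (for $c_-$) or two exist (for $c_+$). In the latter case I must show the two lifts contribute with opposite orientation signs and cancel; this is the main place where the choice of twisted orientation data $\mathfrak{o}$ (the trivialization $\psi$ and the spin structure $\mathfrak{s}$) enters, and is the delicate step. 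Granting this, I obtain $CRGW_d(0|0)_{(d)} = \epsilon_{\mathfrak{o}}/(dt)$ for odd $d$ and $0$ for even $d$, accounting for the exponent $\epsilon_{\mathfrak{o}} \sum_{k} p_{2k+1}/((2k+1)t)$.

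\textbf{Step 3: Computing the doublet piece.} Here I invoke Corollary~\ref{C.Z.part.anti-diag-balanced} with $g=0$, $k=0$, $r=1$, which gives
\[
DRGW_{2m}(0|0)_{(m,m)} = -\tfrac{1}{2}\, GW^{conn}_m(0|0,0)(iu,it)_{(m),(m)},
\]
and the right-hand complex BP invariant is evaluated directly from the single map $z \mapsto z^m$ with $\Z/m$ automorphism and anti-diagonal weights $(t,-t)$ on a rank-$2$ index, yielding $GW^{conn}_m(0|0,0)_{(m),(m)} = -1/(mt^2)$ before the $(u,t) \mapsto (iu,it)$ substitution. The substitution flips the sign of $t^2$, giving $DRGW_{2m}(0|0)_{(m,m)} = -1/(2mt^2)$, which is exactly the coefficient of $p_m^2$ in the claimed exponent.

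\textbf{Step 4: Assembly and sign ambiguity.} The relative analog of \eqref{rgw=exp}, decomposing the domain as a union of connected components and doublets with matching ramification data, then assembles the generating function into $\exp(A+B)$ with $A, B$ as in the statement. Finally, to verify the last sentence I analyze the orientation data: the homotopy class of the isomorphism $\psi$ in \eqref{isom.psi} and (when $\Sigma^c \neq \emptyset$) the spin structure $\mathfrak{s}$ on $T\Sigma^c \oplus \mathbb{C}$ each form a torsor over $\Z/2$, and flipping either changes the induced orientation on the relevant zero-dimensional moduli component by a sign, so both values of $\epsilon_{\mathfrak{o}} = \pm 1$ are realized. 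The main obstacle, as mentioned, is the sign/cancellation analysis in Step 2 for even $d$; this requires unwinding the definition of the canonical orientation from \eqref{or.moduli.rel} and tracking how the twisted trivialization $\phi$ of \eqref{choice.phi} behaves under the two possible real structures on the source.
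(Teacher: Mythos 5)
Your overall strategy is identical to the paper's: reduce via Lemma~\ref{L.level.0} to zero-dimensional moduli, split into connected and doublet contributions via \eqref{rgw=exp}, compute the doublet piece through Corollary~\ref{C.Z.part.anti-diag-balanced} and the known value of $GW^{conn}_m(0|0,0)_{(m),(m)}$, and reassemble by the exponential formula. Steps 1, 3 and 4 are essentially a restatement of what the paper does, and your numerics (one lift for odd $d$, two lifts for even $d$ when $c=c_+$, none when $c=c_-$; the value $-1/(2mt^2)$; the coefficient $\epsilon_{\fo}/(dt)$) are correct.

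The genuine gap is in Step 2, and you acknowledge it yourself: you assert that the two lifts for even $d$ carry opposite orientations and cancel, and that the sign for odd $d$ is a single $\epsilon_\fo$ independent of $d$, but you do not prove either. This is not a minor technicality to be deferred — it is the actual content of the proposition. The paper's proof devotes almost its entire length to precisely this. Concretely, one has to (a) stabilize by adding a pair of conjugate marked points and orient the resulting $2$-dimensional moduli via the evaluation map and the canonical orientation of $\ov{\R\M}_{0,2}$, noting the flip between the fixed-point-free and fixed-point-having components of $\ov{\R\M}_{0,2}$; (b) build an explicit canonical orientation data $\fo_{can}$ (using the distinguished trivialization \eqref{choice.phi} restricted to the unit circle, and comparing the complex orientation on $\Ind\del_{(L\oplus c^*\ov L,c_{tw})}$ with the ``twice-a-bundle'' orientation via Lemma~\ref{L.tw=2bd}); and (c) track how these two conventions differ by $(-1)^{\chi/2}$. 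Only after this does one see that, for $c_+$, maps with fixed-locus domain involution count $+1$ and fixed-point-free ones count $-1$, giving the cancellation for even $d$ and the constant sign for odd $d$. Without (a)--(c) one cannot rule out, say, a sign of $(-1)^{(d-1)/2}$ on the odd-degree covers, which would destroy the claimed formula.

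A secondary caveat in your Step 4: the statement that both values of $\epsilon_\fo$ are realized by changing the spin structure $\mathfrak s$ depends on the even-degree vanishing. Changing $\mathfrak s$ multiplies the degree-$d$ contribution by $(-1)^d$ (cf.\ \cite[Corollary~5.7]{gz}), which would not be a uniform $\pm 1$ if the even contributions survived; it is only because the even-$d$ counts cancel that this collapses to a single global sign. So the cancellation in Step 2 is also what makes the last sentence of the proposition true, and cannot be ``granted.''
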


 \begin{proof} It suffices to calculate the connected and doublet invariants; then \eqref{rgw=exp} extends to give the RGW invariants. By Corollary \ref{C.Z.part.anti-diag-balanced}, the doublet invariants $DRGW$ are related to the 
 BP invariants $GW^{conn}$ counting {\em connected} curves. The latter were computed in \cite[Lemma~6.1]{bp1} giving: 
 $$
 DRGW^{c}_{2d} (0|0)_{\la}(u,t) = -\tfrac{1}{2} GW^{conn}_{d} (0| 0,0)_{\la_+,\la_-}(iu,it)=\tfrac{-1}{2d(-t)^2}\, , \quad \mbox{ for } \la_+=\la_-=(d)
 $$   and vanish otherwise.
 
 \smallskip
 By Lemma~\ref{L.level.0}, the only contribution to the connected real invariant $CRGW$ comes from 0 dimensional moduli spaces. The dimension of $\ov \M^c_{d,h}(\P^1)_\lambda$ is 
 \best
 b=2d +2h-2 -2d +2\ell(\la)= 2h-2+ 2\ell(\la). 
 \eest 
It  vanishes only when $h=0$ and $\ell (\la)=1$ i.e. $\la=(d)$. It suffices to show that in this case 
\bear\label{triv.count}
\int_{[ \ov \M^{c, \fo}_{d,0}(\P^1)_{\la}]^\vir}1 = \begin{cases}\ep_\fo \frac {1} {d},&\text{if $d$ is odd,}\\
	0,&\text{if $d$ is even.} 
\end{cases}
\eear

Elements of $\ov \M^c_{d,0}(\P^1)_{\la}$ for $\la=(d)$ are real covers of a sphere by a sphere, fully ramified at the two points $x^\pm$, and equivariant with respect to a real structure $\si$ on the domain and $c$ on the target. 
 
\smallskip

{\sc Case 1}. Assume first that  $c(w)=-1/\ov w$, so it has no fixed locus. Then $\si$ cannot have fixed locus, and $d$ must be odd (else the moduli space is empty). When $d$ is odd, the moduli space consists of one solution 
$f(z)=z^d$, $\si(z)=-1/\ov z$, but which has $d$ automorphisms $\phi(z)=\zeta z$ where $\zeta^d=1$. It remains to calculate its sign and show it does not depend on $d$. We will first prove that there are two classes of twisted orientation data, giving  rise to opposite invariants, and then we calculate the invariants for a canonical choice $\fo=\fo_{can}$  that   corresponds to $\ep_\fo=1$.
 
A twisted  orientation data $\fo=(\Theta, \psi, \mathfrak{s})$ in this case consists of a choice of an isomorphism \eqref{isom.psi} up to homotopy; the bundle $\Theta=\Si\ti \cx$ is trivial and the real locus of $c$ is empty so the spin structure $\mathfrak {s}$ is irrelevant. 

There are two real homotopy classes of isomorphisms  \eqref{isom.psi} distinguished by the real homotopy class of $\psi$ over the unit circle $|w|=1$ in $\P^1=\cx\cup\infty$. One can switch between them by $\psi\mapsto -\psi$. The effect of this change on the orientation of the moduli space is via the change of the orientation on the bundle 
$\ind\dbar_{(\cx,c_{std})}$, which is $(-1)^{\chi/2}=-1$ since the domains are spheres. In particular, if $\fo_1$ and $\fo_2$ denote the two choices of twisted orientation, then the level 0 connected invariants satisfy 
\bear\label{diff.orient}
CRGW^{c, \fo_1}_{d}(0|0)_{\la}= - CRGW^{c, \fo_2}_{d}(0|0)_{\la}.
\eear

We next determine the sign of the invariants in each degree by looking at the moduli space in more detail. The orientation on the moduli space is induced from the determinant bundle $\det \del_{(T\Si, dc)}$ and the Deligne-Mumford moduli space,  cf. \eqref{or.moduli.rel.a}, after stabilization when necessary. So we add an extra pair of conjugate marked points $y_2^\pm$ on the domain. The moduli space is now 2 dimensional and it suffices to calculate the sign of the evaluation map at $y_2^+$. For this we first exhibit an orientation on the moduli space for which the sign of the evaluation map is clear and then we compare it with orientation induced by the twisted orientation data. 

The real DM moduli space $\R\M_{0,  2}$ is 1-dimensional and consists of 3 intervals that compactify to a circle; one of the intervals corresponds to the case the involution on the domain is fixed-point free and the other two to the case the involution has fixed locus.  We can assume that $\si(z)=\pm 1/\ov z$,  $y_1^\pm$ are $z=0, \infty$, and $y_2^+=b \in \R_+$. The orientation on 
$\R\M_{0,  2}$ agrees with the one induced by $b\in \R_+$ when $\si$ has fixed locus, and is the opposite in the case $\si$ is fixed-point free; see  \cite[\S1.4]{GZ2}. 

When the domain is fixed, the moduli space of degree $d$ real relative maps is 
\best
 f_\tau:(\P^1,\si)\lra (\P^1,c) \quad  z\mapsto e^{i\tau}z^d,\quad \tau\in \R/2\pi\Z; 
 \eest
here $\si(z)=-1/\ov z$. Thus the relative moduli space with the extra pair $y^\pm_2$ of marked points is described by 
$(\tau,b)\in \R\times\R_+$, where $b$ corresponds to the position of $y_2^+$ and $\tau$ gives the map $f_\tau$. For the orientation induced by this identification, the evaluation map at $y_2^+$ is orientation reversing. The tangent space to the first factor corresponds naturally to $\Ind \dbar_{(T\Si, dc)}$ and the tangent space to the second factor to $T\R\M_{0,  2}$. Recall that the canonical orientation on the latter is opposite that of  $b\in \R_+$ when the domain involution is fixed-point free. Thus the evaluation map at $y_2^+$ would have positive sign if the orientation induced by a twisted orientation on $\Ind \dbar_{(T\Si, dc)}$ coincides with that induced by $\tau\in\R$. We next construct such twisted orientation data. 

Let  $\fo_{can}$ be the twisted orientation data for which \eqref{isom.psi} has the form $\psi=\phi \otimes \La^{top}\theta_{tw}$, where $\phi$ is given by \eqref{choice.phi} and 
\best\label{isom.can} 
\theta_{tw}: (\Si\times \cx\oplus c^*(\Si\times \ov\cx), c_{tw})\cong (\Si\times \cx^{\oplus 2}, c\times c_{std})
\eest
is orientation preserving at the level of index bundles when the first term has the complex orientation induced via \eqref{ind.E} and the second term is oriented as twice a bundle. By   Lemma~\ref{L.tw=2bd} below, we can obtain such  $\theta_{tw}$ as the composition of \eqref{isom.tw=double} and \eqref{switch.signs}. For this choice, the twisted orientation data  
$\psi=\phi \otimes \La^{top}\theta_{tw}$ induces precisely the isomorphism \eqref{choice.phi.restr}, as explained above \eqref{detTS=twist}. On the other hand, the isomorphism \eqref{choice.phi.restr} induces an orientation on $\Ind \dbar_{(T\Si, dc)}$ that coincides with that of $\tau\in\R$. Therefore, for this choice of twisted orientation data,  the evaluation map has positive degree for all odd $d$, completing the proof of \eqref{triv.count}. 

\medskip
{\sc Case 2}. Assume $c(w)=1/\ov w$, so the involution on the target has fixed locus. The argument in this case follows along the same lines. The fixed locus $\Si^c$ is now the unit circle $S^1$ and a twisted orientation data requires a choice $\mathfrak s$ of a spin structure on $T\Si^c\oplus \cx_{|\Si^c}$ over the real part of the target, compatible with the orientation induced by the isomorphism \eqref{isom.psi}. There is still one solution for $d$ odd (with $\si(z)=1/\ov z$ on the domain), but when $d$ is even, there are now two solutions, with different real structures.  

We next construct a twisted orientation data $\fo=\fo_{can}$ for which $\ep_\fo=+1$. Let $\psi=\phi\otimes \La^{top} \theta$, where  $\phi$ is as in 
\eqref{choice.phi} and $\theta$ is the isomorphism \eqref{isom.tw=double}.  The isomorphism  $\theta$, along with the orientation of $TS^1$, induces a   spin structure $\mathfrak{s}$, compatible with $\psi$. Denote these choices by $\fo_{can}$.   
  
We repeat the same argument as in Case 1, taking into account that the orientation on the real DM moduli space is given by $b\in\R_+$ when $\si$ has real locus, and by $-b$ when $\si$ does not have real locus. Recall that in odd degree $\si$ must have  real locus, while in even degree there are two solutions, one with real locus and one without. 
By Lemma~\ref{L.tw=2bd} below, at the level of index bundles, the isomorphism $\theta$ has sign $(-1)^{\ind \del_\cx}=(-1)^{\chi/2}=-1$ and thus the orientation induced on $\Ind \dbar_{(T\Si, dc)}$ is opposite of that induced by $\tau\in \R$. So all maps whose domain involution has fixed locus contribute positively and all maps with fixed-point free domain involution contribute negatively. This implies that the maps in even degree cancel each other. In odd degree, the domains can only have real structure with fixed locus and thus contribute positively. This implies \eqref{triv.count} for $\fo= \fo_{can}$  (with $\ep_\fo=1$).

It remains to calculate how the invariants depend on the orientation data $\fo=(\Si \otimes \cx, \psi, \mathfrak s)$. Up to homotopy, there are 4 choices, two for $\psi$ and two for the spin structure $\mathfrak s$. As before, a change in the homotopy class of $\psi$ changes the orientation on all maps thus giving \eqref{diff.orient}. A change in the spin structure results in a change of $(-1)^d$ on the orientation of a degree $d$ map as it changes the pullback spin structure on the domain only if the degree is odd, cf. \cite[Corollary 5.7]{gz} and Lemma \ref{A.comp}. Since the even degree invariants vanish, changing the spin structure $\mathfrak s$ also gives \eqref{diff.orient}, completing the proof of \eqref{triv.count}.
 \end{proof}
 
 When $(L, \phi)\ra (\Si, c)$ is a Real bundle over a symmetric surface, then 
\bear\label{isom.tw=double}
\theta:(L\oplus c^*\ov L, c_{tw})\cong (L \oplus L, \phi\oplus \phi), \qquad (z; x,y)\mapsto (z;  x+\phi(y), -Jx+J\phi(y))
\eear
is a Real isomorphism. The index of the LHS has a natural  complex  orientation while   that of the RHS can be oriented as twice of a bundle. The next lemma compares these two orientations.  
\begin{lemma} \label{L.tw=2bd} Assume $(L, \phi)\ra (\Si, c)$ is a Real bundle. Then the index bundle 
$\Ind \del_{(L\oplus c^*\ov L, c_{tw})}$ has two natural orientations: 
\begin{enumerate}[(i)]
\item one induced by the isomorphism \eqref{ind.E} with $\Ind \del_L$ via the projection onto the first bundle.
\item the second one induced by \eqref{isom.tw=double} and the natural orientation on twice a bundle. 
\end{enumerate} 
The difference  between these orientations is  $(-1)^\iota$ where $\iota$ is the complex rank of $  \Ind \del_L$. Moreover, 
\bear\label{switch.signs} 
Id\oplus -Id:\Ind \del_{(L\oplus L, \phi\oplus \phi)}\ra\Ind \del_{(L\oplus L, \phi\oplus \phi)},
\eear
when both sides are oriented as twice a bundle, also has sign $(-1)^\iota$. 
\end{lemma}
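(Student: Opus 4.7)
\smallskip
\noindent\textbf{Proof proposal.} The second claim is a direct determinant computation. Under the identification $\Ind\del_{(L\oplus L, \phi\oplus\phi)}= V\oplus V$ with $V=\Ind\del_{(L,\phi)}$ a real vector space of dimension $\iota$, the map $Id\oplus -Id$ restricts to $-Id$ on the second factor and thus has determinant $(-1)^\iota$. This is independent of the chosen orientation on $V$, so its sign with respect to the twice-a-bundle orientation on both source and target is $(-1)^\iota$. For the first claim, my plan is to compute the real-linear map on invariant parts of the index bundles induced by $\theta$, and then carry out a basis-level comparison of orientations.

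\smallskip
The projection $\pi_1$ identifies $\Ind\del_{(L\oplus c^*\ov L, c_{tw})}$ with $\Ind\del_L$ as real vector bundles: a Real section of $(L\oplus c^*\ov L, c_{tw})$ is uniquely determined by its first component $s$, with the second component a pullback of $s$ under $c$. This gives the LHS its complex structure and hence the complex orientation. The induced Real structure $\phi^*$ on $\Ind\del_L$ has fixed locus $V=\Ind\del_{(L,\phi)}$, so $\Ind\del_L= V\oplus JV= V\otimes_\R \C$. Writing $s=a+Jb$ with $a,b\in V$, the anti-linearity of $\phi^*$ yields $\phi^*s= a-Jb$, and substituting into the formula \eqref{isom.tw=double} for $\theta$ on an invariant section $(s,c^*s)$ gives
\[
\theta(s,c^*s)=\bigl(s+\phi^*s,\; -J(s-\phi^*s)\bigr)=(2a,\,2b),
\]
which is a $(\phi\oplus\phi)$-invariant section of $L\oplus L$. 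Hence the composition $\theta\circ \pi_1^{-1}\colon \Ind\del_L\to V\oplus V$ is, up to the positive scalar $2$, the real-linear map $a+Jb\mapsto (a,b)$.

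\smallskip
It then remains to compare the complex orientation of $V\otimes_\R\C= V\oplus JV$ with the twice-a-bundle orientation of $V\oplus V$ under this map. Fixing a real basis $e_1,\dots, e_\iota$ of $V$, both orientations are expressed by an ordering of the $2\iota$ real basis vectors $e_1, Je_1, e_2, Je_2,\dots$ versus $e_1,\dots, e_\iota, Je_1,\dots, Je_\iota$, and the comparison becomes a signed-shuffle computation. Combining this with the orientation conventions used throughout the paper (notably those fixed in \eqref{ind.E} and in the construction of $\theta_{tw}$ in the proof of Proposition~\ref{crosscap}, where $\theta_{tw}= (Id\oplus -Id)\circ \theta$ is asserted to be orientation-preserving at the level of index bundles) pins down the sign as $(-1)^\iota$. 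The main obstacle is precisely this careful bookkeeping of orientation conventions: each choice of how the complex orientation on $V\otimes_\R\C$ interleaves real and imaginary basis vectors, and how the twice-a-bundle orientation on $V\oplus V$ separates the two copies, affects the final sign. Once these are fixed consistently with the rest of the paper, the factor $(-1)^\iota$ emerges from the shuffle, and compatibility with the second part of the lemma then ensures $\theta_{tw}$ is orientation-preserving.
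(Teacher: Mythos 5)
Your computation that $\theta\circ\pi_1^{-1}$ sends $a+Jb$ to $(2a,2b)$ is correct and usefully concretizes what the paper's one-line proof leaves implicit; in particular it verifies that $\theta$ is complex-linear for the twice-a-bundle complex structure $J(u,v)=(-v,u)$ on the right-hand side, hence orientation-preserving between the two complex orientations. For the ``moreover'' clause your outline is right, but note that $V=\Ind\del_{(L,\phi)}$ is a \emph{virtual} vector space, not a real vector space of dimension $\iota$: the sign $(-1)^\iota$ for $Id\oplus -Id$ arises as the product of the determinant contributions $(-1)^k$ on the kernel piece and $(-1)^c$ on the dual of the cokernel piece, and only then equals $(-1)^\iota$ because $(-1)^{k+c}=(-1)^{k-c}$.

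The genuine gap is in the final sign comparison. Having reduced to comparing the interleaved ordering $e_1,Je_1,\dots,e_\iota,Je_\iota$ with the separated ordering $e_1,\dots,e_\iota,Je_1,\dots,Je_\iota$, you assert that ``the factor $(-1)^\iota$ emerges from the shuffle''. But the sign of that shuffle is $(-1)^{\iota(\iota-1)/2}$, which is $4$-periodic in $\iota$ and not $(-1)^\iota$; and once the kernel and cokernel are treated separately, as the virtual-rank structure requires, the shuffle gives $(-1)^{k(k-1)/2+c(c-1)/2}$, which --- unlike the $Id\oplus -Id$ case --- does \emph{not} simplify to $(-1)^{k-c}$. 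So the shuffle you describe does not produce the claimed sign. Your fallback, invoking the assertion in the proof of Proposition~\ref{crosscap} that $\theta_{tw}=(Id\oplus -Id)\circ\theta$ is orientation-preserving, is circular, since that assertion is itself a consequence of the present lemma. Some additional ingredient --- a careful matching of the convention for the canonical orientation of $\det$ of a $\cx$-linear Fredholm operator against the twice-a-bundle orientation, with attention to the dual on the cokernel factor --- is needed to obtain $(-1)^\iota$, and neither the shuffle you set up nor the compatibility appeal supplies it.
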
 
\begin{proof} The isomorphism \eqref{ind.E} with $\Ind \del_L$ induces a complex structure and therefore a complex orientation on the index bundle associated to the  left hand side of \eqref{isom.tw=double}. The isomorphism induced by $\theta$ at the level of index bundles would be orientation preserving if the complex orientation on twice of a bundle was used instead on the  right hand side; the two choices differ by $(-1)^\iota$. The second statement is immediate. 
\end{proof}

The proof of Proposition~\ref{crosscap}  constructs choices of orientation data $\fo=\fo_{can}$  that have the property that $\ep_\fo=1$; in particular, the sign of the degree 1 cover is +1. For such choices,  \eqref{level0caps} is equal to 
\bear\label{level0caps.can}
 RGW_{d} (0|0)_{\la}=\exp\left(\sum_{k=0}\frac{p_{2k+1}}{(2k+1)t}-\sum_{m=1}\frac{p_{m}^2}{2mt^2}\right)_{[p_\la]}, 
\eear
while for any other choice of orientation data 
\best\label{level0caps.other}
 RGW^{c,\mathfrak{o}}_{d} (0|0)_{\la}= (\ep_\fo)^d RGW_{d} (0|0)_{\la}
\eest
where $\ep_\fo=\pm 1$ is the sign of the degree 1 cover. This follows because substituting $p_m\mapsto \ep_\fo p_m$ for all $m=1, 2, \dots$ converts the sum in the exponential of \eqref{level0caps.can} to the one in \eqref{level0caps}, but also changes 
$p_\la \mapsto (\ep_\fo)^d p_\la$ when $\la$ is a partition of $d$. 
	
 \vskip.2in 
\section{Canonical orientation and independence of the target real structure} \label{S.Can.TO}
  \vskip.1in 

In this section we study how the RGW invariants depend on the choice of orientation data and on the real structure on the target. We show that a change in the orientation data or in the real structure results in a global change by a factor of $(\pm 1)^d$. 
We then use this information to define canonical  RGW invariants which are compatible with the splitting formulas. 

\subsection{Dependence on the orientation data and real structure}\label{S.dep.data} Assume $(\Si, c)$ is a  symmetric Riemann surface with $r$ pairs of conjugate marked points. We first describe how the RGW invariants depend on the choice of orientation data. 
 \begin{lemma} \label{indep.or.gen}  For  any two orientation data $\mathfrak{o}_1$, $\mathfrak{o}_2$  for $(\Si, c)$,  there exists $m\in \Z$ such that 
 \bear\label{diff.or.gen}
 	RGW^{c,\mathfrak{o}_1}_{d} (\Si|L)_{\la^1\dots \la^r}=(-1)^{dm}RGW^{c,\mathfrak{o}_2}_{d} (\Si|L)_{\la^1\dots \la^r}
 \eear 
 for all $d$ and all collections of $r$ partitions $\la^1, \dots, \la^r$ of d. 
 
 For every $(\Si,c)$ there exist two orientation data for which the sign difference is $(-1)^d$.
 	\end{lemma}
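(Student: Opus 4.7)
The plan is to compare $\mathfrak{o}_1=(\Theta_1,\psi_1,\mathfrak{s}_1)$ and $\mathfrak{o}_2=(\Theta_2,\psi_2,\mathfrak{s}_2)$ by decomposing the passage between them into elementary moves that modify one ingredient at a time, and tracking the sign change of the induced orientation on $[\ov\M^{c,\bullet}_{d,\chi}(\Si)_{\vec\la}]^{\vir,\fo}$ at each step. At a real map $f:(C,\si)\to(\Si,c)$, recall from Lemma~\ref{double.vfc.lemma} and the Appendix that this orientation is built from $\det \dbar_{f^*(T\Si, dc)}\otimes\mathfrak{f}^*\det T\ov{\R\M}^\bullet_{\chi,\ell}$, with the first factor identified with $\det \dbar_{(\cx, c_{std})}$ using $\psi$, the projection \eqref{ind.E}, and the complex orientation on $\Ind \dbar_{f^*\Theta}$; the spin structure $\mathfrak{s}$ enters only when $\Si^c\ne\emptyset$ to guarantee coherence of the choice in families meeting the real locus.

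The three elementary moves to analyze are the following. For a change of $\Theta$ (with compatible adjustments of $\psi$ and $\mathfrak{s}$): two admissible line bundles are isomorphic on a connected target and differ by a doublet shift otherwise (cf.\ Example~\ref{R.or.choice}(b)); by the argument underlying Lemma~\ref{compor}, such a shift changes the complex orientation on $\Ind \dbar_{f^*\Theta}$ by $(-1)^{dk_1}$, where $k_1$ encodes the shift of degrees. For a change of $\psi$ with $\Theta$ fixed: the homotopy classes form a torsor over the group of $c$-equivariant maps $\Si\to U(1)$ modulo homotopy, and pullback by a degree $d$ map sends a generator to an equivariant $C\to U(1)$ whose equivariant degree scales linearly in $d$, contributing a sign $(-1)^{dk_2}$ on $\det \dbar_{(\cx,c_{std})}$. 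For a change of $\mathfrak{s}$ on a single component of $\Si^c$: by \cite[Corollary~5.7]{gz} (and Lemma~\ref{A.comp}), the induced change of pullback spin structure on $C^\si$ flips the orientation by exactly $(-1)^d$. Composing the three signs yields $(-1)^{dm}$ with $m=k_1+k_2+k_3\in\Z$.

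For the second assertion I would exhibit a pair of orientation data whose sign difference is precisely $(-1)^d$. When $\Si^c\ne\emptyset$, take $\mathfrak{o}_2$ to differ from $\mathfrak{o}_1$ only by flipping $\mathfrak{s}$ on a single circle of $\Si^c$; by the Stage~3 computation the sign is $(-1)^d$. When $\Si^c=\emptyset$, realize it either through a degree $\pm1$ shift of $\Theta$ on a doublet target, or through the nontrivial homotopy class of $\psi$ identified in Case~1 of the proof of Proposition~\ref{crosscap}; the explicit computation there for the degree-one cover of a symmetric sphere confirms that the resulting sign is exactly $(-1)^d$.

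The main obstacle is verifying the $d$-dependence of each move \emph{uniformly in $\chi$ and $\vec\la$}. A naive global flip $\psi\mapsto-\psi$ contributes $(-1)^{\chi(C)/2}$, and Riemann--Hurwitz rewrites this as $(-1)^{d\chi(\Si)/2+\delta(\vec\la)}$, which contains a spurious $\vec\la$-dependent factor $(-1)^{\delta(\vec\la)}$ that is not of the required form. The resolution is that not every modification of $\psi$ is an admissible elementary move: condition (iii) of Definition~\ref{TRO} couples $\psi$ to $\mathfrak{s}$, so any such flip must be accompanied by a compatible change of $\mathfrak{s}$ whose own $\chi$-dependent contribution precisely cancels the offending $\vec\la$-factor. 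Carrying out this cancellation rigorously — i.e.\ enumerating the admissible elementary moves and checking their combined sign contributions really do collapse to a pure $(-1)^{dm}$ — is the technical core of the argument.
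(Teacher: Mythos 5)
Your approach — decomposing the passage between $\mathfrak{o}_1$ and $\mathfrak{o}_2$ into elementary moves (change of $\Theta$, of $\psi$, of $\mathfrak{s}$) and tracking signs move by move — is genuinely different from the paper's, and it runs into an obstruction you yourself identify but do not resolve. The paper instead splits $\Sigma$ along a separating collection of circles (each a fixed circle or a crosscap) into level-$0$ spheres and a doublet, then computes the sign change on each piece and recombines via the splitting formula of \S\ref{Gluing}. On the doublet piece Lemma~\ref{double.vfc.lemma} directly gives $(-1)^{dm_2}$; on each sphere Proposition~\ref{crosscap} gives a closed formula $\exp(\ep_{\mathfrak{o}}\sum p_{2k+1}/(2k+1)t-\sum p_m^2/2mt^2)_{[p_\la]}$, and the $(\ep_\mathfrak{o})^d$ behavior of the \emph{disconnected} invariants is then read off from the combinatorial identity that the substitution $p_m\mapsto\ep_\mathfrak{o}p_m$ converts one formula to the other while rescaling $p_\la$ by $\ep_\mathfrak{o}^d$. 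This is the crucial step that a direct orientation-tracing argument cannot reproduce.

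Your proposal has a genuine gap, which is precisely the one you flag in your last paragraph but propose an incorrect fix for. When you flip $\psi$, the orientation of the moduli space changes by $(-1)^{\chi(C)/2}$, and $\chi(C)$ is \emph{not} constant across the moduli spaces contributing to a fixed $RGW_d(\Sigma|L)_{\vec\la}$ once $L$ is nontrivial (contributions come from all domain genera, via Chern classes of $-\Ind\dbar_L$ integrated over positive-dimensional moduli spaces). Your Riemann--Hurwitz rewriting $\chi(C)=d\chi(\Sigma)-2\delta(\vec\la)$ is only valid for $0$-dimensional moduli spaces with no extra ramification; in general $\chi(C)$ varies with the dimension $b$ of the moduli space. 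Your proposed resolution — that the forced compatibility change of $\mathfrak{s}$ cancels the $\chi$-dependent factor — cannot work: by \cite[Corollary~5.7]{gz} and Lemma~\ref{A.comp}, a change of spin structure contributes a fixed sign $(-1)^d$ per degree-$d$ cover, not anything $\chi$-dependent, so it cannot cancel $(-1)^{\chi(C)/2}$. Moreover, when $\Sigma^c=\emptyset$ there is no spin structure at all, yet the lemma still holds and there still exist two homotopy classes of $\psi$ giving opposite orientations on the $\P^1$ moduli space (Case~1 of Proposition~\ref{crosscap}). The actual reason the final answer is of the form $(-1)^{dm}$ despite the $\chi$-varying orientation flip is the combinatorial collapse visible only in the explicit sphere formula, which your formal analysis of elementary moves does not access. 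To repair your argument you would effectively have to re-derive the sphere splitting argument; at that point you are following the paper's proof.
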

\begin{proof} It suffices to prove this when $\Si$ is either connected or a doublet. Assume 
$\mathfrak{o}_i=(\Theta_i, \psi_i,  \mathfrak{s}_i)$ are two orientation data  for $(\Si, c)$, cf.  Definition~\ref{TRO}. 

{\sc Case 1}. When $\Si$ is a doublet, Lemma~\ref{double.vfc.lemma} implies that the $RGW$ invariants for the two orientations  differ by a factor of $(-1)^{dm_2}$, where $m_2$ is the  difference between the degrees of the restrictions to the second component of $\Si$ of the bundles $\Theta_i$. Changing the degree of $\Theta_1$ by 1 on the first component and by -1 on the second gives rise to a sign difference of $(-1)^d$. 

{\sc Case 2}. Assume next that $\Si$ is connected. Choose a separating collection $\{\gamma_i\}$ of circles, each one of which is either fixed or a crosscap. Trivialize the complex line bundle $L$ in a neighborhood of the $\gamma_i$, and split off a level 0-sphere containing no marked points, one for each $\gamma_i$. The complement of these spheres is then a doublet. 
\best
\begin{tikzpicture}[scale=1/2, rotate=0, every tqft/.style={transform shape},
		tqft/.cd, 
		cobordism edge/.style={draw},
		]
\begin{scope}		
	\pic [tqft, incoming boundary components=0, outgoing boundary components=4, 
			cobordism edge/.style={draw},
			cobordism height=3, name=a,   at={(0,3)}]; 
	\pic [tqft/cylinder, cobordism height=2, 
		every upper boundary component/.style={draw,blue,  dotted, thick},
  		every lower boundary component/.style={draw, blue, thick},
	at={(0,0)}];  		
	\pic [tqft/cylinder, cobordism height=.5, 
	 	cobordism edge/.style={draw},
	at={(0,-1.5)}];  
	\pic [tqft/cylinder, cobordism height=.5, 
	 	cobordism edge/.style={draw},
	at={(0,-.5)}];  
	\pic [tqft/cylinder, cobordism height=.5, 
		every incoming upper boundary component/.style={draw, densely dotted, thick, red},
  		every incoming lower boundary component/.style={draw, thick, red},
	at={(0,-1)}];  
	
	\pic [tqft/cylinder, at={(2,0)}];    
	\pic [tqft/cylinder, at={(4,0)}];  
	\pic [tqft/cylinder, at={(6,0)}]; 
	\pic [tqft/cylinder, cobordism height=2, 
		every upper boundary component/.style={draw,blue,  dotted, thick},
  		every lower boundary component/.style={draw, blue, thick},
	at={(2,0)}];  
	\pic [tqft/cylinder, cobordism height=2, 
		every upper boundary component/.style={draw,blue,  dotted, thick},
  		every lower boundary component/.style={draw, blue, thick},
	at={(4,0)}];  
	\pic [tqft/cylinder, cobordism height=2, 
		every upper boundary component/.style={draw,blue,  dotted, thick},
  		every lower boundary component/.style={draw, blue, thick},
	at={(6,0)}];  
	\pic [tqft, incoming boundary components=4, outgoing boundary components=0, 
		cobordism edge/.style={draw},
		cobordism height=3, 
	name=b, at={(0,-2)}];   
	\begin{scope}[rotate=0, shift={(4,-1)}, scale=2,  gray]
		\draw (0,0) ellipse (.18 and .08);
		\draw[] (-.14,-.06)--(.14,.06);
		\draw[] (.14,-.06)--(-.14,.06);
	\end{scope};
	\begin{scope}[rotate=0, shift={(2,-1)}, scale=2,  gray]
		\draw (0,0) ellipse (.18 and .08);
		\draw[] (-.14,-.06)--(.14,.06);
		\draw[] (.14,-.06)--(-.14,.06);
	\end{scope};
	\begin{scope}[rotate=0, shift={(6,-1)}, scale=2, gray]
		\draw (0,0) ellipse (.18 and .08);
		\draw[] (-.14,-.06)--(.14,.06);
		\draw[] (.14,-.06)--(-.14,.06);
	\end{scope};
		\draw[<->, rounded corners=20pt,  solid] (-1,.5)--(-1.8, -1)--(-1,-2.5);
		\node at (-2,-1) {$c$};
		\node at (1,-1) {$\textcolor{red}{\Sigma^c}$};
		\node at (7.5,-1) {$\Sigma$};
\end{scope}
\begin{scope}[shift={(14,0)}]
	\pic [tqft, incoming boundary components=0, outgoing boundary components=4, 
		cobordism edge/.style={draw},
		cobordism height=3, 
		name=a,  at={(0,3)}]; 
	\pic [tqft/cap, 
		every upper boundary component/.style={draw, densely dotted, thick, red},
  		every lower boundary component/.style={draw, thick, red},
	at={(0,1)}];  
	\pic [tqft/cap, at={(0,0)}]; 
	\pic [tqft/cap, at={(2,0)}];    
	\pic [tqft/cap, at={(4,0)}];  
	\pic [tqft/cap, at={(6,0)}]; 
	\pic [tqft/cap, at={(0,1)}];  
	\pic [tqft/cap, at={(2,1)}];    
	\pic [tqft/cap, at={(4,1)}];  
	\pic [tqft/cap, at={(6,1)}]; 
	\pic [tqft/cup, at={(0,-1)}];  
	\pic [tqft/cup, at={(2,-1)}];    
	\pic [tqft/cup, at={(4,-1)}];  
	\pic [tqft/cup, at={(6,-1)}]; 
	\pic [tqft/cup, at={(0,0)}];  
	\pic [tqft/cup, at={(2,0)}];    
	\pic [tqft/cup, at={(4,0)}];  
	\pic [tqft/cup, at={(6,0)}]; 
	\pic [tqft, incoming boundary components=4, outgoing boundary components=0, 
		cobordism edge/.style={draw},
		cobordism height=3, 
	name=b, at={(0,-2)}];   
	\begin{scope}[rotate=0, shift={(4,-1)}, scale=2, gray]
		\draw (0,0) ellipse (.18 and .08);
		\draw[] (-.14,-.06)--(.14,.06);
		\draw[] (.14,-.06)--(-.14,.06);
	\end{scope};
	\begin{scope}[rotate=0, shift={(2,-1)}, scale=2, gray]
		\draw (0,0) ellipse (.18 and .08);
		\draw[] (-.14,-.06)--(.14,.06);
		\draw[] (.14,-.06)--(-.14,.06);
	\end{scope};
	\begin{scope}[rotate=0, shift={(6,-1)}, scale=2, gray]
		\draw (0,0) ellipse (.18 and .08);
		\draw[] (-.14,-.06)--(.14,.06);
		\draw[] (.14,-.06)--(-.14,.06);
	\end{scope};
		\draw[<->, rounded corners=20pt,  solid] (-1,.5)--(-1.8, -1)--(-1,-2.5);
		\node at (-2,-1) {$c_0$};
		\node at (7.5,-1) {$\Sigma_0$};
\end{scope}		
\draw[->, rounded corners=20pt, densely dashed] (8, 0)--(10, .5)--(12,0);
\node at (10,1) {pinch};
\end{tikzpicture}
\eest

Any orientation data on $\Si$ can similarly be split to induce an orientation data on the split surface  $\Si_0$. For two different orientation data on the split surface, the invariants of the $i$'th  sphere differ by a factor of $\ep_i^d$, where $\ep_i=\pm 1$ (by Proposition \ref{crosscap}), and the invariants on the doublet by $(-1)^{dm}$ (as above). The splitting formula \eqref{split.separate} then implies the same is true for the invariants of the original surface. 
\end{proof}
 
\begin{lemma} \label{indep.or} Assume $\Si$ is a connected surface with $2r$ pairs of marked points, and $c_1$, $c_2$ are two real structures on $\Si$. Then for every orientation data $\mathfrak{o}_{c_1}$ on $(\Si,c_1)$ there exists an orientation data $\mathfrak{o}_{c_2}$ on $(\Si,c_2)$ so that
 \bear\label{indep.cx}
 RGW^{c_1,\mathfrak{o}_1}_{d} (\Si|L)_{\la^1\dots \la^r}=RGW^{c_2,\mathfrak{o}_2}_{d} (\Si|L)_{\la^1\dots\la^r}. 
 \eear
 \end{lemma}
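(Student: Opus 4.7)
The plan is to reduce both sides to a common expression in elementary pieces via the splitting formulas of Theorem~\ref{gluing}, and then to use the flexibility in the choice of orientation data provided by Lemma~\ref{indep.or.gen} and Proposition~\ref{crosscap} to make the signs match.

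First I would split both $(\Si, c_1)$ and $(\Si, c_2)$ along maximal collections of pairs of conjugate splitting circles so that their normalizations decompose into elementary pieces: (a) doublets of three-holed spheres, which are controlled by the complex GW invariants of their halves via Corollary~\ref{C.Z.part.anti-diag-balanced}, and (b) symmetric spheres with at most one fixed boundary circle, which after capping off a trivial pair of conjugate points can be treated via Proposition~\ref{crosscap}. The splitting formula \eqref{split.formula.2} expresses each RGW invariant as a sum, over ramification patterns $\la$ on the interior edges, of products of contributions from these building blocks weighted by the $\zeta(\la) t^{2\ell(\la)}$ factors.

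Next I would match the pieces on the two sides. The topological content of the argument is that, up to diffeomorphism, any symmetric surface can be cut into doublets of pants plus one crosscap cap per boundary fixed circle, and any two real structures $c_1, c_2$ on the same underlying smooth $\Si$ yield, after suitable generic choices of conjugate splitting circles, normalizations with the same complex type. The invariants of the doublet pieces are intrinsic to the complex geometry of the half and are thus insensitive to the real structure; the invariants of the crosscap cap pieces are given by Proposition~\ref{crosscap} and depend on the orientation data only through a global $\ep^d$ factor with $\ep = \pm 1$. Therefore the two sides agree up to an overall sign of the form $(-1)^{dm}$ for some $m \in \Z$ independent of $\vec\la$.

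Finally I would fix the sign by choosing $\mathfrak{o}_{c_2}$ appropriately. By Lemma~\ref{indep.or.gen}, on any connected or doublet building block any sign of the form $(-1)^{dm}$ can be realized by changing the orientation data, and by Proposition~\ref{crosscap} each crosscap cap can independently be assigned either value of $\ep = \pm 1$. Starting from the given $\mathfrak{o}_1$ on $(\Si, c_1)$, I would propagate these two types of local sign adjustments through the building blocks of the splitting of $(\Si, c_2)$ so that each local factor matches, and then reassemble the local orientation choices into a global twisted orientation data $\mathfrak{o}_2$ on $(\Si, c_2)$. The main obstacle will be this reassembly step: the local orientation choices must glue to a well-defined twisted orientation data on the unsplit surface, which amounts to checking that the first Chern class condition and the Real determinant isomorphism of Definition~\ref{TRO}, together with the spin structure on $\Si^{c_2}$, are compatible with the prescribed values across the gluing loci. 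Once this compatibility is established, the identity \eqref{indep.cx} follows term by term in $\la$ from the splitting formula.
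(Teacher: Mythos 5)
The broad strategy — decompose via the splitting formula, control the crosscap pieces via Proposition~\ref{crosscap}, control the doublet pieces via the identification with complex GW invariants, and adjust signs by Lemma~\ref{indep.or.gen} — is the same as the paper's.  However, the way you implement it differs in a way that introduces a genuine gap.

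The paper's actual proof replaces \emph{one sphere at a time}: it splits off a single sphere containing a crosscap or a fixed circle, replaces that sphere by one carrying the other real structure (choosing the orientation data on the new sphere via Proposition~\ref{crosscap} so that its invariants match), and leaves the complement untouched; iterating transforms $(\Si,c_1)$ into $(\Si,c_2)$ with matching invariants, using the topological classification of real structures on a surface.  Because the complement is held fixed at every step, the question of matching decompositions never arises.  Your version instead cuts both $(\Si,c_1)$ and $(\Si,c_2)$ fully into elementary pieces and asserts that ``after suitable generic choices of conjugate splitting circles, normalizations with the same complex type'' are obtained.  This is not correct as stated: two real structures on the same genus-$g$ surface generically have different numbers of fixed circles and crosscaps, so the resulting decompositions have different numbers of cap pieces (and hence different numbers of pants-doublets).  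Reconciling them would require the Klein-bottle relation (two crosscaps $\leftrightarrow$ one genus) for the invariants, which is part of the KTQFT structure being established downstream of this lemma — so this cannot be assumed here without circularity.

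Your concern about the final reassembly step is legitimate but is addressed in the paper elsewhere: the paragraph preceding Theorem~\ref{gluing} explicitly asserts that a line bundle and twisted orientation data on the normalization descend to the nodal curve and deform to data on the smoothed surface. In the paper's one-sphere-at-a-time argument this suffices, because at each step one only needs to glue across a single pair of conjugate circles while the data on the complement is inherited unchanged.  In your version you would need the simultaneous compatibility across all splitting circles, which is more delicate but ultimately handled the same way; the real obstacle is the piece-matching issue above, not the reassembly.
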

\begin{proof} Real structures on $\Si$ are classified topologically by the number of fixed circles of $\Si$ and the orientability of $\Si/c$. We can transform $(\Si,c_1)$ into $(\Si,c_2)$ via a sequence of splittings of a sphere around either a crosscap or a fixed circle as above and replacing that sphere by a sphere with the other real structure. By Proposition \ref{crosscap}  we can choose the orientation data on the new sphere so that its invariants match those of the old sphere. The claim follows from the splitting formula \eqref{split.formula.2}.  
\end{proof}

\begin{rem}\label{R.canord}  When the target is connected, Lemma  \ref{lemcdouble.vfc} implies that the orientation of the doublet moduli space depends neither on the choice of orientation data, nor on the real structure of the target. 

When the target is a doublet, then up to deformation different choices of orientation data are distinguished by the degree of $\Theta|_{\Si_2}$, cf. Example~\ref{R.or.choice}(b).  As in the proof above, the local RGW invariants then differ by a factor of $(-1)^{dm_2}$, where $m_2$ is the difference between these degrees. 
\end{rem} 

\subsection{Canonical  RGW invariants} Assume $(\Si, c)$ is a symmetric surface with $r$ pairs of conjugate points. The discussion above partitions the choices of orientation data $\fo$ on $(\Si, c)$ into two nonempty classes, distinguished by the sign $\ep_\fo=\pm  1$ of the $d=1$ cover of $(\Si, c)$. 
\begin{defn} \label{D.o.can} A \textsf{canonical twisted orientation} for $(\Si,c)$ corresponds to a choice of twisted orientation data 
$\fo_{can}=\fo$ for which the degree 1 cover of $\Si$  has sign  $\ep_\fo=+1$.
\end{defn}

\begin{cor}\label{C.indep.or} With the notation above,
\bear\label{def.RGW.can}  
RGW_{d} (\Si|L)_{\la^1\dots \la^r} = (\ep_\fo)^d  RGW^{c,\mathfrak{o}}_{d} (\Si|L)_{\la^1\dots \la^r} 
\eear
is well defined, independent of the orientation data $\fo$ and of the real structure $c$ on $\Si$;  in particular, 
\best\label{def.rwg.can}
RGW_{d} (\Si|L)_{\la^1\dots \la^r} = RGW^{c,\mathfrak{o}_{can}}_{d} (\Si|L)_{\la^1\dots \la^r}.
\eest
It is also compatible with the splitting formula \eqref{split.formula.2}, in the sense that 
 \bear\label{split.formula.2.1}
RGW_d(\Si, L)_{\mu^1\dots \mu^r}^{\nu^1\dots\nu^s}= \sum_{\la\vdash d} 
RGW_d(\wt \Si, \wt L)_{\la, \mu^1\dots\mu^r} ^ {\la, \nu^1\dots\nu^s}. 
\eear
\end{cor}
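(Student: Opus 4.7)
The plan is to verify the four claims in turn: independence of $\fo$, independence of the real structure $c$, the identification $RGW_d = RGW^{c,\fo_{can}}_d$, and splitting compatibility. The unifying observation is that in degree~$1$ the moduli space consists essentially of the identity cover, so $RGW^{c,\fo}_1(\Si|L)_{\vec\la}$ equals $\ep_\fo$ times a quantity that depends only on $(\Si,c,L,\vec\la)$ and not on $\fo$. This identification is explicit for the sphere via Proposition~\ref{crosscap} and extends to the general case by splitting off level-$0$ spheres as in the proof of Lemma~\ref{indep.or.gen}: each sphere contributes its own $\ep_{\fo_i}$, while the residual doublet contribution is $\fo$-independent, and their product recovers $\ep_\fo$.

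For independence of $\fo$, invoke Lemma~\ref{indep.or.gen} to produce $m\in\Z$ with $RGW^{c,\fo_1}_d=(-1)^{dm}\,RGW^{c,\fo_2}_d$ for every $d$. Specializing to $d=1$ yields $\ep_{\fo_1}=(-1)^m\ep_{\fo_2}$, hence $\ep_{\fo_1}^d=(-1)^{dm}\ep_{\fo_2}^d$. Combining these two relations gives $\ep_{\fo_1}^d \, RGW^{c,\fo_1}_d = \ep_{\fo_2}^d \, RGW^{c,\fo_2}_d$, so the right-hand side of \eqref{def.RGW.can} does not depend on $\fo$. For independence of $c$, apply Lemma~\ref{indep.or}: given real structures $c_1,c_2$ on $\Si$ and any $\fo_1$ on $(\Si,c_1)$, there exists $\fo_2$ on $(\Si,c_2)$ with $RGW^{c_1,\fo_1}_d = RGW^{c_2,\fo_2}_d$ for all $d$; setting $d=1$ forces $\ep_{\fo_1}=\ep_{\fo_2}$, and combining with the previous step gives full independence. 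The identification $RGW_d = RGW^{c,\fo_{can}}_d$ is then immediate from $\ep_{\fo_{can}}=+1$ (Definition~\ref{D.o.can}).

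For splitting compatibility, apply the splitting formula \eqref{split.formula.2} to any compatible pair $(\fo,\wt\fo)$ in degree $d=1$, where the sum over partitions of~$1$ reduces to the single term $\la=(1)$. Writing each side as $\ep\cdot F$ with $F$ the $\fo$-independent factor from the first paragraph, the resulting equality forces $\ep_\fo=\ep_{\wt\fo}$; equivalently, the identity cover of $\Si$ deforms through the splitting to the identity cover of $\wt\Si$ with matching sign. Multiplying \eqref{split.formula.2} by the common factor $\ep_\fo^d=\ep_{\wt\fo}^d$ then yields the canonical splitting formula \eqref{split.formula.2.1}. The main subtle point throughout is the degree-$1$ identification $RGW^{c,\fo}_1 = \ep_\fo\cdot F(\Si,c,L,\vec\la)$; once this sphere-reduction is carried out carefully, the remaining statements follow formally from the three earlier results \ref{indep.or.gen}, \ref{indep.or}, and the splitting formula.
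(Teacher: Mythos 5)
Your proof is correct and follows essentially the same route as the paper's: independence of $\fo$ from Lemma~\ref{indep.or.gen} together with the $d=1$ case, independence of $c$ from Lemma~\ref{indep.or} at $d=1$, and the splitting formula \eqref{split.formula.2.1} from the fact that degree-$1$ covers split as degree-$1$ covers (so $\ep_\fo=\ep_{\wt\fo}$). The only superfluous step is the sphere-splitting discussion in your first paragraph: the identity $RGW^{c,\fo}_1(\Si|L)_{\vec\la}=\ep_\fo\cdot F$ with $F$ independent of $\fo$ is immediate, since the degree-$1$ moduli space is a single point and $\ep_\fo$ is by definition the sign of its virtual class, so no reduction to Proposition~\ref{crosscap} is needed.
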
 
\begin{proof} The fact that \eqref{def.RGW.can} is independent of the orientation data $\fo$ on $(\Si,c)$ follows from Lemma~\ref{indep.or.gen}. 
Next, \eqref{def.RGW.can} is also independent of the real structure $c$ on $\Si$  by Lemma~\ref{indep.or} since   
\eqref{indep.cx} for $d=1$ implies that the sign of the degree 1 cover is the same for both $\fo_1$ and $\fo_2$. 
Finally, under the splitting \eqref{split.formula.2} degree 1 covers split as degree 1 covers, giving \eqref{split.formula.2.1}. 
\end{proof}

We end this section with a few consequences of this discussion. 
\begin{cor}\label{C.cong}
The degree $d$, connected genus $h$ real invariants of a connected genus $g$ target vanish unless $d(g-1)+h-1\equiv 0 \mod 2$.
\end{cor}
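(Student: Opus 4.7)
I would attempt to deduce the parity claim for $CRGW_d$ from a parity symmetry of the disconnected $RGW_d$ under $u\to -u$, combined with the exponential relation \eqref{rgw=exp} and the automatic even $u$-parity of doublet contributions.

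The key preliminary observation is that, by \eqref{power.t}, the power of the equivariant parameter $t$ in each $RZ^{c,\fo}_{d,\chi}(\Sigma,L)$ equals $d(g-1-k)$ (with $k=c_1(L)[\Sigma]$), independent of the domain Euler characteristic $\chi$. So the invariants factor cleanly as
\[
RGW_d=t^{d(g-1-k)}F_d(u),\quad CRGW_d=t^{d(g-1-k)}G_d(u),\quad DRGW_{2d}=t^{2d(g-1-k)}H_{2d}(u).
\]
By \eqref{doublet.invar}, the $u$-power of $DRGW_{2d}$ at doublet genus $h$ is $2d(1-g+k)+2(h-1)$, always even, so $H_{2d}(-u)=H_{2d}(u)$. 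Substituting $\tilde q=qt^{g-1-k}$ then rewrites \eqref{rgw=exp} as $1+\sum_d F_d(u)\tilde q^d=\exp\bigl(\sum_d G_d(u)\tilde q^d+\sum_d H_{2d}(u)\tilde q^{2d}\bigr)$.

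Next I would establish the parity $F_d(-u)=(-1)^{dk}F_d(u)$. In the Calabi-Yau case $k=g-1$ this follows directly from Theorem \ref{MainCY}: each summand is a $(g-1)$-th power of $\prod_{\square\in\rho}2\sinh(h(\square)u/2)$, a product of $d$ odd functions of $u$, yielding $F_d(-u)=(-1)^{d(g-1)}F_d(u)=(-1)^{dk}F_d(u)$. For general $k$ the same parity is a consequence of the explicit structural formula of Theorem \ref{T.str.gen}. Granting this, applying $u\to -u$ to the exponential relation above and comparing with $\tilde q\to(-1)^k\tilde q$ (which leaves the doublet sum invariant since $(-1)^{2dk}=1$) gives $G_d(-u)=(-1)^{dk}G_d(u)$. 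Thus $CRGW_d$ contains only $u$-powers $P$ with $P\equiv dk\pmod 2$; since $P=d(1-g+k)+h-1$, this rewrites as $d(g-1)+h-1\equiv 0\pmod 2$, as claimed.

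The main obstacle is verifying the parity $F_d(-u)=(-1)^{dk}F_d(u)$ in full generality: outside the Calabi-Yau setting, the argument I have relies on Theorem \ref{T.str.gen}, which is proved only later in the paper. A cleaner, self-contained alternative would be to produce a direct involution on the universal index bundle over $\ov{\M}^c_{d,h}(\Sigma)$ forcing odd Chern classes of $-\mathrm{Ind}\,\dbar_L$ to vanish (in analogy with the Hodge-bundle argument in the proof of Lemma \ref{L.level.0}); but this is subtle, as the real structure $c_{tw}$ naturally endows only the direct sum $\mathrm{Ind}\,\dbar_L\oplus\mathrm{Ind}\,\dbar_{c^*\ov L}$ with a real structure, not $\mathrm{Ind}\,\dbar_L$ itself.
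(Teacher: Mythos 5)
Your argument is mathematically coherent, and you correctly identified the obstruction yourself: the parity $F_d(-u)=(-1)^{dk}F_d(u)$ in full generality rests on Theorem~\ref{T.str.gen}, which is only established in \S\ref{Solving}, whereas Corollary~\ref{C.cong} is stated and proved in \S\ref{S.Can.TO} as part of the orientation discussion that the later machinery builds on. Within the paper's logical order this would be circular (or at best require a reorganization of the argument), and your proposed self-contained replacement --- a direct involution on $\mathrm{Ind}\,\dbar_L$ --- is indeed blocked exactly as you say, since $c_{tw}$ only makes $\mathrm{Ind}\,\dbar_L\oplus\mathrm{Ind}\,\dbar_{c^*\ov L}$ into a Real bundle, not $\mathrm{Ind}\,\dbar_L$ itself.

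The paper's own proof is both shorter and more elementary, and avoids the $u\to-u$ symmetry entirely. By Corollary~\ref{C.indep.or} the canonical RGW invariants are independent of the real structure $c$ on the target; Remark~\ref{R.canord} gives the same for the doublet invariants, so via \eqref{rgw=exp} the connected invariants $CRGW$ are also independent of $c$. One is then free to choose $c$ fixed-point free, in which case \cite[Example 5.1]{gz3} shows there are no real degree $d$ maps from a connected genus $h$ surface to $(\Si,c)$ at all unless $d(g-1)+h-1\equiv 0 \bmod 2$: the moduli space is empty and the invariant vanishes trivially. This replaces your analytic parity argument with a topological one, leveraging the already-established independence of the real structure (Lemma~\ref{indep.or}, Corollary~\ref{C.indep.or}) to reduce to the target geometry where the vanishing is manifest --- and, crucially, it uses nothing from \S\ref{Solving}.
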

\begin{proof} By Corollary \ref{C.indep.or} the  RGW invariants \eqref{def.RGW.can} are independent of the choice of real structure and of orientation data on the target.  For a connected target, the same is true for the doublet invariants $DRGW$ by Remark~\ref{R.canord}. Since the RGW invariants are equal to $\exp(CRGW+DRGW)$ as in \eqref{rgw=exp} it follows that the connected RGW invariants of a connected target $\Si$ are also independent of these choices. Finally, when the real structure on the connected genus $g$ target has no fixed locus, there are no real degree $d$ maps from a connected genus $h$ surface to $\Si$  unless $d(g-1)+h-1\equiv 0 \mod 2$ cf. \cite[Example 5.1]{gz3}. Therefore the connected invariants vanish for any choice of real structure and orientation data unless this condition is satisfied. 
\end{proof}

\begin{lemma}\label{signOm}
	Exchanging the order within the $i$-th pair of conjugate marked points of $\Si$ changes 
	$RGW_{d} (\Si|L)_{\la^1\dots \la^r}$ by a factor of $(-1)^{d-\ell(\la^i)}$. Exchanging two pairs of conjugate points does not change the invariant.
	\end{lemma}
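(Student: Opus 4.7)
The underlying set of the moduli space $\ov\M^{c,\bullet}_{d,\chi}(\Si)_{\vec\la}$ is unchanged by either relabeling, so the plan is to compare orientations. By \eqref{or.moduli.rel} the orientation decomposes as the tensor product of $\det\dbar_{(T\Si, dc)}$ (twisted by the orientation data $\fo$) and $\mathfrak f^*\det T\ov{\R\M}^\bullet_{\chi, \ell}$. Since the relative tangent bundle \eqref{T.punctured} is symmetric in the two members of each conjugate pair of marked points on $\Si$, and the orientation data $\fo$ is correspondingly invariant, the first factor does not change under either relabeling. The entire sign must therefore come from the real Deligne--Mumford factor, whose orientation depends on the choice of ``first element'' in each of the $\ell=\sum_i\ell(\la^i)$ pairs of conjugate marked points on the domain.

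For the second statement, exchanging two entire pairs $(x_i^+,x_i^-) \leftrightarrow (x_j^+,x_j^-)$ on the target induces a block-level transposition of the two corresponding groups of conjugate preimage pairs on the domain, preserving the $\pm$ label within each pair. This is an even permutation of the underlying marked points of $\ov{\R\M}^\bullet_{\chi,\ell}$, so no sign arises.

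For the first statement, I would apply the splitting formula \eqref{split.separate} along a pair of conjugate circles enclosing $(x_i^+,x_i^-)$ to write
\[
RGW_d(\Si, L)_{\vec\la} \;=\; \sum_{\mu\vdash d}\; RGW_d(\Si',L')_{\vec\la\setminus\la^i}^{\mu}\cdot RGW_d(S^2,\O)_{\la^i,\mu},
\]
so that only the sphere factor $RGW_d(S^2,\O)_{\la^i,\mu}$ is affected by the swap. For the doublet contribution to this sphere factor, Corollary \ref{corcdefn.Z.part.anti-diag-balanced} expresses it as a signed sum over decompositions $\la^i=\la^i_+\sqcup\la^i_-$ weighted by $(-1)^{\ell(\la^i_-)}$; the swap exchanges $\la^i_+\leftrightarrow\la^i_-$, and together with the $\la_+\leftrightarrow\la_-$ symmetry of the relevant complex GW invariants this produces the global factor $(-1)^{d-\ell(\la^i)}$ after reindexing (using $\ell(\la^i_+)+\ell(\la^i_-)=\ell(\la^i)$ and $|\la^i_+|+|\la^i_-|=d$). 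For the connected sector, one argues directly on $\ov{\R\M}^\bullet_{\chi,\ell}$ that a single swap $y^+\leftrightarrow y^-$ within one preimage pair of ramification $k$ multiplies the real DM orientation by $(-1)^{k-1}$; taking the product over the $\ell(\la^i)$ preimage pairs over $(x_i^+,x_i^-)$ then gives $\prod_j(-1)^{\la^i_j-1}=(-1)^{d-\ell(\la^i)}$.

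The main expected obstacle is the per-pair local orientation analysis on the real Deligne--Mumford moduli, which requires the framework of \cite{gz} together with the local ramification model $z\mapsto z^k$ to extract the correct sign $(-1)^{k-1}$ at each conjugate preimage pair.
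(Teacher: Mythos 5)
Your decomposition of the orientation sheaf via \eqref{or.moduli.rel} is right, and so is the observation that the factor $\det\dbar_{(T\Si,dc)}$ is insensitive to the relabeling, so any sign must come from the real Deligne--Mumford factor. But the sign you then assign to that factor is unsupported: the orientation of $\ov{\R\M}^\bullet_{\chi,\ell}$ is built (cf.\ \eqref{or.DM}) by orienting the forgetful morphism at each conjugate pair $(y^+,y^-)$ via the \emph{first} element, and swapping $y^+\leftrightarrow y^-$ flips that orientation by exactly $-1$, \emph{independently of the ramification order} $k$. The moduli space $\ov{\R\M}^\bullet_{\chi,\ell}$ carries no cover data; the local model $z\mapsto z^k$ lives entirely on the map side and never touches the DM factor. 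So the DM contribution is $(-1)^{\ell(\la^i)}$, not $\prod_j(-1)^{\la^i_j-1}=(-1)^{d-\ell(\la^i)}$, and the assertion of a per-pair sign $(-1)^{k-1}$ is where the argument breaks.

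The missing ingredient is that the lemma is a statement about the \emph{canonical} invariant $RGW_d$ of Corollary~\ref{C.indep.or}, i.e.\ $(\ep_\fo)^d RGW^{c,\fo}_d$, not about $RGW^{c,\fo}_d$ for a fixed orientation datum. After the swap $x_i^+\leftrightarrow x_i^-$, the DM analysis above multiplies $RGW^{c,\fo}_d$ by $(-1)^{\ell(\la^i)}$; in degree $1$ every $\la^i=(1)$ has $\ell(\la^i)=1$, so the sign of the degree-$1$ cover flips and $\fo$ is no longer canonical for the relabeled target. Restoring canonicity costs an additional factor $(-1)^d$ by Lemma~\ref{indep.or.gen}, giving the total $(-1)^{\ell(\la^i)}\cdot(-1)^d=(-1)^{d-\ell(\la^i)}$. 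This is exactly what the paper does, in three lines, without any splitting. Your detour through the splitting formula and Corollary~\ref{corcdefn.Z.part.anti-diag-balanced} also does not resolve this: that corollary is phrased for $DRGW^{c,\fo}_{2d}$ with a fixed $\fo$, so the reindexing $\la^i_+\leftrightarrow\la^i_-$ only produces $(-1)^{\ell(\la^i)}$ from the weight $(-1)^{\ell^-}$, and again you would need the extra normalization step. Your argument for the second statement (exchanging two whole pairs is an even permutation, hence orientation-preserving) is fine and matches the paper's.
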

\begin{proof}
	Exchanging $x_i^+ \leftrightarrow  x_i^-$ in the target  exchanges their $\ell(\la^i)$ preimages, contributing the 
	$(-1)^{\ell(\la^i)}$ factor. For a degree 1 map, $\ell(\la^i)=1$ and thus this changes the sign of the degree 1 
	cover by a factor of -1. This forces a change in the twisted orientation data to compensate for the $-$ sign on the degree 1 cover as in  Lemma~\ref{indep.or.gen}. The effect of this change on a degree~$d$ map is $(-1)^d$. Altogether, this implies the first claim. The second claim follows immediately since permuting pairs of conjugate points in the domain is relatively orientation preserving at the level of the DM moduli spaces. 
\end{proof}

\begin{cor}
	The  degree $d$ RGW invariants \eqref{def.RGW.can} of a connected target vanish unless $d-\ell(\la^i)\equiv 0$ mod $2$ for all $i$.
	\end{cor}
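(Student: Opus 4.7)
My plan is to combine Lemma \ref{signOm} with a symmetry argument that uses the connectedness of the target. For a connected symmetric surface $(\Si, c)$ with marked points I would like to construct a self-diffeomorphism $\phi\colon \Si \to \Si$ that commutes with $c$, is orientation-preserving, exchanges $x_i^+$ with $x_i^-$, and fixes each of the other marked points. Given such a $\phi$, the post-composition map $f\mapsto \phi\circ f$ induces an isomorphism between $\ov \M^{c,\bullet}_{d,\chi}(\Si)_{\vec\la}$ set up with the original labeling and the same space set up with the preferred point of the $i$-th conjugate pair swapped. Since $RGW_d$ is, by Corollary \ref{C.indep.or}, independent of the choice of orientation data and of the real structure, this identification forces the invariants for the two orderings of the $i$-th pair to agree. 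Combined with Lemma \ref{signOm}, this yields $RGW_d(\Si|L)_{\vec\la} = (-1)^{d-\ell(\la^i)} RGW_d(\Si|L)_{\vec\la}$, whence vanishing whenever $d-\ell(\la^i)$ is odd.

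The construction of $\phi$ is purely topological and is where the connectedness hypothesis enters. I would choose an embedded arc $\gamma\subset\Si$ from $x_i^+$ to $x_i^-$ avoiding all other marked points and meeting its conjugate $c(\gamma)$ only at its endpoints; the existence of such a $\gamma$ follows from connectedness of $\Si$. Then $\gamma\cup c(\gamma)$ is a $c$-invariant embedded loop, and a $c$-equivariant half-twist supported in a tubular neighborhood of this loop produces the desired $\phi$. The archetypal case is $(\Si,c)=(\P^1, w\mapsto 1/\bar w)$ with $x_i^\pm = 0,\infty$ and $\phi(w)=1/w$, which is holomorphic, commutes with $c$, and exchanges $0$ with $\infty$.

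The main step to verify carefully is that the induced map on moduli spaces respects the canonical orientation, so that the two invariants really agree rather than differ by a sign contributed by $\phi$ itself. This should follow from Corollary \ref{C.indep.or}: $RGW_d$ is built purely from the isomorphism class of $(\Si, c, L, \{(x_i^+, x_i^-)\}_i)$ together with its chosen ordering within each pair, so the orientation-preserving $c$-equivariant automorphism $\phi$ transports the canonical orientation for the original labeling to the canonical orientation for the swapped labeling. The essential use of connectedness sits in constructing $\phi$: for a doublet target, any $c$-equivariant diffeomorphism exchanging $x_i^+$ with $x_i^-$ must swap the two components of $\Si$ and hence simultaneously exchange \emph{every} conjugate pair, so the argument could not be localized to a single index $i$.
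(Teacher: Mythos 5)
Your proof is correct and follows essentially the same route as the paper: both combine Lemma~\ref{signOm} with the observation that, on a connected target, the conjugate pair $(x_i^+,x_i^-)$ can be carried onto $(x_i^-,x_i^+)$. The paper phrases this move as a continuous deformation of the marked-point data, while you package it as the $c$-equivariant diffeomorphism arising from such an isotopy and appeal to Corollary~\ref{C.indep.or} to see the identification respects the canonical orientation, but these are equivalent.
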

\begin{proof}
This follows by Lemma~\ref{signOm} since on a connected target we can find a path connecting $x^+$ to $x^-$ and therefore continuously deform the pair of  conjugate marked points $(x^+, x^-)$ into $(x^-, x^+)$. 
\end{proof}

\begin{cor} \label{compBP} For a $g$-doublet target with all the positive marked points on the  first component, we have 
\bear\label{doublet=gw}
	RGW_d(g,g|k_1,k_2)_{\la^1\dots\la^r}(u,t)=(-1)^{dk_2} GW_d(g|k_1,k_2)_{\la^1\dots \la^r}(iu,it).
\eear
	\end{cor}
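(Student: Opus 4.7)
My plan is to derive Corollary~\ref{compBP} directly from Corollary~\ref{corR=GW.part.rel} together with the substitution $(u,t)\mapsto(iu,it)$, paralleling exactly the way Corollary~\ref{C.Z.part.anti-diag-balanced} was derived from Corollary~\ref{corcdefn.Z.part.anti-diag-balanced}. First I would fix a twisted orientation data $\fo=(\Theta,\psi,\mathfrak{s})$ on the $g$-doublet with $m_i=c_1(\Theta)[\Si_i]$ satisfying $m_1+m_2=2g-2$. Since all positive marked points lie on $\Si_1$, we have $\ell_2=0$, so Corollary~\ref{corR=GW.part.rel} reduces to
\begin{equation*}
RZ^{c,\fo}_{d,2\chi'}(D\Si\,|\,D(L_1,L_2))_{\vec\la}=(-1)^{d(k_2+m_2)+\chi'/2}\,Z_{d,\chi'}(\Si\,|\,L_1,L_2)_{\vec\la}.
\end{equation*}

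Next I would assemble the shifted partition functions. Using $\chi(\Si_d)/2=2-2g=\chi(\Si_c)$ and $\chi_{\mathrm{real}}=2\chi'$, the powers of $u$ in $RGW^{c,\fo}_d$ on the doublet target and in $GW_d$ on its connected half coincide term-by-term, both equal to $A=d(2-2g+k_1+k_2)-\chi'-\delta(\vec\la)$. Summing the displayed identity against $u^A$ over $\chi'$ gives
\begin{equation*}
RGW^{c,\fo}_d(g,g|k_1,k_2)_{\vec\la}(u,t)=(-1)^{d(k_2+m_2)}\sum_{\chi'}(-1)^{\chi'/2}\,u^{A}\,Z_{d,\chi'}(t).
\end{equation*}

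The key computational step is to recognize the right-hand side as $(-1)^{d(k_2+m_2)}GW_d(iu,it)$. Since $Z_{d,\chi'}=\nu\cdot t^{\iota_C-b_C}$ is a single monomial with $\iota_C=-\chi'-d(k_1+k_2)$ and $b_C=d(2-2g)-\chi'-\delta(\vec\la)$, a direct calculation yields $A+\iota_C-b_C=-\chi'$. Hence the substitution $(u,t)\mapsto(iu,it)$ multiplies each $\chi'$-summand by $i^{-\chi'}=(-1)^{\chi'/2}$, matching the sign above. This establishes
\begin{equation*}
RGW^{c,\fo}_d(g,g|k_1,k_2)_{\vec\la}(u,t)=(-1)^{d(k_2+m_2)}\,GW_d(g|k_1,k_2)_{\vec\la}(iu,it).
\end{equation*}

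The last step is to pass from $\fo$ to the canonical orientation via Corollary~\ref{C.indep.or}, giving $RGW_d=\ep_\fo^d\,RGW^{c,\fo}_d$. By Remark~\ref{R.canord}, the two deformation classes of orientation data on the $g$-doublet are distinguished by the parity of $m_2$, and the canonical class is pinned down by $\ep_\fo=+1$. I would verify that the canonical class is the one with $m_2$ even; combining this with the display above then cancels the $(-1)^{dm_2}$ factor and yields $RGW_d=(-1)^{dk_2}GW_d(iu,it)$. The main technical hurdle is precisely this identification of the canonical class on a doublet target. It can be settled either by invoking the splitting procedure of Lemma~\ref{indep.or.gen}, reducing to the explicit sphere construction in Proposition~\ref{crosscap}, or by directly tracking the orientation comparison in Lemmas~\ref{double.vfc.lemma} and \ref{P.res=} starting from the natural reference $m_2=0$.
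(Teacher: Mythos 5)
Your proposal follows the paper's argument in essentially the same way: reduce to Corollary~\ref{corR=GW.part.rel} with $\ell_2=0$, sum into the shifted generating functions, and absorb the $(-1)^{\chi'/2}$ into the substitution $(u,t)\mapsto(iu,it)$, leaving only the identification $m_2\equiv 0\bmod 2$ for $\fo_{can}$. The one step you flag as a ``to-do'' is exactly the paper's one-liner — Lemma~\ref{double.vfc.lemma} with $d=1$, $\ell_2=0$ gives the degree-$1$ cover sign $(-1)^{m_2}$, so $\ep_\fo=+1$ forces $m_2$ even — and your second proposed route is precisely that, so there is no genuine gap.
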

\begin{proof} Since the degree $d=1$ cover of a complex curve counts positively, Lemma~\ref{double.vfc.lemma} implies that, for a doublet target $\Si=\Si_1\sqcup \Si_2$,  $\fo_{can}$ corresponds to a choice $(\Theta,\psi,\mathfrak{s})$ such that 
	\best c_1(\Theta)[\Si_2] \equiv 0  \mod 2. 
	\eest
(Note that $\ell_2=0$ because by assumption all the + points are on the first component.) By Corollary \ref{corR=GW.part.rel}, the real and complex invariants differ by a factor of  $(-1)^{dk_2+\chi/2}$. Since the correspondence $(u,t)\mapsto(iu,it)$
changes the coefficient $GW_{d, \chi}(g| k_1,k_2)_{\la^1\dots\la^r}$ by $(-1)^{\chi/2}$,  we obtain \eqref{doublet=gw}.
\end{proof}

 \vskip.2in 
\section{TQFT and Klein TQFT}\label{KTQFT}
 \vskip.1in 
 
We will use the local RGW invariants to define an extension of a semi-simple Klein TQFT in \S\ref{KTQFTforRGW}, which we completely solve in \S\ref{Solving}, obtaining explicit closed formulas for the local RGW invariants. This section contains a brief overview of TQFTs and Klein TQFTs,  following \cite[\S4]{bp1} and \cite[\S1]{braun} (up to some change in notation), and a discussion of   semi-simple ones. 

Let $\cob$ be the usual (oriented, closed) 2-dimensional cobordism category. It is the symmetric monoidal category with objects given by compact oriented 1-manifolds (without boundary) and morphisms given by (diffeomorphism classes of) oriented cobordisms. A \textsf{2-dimensional topological quantum field theory (2d TQFT)}  with values in a commutative ring $R$ is a symmetric monoidal functor
\best
F:\cob \ra R\mathrm {mod},
\eest
where $R\mathrm {mod}$ is the category of $R$-modules. This is equivalent to a commutative Frobenius algebra over $R$; the product and co-product correspond to the pair of pants while the unit and co-unit to the cap and cup respectively, see Figure~\ref{F.gen.2cob}.   

In \cite[\S4.2]{bp1}, Bryan and Pandharipande enlarge the category $\cob$ to a category $\cob^{L_1, L_2}$
with the same objects, but with extra morphisms. The morphisms are now equivalence classes of oriented cobordisms $W$ decorated by a pair of complex line bundles $L_1, L_2\ra W$ trivialized over the boundary. The equivalence is up to bundle isomorphisms covering diffeomorphisms between the cobordisms (and compatible with the trivializations over the boundary). The composition is given by concatenation of the cobordisms and gluing the bundles using the trivializations over the boundary. 

For example, a morphism in $\cob^{L_1, L_2}$ corresponding to a connected cobordisms $W$ is completely determined by the genus $g$ of $W$ together with a pair of integers $(k_1, k_2)$, called the  \textsf{level},  recording the  Euler classes $e(L_i) \in H^2(W, \bd W)$. Restricting the morphisms to $k_1=k_2=0$ defines an embedding 
\best\label{emb.cob} 
\cob\subset \cob^{L_1, L_2}.
\eest

In \cite[\S4.4]{bp1} Bryan-Pandharipande use the local GW invariants to define a symmetric monoidal functor
\bear\label{GW.functor}
{\bf GW}:\cob^{L_1, L_2} \ra R\mathrm {mod}. 
\eear 
on this larger category. The functor \eqref{GW.functor} extends the classical 2d TQFT that appeared in the work of 
Dijkgraaf-Witten \cite{DW} and Freed-Quinn \cite{FQ}, and whose Frobenius algebra is the center $\Q[S_d]^{S_d}$ of the group algebra of the symmetric group $S_d$.  It is used to completely solve the local Gromov-Witten theory.\\
\smallskip
 
 A different extension of $\cob$ is obtained by allowing unoriented and possibly unorientable surfaces as cobordisms; see \cite{AN, braun}. We refer to this category as $\kcob$, where $\bf{K}$ stands for Klein (surface).  The objects are closed unoriented 1-manifolds   and the morphisms are diffeomorphism classes of {\em unoriented} (and possibly unorientable) cobordisms. An equivalent point of view is to consider the orientation double covers of both the objects and the morphisms: (i) the objects are then closed oriented 1-manifolds with an orientation-reversing involution (deck transformation) exchanging the sheets of the cover and (ii) the morphisms are compact oriented 2-dimensional manifolds with a {\em fixed-point free} orientation-reversing involution extending the one on the boundary. Such 2-dimensional manifolds are called \textsf{symmetric surfaces} and we denote this category by $\SymRiem$.  Moreover
 $$
 \kcob \equiv \SymRiem
 $$
 where the identification is obtained by passing to the orientation double cover  in one direction and taking the quotient by the involution  in the other direction. Working from the perspective of $\SymRiem$ allows us to construct an extension of this category related to that of \cite{bp1} and completely solve the local real Gromov-Witten theory. For this reason we describe $\kcob$ and $\SymRiem$ in parallel below.

\begin{rem}\label{R.cob-subcateg} As mentioned after \cite[Definition~1.7]{braun}, it is convenient to identify $\kcob$ (and respectively $\cob$) with its  skeleton, which is the full subcategory whose objects are disjoint unions of copies of a {\em fixed} oriented circle $S^1$. For $\SymRiem$ we take the full subcategory whose objects are disjoint unions  of two circles ${\cal S}= (S^1\sqcup \ov{S^1}, \ep)$, where $\ov {S^1}$ denotes the circle with opposite orientation and $\ep|_{S^1}=id:S^1\lra \ov {S^1}$. This way, $\cob$ can be regarded as a subcategory of $\kcob$ with the same objects, but fewer morphisms:
	\best\label{emb.cob.kcob} 
	\cob\subset \kcob.
	\eest 
Note that even if a cobordism in $\kcob$ is orientable, there may not be way to orient it in a way compatible with the boundary identifications. For example,  Figure~\ref{F.id.U} shows two different cobordisms, the first one being the tube (which induces the identity). The second one reverses the orientation of the $S^1$ and we refer to it as the \textsf{involution} $\Omega$. It is a morphism in $\kcob$ but not in $\cob$. The difference is even more visible from the perspective of $\SymRiem$, cf.  second cobordism in \eqref{F.double.xcap}. 
\end{rem}

\begin{figure}[ht!]
	\begin{center}
		\begin{tikzpicture}[baseline={(current bounding box.center)}, scale=1, rotate=90, every tqft/.style={transform shape},
		tqft/.cd, 
		cobordism/.style={draw},
		every upper boundary component/.style={draw},
  		every incoming lower boundary component/.style={draw, densely dashed},
  		every outgoing lower boundary component/.style={draw},
		every outgoing upper boundary component/.style={decorate, decoration={markings,
				mark=at position .5 with {\arrow{<}}, },}, 
		every incoming upper boundary component/.style={decorate, decoration={markings,
				mark=at position .5 with {\arrow{<}}, },}  
		]
		\pic [tqft/cylinder, name=f];    
		\node at ([yshift=30pt] f-outgoing boundary 1) {$id$};
		\end{tikzpicture}
		\hskip.4in
		\begin{tikzpicture}[baseline={(current bounding box.center)}, scale=1, rotate=90, every tqft/.style={transform shape},
		tqft/.cd, 
		cobordism/.style={draw},
		every upper boundary component/.style={draw},
  		every incoming lower boundary component/.style={draw, densely dashed},
  		every outgoing lower boundary component/.style={draw},
		every outgoing upper boundary component/.style={decorate, decoration={markings,
				mark=at position .5 with {\arrow{>}}, },}, 
		every incoming upper boundary component/.style={decorate, decoration={markings,
				mark=at position .5 with {\arrow{<}}, },}  
		]
		\pic [tqft/cylinder, name=f];    
		\node at ([yshift=30pt] f-outgoing boundary 1) {$\Om$};
		\end{tikzpicture}
	\end{center}
	\caption{The tube (identity) and the involution $\Om$ in $\kcob$.  } 
	\label{F.id.U}
\end{figure}
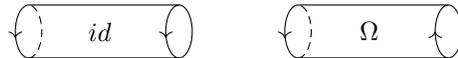

When $\cob$ is regarded as a subcategory of $\kcob$ as described in Remark~\ref{R.cob-subcateg}, its generators are given in Figure~\ref{F.gen.2cob}  (cf. \cite[Figure~1.1]{braun}). The corresponding elements of $\SymRiem$ are their orientation double covers, cf. Figure~\ref{F.gen.2cobSR}.
  
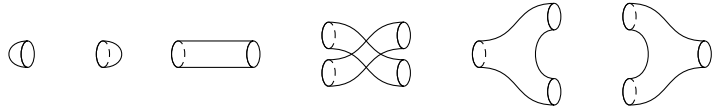
\begin{figure}[ht!]
	\begin{tikzpicture}[ baseline={(current bounding box.center)}, scale=1/2, rotate=90, every tqft/.style={transform shape},
		tqft/.cd, 
		cobordism/.style={draw},
		every upper boundary component/.style={draw},
  		every incoming lower boundary component/.style={draw, densely dashed},
  		every outgoing lower boundary component/.style={draw},
		]		
		\pic [tqft/cap, at={(0,10)}, name=a];    
		\pic[tqft/cup, at={(0,6)}, name=b];
		\pic [tqft/cylinder, at={(0,4)}, name=c];    
		\pic [tqft/cylinder to next, anchor=incoming boundary 1,  at={(-0.5,0)}, name=d];
		\pic [tqft/cylinder to prior,anchor=incoming boundary 1, 
		at=(d-outgoing boundary |- d-incoming boundary), name=e, ];     
		\pic [tqft/pair of pants, at={(0,-4)}];  
		\pic [tqft/reverse pair of pants, at={(-1,-8)}];  
	\end{tikzpicture}
 	\caption{The elementary cobordisms: cap, cup, tube, twist and  the pairs of pants in $\cob\subset\kcob$.}
	\label{F.gen.2cob}
\end{figure} 

\begin{figure}[h!]
	\begin{tikzpicture}[ baseline={(current bounding box.center)}, scale=1/2, rotate=90, every tqft/.style={transform shape},
		tqft/.cd, 
		cobordism/.style={draw},
		every upper boundary component/.style={draw},
  		every incoming lower boundary component/.style={draw, densely dashed},
  		every outgoing lower boundary component/.style={draw},
		]		
		\pic [tqft/cap, at={(0,10)}, name=a];    
		\pic[tqft/cup, at={(0,6)}, name=b];
		\pic [tqft/cylinder, at={(0,4)}, name=c];    
		\pic [tqft/cylinder to next, anchor=incoming boundary 1,  at={(-0.5,0)}, name=d];
		\pic [tqft/cylinder to prior,anchor=incoming boundary 1, 
		at=(d-outgoing boundary |- d-incoming boundary), name=e, ];     
		\pic [tqft/pair of pants, at={(0,-4)}];  
		\pic [tqft/reverse pair of pants, at={(-1,-8)}];  
\draw[dashed] (2,9)--(2,-11);
\draw[<->, rounded corners=20pt,  solid] (1,10)--(2, 11)--(3,10);
\node at (2,11) {$c$};
		\pic [tqft/cap, at={(4,10)}, name=a];    
		\pic[tqft/cup, at={(4,6)}, name=b];
		\pic [tqft/cylinder, at={(4,4)}, name=c];    
		\pic [tqft/cylinder to next, anchor=incoming boundary 1,  at={(3.5,0)}, name=d];
		\pic [tqft/cylinder to prior,anchor=incoming boundary 1, 
		at=(d-outgoing boundary |- d-incoming boundary), name=e, ];     
		\pic [tqft/pair of pants, at={(4,-4)}];  
		\pic [tqft/reverse pair of pants, at={(3,-8)}];  			
	\end{tikzpicture}
	\caption{The elementary cobordisms: cap, cup, tube, twist and  the pairs of pants in $\SymRiem$.}
	\label{F.gen.2cobSR}
\end{figure}
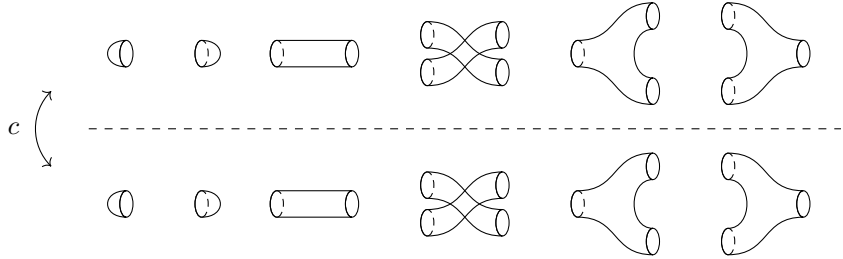	
The category $\kcob$ has two extra generators, the cross-cap   (a M\"obius band) and the involution
\bear\label{F.extra.gen.kcob}
		\begin{tikzpicture}[baseline={(current bounding box.center)}, scale=1, rotate=90, every tqft/.style={transform shape},
		tqft/.cd, 
		cobordism/.style={draw},
		every upper boundary component/.style={draw},
		every incoming lower boundary component/.style={draw, densely dashed},
		every lower boundary component/.style={draw},
		every outgoing upper boundary component/.style={decorate, decoration={markings,
				mark=at position .5 with {\arrow{>}}, },}, 
		every incoming upper boundary component/.style={decorate, decoration={markings,
				mark=at position .5 with {\arrow{<}}, },},  
		]
		\pic [tqft/cap, name=a, at={(0,1.67)}];    
		\begin{scope}[scale=1]
		\draw (0,0) ellipse (.18 and .08);
		\draw[] (-.14,-.06)--(.14,.06);
		\draw[] (.14,-.06)--(-.14,.06);
		\end{scope};
		\pic [tqft/cylinder, 
  		every incoming lower boundary component/.style={draw, densely dashed},
  		every outgoing lower boundary component/.style={draw},
		name=f, at={(0,-2)},   
		]; 
		\node at ([yshift=30pt] f-outgoing boundary 1) {$\Om$};

		\end{tikzpicture}
\eear
respectively. In  $\SymRiem$ these correspond to their orientation double covers:
\bear\label{F.double.xcap} 
	\begin{tikzpicture}[baseline={(current bounding box.center)}, scale=3/4, rotate=90, every tqft/.style={transform shape},
		tqft/.cd, 
		cobordism/.style={draw},
		every upper boundary component/.style={draw},
		every lower boundary component/.style={draw},
		]
		\pic [tqft, 
		incoming boundary components=0, 
		outgoing boundary components=2, 
		anchor=incoming boundary 1, 
		 name=a, at={(0,5)}];    
		\begin{scope}[rotate=90, shift={(3.75,-1)}, scale=1.4, gray]
		\draw (0,0) ellipse (.18 and .08);
		\draw[] (-.14,-.06)--(.14,.06);
		\draw[] (.14,-.06)--(-.14,.06);
		\end{scope};
		\pic [tqft/cylinder to next, anchor=incoming boundary 1,name=c, at={(.5,.5)} ];
		\pic [tqft/cylinder to prior, anchor=incoming boundary 1, 
		at=(c-outgoing boundary |- c-incoming boundary), name=d, ];
		\draw[dashed] (1,-2.5)--(1,4.5);
		\draw[<->, rounded corners=10pt,  solid] (.5,4.5)--(1, 5)--(1.5,4.5);
		\node at (1,5) {$c$};
	\end{tikzpicture}
\eear
Note that in $\SymRiem$ the involution swaps the two outgoing circles.

The extra generators satisfy certain relations in $\kcob$ (see p 1840-1841 of \cite{braun}). For example,
moving a puncture  once around the M\"obius band   changes the  orientation of the puncture, cf. Figure~\ref{F.Moeb}; equivalently, the involution acts trivially on the product of the cross-cap with another element, cf. \eqref{k.ax.2.1}.
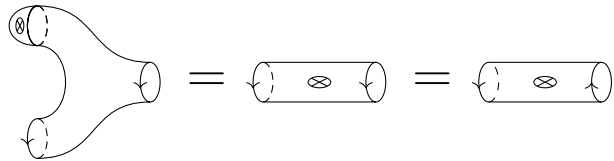
\begin{figure}[h!]
	\begin{centering}
		\begin{tikzpicture}[baseline={(current bounding box.center)}, scale=3/4, rotate=90, every tqft/.style={transform shape},
		tqft/.cd, 
		cobordism/.style={draw},
		every upper boundary component/.style={draw},
		every lower boundary component/.style={draw, densely dashed},
		]
		\pic [tqft/reverse pair of pants, name=c, anchor=incoming boundary 1, 
		outgoing lower boundary component 1/.style={draw, solid}, 
		incoming lower boundary component 1/.style={draw, densely dashed}, 
		every outgoing upper boundary component/.style={decorate, decoration={markings,
				mark=at position .5 with {\arrow{<}}, },}, 
		incoming upper boundary component 1/.style={decorate, decoration={markings,
				mark=at position .5 with {\arrow{<}}, },}, 
		 at={(2, 0)}];  
		\pic [tqft/cap, name=d, anchor=outgoing boundary 1, at=(c-incoming boundary 2)];  
		 \begin{scope}[shift={(4,.31)}, scale=.7, rotate=90] 
		\draw[rounded corners=28pt] (-.07,-.14)--(.07,.14);
		\draw[rounded corners=28pt] (-.07,.14)--(.07,-.14);
		\draw(0,-.2) arc (270:90:.1 and 0.2);
		\draw(0,-.2) arc (270:90:-.1 and 0.2);
		\end{scope}
		\path (c-outgoing boundary 1) ++(0,-1) node[font=\Huge] {\(=\)};
		\pic [tqft/cylinder,
		 outgoing lower boundary component 1/.style={draw, solid},
		 every outgoing upper boundary component/.style={decorate, decoration={markings,
				mark=at position .5 with {\arrow{<}}, },}, 
		incoming upper boundary component 1/.style={decorate, decoration={markings,
				mark=at position .5 with {\arrow{<}}, },}, 
		 at={(3,-4)}, name=f];    
		 \begin{scope}[shift={(3,-5)}, scale=1] 
		\draw[rounded corners=28pt] (-.07,-.14)--(.07,.14);
		\draw[rounded corners=28pt] (-.07,.14)--(.07,-.14);
		\draw(0,-.2) arc (270:90:.1 and 0.2);
		\draw(0,-.2) arc (270:90:-.1 and 0.2);
		\end{scope}			
		\path (f-outgoing boundary 1) ++(0,-1) node[font=\Huge] {\(=\)};
		\pic [ draw, tqft/cylinder, scale=1,  
		every lower boundary component/.style={draw},
		every incoming lower boundary component/.style={dashed},
		every outgoing lower boundary component/.style={solid},
		every outgoing upper boundary component/.style={decorate, decoration={markings,
				mark=at position .5 with {\arrow{>}}, },}, 
		every incoming upper boundary component/.style={decorate, decoration={markings,
				mark=at position .5 with {\arrow{<}}, },},  
		at={(3,-8)}, name=a];    
		 \begin{scope}[shift={(3,-9)}, scale=1] 
		\draw[rounded corners=28pt] (-.07,-.14)--(.07,.14);
		\draw[rounded corners=28pt] (-.07,.14)--(.07,-.14);
		\draw(0,-.2) arc (270:90:.1 and 0.2);
		\draw(0,-.2) arc (270:90:-.1 and 0.2);
		\end{scope}		
		\end{tikzpicture}
	\end{centering} 
	\caption{ The cobordism $K$ and relations in $\kcob$.}
	\label{F.Moeb}
\end{figure}
\begin{figure}[h!]
	\begin{centering}
		\begin{tikzpicture}[baseline={(current bounding box.center)}, scale=1/2, rotate=90, every tqft/.style={transform shape},
		tqft/.cd, 
		cobordism/.style={draw},
		every upper boundary component/.style={draw},
		every lower boundary component/.style={draw, densely dashed},
		]
		\pic [tqft, incoming boundary components=0, outgoing boundary components=2, name=c,  anchor=outgoing boundary 1];   
		\pic [tqft/reverse pair of pants, name=a,  
			outgoing lower boundary component 1/.style={draw, solid},
			anchor= incoming boundary 2, at=(c-outgoing boundary 1)];   
		\pic [tqft/reverse pair of pants, name=b,  
			outgoing lower boundary component 1/.style={draw, solid},
			anchor= incoming boundary 1, at=(c-outgoing boundary 2)];   
		 \begin{scope}[shift={(1,.75)}, scale=1.2, gray] 
		\draw[rounded corners=28pt] (-.07,-.14)--(.07,.14);
		\draw[rounded corners=28pt] (-.07,.14)--(.07,-.14);
		\draw(0,-.2) arc (270:90:.1 and 0.2);
		\draw(0,-.2) arc (270:90:-.1 and 0.2);
		\end{scope}		
		\path (b-incoming boundary 1) ++(-1,-3) node[font=\Huge] {\(=\)};
		\pic[tqft, incoming boundary components=2, outgoing boundary components=2, 
		every outgoing lower boundary component/.style={draw, solid},
		name=z,  anchor=incoming boundary 1, 
		at=({0,-5}) ];   
		\path (z-outgoing boundary 1) ++(1,-2) node[font=\Huge] {\(=\)};
		\pic[tqft, incoming boundary components=2, outgoing boundary components=2, name=w,  anchor=incoming boundary 1, 
		at=({0,-10}) ];   
		\pic[tqft, incoming boundary components=1, outgoing boundary components=1,  
		outgoing lower boundary component 1/.style={draw, solid},
		anchor=incoming boundary 1, at=(w-outgoing boundary 1), offset=1 ]; 
		\pic[tqft, incoming boundary components=1, outgoing boundary components=1, 
		outgoing lower boundary component 1/.style={draw, solid},
		anchor=incoming boundary 1, at=(w-outgoing boundary 2), offset=-1 ];     
		 \begin{scope}[shift={(1,-6)}, scale=2.2, gray] 
		\draw[rounded corners=28pt] (-.07,-.14)--(.07,.14);
		\draw[rounded corners=28pt] (-.07,.14)--(.07,-.14);
		\draw(0,-.2) arc (270:90:.1 and 0.2);
		\draw(0,-.2) arc (270:90:-.1 and 0.2);
		\end{scope}		
		 \begin{scope}[shift={(1,-11)}, scale=2.2, gray] 
		\draw[rounded corners=28pt] (-.07,-.14)--(.07,.14);
		\draw[rounded corners=28pt] (-.07,.14)--(.07,-.14);
		\draw(0,-.2) arc (270:90:.1 and 0.2);
		\draw(0,-.2) arc (270:90:-.1 and 0.2);
		\end{scope}		
		\draw[<->, rounded corners=20pt,  solid] (-.5,2)--(1, 2.8)--(2.5,2);
		\node at (1,3) {$c$};

		\end{tikzpicture}
	\end{centering} 
	\caption{ The cobordism $K$ and relations in $\SymRiem$.}
\end{figure}

Another relation comes from decomposing the product of two cross-caps as in Figure~\ref{F.punct.Klein}, cf. \eqref{k.ax.2.2}.

\begin{figure}[h!] 
	\begin{centering}
		\begin{tikzpicture}[baseline={(current bounding box.center)}, scale=.6, rotate=90, every tqft/.style={transform shape},
		tqft/.cd, 
		cobordism/.style={draw},
		every upper boundary component/.style={draw},
		every lower boundary component/.style={draw, densely dashed},
		]
		\pic [tqft/cap, name=a,  anchor= outgoing boundary 1, at=({0,0})];   
		\pic [tqft/cap, name=b,  anchor= outgoing boundary 1, at={(2, 0)}];   
		\pic [tqft/reverse pair of pants, every outgoing boundary component/.style={draw},
		every outgoing upper boundary component/.style={decorate, decoration={markings,
				mark=at position .5 with {\arrow{<}}, },}, 
		outgoing lower boundary component 2/.style={draw, solid},  name=c, 
		anchor=incoming boundary 1, at=(a-outgoing boundary 1) ];  
		\path (c-outgoing boundary 1) ++(0,-1) node[font=\Huge] {\(=\)};
		\pic [tqft/cap, name=d,  anchor= outgoing boundary 1, at=({1,-4})];   
		\pic [tqft/pair of pants, name=f, anchor= incoming boundary 1, at=(d-outgoing boundary 1)];
		\pic [tqft/cylinder, name=c1,  every outgoing boundary component/.style={draw},
		every outgoing upper boundary component/.style={decorate, 
			decoration={markings, mark=at position .5 with {\arrow{>}}, },}, 
			anchor= incoming boundary 1, at=(f-outgoing boundary 1)];   
		\pic [tqft/cylinder, name=c2, every outgoing boundary component/.style={draw},
		every outgoing upper boundary component/.style={decorate, decoration={markings,
				mark=at position .5 with {\arrow{<}}, },}, 
			anchor= incoming boundary 1, at=(f-outgoing boundary 2)];   
		\node at ([yshift=30pt] c1-outgoing boundary 1) {$\Om$};
		\node at ([yshift=30pt] c2-outgoing boundary 1) {$id$}; 
		\pic [tqft/reverse pair of pants, every lower boundary component/.style={draw},
			every incoming lower boundary component/.style={dashed},
			every outgoing boundary component/.style={draw},
			every outgoing upper boundary component/.style={decorate, decoration={markings,
				mark=at position .5 with {\arrow{<}}, },}, 
			every incoming upper boundary component/.style={decorate, decoration={markings,
				mark=at position .5 with {\arrow{<}}, },} , 
    			name=f, anchor=outgoing boundary 1, at=({1, -11})];  
		\begin{scope}[shift={(2,0.32)}, scale=.8, rotate=90] 
		\draw[rounded corners=28pt] (-.07,-.14)--(.07,.14);
		\draw[rounded corners=28pt] (-.07,.14)--(.07,-.14);
		\draw(0,-.2) arc (270:90:.1 and 0.2);
		\draw(0,-.2) arc (270:90:-.1 and 0.2);
		\end{scope}		
		\begin{scope}[shift={(0,.32)}, scale=.8, rotate=90] 
		\draw[rounded corners=28pt] (-.07,-.14)--(.07,.14);
		\draw[rounded corners=28pt] (-.07,.14)--(.07,-.14);
		\draw(0,-.2) arc (270:90:.1 and 0.2);
		\draw(0,-.2) arc (270:90:-.1 and 0.2);
		\end{scope}		
		\end{tikzpicture}
	\end{centering} 
	\caption{Relation in  $\kcob$:     decomposing the punctured Klein bottle.}
	\label{F.punct.Klein}
\end{figure}
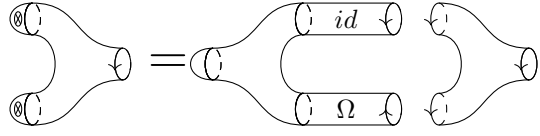

\subsection{Semi-simple Klein TQFT}\label{Semisimple}  
\begin{defn}\label{defKTQFT} A \textsf{(closed) 2d Klein TQFT}  is a symmetric monoidal functor
\bear\label{def.f.ktqft}
F:\kcob \ra R\mathrm {mod}.
\eear
\end{defn}
In fact, cf.  \cite[Prop~1.11]{braun}, 
 a (closed) 2d Klein TQFT is equivalent to a commutative Frobenius algebra $H=F(S^1)$ together with two extra structures: 
\begin{enumerate}[(a)]
	\item an involutive (anti)-automorphism $\Omega$ of the Frobenius algebra $H$,  denoted $x \mapsto x^*$. This means 
	\bear\label{k.ax.1} 
	(x^*)^*=x,\quad  (xy)^*=y^* x^* \quad \mbox{ and } \quad \lg x^*, y^* \rg = \lg x, y \rg  \quad \mbox{ for all $x, y\in H$.}
	\eear
	\item an element $U\in H$ such that 
	\bear\label{k.ax.2.1} 
	(aU)^*=aU\quad \text{for all}\,\,\, a\in H \quad \mbox {and}
	\eear
	\bear\label{k.ax.2.2} 
	U^2= m(id \otimes\Om ) (\De(1))=\sum \al_i \beta_i^*\,, \quad \mbox{ where the co-product} \quad \Delta (1)=\sum \al_i \otimes \beta_i. 
	\eear
\end{enumerate} 
The involution $\Om$ and the element $U$ correspond to the cobordisms \eqref{F.extra.gen.kcob}. For an interpretation of the relations (b), see Figure~\ref{F.Moeb} and ~\ref{F.punct.Klein}. 

There are several elements of $\kcob$ that play a special role; their images under  \eqref{def.f.ktqft} are denoted:
\bear\label{G+C}
F\l( 
\begin{tikzpicture}[baseline={(current bounding box.center)},scale=1/2, rotate=90, every tqft/.style={transform shape},
  tqft/.cd, 
  cobordism/.style={draw},
  every upper boundary component/.style={draw},
  every lower boundary component/.style={draw},
  ]
  \pic [tqft/cap];    
\end{tikzpicture}
\r)=1, 
\quad
F\l(  
\begin{tikzpicture}[baseline={(current bounding box.center)}, scale=1/3, rotate=90, every tqft/.style={transform shape},
  tqft/.cd, 
  cobordism edge/.style={draw},
      ]
 \pic [tqft/pair of pants, 
 	incoming upper boundary component 1/.style={draw}, 
	incoming lower boundary component 1/.style={draw, densely dashed},
 name=a, anchor=outgoing boundary 1, ];  
 \pic [tqft/reverse pair of pants, 
 	outgoing lower boundary component 1/.style={draw}, 
	outgoing upper boundary component 1/.style={draw}, 
 	every incoming lower boundary component/.style={draw, densely dashed},
	every incoming upper boundary component/.style={draw, densely dashed},
	anchor=incoming boundary 1];  
\end{tikzpicture}
\r)=G,
\quad 
\mbox{ and} 
\quad 
F\l( 
\begin{tikzpicture}[baseline={(current bounding box.center)}, scale=1/2, rotate=90, every tqft/.style={transform shape},
  tqft/.cd, 
  cobordism/.style={draw},
  every upper boundary component/.style={draw},
  every lower boundary component/.style={draw, densely dashed},
  ]
\pic [tqft/cup];    
\end{tikzpicture}
\r)=C,
\eear
\bear\label{O+U}
F\Big(
\begin{tikzpicture}[baseline={(current bounding box.center)},scale=1/2, rotate=90, every tqft/.style={transform shape},
  tqft/.cd, 
  cobordism/.style={draw},
   every upper boundary component/.style={draw},
  every incoming lower boundary component/.style={draw, densely dashed},
  every outgoing lower boundary component/.style={draw},
  every outgoing upper boundary component/.style={decorate, decoration={markings,
      mark=at position .5 with {\arrow{<}}, },}, 
   every incoming upper boundary component/.style={decorate, decoration={markings,
      mark=at position .6 with {\arrow{>}}, },}  
      ]
 \pic [tqft/cylinder, name=f];    
\end{tikzpicture}
\Big)= \Om, 
\quad 
F\Big(
\begin{tikzpicture}[baseline={(current bounding box.center)}, scale=3/4,  rotate=90, every tqft/.style={transform shape},
  tqft/.cd, 
  cobordism/.style={draw},
  every upper boundary component/.style={draw},
  every lower boundary component/.style={draw},
]
\pic [tqft/cap, name=d];    
\begin{scope}[shift={(0,-1.67)}, scale=.8, rotate=90] 
\draw[rounded corners=28pt] (-.07,-.14)--(.07,.14);
\draw[rounded corners=28pt] (-.07,.14)--(.07,-.14);
\draw(0,-.2) arc (270:90:.1 and 0.2);
\draw(0,-.2) arc (270:90:-.1 and 0.2);
\end{scope}
\end{tikzpicture}
\Big)=U,
\quad \mbox{ and } \quad
F\l( 
\begin{tikzpicture}[baseline={(current bounding box.center)},scale=1/2, rotate=90, every tqft/.style={transform shape},
  tqft/.cd, 
  cobordism edge/.style={draw},
  every upper boundary component/.style={draw},
  every incoming lower boundary component/.style={draw, densely dashed},
  every outgoing lower boundary component/.style={draw},
      ]
 \pic [tqft/cylinder, name=f];   
 \begin{scope}[shift={(0,-1)}, scale=1.4] 
\draw[rounded corners=28pt] (-.07,-.14)--(.07,.14);
\draw[rounded corners=28pt] (-.07,.14)--(.07,-.14);
\draw(0,-.2) arc (270:90:.1 and 0.2);
\draw(0,-.2) arc (270:90:-.1 and 0.2);
\end{scope}
\end{tikzpicture}
\r)=K. 
\eear
 When  \eqref{def.f.ktqft} is regarded as a morphism on $\SymRiem\equiv \kcob$ via the orientation double cover construction, we denote it 
 \bear\label{lift.F.sym}
 \wt F:  \SymRiem\ra R\mathrm{mod}.
 \eear In particular,  
 \bear\label{K.=} 
\wt F\l(
\begin{tikzpicture}[ baseline={([yshift=0, xshift=0] current bounding box.center)}, scale=1/2, rotate=90, every tqft/.style={transform shape},
		tqft/.cd, 
		cobordism/.style={draw},
		every upper boundary component/.style={draw},
		every lower boundary component/.style={draw},
		]
		\pic [tqft/cylinder to next, anchor=incoming boundary 1,name=c, at={(.5,.5)} ];
		\pic [tqft/cylinder to prior, anchor=incoming boundary 1, 
		at=(c-outgoing boundary |- c-incoming boundary), name=d, ];			
	\end{tikzpicture}
\r)= \Omega,
\quad 
\wt F\l(
\begin{tikzpicture}[ baseline={([yshift=0, xshift=0] current bounding box.center)}, scale=1/2, rotate=90, every tqft/.style={transform shape},
		tqft/.cd, 
		cobordism/.style={draw},
		every upper boundary component/.style={draw},
		every lower boundary component/.style={draw},
		]
		\pic [tqft, 
		incoming boundary components=0, 
		outgoing boundary components=2, 
		 name=a, at={(0,0)}];    
		\begin{scope}[rotate=90, shift={(-1.25,-1)}, scale=1.4, gray]
		\draw (0,0) ellipse (.18 and .08);
		\draw[] (-.14,-.06)--(.14,.06);
		\draw[] (.14,-.06)--(-.14,.06);
		\end{scope};
	\end{tikzpicture}
\r)=U,
\quad \mbox{ and }\quad 
\wt F\l(
\begin{tikzpicture}[baseline={(current bounding box.center)}, scale=1/3, rotate=90, every tqft/.style={transform shape},
		tqft/.cd, 
		cobordism/.style={draw},
		every upper boundary component/.style={draw},
		every lower boundary component/.style={draw},
		]
		\pic[tqft, incoming boundary components=2, outgoing boundary components=2 ];   
		 \begin{scope}[shift={(1,-1)}, scale=2.2, gray] 
		\draw[rounded corners=28pt] (-.07,-.14)--(.07,.14);
		\draw[rounded corners=28pt] (-.07,.14)--(.07,-.14);
		\draw(0,-.2) arc (270:90:.1 and 0.2);
		\draw(0,-.2) arc (270:90:-.1 and 0.2);
		\end{scope}		
\end{tikzpicture}
\r)= K.
\eear

\begin{defn}
	A \textsf{semi-simple} Klein TQFT is a Klein TQFT whose associated Frobenius algebra is semi-simple.
\end{defn}
A semi-simple TQFT is determined by the structure constants  $\{\la_\rho\}$, i.e. the coefficients of the co-multiplication $\Delta(v_\rho)=\la_\rho v_\rho \otimes v_\rho$ in the idempotent basis $\{v_\rho\}$. Moreover, 
 \begin{prop}\label{P.elem.cob} Assume \eqref{def.f.ktqft} is a semisimple KTQFT with idempotent basis $\{ v_\rho\}$, and assume that the ground ring $R$ has no zero divisors. Then 
\begin{enumerate}[(i)] 
\item $G(v_\rho)=\la_\rho v_\rho$ and $C(v_\rho)=\la_\rho^{-1}$.
\item $\Omega$ defines an involution on the idempotent basis $\Omega(v_\rho)= v_{\rho^*}$.
\item If $U=\sum_\rho U_\rho v_\rho$ then $U_\rho^2=\la_\rho$ if $\rho=\rho^*$, and $U_\rho=0$ if $\rho \ne \rho^*$. 
\item $K(v_\rho)=U_\rho v_\rho$. 
\end{enumerate} 
\end{prop}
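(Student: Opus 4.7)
The plan is to interpret each of the four elementary cobordisms appearing in Proposition \ref{P.elem.cob} as an operation on the Frobenius algebra $H = F(S^1)$ and evaluate it on the orthogonal idempotent basis $\{v_\rho\}$, using $v_\rho v_\sigma = \delta_{\rho\sigma} v_\rho$ and that the $v_\rho$ are characterized as the indecomposable idempotents.

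For (i), I will use the standard diagrammatic identifications $G = m \circ \Delta$ and $C = \varepsilon$. Applying $G$ to $v_\rho$ via the hypothesis $\Delta(v_\rho) = \lambda_\rho\, v_\rho \otimes v_\rho$ immediately yields $G(v_\rho) = \lambda_\rho v_\rho^2 = \lambda_\rho v_\rho$. For $C$, I will invoke the general Frobenius algebra identity $\Delta(1) = \sum_\rho v_\rho \otimes \tilde v_\rho$, where $\{\tilde v_\rho\}$ is the basis dual to $\{v_\rho\}$ under the pairing $\eta(x,y) = C(xy)$. Since this pairing is diagonal in the idempotent basis, $\tilde v_\rho = C(v_\rho)^{-1} v_\rho$, and then $\Delta(v_\rho) = (v_\rho \otimes 1)\,\Delta(1) = C(v_\rho)^{-1}\, v_\rho \otimes v_\rho$ matches the hypothesis exactly when $C(v_\rho) = \lambda_\rho^{-1}$.

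For (ii), axioms \eqref{k.ax.1} say $\Omega$ is an involutive (anti)automorphism of $H$; being $R$-linear and multiplicative it sends orthogonal idempotents to orthogonal idempotents, and therefore permutes the canonical basis of indecomposables. This permutation is an involution, which I label $\rho \mapsto \rho^*$. For (iii), I will use \eqref{k.ax.2.1} with $a = v_\rho$: writing $U = \sum_\sigma U_\sigma v_\sigma$, orthogonality gives $v_\rho U = U_\rho v_\rho$, hence $(v_\rho U)^* = U_\rho v_{\rho^*}$ using $R$-linearity of $\Omega$. The condition $(v_\rho U)^* = v_\rho U$ then forces $U_\rho = 0$ when $\rho \neq \rho^*$. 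To obtain $U_\rho^2 = \lambda_\rho$ for $\rho = \rho^*$, I substitute the formula $\Delta(1) = \sum_\rho \lambda_\rho\, v_\rho \otimes v_\rho$ from (i) into \eqref{k.ax.2.2}, which gives
\[
U^2 \;=\; \sum_\rho \lambda_\rho\, v_\rho \cdot v_{\rho^*} \;=\; \sum_{\rho = \rho^*} \lambda_\rho\, v_\rho,
\]
and compare with $U^2 = \sum_\rho U_\rho^2 v_\rho$. Finally (iv) is immediate once $K$ is identified as a cylinder with a crosscap on its outgoing boundary, so $K$ is the operator $x \mapsto xU$; then $K(v_\rho) = v_\rho U = U_\rho v_\rho$.

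The main obstacle is really just the bookkeeping in (iii): keeping track of how $\Omega$ enters \eqref{k.ax.2.2} to produce the key product $v_\rho \cdot v_{\rho^*}$, and separating the two relations \eqref{k.ax.2.1}--\eqref{k.ax.2.2} into the role of determining which $U_\rho$ vanish versus the squares of those that do not. Everything else reduces to linear algebra in the idempotent basis and to standard cobordism/Frobenius dictionary entries. The no-zero-divisors assumption is not required along the route through \eqref{k.ax.2.1}, but would be needed if (iii) were derived purely from \eqref{k.ax.2.2} (to conclude $U_\rho = 0$ from $U_\rho^2 = 0$).
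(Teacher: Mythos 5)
Your proof is correct, and (i) and (iv) coincide with the paper's treatment, but you take a genuinely different route in (ii) and (iii) that is worth comparing. For (ii), the paper works directly with the matrix entries $\Omega^\nu_\rho$: the anti-automorphism axiom gives $\Omega^\nu_\rho\Omega^\nu_\mu = 0$ for $\rho \ne \mu$ and $(\Omega^\nu_\rho)^2 = \Omega^\nu_\rho$, and then the no-zero-divisors hypothesis forces each entry to be $0$ or $1$, with invertibility pinning down a permutation matrix. Your structural argument — a multiplicative, $R$-linear bijection permutes the primitive orthogonal idempotents — is cleaner, but note it rests on the same hypothesis: one must know that idempotents in $H \cong \prod_\rho R v_\rho$ have coefficients in $\{0,1\}$, i.e., that $R$ has no nontrivial idempotents, which follows from $R$ having no zero divisors. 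Your closing remark attributes the role of that hypothesis to (iii), when it is really needed in (ii). For (iii), the paper obtains the vanishing of $U_\rho$ for $\rho \ne \rho^*$ from \eqref{k.ax.2.2} alone: the identity $U_\rho^2 = \Omega^\rho_\rho\lambda_\rho$ gives $U_\rho^2 = 0$ when $\rho \ne \rho^*$, and one again uses no zero divisors. Your route through \eqref{k.ax.2.1} — applying $(v_\rho U)^* = v_\rho U$ and comparing coefficients of $v_\rho$ and $v_{\rho^*}$ — gets $U_\rho = 0$ directly, so that \eqref{k.ax.2.2} is then used only to compute the squares on the diagonal. Your decoupling of the two crosscap axioms (one kills off-diagonal coefficients, the other fixes the diagonal squares) is slightly more transparent than the paper's single-axiom derivation, at the cost of invoking an extra relation; both arguments are sound.
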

\begin{proof} Property (i) holds for any semi-simple TQFT. To prove (ii), note that the second relation in \eqref{k.ax.1} implies
\best
\Omega(v_\rho) \Omega(v_\mu)=\Omega(v_\mu v_\rho)
\eest
for all $\rho$ and $\mu$. Since $\{ v_\rho\}$ is an idempotent basis, i.e. $v_\rho v_\mu=\de_{\rho \mu} v_\rho$ for all $\rho, \mu$,  this becomes
\best
 \sum_\nu\Omega_\rho^\nu \Omega_\mu^\nu v_\nu
=\de_{\mu \rho} \sum_\nu  \Omega^{\nu}_{\rho}v_{\nu}.
\eest
Therefore 
\bear\label{Om.sq.1}
\Omega_\rho^\nu \Omega_\mu^\nu =0   \quad\mbox{ for all }  \rho \ne \mu \mbox{ and all } \nu,  \quad \mbox{ while }
\eear
\bear\label{Om.sq.2}
(\Omega_\rho^\nu)^2= \Omega_\rho^\nu \quad\mbox{ so } \quad \Omega_\rho^\nu= 0 \mbox { or } 1  \quad\mbox{ for all }\rho, \; \nu
\eear
(because $R$ has no zero divisors). Equation \eqref{Om.sq.1} implies that there is at most one non zero element in each row of the matrix associated to 
$\Omega$ in this basis. But since $\Omega$ is invertible, there must be exactly one non-zero element in each row, which by \eqref{Om.sq.2} must be equal to 1. The invertibility of $\Omega$ also implies that there is precisely one non-zero element in every column. This proves (ii). 

Next, \eqref{k.ax.2.2}  (cf. Figure~\ref{F.punct.Klein}) implies
\bear\label{Omeq}
U^2_\rho = \Omega^\rho_\rho\la_\rho,
\eear
since $\Delta(1)=\sum_\rho \la_\rho v_\rho\otimes v_\rho$. This gives  (iii). 

Finally, property (iv) follows since $K(x)=U\cdot x$, i.e.  $K$ decomposes as
\[
K=F\l( 
\begin{tikzpicture}[baseline={(current bounding box.center)}, scale=3/4, rotate=90, every tqft/.style={transform shape},
  tqft/.cd, 
  cobordism edge/.style={draw},
  every upper boundary component/.style={draw},
  every incoming lower boundary component/.style={draw, densely dashed},
  every outgoing lower boundary component/.style={draw},
      ]
 \pic [tqft/cylinder, name=f];   
 \begin{scope}[shift={(0,-1)}, scale=1.4] 
\draw[rounded corners=28pt] (-.07,-.14)--(.07,.14);
\draw[rounded corners=28pt] (-.07,.14)--(.07,-.14);
\draw(0,-.2) arc (270:90:.1 and 0.2);
\draw(0,-.2) arc (270:90:-.1 and 0.2);
\end{scope}
\end{tikzpicture}
\r)=
F\l( 
\begin{tikzpicture}[baseline={(current bounding box.center)}, scale=3/4, rotate=90, every tqft/.style={transform shape},
		tqft/.cd, 
		cobordism/.style={draw},
		every upper boundary component/.style={draw},
		every lower boundary component/.style={draw, densely dashed},
		]
		\pic [tqft/reverse pair of pants, name=c, anchor=incoming boundary 1, 
		outgoing lower boundary component 1/.style={draw, solid},  at={(2, 0)}];  
		\pic [tqft/cap, name=d, anchor=outgoing boundary 1, at=(c-incoming boundary 2)];  
		 \begin{scope}[shift={(4,0.32)}, scale=.8, rotate=90] 
		\draw[rounded corners=28pt] (-.07,-.14)--(.07,.14);
		\draw[rounded corners=28pt] (-.07,.14)--(.07,-.14);
		\draw(0,-.2) arc (270:90:.1 and 0.2);
		\draw(0,-.2) arc (270:90:-.1 and 0.2);
		\end{scope}
\end{tikzpicture}
\r)
\]
cf. Figure~\ref{F.Moeb}. 
\end{proof} 
Assume $\Si$ is a closed symmetric surface, considered as a morphism in $\SymRiem$ from the ground ring to the ground ring. 
\begin{cor}\label{cor.calc.ktft} With the notation of Proposition \ref{P.elem.cob}, the morphism \eqref{lift.F.sym}  is given by:
\best
\wt F(\Si)&=& \sum_{\rho=\rho^*} U_{\rho}^{g-1} , \quad \mbox { when $\Si$ is a connected genus $g$ surface and}
\\
\wt F(\Si\sqcup \ov \Si)&=&\sum_{\rho} \la_{\rho}^{g-1}, \quad \mbox { when $\Si \sqcup \ov \Si$ is a $g$-doublet.}
\eest
\end{cor}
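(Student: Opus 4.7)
The plan is to decompose each closed symmetric surface into the elementary cobordisms of Figures~\ref{F.gen.2cob}--\ref{F.double.xcap} and evaluate each piece using Proposition~\ref{P.elem.cob}. By Corollary~\ref{C.indep.or}, the invariant $\wt F$ depends only on the topology of the underlying oriented surface, so in each case I would pick the most convenient real structure to make the decomposition in $\kcob \equiv \SymRiem$ transparent.

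For the doublet case, I would take the component-swap involution on $\Si \sqcup \ov \Si$. Since it is fixed-point free, the corresponding object of $\kcob$ is the closed oriented genus $g$ surface $\Si_g$, which lies in the oriented subcategory $\cob \subset \kcob$. Its standard decomposition reads
\[
\emptyset \;\xrightarrow{\text{cap}}\; S^1 \;\xrightarrow{G^g}\; S^1 \;\xrightarrow{\text{cup}}\; \emptyset,
\]
giving $\wt F(\Si \sqcup \ov\Si) = C(G^g(1_H))$. Expanding $1_H = \sum_\rho v_\rho$ in the idempotent basis and applying Proposition~\ref{P.elem.cob}(i) (which yields $G(v_\rho)=\la_\rho v_\rho$ and $C(v_\rho)=\la_\rho^{-1}$) collapses the sum to $\sum_\rho \la_\rho^{g-1}$.

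For the connected case, I would pick a fixed-point free real structure $c$ on $\Si$ whose quotient is non-orientable, which exists for every $g$ by the classification of real structures. An Euler characteristic count $\chi(\Si/c) = \tfrac12 \chi(\Si) = 1-g$ identifies $\Si/c$ with $\#_{g+1} \R P^2$. This Klein surface admits the decomposition in $\kcob$
\[
\emptyset \;\xrightarrow{U}\; S^1 \;\xrightarrow{K^g}\; S^1 \;\xrightarrow{C}\; \emptyset,
\]
so that $\wt F(\Si) = C(K^g(U)) = C(U^{g+1})$, after using Proposition~\ref{P.elem.cob}(iv) which gives $K(\,\cdot\,) = U \cdot (\,\cdot\,)$. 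By Proposition~\ref{P.elem.cob}(iii), we may write $U = \sum_{\rho = \rho^*} U_\rho v_\rho$ (the terms with $\rho \ne \rho^*$ vanish), and then orthogonality of the idempotents gives $U^{g+1} = \sum_{\rho=\rho^*} U_\rho^{g+1} v_\rho$. Combining with $C(v_\rho) = \la_\rho^{-1}$ and the identity $\la_\rho = U_\rho^2$ from Proposition~\ref{P.elem.cob}(iii) yields $\sum_{\rho = \rho^*} U_\rho^{g+1}/U_\rho^2 = \sum_{\rho=\rho^*} U_\rho^{g-1}$.

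The argument is essentially bookkeeping once the decomposition is in place; no step presents a substantial difficulty. The one point worth double-checking is the existence of the chosen non-orientable-quotient real structure for every $g$; as a sanity check, an alternative decomposition of $\Si/c$ as a $2$-sphere with one crosscap and $g$ handles attached would give the same answer after combining Proposition~\ref{P.elem.cob}(i), (iii), and (iv), with consistency guaranteed by Corollary~\ref{C.indep.or}.
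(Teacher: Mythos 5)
Your proof is correct and follows essentially the same route as the paper: decompose the connected case as $CK^gU$ (which you rewrite as $C(U^{g+1})$ using $K(x)=Ux$) and the doublet case as $CG^g(1)$, then read off the coefficients in the idempotent basis via Proposition~\ref{P.elem.cob}. One small remark: the appeal to Corollary~\ref{C.indep.or} is superfluous and slightly misplaced here, since that corollary concerns the specific RGW invariants rather than a general semi-simple KTQFT — what you actually need is only the elementary fact that every fixed-point-free orientation-reversing involution on a connected genus $g$ surface has quotient homeomorphic to $\#_{g+1}\R P^2$, so the morphism in $\SymRiem$ is determined by the topological type of $\Si$ alone.
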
 
\begin{proof} This follows from Proposition \ref{P.elem.cob} by decomposing the surface $\Si$ into elementary cobordisms. When $\Si=\P^1$, 
\best
\wt F(\P^1) = CU=\sum_{\rho} \la_\rho^{-1}U_\rho = \sum_{\rho=\rho^*}\la_\rho^{-1} U_\rho =  \sum_{\rho=\rho^*} U_\rho^{-1}.
\eest
Similarly, 
\bear\label{torus.split}
\wt F(T^2)= 
\wt F\l( 
\begin{tikzpicture}[baseline={(current bounding box.center)}, scale=1/2, rotate=90, every tqft/.style={transform shape},
		tqft/.cd, 
		cobordism/.style={draw},
		every upper boundary component/.style={draw},
		every lower boundary component/.style={draw},
		]
		\begin{scope}[shift={(2,.75)}, scale=1.2, gray] 
		\draw[rounded corners=28pt] (-.07,-.14)--(.07,.14);
		\draw[rounded corners=28pt] (-.07,.14)--(.07,-.14);
		\draw(0,-.2) arc (270:90:.1 and 0.2);
		\draw(0,-.2) arc (270:90:-.1 and 0.2);
		\end{scope};		
		\pic [tqft, 
		incoming boundary components=0, 
		outgoing boundary components=2, 
		 name=a, at={(1,2)}];    
		\pic[tqft, incoming boundary components=2, outgoing boundary components=2 ,at={(1,0)}];   
		 \begin{scope}[shift={(2,-1)}, scale=2.2, gray] 
		\draw[rounded corners=28pt] (-.07,-.14)--(.07,.14);
		\draw[rounded corners=28pt] (-.07,.14)--(.07,-.14);
		\draw(0,-.2) arc (270:90:.1 and 0.2);
		\draw(0,-.2) arc (270:90:-.1 and 0.2);
		\end{scope};	
		\pic [tqft/cup,at={(1,-2)}];
		\pic [tqft/cup, at={(3,-2)}];		
\end{tikzpicture}
\r) = 
CKU 
\eear
More generally, for a genus $g\ge 1$ symmetric surface $\Si$, 
\best
\wt F(\Si)= CK^gU = \sum_{\rho} \la_\rho^{-1}U_\rho^g U_\rho = \sum_{\rho=\rho^*} U_\rho^{g-1}. 
\eest
Finally, on a $g$-doublet, the morphisms is 
$$
\wt F(\Si\sqcup\ov\Si) = CG^g(1)=\sum_\rho \la_\rho^{g-1},
$$
recovering the  classical theory. 
\end{proof}

\subsection{The category $\SymRiem^L$} \label{S.kcob.categ}
We next construct a simultaneous extension of the categories $\cob^{L_1, L_2}$ and $\kcob\equiv\SymRiem$. Consider the category $\SymRiem^L$ whose 
\begin{itemize} 
	\item   objects are disjoint unions of copies of ${\cal S}=(S^1\sqcup \ov{S^1}, \ep)$, where $\ep$ swaps the two components, and 
	\item   morphisms correspond to isomorphism classes relative boundary of {\em decorated cobordisms} 
\best
W=(\Si, c, L),
\eest
where $\Si$ is an oriented cobordism with a fixed-point free orientation-reversing involution  $c$, extending~$\ep$, and $L$ is a complex line bundle 
over $\Si$, trivialized along the boundary of $\Si$. 
\end{itemize}
 The level zero theory corresponds to a trivial bundle $L$, and defines an embedding:
\bear\label{emb.kcob.in.symL}
\cob\subset \kcob\equiv  \SymRiem \subset \SymRiem^L. 
\eear
The doubling procedure defines an embedding 
\bear\label{emb.cob.in.kcob}
\cob^{L_1, L_2} \subset \SymRiem^L,\quad (\Si, L_1,L_2)\mapsto (\Si\sqcup\ov\Si, c_{|\Si}=id:\Si\ra\ov\Si, L_1\sqcup \ov L_2). 
\eear
The category $\cob^{L_1,L_2}$ has 4 extra generators,  the level $(\pm 1,0), (0,\pm 1)$-caps, besides those of $\cob$, cf. \cite[\S4.3]{bp1}. Similarly,  the generators of the category  $\SymRiem^L$ are those of $\SymRiem$ together with the images of  the $(\pm 1,0), (0,\pm 1)$-caps under \eqref{emb.cob.in.kcob}.  It is also useful to consider the  tubes
 \bear\label{two.tubes.k}
\begin{tikzpicture}[baseline={(current bounding box.center)},scale=1, rotate=90, every tqft/.style={transform shape},
  tqft/.cd, 
  cobordism/.style={draw},
  every lower boundary component/.style={draw},
		every incoming lower boundary component/.style={dashed},
		every outgoing lower boundary component/.style={solid},
		every outgoing upper boundary component/.style={decorate, decoration={markings,
				mark=at position .5 with {\arrow{<}}, },}, 
		every incoming upper boundary component/.style={decorate, decoration={markings,
				mark=at position .5 with {\arrow{<}}, },},  
      ]
 \pic [tqft/cylinder, name=f];    
 \node at ([yshift=30pt] f-outgoing boundary 1) {$(-1, 0)$};
\end{tikzpicture}
\qquad  \mbox{ and } \qquad
\begin{tikzpicture}[baseline={(current bounding box.center)},scale=1, rotate=90, every tqft/.style={transform shape},
  tqft/.cd, 
  cobordism/.style={draw},
  every lower boundary component/.style={draw},
		every incoming lower boundary component/.style={dashed},
		every outgoing lower boundary component/.style={solid},
		every outgoing upper boundary component/.style={decorate, decoration={markings,
				mark=at position .5 with {\arrow{<}}, },}, 
		every incoming upper boundary component/.style={decorate, decoration={markings,
				mark=at position .5 with {\arrow{<}}, },},  
      ]
 \pic [tqft/cylinder, name=f];    
 \node at ([yshift=30pt] f-outgoing boundary 1) {$(0, -1)$};
\end{tikzpicture}
\eear
in $\cob^{L_1, L_2} $, and their images
 \bear\label{two.tubes.s}
\begin{tikzpicture}[baseline={(current bounding box.center)}, scale=1, rotate=90, every tqft/.style={transform shape},
  tqft/.cd, 
  cobordism/.style={draw},
  		every lower boundary component/.style={draw},
  		every incoming lower boundary component/.style={dashed},
		every outgoing lower boundary component/.style={solid},
      ]
 \pic [tqft/cylinder, name=f, anchor= incoming boundary 1];    
  \pic [tqft/cylinder, name=f2, anchor= incoming boundary 1, at=({1,0})];  
 \node at ([yshift=30pt] f-outgoing boundary 1) {$0$};
  \node at ([yshift=30pt] f2-outgoing boundary 1) {$-1$};
\end{tikzpicture}
\qquad \mbox {and} \qquad
\begin{tikzpicture}[baseline={(current bounding box.center)}, scale=1, rotate=90, every tqft/.style={transform shape},
  tqft/.cd, 
  cobordism/.style={draw},
  		every lower boundary component/.style={draw},
  		every incoming lower boundary component/.style={dashed},
		every outgoing lower boundary component/.style={solid},
      ]
 \pic [tqft/cylinder, name=f, anchor= incoming boundary 1];    
  \pic [tqft/cylinder, name=f2, anchor= incoming boundary 1, at=({1,0})];  
 \node at ([yshift=30pt] f-outgoing boundary 1) {$-1$};
  \node at ([yshift=30pt] f2-outgoing boundary 1) {$0$};
\end{tikzpicture}
\eear
in $\SymRiem^L$ under  \eqref{emb.cob.in.kcob}. 

 As in \cite[Theorem 4.1]{bp1},  we obtain the following result.
 \begin{proposition}\label{extKTQFT} A symmetric monoidal functor 
\bear\label{F.on.sym.L}
 	F:\SymRiem^L\lra R\text{mod}
\eear
is uniquely determined by the level $0$ theory and the images $\eta$ and $\ov\eta$ of the level $(-1,0)$ and $(0,-1)$-caps. 
 \end{proposition}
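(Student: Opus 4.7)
The plan is to adapt the argument of \cite[Theorem 4.1]{bp1} to the symmetric setting, using the doubling embedding \eqref{emb.cob.in.kcob} to transport the analysis of line bundle decorations from $\cob^{L_1,L_2}$ into $\SymRiem^L$, while the underlying topology is handled by the level $0$ restriction of $F$ to $\SymRiem$.

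First I would identify a generating set for $\SymRiem^L$ consisting of (a) the generators of $\SymRiem$ (cap, cup, tube, twist, pair of pants, reverse pair of pants, and the involution \eqref{F.double.xcap}), together with (b) the four decorated caps obtained as images under \eqref{emb.cob.in.kcob} of the level $(\pm 1, 0)$ and $(0, \pm 1)$ caps of $\cob^{L_1,L_2}$. The claim is that every decorated cobordism $(\Sigma,c,L)\in \SymRiem^L$ admits a decomposition into such pieces: choose a pants decomposition of $\Sigma$ made of pairs of conjugate splitting circles (which exists because $c$ is fixed-point free and orientation-reversing, so every connected component of $\Sigma$ carries either a pair of conjugate subcomponents or admits a pair of disjoint conjugate vanishing cycles), so that each elementary piece either is a level $0$ morphism in $\SymRiem$ or factors through the doubling embedding from $\cob^{L_1,L_2}$. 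The line bundle $L$, trivialized along $\partial\Sigma$, is classified by its Chern class in $H^2(\Sigma,\partial\Sigma;\Z)$, which on each piece becomes a pair of integers; these can be realized by concatenating the appropriate number of decorated cap generators.

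Second, among the four decorated cap generators, I would show that only $\eta$ (the level $(-1,0)$-cap) and $\bar\eta$ (the level $(0,-1)$-cap) are needed to determine $F$. Gluing a level $(+1,0)$-cap to a level $(-1,0)$-cap along the level $0$ tube produces a decorated sphere whose bundle has total degree zero, hence is isomorphic in $\SymRiem^L$ to the level $0$ sphere; the corresponding identity
\[
\lg F(\text{level }(+1,0)\text{ cap}),\eta\rg \;=\; F(\text{level }0\text{ sphere})
\]
in the Frobenius pairing of the level $0$ theory (which is nondegenerate, as the cup and cap make it so) uniquely determines the image of the $(+1,0)$-cap from $\eta$ and the level $0$ data, and analogously for the $(0,+1)$-cap from $\bar\eta$. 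This is the exact analog of the argument used in \cite[Theorem 4.1]{bp1}.

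Combining these two steps, the images under $F$ of all generators of $\SymRiem^L$ are determined by the restriction of $F$ to $\SymRiem$ together with $\eta$ and $\bar\eta$, so $F$ itself is determined. The main obstacle in executing this plan is the first step: one must carefully handle connected components of $\Sigma$ on which $c$ acts freely but does not swap two subcomponents (the analogs of Klein surfaces in the quotient), by producing a pair of conjugate splitting circles internal to such a component so that the resulting pieces do factor through the doubling embedding \eqref{emb.cob.in.kcob}, thereby reducing the line bundle analysis to the complex case treated in \cite{bp1}.
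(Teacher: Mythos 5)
Your overall plan is the right one and follows the route the paper itself indicates (the text simply cites \cite[Theorem 4.1]{bp1} and gives no explicit proof), namely: generate $\SymRiem^L$ by the elementary cobordisms of $\SymRiem$ together with the four decorated caps coming from $\cob^{L_1,L_2}$ via the doubling embedding \eqref{emb.cob.in.kcob}, and then show that the values of $F$ on all these generators are determined by the level $0$ restriction and the elements $\eta,\bar\eta$. Two points, however, are imprecise enough to be called gaps.

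First, in your second step the determination of the $(+1,0)$-cap from $\eta$ is not actually achieved by the displayed identity: knowing only the single pairing value $\langle \eta^+,\eta\rangle = F(S^2)$ does not pin down the element $\eta^+\in H$ (nondegeneracy of the pairing tells you an element is determined by its pairing with \emph{every} element, not with one). The correct argument — and the one used in \cite{bp1} — is the \emph{tube relation}: in $\SymRiem^L$ the composite of the $(+1,0)$-tube and the $(-1,0)$-tube is the identity cylinder, so functoriality forces $A^+\circ A=\mathrm{id}$, i.e.\ $\eta^+\eta=1$ in the Frobenius algebra $H$. Hence $\eta$ is a unit and $\eta^+=\eta^{-1}$ is determined by $\eta$ together with the ring structure (which is part of the level $0$ theory); similarly for $\bar\eta^+$. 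The Frobenius pairing is not needed here at all.

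Second, a minor slip in your list of generators: the extra generators of $\SymRiem\equiv\kcob$ are \emph{both} the crosscap $U$ (the symmetric cylinder in the first picture of \eqref{F.double.xcap}) and the involution $\Omega$; your list names only the involution. This matters for your first step: the non-doublet piece that actually appears in a decomposition along conjugate splitting circles is precisely $U$, and the point you need to make explicit is that any degree on the restriction of $L$ to such a $U$-piece can be slid off onto an adjacent doublet tube (write $U_{(k)} = U_{(0)}\circ(\text{tube of level }(k,0))$), so that the only decorated pieces remaining are doublets that factor through \eqref{emb.cob.in.kcob}. With these two fixes the plan goes through and agrees with the argument the paper is appealing to.
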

The images 
\bear
A=F\l( 
\begin{tikzpicture}[baseline={(current bounding box.center)}, scale=1, rotate=90, every tqft/.style={transform shape},
  tqft/.cd, 
  cobordism/.style={draw},
  		every lower boundary component/.style={draw},
  		every incoming lower boundary component/.style={dashed},
		every outgoing lower boundary component/.style={solid},
      ]
 \pic [tqft/cylinder, name=f, anchor= incoming boundary 1];    
  \pic [tqft/cylinder, name=f2, anchor= incoming boundary 1, at=({1,0})];  
 \node at ([yshift=30pt] f-outgoing boundary 1) {$0$};
  \node at ([yshift=30pt] f2-outgoing boundary 1) {$-1$};
\end{tikzpicture}
\r) \quad \mbox{ and } \quad 
\bar  A=F\l( 
\begin{tikzpicture}[baseline={(current bounding box.center)}, scale=1, rotate=90, every tqft/.style={transform shape},
  tqft/.cd, 
  cobordism/.style={draw},
  		every lower boundary component/.style={draw},
  		every incoming lower boundary component/.style={dashed},
		every outgoing lower boundary component/.style={solid},
      ]
 \pic [tqft/cylinder, name=f, anchor= incoming boundary 1];    
  \pic [tqft/cylinder, name=f2, anchor= incoming boundary 1, at=({1,0})];  
 \node at ([yshift=30pt] f-outgoing boundary 1) {$-1$};
  \node at ([yshift=30pt] f2-outgoing boundary 1) {$0$};
\end{tikzpicture}
\r)
\eear
of \eqref{two.tubes.s} are called the  \textsf{level-decreasing} operators, and moreover
 $$
 A(x)=\eta\cdot x,\quad \ov A(x) = \ov \eta\cdot x. 
 $$
 If the restriction of \eqref{F.on.sym.L} to the level 0 theory defines a semi-simple KTQFT with idempotent basis $\{v_\rho\}$ then 
 \bear
 A(v_\rho)=\eta_\rho v_\rho \mbox{ and } \bar A(v_\rho)=\ov\eta_\rho v_\rho,\quad  \mbox{ where }  \quad 
 \eta=\sum_\rho \eta_\rho v_\rho  \mbox{ and } \bar \eta=\sum_\rho \ov\eta_\rho v_\rho.
\eear
As in Corollary~\ref{cor.calc.ktft}, then the value of $F$ on a closed connected genus $g$ symmetric surface $\Si$ at level 
$k=c_1(L)[\Si]$ is equal to
\bear\label{Si.k}
F(\Si| L )= CA^{-k}K^g(U)=\sum_{\rho=\rho^*} U_\rho^{g-1}\eta_\rho^{-k}.
 \eear
The value of  $F$ on a $g$-doublet $\Si\sqcup \ov \Si$ with a line bundle $L_1\sqcup L_2$  is similarly equal to
\best\label{Si.doublet}
 F(\Si\sqcup\ov\Si\,|L_1,L_2) =C(A^{-k_1}\bar A^{-k_2}G^g(1))=\sum_\rho \la_\rho^{g-1}\eta_\rho^{-k_1}\bar\eta_\rho^{-k_2}, 
\eest
where $k_1=c_1(L_1)[\Si]$ and $k_2=c_1(L_2)[\ov\Si]$. 
 
 \vskip.2in 
\section{The Klein TQFT induced by the RGW invariants}\label{KTQFTforRGW}
 \vskip.1in 

In this section we use the local RGW invariants   \eqref{def.RGW.can}  to define an extension of a Klein TQFT, i.e. a  functor {\bf RGW} from the category $\SymRiem^L$ described in \S \ref{S.kcob.categ}. This extends the Bryan-Pandharipande TQFT  constructed  from the $GW$ theory for the anti-diagonal action; see \S\ref{S.cxBP}. 

Let $R= \cx(t)((u))$ be the ring of Laurent series in $u$ whose coefficients are rational functions of $t$ and~$d$ be a positive integer. 
Denote by $\cal S=(S^1\sqcup \ov {S^1}, \ep)$ the disjoint union of two copies of a circle with opposite orientations and with the  involution $\ep$ swapping them. To the object $ {\cal S}$ we associate
\bear\label{def.Frob.R} 
{\bf RGW}_d( {\cal S})=H=\bigoplus_{\al\vdash d} R e_\al, 
\eear 
the free module with basis $\{ e_\al\}_{\al\vdash d}$ indexed by partitions $\al$ of $d$. Let 
\best
{\bf RGW}_d( {\cal S}\sqcup \dots \sqcup  {\cal S})=H\otimes \dots \otimes H. 
\eest
To each cobordism $W=(\Si,c, L)$ in $\SymRiem^L$ from $n$ copies of $ {\cal S}$ to $m$ copies of $ {\cal S}$, associate the $R$-module homomorphism 
\bear\label{def.Frob.cob} 
{\bf RGW}_d(W): H^{\otimes n} \ra H^{\otimes m}
\eear
defined by 
\best
e_{\la^1}\otimes \dots \otimes e_{\la^n}\mapsto \sum_{\mu^i\vdash d}  RGW_d(\Si_W|L_W)_{\la^1.. \la^n}^{\mu^1.. \mu^m}  \; e_{\mu^1}\otimes  \dots \otimes  e_{\mu^m}. 
\eest
  Here (i) $\Si_W$ is a closed marked symmetric Riemann surface whose topological type is that of $\Si$ after removing small disks around the pairs of marked points, (ii) the first element in each pair of marked points of $\Si_W$  corresponds to the first copy of $S^1$ in $ {\cal S}=(S^1\sqcup \ov{S^1})$, and (iii) $L_W\ra \Si_W$ is a holomorphic line bundle whose first Chern class corresponds to the Euler class of $L\ra \Si$.  Finally, $RGW_d(\Si_W|L_W)_{\vec \la}$ are the local RGW invariants defined by \eqref{def.RGW.can} and \eqref{defn.RGW.real.rel}, and the indices are raised by  \eqref{modified.metric}. The coefficients are invariant under smooth deformation, thus the assignment \eqref{def.Frob.cob} is well-defined.
 
 \begin{theorem}\label{ktqft} The assignment \eqref{def.Frob.cob} defines a symmetric monoidal functor 
	\bear\label{RGW.functor}
	{\bf RGW}_d: \SymRiem^L \ra R\mathrm{mod}. 
	\eear 
Its restriction to $\kcob$ under \eqref{emb.kcob.in.symL} is a Klein TQFT, while its restriction to $\cob^{L_1, L_2}$ under \eqref{emb.cob.in.kcob} is
\bear\label{rgw=gw}
		{\bf RGW}_d(\Si\sqcup\ov\Si|L_1\sqcup \ov{L}_2)(u,t)=(-1)^{dk_2}{\bf GW}_d(\Si|L_1,L_2)(iu,it).
\eear
	Here $k_i$ is the total degree of $L_i$ and ${\bf GW}_d$
	is the TQFT \eqref{GW.functor} considered by Bryan-Pandharipande (for the anti-diagonal action).  
\end{theorem}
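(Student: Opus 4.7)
The plan is to verify the three defining properties of a symmetric monoidal functor (well-definedness on morphisms, functoriality under composition, monoidal respect for disjoint unions and the braiding), and then identify the two claimed restrictions.

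\emph{Well-definedness.} The coefficients $RGW_d(\Si_W|L_W)^{\vec\mu}_{\vec\lambda}$ depend only on the isomorphism class of the decorated cobordism $(\Si,c,L)$ relative to its boundary: deformation invariance of the relative local RGW invariants \eqref{defn.RGW.real.rel} handles deformations of complex structure and line bundle, while Corollary~\ref{C.indep.or} shows that the canonical invariants are independent of the orientation data and of the ambient real structure (which always exists by Example~\ref{R.or.choice}). Since the morphisms in $\SymRiem^L$ are precisely isomorphism classes, the assignment descends to a well-defined map on $\Hom$-sets.

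\emph{Functoriality.} Given cobordisms $W_1:\mathcal S^{\otimes n}\to\mathcal S^{\otimes \ell}$ and $W_2:\mathcal S^{\otimes \ell}\to\mathcal S^{\otimes m}$, concatenation $W_2\circ W_1$ along the shared $\mathcal S^{\otimes\ell}$ corresponds, after closing by capping the unpaired boundaries with punctured disks, to a closed symmetric surface carrying $\ell$ pairs of conjugate separating circles, one pair for each factor of $\mathcal S$ in the middle object. Applying the separating splitting formula \eqref{split.separate} once per pair of circles (induction on $\ell$) gives
\[
RGW_d(W_2\circ W_1)_{\vec\la}^{\vec\nu}
=\sum_{\vec\al\vdash d} RGW_d(W_1)_{\vec\la}^{\vec\al}\cdot RGW_d(W_2)_{\vec\al}^{\vec\nu},
\]
where $\vec\al$ ranges over $\ell$-tuples of partitions of $d$. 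The index-raising convention \eqref{modified.metric}, precisely through the combinatorial factors $\zeta(\al^i)t^{2\ell(\al^i)}$, is exactly what matches the matrix multiplication in the basis $\{e_\la\}$ implicit in composing the associated $R$-linear maps. For the identity, the tube with trivial line bundle produces only unramified \'etale covers whose contribution is computed directly (the count is $\zeta(\la)^{-1}t^{-2\ell(\la)}$ on the diagonal), giving the identity matrix after raising indices; similarly the involution tube $\Omega$ (second picture in~\eqref{F.id.U}) swaps the two $S^1$'s of $\mathcal S$ and, by Lemma~\ref{signOm}, yields the corresponding permutation on the basis. The main difficulty here lies in checking that the orientation data splits compatibly across the gluing and that the canonical choice on both sides matches, so that no stray signs appear in the composition; this is handled by Corollary~\ref{C.indep.or} combined with the orientation-comparison arguments in Lemmas~\ref{double.vfc.lemma} and~\ref{lemcdouble.vfc}.

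\emph{Monoidal structure.} For disjoint unions $W\sqcup W'$ of cobordisms, the associated moduli space of degree $d$ real stable maps to the disjoint target decomposes as a product of moduli spaces in each degree splitting $d=d_1+d_2$ of $d$ between the components; the index bundle and its equivariant Euler class factor across the product, so ${\bf RGW}_d(W\sqcup W')={\bf RGW}_d(W)\otimes{\bf RGW}_d(W')$ on $H^{\otimes\bullet}$. The symmetric braiding (interchange of tensor factors) is realized by a cobordism isotopic through symmetric surfaces to a pair of crossed cylinders, and invariance under this isotopy follows from deformation invariance. The empty cobordism maps to the ground ring $R$ trivially.

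\emph{The two restrictions.} The restriction to $\kcob\equiv\SymRiem\hookrightarrow\SymRiem^L$ via the trivial bundle is a symmetric monoidal functor on the category of unoriented closed $2$-cobordisms, hence a Klein TQFT by Definition~\ref{defKTQFT}. The restriction to $\cob^{L_1,L_2}$ via the doubling embedding \eqref{emb.cob.in.kcob} sends $(\Si,L_1,L_2)$ to the doublet $(\Si\sqcup\ov\Si, L_1\sqcup\ov L_2)$; the matching formula \eqref{rgw=gw} is exactly the doublet-versus-GW comparison \eqref{doublet=gw} of Corollary~\ref{compBP}, applied relatively to the partitions at each pair of conjugate marked points (the relative version follows by the same argument, using Corollary~\ref{corR=GW.part.rel} at the level of relative moduli spaces). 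This completes the identification of ${\bf RGW}_d$ with a sign-twisted, substitution-twisted extension of the Bryan-Pandharipande TQFT.
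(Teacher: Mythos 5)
Your proof follows essentially the same skeleton as the paper's own: the splitting formula \eqref{split.formula.2.1} for composition, factorization of the moduli space and index bundle over a disjoint-union target for the monoidal structure, Lemma~\ref{signOm} for symmetry of the braiding, and Corollary~\ref{compBP} for identifying the restriction to $\cob^{L_1,L_2}$ with the (sign- and variable-twisted) Bryan--Pandharipande TQFT. Your paragraph on well-definedness via Corollary~\ref{C.indep.or} is a correct and useful addition that the paper leaves implicit, and your direct computation of the identity morphism is consistent with what one gets from \eqref{rgw=gw}.

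There is one genuine error, in your justification of the monoidal structure. You write that the moduli space of degree-$d$ maps to $W\sqcup W'$ ``decomposes as a product of moduli spaces in each degree splitting $d=d_1+d_2$ of $d$ between the components'' and then conclude ${\bf RGW}_d(W\sqcup W')={\bf RGW}_d(W)\otimes{\bf RGW}_d(W')$. These two claims are incompatible: if the moduli space were a union of products over degree splittings, the resulting invariant would be $\sum_{d_1+d_2=d}{\bf RGW}_{d_1}(W)\otimes{\bf RGW}_{d_2}(W')$, not a single tensor product in fixed degree $d$. The correct picture is that the degree $d$ in the definition \eqref{def.Frob.cob} is the degree over \emph{each} connected component of the target separately --- this is forced by the relative boundary conditions, which assign partitions of the same $d$ to the conjugate marked points on every component --- so the moduli space over a disjoint union of Real curves is simply a product of the moduli spaces for each piece, with no sum over degree splittings, and the index bundle decomposes accordingly. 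Once this is corrected, your conclusion is exactly the one in the paper.
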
 
\begin{proof} By Lemma~\ref{signOm}, coefficients of the assignment \eqref{def.Frob.cob} are invariant under permuting two {\em pairs} of conjugate points of $\Si_W$, thus \eqref{RGW.functor} is symmetric. It is monoidal i.e. 
\best
	 {\bf RGW}_d  (W_1\sqcup W_2) = {\bf RGW}_d(W_1)\otimes {\bf RGW}_d(W_2) 
\eest
because the real moduli space over a disjoint union of {\em Real} curves is the product of the real moduli spaces on each piece, and the index bundle naturally decomposes as the direct sum of the two index bundles. The composition law 
\best
	 {\bf RGW}_d  (W_1\circ W_2)={\bf RGW}_d(W_1)\circ {\bf RGW}_d(W_2)
\eest
holds by \eqref{split.formula.2.1}, cf. \eqref{split.separate} and \eqref{split.nonseparate}.  
 	
When $W$ is a doublet, Corollary \ref{compBP} implies that the restriction of \eqref{GW.functor} to $\cob^{L_1, L_2}$ is the 
	Bryan-Pandharipande TQFT \eqref{GW.functor} modified as stated. In particular \eqref{RGW.functor} takes the identity in 
	$\cob$ to the identity morphism, and therefore \eqref{RGW.functor} is a functor.
\end{proof}

 \vskip.2in 
\section{Solving the theory}\label{Solving}
 \vskip.1in 
 
In this section we show that the  functor ${\bf RGW}_d$ defined by \eqref{RGW.functor} restricts at level 0 to a semi-simple Klein TQFT. We also provide an explicit expression in terms of representation theoretic data of its value on a closed symmetric surface with a line bundle over it, thus solving the local RGW theory. 
\smallskip

Conjugacy classes of the symmetric group $S_d$ are indexed by partitions $\al$ of $d$. If $\rho$ is an irreducible representation of $S_d$, let $\chi_\rho(\al)$ denote the trace of $\rho$ on the conjugacy class $\al$.  

Recall that the level 0 part of $\SymRiem^L$ is naturally identified with $\SymRiem=\kcob$. Then 
\begin{lemma}The restriction of the functor ${\bf RGW}_d$ to $\kcob\subset \SymRiem^L$ determines a semi-simple Klein TQFT with idempotent basis \eqref{idem.basis}. 
\end{lemma}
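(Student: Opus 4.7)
The plan is to reduce the claim to the already established structure of the Bryan--Pandharipande TQFT at level $(0,0)$. Semi-simplicity of a Klein TQFT is a property only of its underlying commutative Frobenius algebra, i.e. of the restriction of the functor to $\cob$; the additional Klein data $(\Omega,U)$ (orientation-reversing tube and crosscap) is irrelevant to semi-simplicity and enters only in the determination of the extra coefficients $U_\rho$ and the involution on $\{v_\rho\}$ in the sense of Proposition~\ref{P.elem.cob}. So it suffices to exhibit the Frobenius algebra $H = \bigoplus_{\alpha \vdash d} R\, e_\alpha$ given by \eqref{def.Frob.R} as a semi-simple algebra and to write down its orthogonal idempotents.

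First, I would invoke the identification \eqref{rgw=gw} of Theorem~\ref{ktqft}: the restriction of ${\bf RGW}_d$ to $\cob$, obtained via the doubling embedding \eqref{emb.cob.in.kcob}, is the Bryan--Pandharipande TQFT ${\bf GW}_d$ at level $(0,0)$ up to the substitution $(u,t)\mapsto(iu,it)$ (the sign prefactor $(-1)^{dk_2}$ is trivial when $k_2=0$). Thus the Frobenius-algebra structure on $H$ coincides, as an algebra, with the one produced by the BP level $(0,0)$ theory, since the pair-of-pants structure constants are the restrictions of ${\bf GW}_d$ to doublet cobordisms with trivial line bundles.

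Next, by Lemma~\ref{L.level.0} the level $0$ partition functions receive contributions only from $0$-dimensional components of the moduli space, so the relevant integrals reduce to (signed) counts of unramified covers of the pair of pants with prescribed monodromy $\lambda^i$ over the punctures. These are precisely the classical Hurwitz numbers that realize the multiplication in the center $Z(\mathbb{Q}[S_d])$ of the symmetric group algebra, as in the Dijkgraaf--Witten/Freed--Quinn TQFT. It is standard that $Z(\mathbb{Q}[S_d])$ is semi-simple, with orthogonal idempotents
\[
v_\rho \;=\; \frac{\dim \rho}{d!}\sum_{\alpha \vdash d}\chi_\rho(\alpha)\, e_\alpha,
\]
one for each irreducible representation $\rho$ of $S_d$, and that these vectors form the basis \eqref{idem.basis}. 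The change of variables $(u,t)\mapsto(iu,it)$ only rescales the grading by Euler characteristic and does not affect either semi-simplicity or the formula for the $v_\rho$ (which involves only Euler-characteristic-zero cobordisms).

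The main obstacle, and the only part that requires care, is bookkeeping: verifying that the orientation convention and the sign substitutions in \eqref{rgw=gw} really do preserve the multiplication table of $Z(\mathbb{Q}[S_d])$, so that the Frobenius algebra obtained from the RGW doublet invariants at level $0$ is identical (not merely isomorphic) to the classical one in the basis $\{e_\alpha\}$. Once this matching is pinned down, orthogonality $v_\rho \cdot v_\sigma = \delta_{\rho\sigma} v_\rho$ is immediate from the orthogonality relations for characters of $S_d$, and semi-simplicity of the Klein TQFT ${\bf RGW}_d|_{\kcob}$ follows.
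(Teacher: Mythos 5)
Your overall strategy is the right one — invoke the identification of the ${\bf RGW}_d$ doublet theory with the Bryan–Pandharipande level $(0,0)$ theory, recognize the classical Frobenius algebra $Z(\Q[S_d])$ lurking underneath, and read off the idempotents — and this is essentially what the paper does. But there is a concrete error in the execution: the idempotents you write down,
\[
v_\rho = \frac{\dim\rho}{d!}\sum_{\alpha\vdash d}\chi_\rho(\alpha)\, e_\alpha,
\]
are \emph{not} the basis \eqref{idem.basis}, which carries the extra weight $(-t)^{\ell(\alpha)-d}$. This factor is not cosmetic. The level-$0$ structure constants for the pair of pants are (cf.\ the paper's proof and \eqref{power.t})
\[
RGW_d((0,0)|(0,0))_{\alpha,\beta}^{\gamma}(t)
= t^{\,d-\ell(\alpha)-\ell(\beta)+\ell(\gamma)}
\sum_{\rho}\Big(\frac{d!}{\dim\rho}\Big)\frac{\chi_\rho(\alpha)\chi_\rho(\beta)}{\zeta(\alpha)\zeta(\beta)},
\]
so the multiplication on $H=\bigoplus_\alpha R\, e_\alpha$ is a $t$-twisted version of the one on $Z(\Q[S_d])$, not literally equal to it. Your appeal to ``Euler-characteristic-zero cobordisms'' does not apply here: a pair of pants has $\chi=-1$, and Lemma~\ref{L.level.0} only forces the $u$-degree to vanish, not the $t$-degree. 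The $t$-power comes from the equivariant rank of $-\Ind\dbar_L$, cf.\ \eqref{defn.Z.real.nonequiv}, and survives even when the moduli space is $0$-dimensional. Without the $(-t)^{\ell(\alpha)-d}$ rescaling the $v_\rho$ you wrote do not satisfy $v_\rho\cdot v_\sigma = \delta_{\rho\sigma}v_\rho$ with respect to the pair-of-pants product of \eqref{def.Frob.cob}.

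To fix this, either carry out the $t$-weighted rescaling explicitly (as the paper does: first define $v_\rho$ with the weight and then verify idempotence from the displayed structure constants), or work from the outset in the rescaled basis $\tilde e_\alpha = (-t)^{d-\ell(\alpha)} e_\alpha$, in which the algebra literally becomes $Z(\Q[S_d])\otimes R$. The rest of your argument — that semi-simplicity is a property of the underlying Frobenius algebra alone, and that the $\Omega, U$ data play no role here — is correct and matches the paper's reduction.
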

\begin{proof}
	This is a direct consequence of \cite{bp1} with small modifications as follows. Let $e_\alpha$ be as in (\ref{def.Frob.R}). Define a new basis 
	\begin{equation}\label{idem.basis}
	v_\rho = \frac{\dim \rho}{d!}\sum_\alpha (-t)^{\ell(\alpha)-d}\chi_\rho(\alpha)e_\alpha,
	\end{equation}
 indexed by the irreducible representations $\rho$ of $S_d$.
Note that  
	\bear\label{e=v.basis}
	e_\alpha=(-t)^{d-\ell(\alpha)}\sum_\rho \frac{d!}{\dim \rho}\frac{\chi_ \rho(\al)}{\zeta(\alpha)}v_\rho.
	\eear
The pair of pants product is determined by 
	$$
	RGW_d((0,0)|(0,0))_{\alpha,\beta}^\gamma
	$$
	and by Corollary \ref{compBP} and the last display on p. 35 of \cite{bp1}
	$$
	RGW_d((0,0)|(0,0))_{\al,\beta}^\gamma(t) = GW_d(0|0,0)_{\alpha,\beta}^\gamma(it) = t^{d-\ell(\al)-\ell(\beta)+\ell(\gamma)}\sum_\rho \l(\frac{d!}{\dim \rho}\r)\frac{\chi_\rho(\al)\chi_\rho(\beta)}{\zeta(\al)\zeta(\beta)}.
	$$
As in \cite{bp1} this implies that $\{v_\rho\}$ is an idempotent basis and therefore ${\bf RGW}_d$ is semi-simple.
	\end{proof}
Note that the relation between  $v_\rho$, defined in  \eqref{idem.basis}, and $v_\rho^{BP}$, defined in
 \cite[Equation (20)]{bp1}, is
\bear\label{v.rho=BP}
	v_\rho(t) = v_\rho^{BP}(it).
\eear

As discussed in \S\ref{KTQFT}, the theory is determined by the genus-adding operator $G$, the level-decreasing operators $A$, $\bar A$, the cross-cap $U$, and the involution $\Omega$. Moreover,
\begin{lemma}
In the idempotent basis $\{v_\rho\}$, the genus-adding operator $G$, the  $(-1, 0)$-tube $A$, and the $(0, -1)$-tube $\bar A$ have eigenvalues respectively 
\bear\label{la+eta}
\la_\rho=t^{2d}\l(\frac{d!}{\dim \rho}\r)^{2},\quad  \eta_\rho= t^dQ^{c_\rho/2}\l(\frac{\dimh_Q \rho}{\dim \rho}\r), \quad \ov \eta_\rho=t^dQ^{-c_\rho/2}\l(\frac{\dimh_Q \rho}{\dim \rho}\r). 
\eear
Here $Q=e^u$, $c_\rho$ is the total content of the Young diagram associated to $\rho$, and 
\bear\label{def.dimh}
\dimh_Q \rho = d!\prod_{\square \in \rho}\left(2\sinh\tfrac{ h(\square)u}{2} \right)^{-1} = d!\prod_{\square\in \rho} \l(Q^{\tfrac{h(\square)}{2}}-Q^{-\tfrac{h(\square)}{2}}\r)^{-1},
\eear
where $h(\square)$ denotes the hooklength of the square $\square$ in the Young diagram associated to 
$\rho$. 
\end{lemma}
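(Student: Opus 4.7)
The plan is to realize each of $G$, $A$, $\bar A$ as the image under the doubling embedding \eqref{emb.cob.in.kcob} of a familiar cobordism in $\cob^{L_1,L_2}$, and then reduce the computation to the anti-diagonal Bryan--Pandharipande theory via Theorem~\ref{ktqft} and the basis change \eqref{v.rho=BP}. Concretely, $G$ is obtained by composing a reverse pair of pants and a pair of pants at level $0$, which is the doublet of the BP genus-$1$ tube (level $(0,0)$, genus $1$); by construction \eqref{two.tubes.s}, $A$ and $\bar A$ are the doublets of the BP $(-1,0)$- and $(0,-1)$-tubes respectively. In particular, each of these is the image under \eqref{emb.cob.in.kcob} of a cobordism to which \eqref{rgw=gw} applies.

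Given this, I would combine \eqref{rgw=gw} with \eqref{v.rho=BP} to deduce that for each of these operators the action on $v_\rho$ is diagonal and equals
\[
\mathrm{eig}_\rho^{\mathbf{RGW}}(u,t) \;=\; (-1)^{d k_2}\,\mathrm{eig}_\rho^{\mathbf{GW}}(iu,it),
\]
with $k_2=0$ for $G$ and $A$, and $k_2=-1$ for $\bar A$. The required BP eigenvalues for the anti-diagonal action are already established in \cite{bp1}: the genus-adding operator has eigenvalue $(-1)^d(d!t^d/\dim\rho)^2$, while the level-decreasing operators have eigenvalues of the form $t^d q^{\pm c_\rho/2}\,\dim_q\rho/\dim\rho$ with $q=e^{u}$ and $\dim_q\rho = d!\prod_{\square}(q^{h/2}-q^{-h/2})^{-1}$. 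Substituting $(u,t)\mapsto(iu,it)$ in these expressions, using $(it)^{2d}=(-1)^d t^{2d}$, combining with the outer $(-1)^{dk_2}$ prefactor, and identifying $Q=e^{u}$, one checks that the hook-length product $\prod(q^{h/2}-q^{-h/2})^{-1}$ under $u\mapsto iu$ precisely reproduces $\prod(2\sinh(hu/2))^{-1}$ up to factors absorbed by the $(\pm t)^d$ and $(-1)^{dk_2}$ terms. The output is exactly \eqref{la+eta}.

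The principal obstacle is entirely bookkeeping: three intertwined substitutions ($t\mapsto it$, $u\mapsto iu$, and the sign $(-1)^{dk_2}$) each introduce factors of $i^d$ or $(-1)^d$, and a single misallocation throws the formula off by a sign or an $i^d$. To minimize error, I would first verify each formula at $d=1$, where $\rho$ is the trivial representation, $\dim\rho=1=\dim_Q\rho\cdot(2\sinh(u/2))$, $c_\rho=0$, and all three identities reduce to the easy degree-one cover computation carried out in the proof of Proposition~\ref{crosscap}; then extend to general $d$ by the standard symmetric-function manipulations (expressing the central character by $c_\rho$ via Jucys--Murphy elements) that underlie BP's computation. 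Finally, the formulas for $A$ and $\bar A$ being related by $c_\rho\leftrightarrow -c_\rho$ reflects the symmetry between the first and second components of the doublet, which is a useful consistency check to perform before declaring the proof complete.
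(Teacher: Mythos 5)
Your proposal is correct and follows essentially the same route as the paper: identify $G$, $A$, $\bar A$ as images under the doubling embedding of the BP genus-adding and level-decreasing tubes, then invoke \eqref{rgw=gw} and \eqref{v.rho=BP} to transport the eigenvalues from \cite[\S7.3]{bp1} via the substitution $(u,t)\mapsto(iu,it)$ and the $(-1)^{dk_2}$ prefactor. The paper's proof is precisely this reduction, recording along the way the resulting relation $\dimh_Q\rho(u)=(-i)^d\dim_{Q_{BP}}\rho(iu)$ between the two quantum dimensions.
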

\begin{proof} By  \eqref{rgw=gw} and \eqref{v.rho=BP}, the relation between the ${\bf RGW}_d$ and ${\bf GW}_d$ is obtained by the change of variables 
$(u,t)\mapsto (iu, it)$ and multiplication by $(-1)^{dc_1(L_2)}$ in both  the standard and the idempotent bases. The result then  follows from \cite[\S7.3]{bp1}.  In particular, \eqref{def.dimh} is related to the quantum dimension defined in \cite{bp1} via
$$
\dimh_Q\rho(u)= (-i)^d\dim_{Q_{BP}}\rho(iu),  \quad \mbox{ where} 
$$
$$
\frac{\dim_{Q_{BP}}\rho}{d!} = \prod _{\square \in \rho}\left(2\sin\tfrac{ h(\square)u}{2} \right)^{-1} = \prod_{\square\in \rho} i\l({Q_{BP}}^{\tfrac{h(\square)}{2}}-{Q_{BP}}^{-\tfrac{h(\square)}{2}}\r)^{-1}
$$ 
and ${Q_{BP}}=e^{iu}$. 
\end{proof}

It remains to determine $U$ and $\Omega$ in the idempotent basis. 
\begin{prop}\label{L.Om=conj} The involution $\Omega$ is given by
\bear\label{Om=conj}
	\Om (e_\al)= (-1)^{d-\ell(\al)} e_\al  \quad \mbox{and }   \quad \Om(v_\rho)= v_{\rho'} 
\eear	
in the standard basis $\{e_\al\}$ and in the idempotent basis $\{v_\rho\}$, respectively. Here  $\rho'$ denotes the conjugate representation.  
\end{prop}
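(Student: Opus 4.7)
The plan is to first read off the matrix of $\Omega$ in the standard basis $\{e_\alpha\}$ from its geometric description in $\SymRiem^L$, and then pass to the idempotent basis by a direct calculation using standard symmetric-group representation theory.

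\textbf{Step 1: The action on $e_\alpha$.} The involution $\Omega$ is, by definition, the image under $\wt{\mathbf{RGW}}_d$ of the symmetric tube from $\mathcal{S}$ to $\mathcal{S}$ that swaps the two circles, cf.\ \eqref{K.=} and Figure~\ref{F.id.U}. Topologically this tube is the identity; what distinguishes it is that the boundary identification interchanges the ``positive'' and ``negative'' components of the outgoing $\mathcal{S}$. When we apply the definition \eqref{def.Frob.cob}, the closed Real surface $\Sigma_\Omega$ is a sphere with two pairs of conjugate marked points; the matrix coefficient $\Omega^\beta_\alpha$ equals the RGW invariant on this sphere with ramification $\alpha$ and $\beta$ at the two pairs, but computed with the ``wrong'' choice of the first element in one of the two pairs. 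Straightening out this choice by exchanging $x^+\leftrightarrow x^-$ in that pair is exactly the operation governed by Lemma~\ref{signOm}, and it multiplies the invariant by $(-1)^{d-\ell(\alpha)}$. Since, after the swap is undone, we are left with the matrix element of the identity tube, we obtain
\[
\Omega(e_\alpha)=(-1)^{d-\ell(\alpha)}\,e_\alpha.
\]

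\textbf{Step 2: Change to the idempotent basis.} Substituting this into the change-of-basis formula \eqref{idem.basis} gives
\[
\Omega(v_\rho)=\frac{\dim\rho}{d!}\sum_\alpha (-t)^{\ell(\alpha)-d}\chi_\rho(\alpha)(-1)^{d-\ell(\alpha)}\,e_\alpha
=\frac{\dim\rho}{d!}\sum_\alpha t^{\ell(\alpha)-d}\chi_\rho(\alpha)\,e_\alpha,
\]
where the signs $(-1)^{\ell(\alpha)-d}$ and $(-1)^{d-\ell(\alpha)}$ cancel. On the other hand, the conjugate (sign-twisted) representation satisfies $\rho'=\rho\otimes\mathrm{sgn}$, so $\dim\rho'=\dim\rho$ and
\[
\chi_{\rho'}(\alpha)=\mathrm{sgn}(\alpha)\chi_\rho(\alpha)=(-1)^{d-\ell(\alpha)}\chi_\rho(\alpha),
\]
using the standard fact that a permutation of cycle type $\alpha=(1^{m_1}2^{m_2}\cdots)$ is a product of $\sum_k m_k(k-1)=d-\ell(\alpha)$ transpositions. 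Applying \eqref{idem.basis} to $\rho'$ and using this identity,
\[
v_{\rho'}=\frac{\dim\rho}{d!}\sum_\alpha(-t)^{\ell(\alpha)-d}(-1)^{d-\ell(\alpha)}\chi_\rho(\alpha)\,e_\alpha
=\frac{\dim\rho}{d!}\sum_\alpha t^{\ell(\alpha)-d}\chi_\rho(\alpha)\,e_\alpha,
\]
which coincides with the expression for $\Omega(v_\rho)$ obtained above. Hence $\Omega(v_\rho)=v_{\rho'}$.

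\textbf{Main obstacle.} The only subtle point is Step 1: correctly identifying the geometric meaning of the $\Omega$-tube in $\SymRiem^L$ and tracing how it modifies the conventions for the ``positive'' and ``negative'' elements of a conjugate pair, so that Lemma~\ref{signOm} applies with the exponent $d-\ell(\alpha)$ rather than, say, $\ell(\alpha)$ or $d$. Once that bookkeeping is done, Step~2 is a formal manipulation.
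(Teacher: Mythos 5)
Your proposal is correct and follows essentially the same route as the paper's proof: the formula for $\Omega(e_\alpha)$ is read off from Lemma~\ref{signOm} by observing that the $\Omega$-tube differs from the identity tube only in the ordering within one conjugate pair, and the passage to $v_{\rho'}$ is the same change-of-basis computation using $\chi_{\rho'}(\alpha)=(-1)^{d-\ell(\alpha)}\chi_\rho(\alpha)$. The only difference is that you spell out both sides of the sign cancellation explicitly, while the paper absorbs $(-1)^{d-\ell(\alpha)}$ directly into the character to produce $\chi_{\rho'}$ in one step.
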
 
 
\begin{proof}  Consider the moduli space of real maps into the doublet corresponding to $\Om$.  It is the same as the moduli space of real maps into the doublet associated to the level 0 tube (the identity), 
except for the change $x_2^+\leftrightarrow x_2^-$  of the order within the pair of marked points in the target corresponding to the outgoing boundary.  Lemma~\ref{signOm} then implies the first equality. In the idempotent basis \eqref{idem.basis}  
	\best
	\Omega(v_\rho)=\frac{ \dim \rho} {d!}  \sum_\al (-it)^{\ell(\al)-d}\chi_\rho(\al) \Omega(e_\al) = \frac{ \dim \rho} {d!}  \sum_\al (-it)^{\ell(\al)-d}\chi_{\rho'}(\al) e_\al =v_{\rho'}, 
	\eest
	where the second equality holds since $\chi_{\rho'}(\al)=(-1)^{d-l(\al)}\chi_{\rho}(\al)$. 
	\end{proof}
Note that \eqref{Omeq} and \eqref{Om=conj} imply that the coefficients $U_\rho$ vanish unless $\rho= {\rho'}$. If $\rho=\rho'$ then \eqref{Omeq}  and \eqref{la+eta} imply that $U_\rho =\pm t^d\frac{d!}{\dim \rho}$ determining it up to a sign. Proposition~\ref{CC} below calculates $U$ directly, independent of these considerations, including the signs. The signed Frobenius-Schur indicator, defined  in \S\ref{SFSsection}, plays a crucial role in this calculation.

\subsection{The level 0 cross-cap $U$} Consider next the level 0 cross-cap $U$ corresponding to \eqref{F.double.xcap}. Its coefficients in the standard basis are obtained from the RGW invariants of a sphere with 2 marked points, real structure $c(z)=-1/\ov z$, and a trivial line bundle. 

 Before we proceed, it is convenient to make the following definition. For a partition $\la$ of $d$, let 
\bear\label{sq.la}
sq(\lambda) 
\eear
denote the partition of $d$ obtained from $\lambda$ by splitting all of the even parts of $\lambda$ into two equal parts e.g. $sq(4,3,3,2,1) = (2,2,3,3,1,1,1)$. This is motivated by the fact that if $g\in S_d$ has cycle type $\la$, then $g^2$ has cycle type $sq(\la)$.  Recall  that the sign morphism on $S_d$ descends to the conjugacy class
\bear\label{sign.g}
\sign(g)= (-1)^{s(g)} = (-1)^{d-\ell(\la)}= \sign(\la),
\eear
where $s(g)$ is the parity of the permutation $g\in S_d$ and $\la$ is its cycle type. This is also the parity of the number of even parts of $\la$ and in particular, $\sign(sq(\la))=+1$.
 
 We start with the following combinatorial identity, which uses the notation \eqref{p-la}; see also \eqref{level0caps}.

\begin{lemma}\label{L.triv.covers} For any partition $\al\vdash d$, the coefficient $r_\al$ of the monomial $p_\al$ in the expansion below is given by
\bear\label{r=trivial.cover}
r_\al=\l[\exp \l( \sum_{d \; \mathrm{odd}} \frac{1}{d} p_d -\frac 12 \sum_{d=1}^\infty \frac{1}{d} (p_d)^2  \r)\r]_{p_\al} 
	= \sum_d \sum_{\lambda\vdash d \atop{sq(\lambda)=\al}} \frac{(-1)^{d-\ell(\lambda)}}{\zeta(\lambda)}.
\eear  
 In particular, $r_\al$ vanishes unless $\al$ has an even number of even parts.
 \end{lemma}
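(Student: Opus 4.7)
The plan is to expand both sides of \eqref{r=trivial.cover} explicitly and match them factor by factor. First I would write the exponential as a product
\[
\prod_{d\text{ odd}} \exp\!\left(\tfrac{p_d}{d}\right) \cdot \prod_{d\ge 1} \exp\!\left(-\tfrac{p_d^2}{2d}\right),
\]
and expand each Taylor series. The coefficient of $p_\alpha=\prod_d p_d^{m_d}$ then factors as a product over $d$: for each even $d$ only the second factor contributes, giving $\frac{(-1)^{m_d/2}}{(m_d/2)!\,(2d)^{m_d/2}}$ when $m_d$ is even and $0$ otherwise; for each odd $d$ both factors contribute, giving $\sum_{a+2b=m_d}\frac{(-1)^b}{a!\,d^a\,b!\,(2d)^b}$. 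The vanishing claim is immediate from this: if $\alpha$ has an odd number of even parts, then some $m_d$ with $d$ even is odd, and the product contains a zero factor.

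For the matching computation, I would parametrize the partitions $\lambda$ with $sq(\lambda)=\alpha$ in a form compatible with the factorization above. Since $sq$ leaves odd parts untouched and splits each even part $2k$ of $\lambda$ into a pair $(k,k)$, a part of size $j$ of $\alpha$ either comes from a single odd part $j$ of $\lambda$ (requires $j$ odd) or from an even part $2j$ of $\lambda$ (which always produces its copies of $j$ in pairs). Letting $a_d$ denote the multiplicity of $d$ in $\lambda$ (nonzero only when $d$ is odd) and $b_d$ the multiplicity of $2d$ in $\lambda$, the condition $sq(\lambda)=\alpha$ becomes $a_d+2b_d=m_d$ for $d$ odd and $b_d=m_d/2$ (with $m_d$ even) for $d$ even.

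Finally, one reads off from \eqref{mult.la} that $\zeta(\lambda)=\prod_{d\text{ odd}}a_d!\,d^{a_d}\cdot\prod_{d}b_d!\,(2d)^{b_d}$. The one subtle point is the sign: writing $|\lambda|-\ell(\lambda)$ as the sum of $(\text{part}-1)$ over parts of $\lambda$, each odd part contributes an even summand while each even part contributes an odd summand, so
\[
(-1)^{|\lambda|-\ell(\lambda)}=(-1)^{\sum_d b_d}.
\]
Substituting all this, the sum over $\lambda$ on the RHS of \eqref{r=trivial.cover} factors into a product over $d$ that agrees term by term with the coefficient extraction on the LHS, completing the proof. The only step requiring care is this sign identification; the remainder is bookkeeping.
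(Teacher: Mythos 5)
Your proof is correct and uses essentially the same approach as the paper: expand the exponential as a product over $d$, parametrize partitions $\lambda$ with $sq(\lambda)=\alpha$ by multiplicities of odd parts and even parts, and match the combinatorial factors and sign. The paper writes the decomposition slightly more compactly as $\alpha = a \sqcup b \sqcup b \leftrightarrow \lambda = a \sqcup (2b)$, but the bookkeeping and the key sign identification $(-1)^{|\lambda|-\ell(\lambda)} = (-1)^{\#\text{even parts of }\lambda}$ are identical.
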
 
\begin{proof} The coefficient $r_\al$ on the  LHS of \eqref{r=trivial.cover} is the sum over all possible ways of decomposing $\al$ into a partition $a$ containing only odd elements and 2 copies of a partition $b$:
\bear\label{RGW-0-cross-cap}
\sum_\al r_ \al p_\al = \sum_{a, b } \prod_{k\; \rm odd} \frac {p_k^{a_k}} {a_k! k^{a_k}} \prod_{k} \frac {(-1)^{b_k}p_k^{2b_k}} 
{ 2^{b_k}b_k! k^{b_k}},
\eear
where $a_k, b_k$ denote the multiplicities of $k$ in the partitions $a$ and $b$ respectively. Every such decomposition 
$\al=a\sqcup b\sqcup b$ corresponds to a partition $\la=a\sqcup (2b)$, where $(2b)$ denotes the partition obtained from $b$ by multiplying by 2 each of its parts;  
in particular, $sq(\la)=\al$ and $\sum_k b_k\equiv d-\ell(\la) \mod 2$.  Therefore \eqref{RGW-0-cross-cap} becomes
$$
\sum_{\al} r_\al p_\al = \sum_\al\Big(\sum_{\lambda\vdash d \atop sq(\lambda)=\al} \frac{(-1)^{d-\ell(\lambda)}}{\zeta(\lambda)}\Big)p_{\al}.
$$
\end{proof}
Combined with Proposition~\ref{crosscap} , Lemma \ref{L.triv.covers} implies that the invariant $RGW(0|0)_\al$ is equal to 
\bear\label{rgw00}
RGW(0|0)_\al= r_\al= \sum_d \sum_{\lambda\vdash d \atop{sq(\lambda)=\al}} \frac{(-1)^{d-\ell(\lambda)}}{\zeta(\lambda)}.
\eear
The coefficients $RGW(0|0)^\al$ of $U$ in the standard basis $\{ e_\al\}$ are obtained by raising the indices in \eqref{rgw00} via \eqref{modified.metric}. Combinatorial considerations then allow us to determine the coefficients of
\bear\label{U.in.bases}
U=\sum_\al RGW(0|0)^\al e_\al= \sum_\rho U_\rho v_\rho 
\eear
in the idempotent basis $\{ v_\rho\}$.

 Decompose the set of partitions $\la$ of $d$ into even and odd according to the parity of $d-\ell(\la)$, cf. \eqref{sign.g}.  
  
 \begin{prop}\label{CC} The level 0 cross-cap \eqref{U.in.bases} is equal to the sum over self-conjugate irreducible representations of $S_d$
\bear\label{U=sum}
 &&U= \sum_{\rho=\rho'} \ep_\rho t^d \frac{d!}{{\dim\rho}}v_\rho, 
 \quad \mbox{where }
\\
\label{ep=} 
 &&\ep_\rho=(-1)^{o(\rho)}\quad \mbox{ and} \quad o(\rho) =\sum_{  \beta\vdash d \atop \beta\; \mathrm{odd}}\frac{\chi_\rho(sq(\beta))}{\zeta(\beta)}.
 \eear
 The expression $o(\rho)$ takes values $0$ or $1$ on a self-conjugate irreducible representation $\rho$.
 	\end{prop}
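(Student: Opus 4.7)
The plan is to compute the coefficient $U_\rho$ of $U$ in the idempotent basis, recognize it as a multiple of a signed Frobenius–Schur indicator $SFS(\rho)$, and then determine its value from the Klein TQFT constraints together with the representation theory of $S_d$.

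First, I would substitute the combinatorial expression \eqref{rgw00} for $RGW(0|0)_\alpha$ into $U = \sum_\alpha RGW(0|0)^\alpha e_\alpha$, raise the index via \eqref{modified.metric}, and then re-expand each $e_\alpha$ in the idempotent basis using \eqref{e=v.basis}. The sign $(-1)^{d-\ell(\alpha)}$ arising from the change of basis combines with $\chi_\rho(\alpha)$ to give $\chi_{\rho'}(\alpha)$. Re-indexing the sum over $\alpha$ as a sum over $\lambda$ with $\alpha=sq(\lambda)$, and noting that $sq(\lambda)$ has an even number of even parts so that $\chi_\rho$ and $\chi_{\rho'}$ agree on $sq(\lambda)$, the computation collapses to
\[
U_\rho \;=\; \frac{t^d\, d!}{\dim\rho}\, SFS(\rho), \qquad
SFS(\rho)\;:=\;\sum_{\lambda\vdash d}\frac{(-1)^{d-\ell(\lambda)}}{\zeta(\lambda)}\,\chi_\rho(sq(\lambda)).
\]

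Next I would invoke the Klein TQFT structure. By Proposition \ref{L.Om=conj}, the involution $\Omega$ permutes the idempotent basis as $v_\rho \mapsto v_{\rho'}$, so Proposition \ref{P.elem.cob}(iii) gives $U_\rho = 0$ whenever $\rho \neq \rho'$ and $U_\rho^2 = \lambda_\rho = t^{2d}(d!/\dim\rho)^2$ when $\rho = \rho'$. Consequently $SFS(\rho)=0$ for $\rho\neq \rho'$, and $SFS(\rho)\in\{\pm 1\}$ for self-conjugate $\rho$. This already yields the vanishing half of the proposition and reduces the problem to identifying the sign.

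For the final step, I would split the sum defining $SFS(\rho)$ according to the parity of $d-\ell(\lambda)$, writing $SFS(\rho) = e(\rho) - o(\rho)$ with $o(\rho)$ as in the statement and $e(\rho)$ the corresponding ``even'' partial sum. The ordinary Frobenius–Schur indicator of $\rho$ then reads $FS(\rho) = e(\rho) + o(\rho)$ and equals $1$ because every irreducible representation of $S_d$ is defined over $\R$. Hence $SFS(\rho) = 1 - 2\,o(\rho)$, and combined with $SFS(\rho) \in \{\pm 1\}$ from the previous step this forces $o(\rho) \in \{0,1\}$ and $SFS(\rho) = (-1)^{o(\rho)}$, proving both claims of the proposition.

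The most delicate aspect is the bookkeeping in the first step: carefully tracking the powers of $t$ and the signs $(-1)^{d-\ell(\lambda)}$ through the index-raising and the change of basis, and verifying that the squaring operation $\lambda\mapsto sq(\lambda)$ from Lemma \ref{L.triv.covers} produces exactly the combinatorial structure needed to match the signed Frobenius–Schur indicator. Once that identification is clean, the Klein TQFT axioms packaged in Proposition \ref{P.elem.cob} together with the reality of $S_d$-representations assemble the remaining steps with no further effort.
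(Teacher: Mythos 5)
Your proof is correct, but the route you take to pin down the value $SFS(\rho)\in\{0,\pm 1\}$ is genuinely different from the paper's. After the same initial computation (index raising, change to the idempotent basis, re-indexing by $\lambda$ with $sq(\lambda)=\alpha$ so the $(-t)^{d-\ell(\alpha)}$ factor contributes only $t^{d-\ell(\alpha)}$), both arrive at $U_\rho=\tfrac{t^d d!}{\dim\rho}\,SFS(\rho)$. At this point the paper invokes Lemma~\ref{SFS}, whose proof in \S\ref{SFSsection} is a self-contained Frobenius--Schur-type argument: it realizes $SFS(\rho)$ as $\dim(Sym^2)^{S_d}-\dim(Alt^2)^{S_d}$ for a sign-twisted action on bilinear forms, showing directly that $SFS(\rho)\in\{0,\pm 1\}$ with the value $0$ exactly when $\rho\neq\rho'$. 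You instead deduce the same facts from the Klein TQFT axioms: Proposition~\ref{L.Om=conj} gives $\Omega(v_\rho)=v_{\rho'}$, and Proposition~\ref{P.elem.cob}(iii) then forces $U_\rho=0$ for $\rho\neq\rho'$ and $U_\rho^2=\la_\rho$ for $\rho=\rho'$, hence $SFS(\rho)^2=1$. Both approaches then finish identically by pairing $SFS=e-o$ with $FS=e+o=1$ (every $S_d$-irrep is real) to extract $o(\rho)\in\{0,1\}$ and $\ep_\rho=(-1)^{o(\rho)}$. The trade-off: the paper's proof is self-contained representation theory and is explicitly intended to serve as an independent check on the KTQFT computation of $U$ (the text preceding Proposition~\ref{CC} notes that \eqref{Omeq} already determines $U_\rho$ up to sign and advertises the direct calculation); your version is shorter but leans on the splitting formula and functoriality underlying Theorem~\ref{ktqft}, so it cannot play that cross-checking role. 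Since none of the inputs you cite depend on Proposition~\ref{CC}, there is no circularity, and the argument stands. One minor bookkeeping remark: the factor $(-1)^{d-\ell(\alpha)}$ from the change of basis is identically $+1$ once $\alpha=sq(\lambda)$ (since $sq(\lambda)$ has an even number of even parts), so the detour through $\chi_{\rho'}$ you describe is unnecessary, though harmless.
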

 \begin{proof} By \eqref{U.in.bases}, \eqref{rgw00} and \eqref{e=v.basis} 
 	\begin{gather*}
 	U=\sum_\al RGW(0|0)^\al e_\al = \sum_\al \Big(\sum_{sq(\lambda)=\al} \frac{(-1)^{d-\ell(\lambda)}}{\zeta(\lambda) 
	t^{\ell(\al)}}\Big)\zeta(\al) t^{2l(\al)} e_\al =
	\\
 	=\sum_\al \Big(\sum_{sq(\lambda)=\al} \frac{(-1)^{d-\ell(\lambda)}}{\zeta(\lambda)}\Big)\zeta(\al) t^{\ell(\al)}
	 \Big((-t)^{d-\ell(\alpha)}\sum_\rho \frac{d!}{\dim \rho}\frac{\chi_\rho(\alpha)}{\zeta(\alpha)}v_\rho
 	\Big)=
	\\
 	=\sum_\rho \Big(\sum_\al \sum_{sq(\lambda)=\al} (-1)^{d-\ell(\lambda)}\frac{\chi_\rho(\al)}{\zeta(\lambda)}\Big) \frac{d!t^d}{\dim \rho} v_\rho
 		\end{gather*}
 In the last equality we used the fact that  for $\al=sq(\la)$ the parity of $d$ and $\ell(\al)$ is the same.  It remains to show that the expression in the parenthesis  is given by \eqref{ep=}.  For this we use the following combinatorial identity
 \bear\label{sum.sq=sum.self.conj}
	\sum_{\la\vdash d \atop sq(\la)=\alpha}(-1)^{d-\ell(\la)} \frac{\zeta(\alpha)}{\zeta(\la)}= 
	\sum_{ \rho=\rho'} \ep_\rho \chi_\rho(\alpha)
\eear
cf. Lemma \ref{SFS}, which is of independent interest and whose proof is deferred to \S \ref{SFSsection}. Then 
 	\begin{gather*}
 	 \sum_\al\sum_{sq(\lambda)=\al}(-1)^{d-\ell(\lambda)} \frac{\chi_\rho(\al)}{\zeta(\lambda)}= 
	 \sum_\al \sum_{\mu=\mu'}\ep_{\mu}  \chi_{\mu}(\al)\frac{\chi_\rho(\al) }{\zeta(\al)}=
 	 \sum_{\mu=\mu'}\ep_{\mu} \sum_\al \frac{\chi_{\mu}(\al)\chi_\rho(\al)}{\zeta(\al)}=
	 \begin{cases}\ep_\rho &\text{if } \rho=\rho',\\ 0&\text{otherwise}.\end{cases}
 	\end{gather*}
 The result follows.
 	\end{proof}

 The next lemma provides a simpler expression for the sign $\ep_\rho$ appearing in \eqref{U=sum}. 
 \begin{lemma}\label{CCsign}
 	Let $\rho$ be an irreducible representation of $S_d$, $r(\rho)$ the length of the main diagonal of its Young diagram, and $\ep_\rho$ be as \eqref{ep=}. Then
 \bear\label{ep=2}
 	\ep_\rho= (-1)^{\frac{d-r(\rho)}{2}}.
 \eear
 \end{lemma}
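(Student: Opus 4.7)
The plan is to reinterpret $\ep_\rho=(-1)^{o(\rho)}$ as the coefficient of the Schur function $s_\rho$ in an explicit symmetric function, so that classical Littlewood--Schur identities produce the sign $(-1)^{(d-r(\rho))/2}$. Denote by $p_\al=\prod_i p_{\al_i}$ the power-sum symmetric function in variables $x=(x_1,x_2,\ldots)$, with $p_k=\sum_i x_i^k$. By the Frobenius character formula $p_\al=\sum_\mu\chi_\mu(\al)s_\mu$ combined with $\langle p_\al,p_\b\rangle=\de_{\al\b}\zeta(\al)$, one has
\[
o(\rho)=\langle\Phi,s_\rho\rangle,\qquad \Phi(x):=\sum_{\b\vdash d,\ \b\text{ odd}}\frac{p_{sq(\b)}(x)}{\zeta(\b)}.
\]
It is more convenient to compute the two auxiliary series $G_\pm(x):=\sum_{\b\vdash d}(\pm1)^{s(\b)}p_{sq(\b)}(x)/\zeta(\b)$, where $s(\b)$ denotes the number of even parts of $\b$; since $\b$ is odd (i.e.\ $d-\ell(\b)$ is odd) exactly when $s(\b)$ is odd, $\Phi=(G_+-G_-)/2$.

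By the exponential formula, $G_\pm=\exp\bigl(\sum_{k\text{ odd}}p_k/k\pm\sum_{j\ge1}p_j^2/(2j)\bigr)$, and the standard logarithmic identities $\sum_{k\text{ odd}}p_k/k=\tfrac12\sum_i\log\tfrac{1+x_i}{1-x_i}$ and $\sum_jp_j^2/j=-\sum_{i,j}\log(1-x_ix_j)$ yield the closed forms
\[
G_+(x)=\prod_i\frac{1}{1-x_i}\prod_{i<j}\frac{1}{1-x_ix_j},\qquad G_-(x)=\prod_i(1+x_i)\prod_{i<j}(1-x_ix_j).
\]
The classical Littlewood identity identifies $G_+=\sum_\lambda s_\lambda$, so $\langle G_+,s_\rho\rangle=1$ for every $\rho\vdash d$, whence $o(\rho)=\tfrac12\bigl(1-\langle G_-,s_\rho\rangle\bigr)$.

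The heart of the argument is then the dual Littlewood-type identity
\[
\prod_i(1+x_i)\prod_{i<j}(1-x_ix_j)=\sum_{\rho=\rho'}(-1)^{(|\rho|-r(\rho))/2}\,s_\rho(x),
\]
which I would prove by multiplying both sides by the Vandermonde $a_\de(x)=\prod_{i<j}(x_j-x_i)$ and invoking the Weyl/Frobenius formula $a_\de(x)\,s_\mu(x)=\det(x_i^{\mu_j+n-j})$; the left-hand side then reduces, via Schur's Pfaffian identity for $a_\de(x)\prod_{i<j}(1-x_ix_j)$, to an alternating sum of determinants indexed by the Frobenius arm-coordinates $(a_1>\cdots>a_r\ge0)$ of the self-conjugate partitions of $d$, contributing the overall sign $(-1)^{a_1+\cdots+a_r}=(-1)^{(d-r)/2}$. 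This Schur--Pfaffian/determinantal step is the main obstacle; I have independently verified the identity by direct expansion for $d\le 4$. Granting it, for self-conjugate $\rho\vdash d$ one obtains $o(\rho)=\tfrac12\bigl(1-(-1)^{(d-r(\rho))/2}\bigr)\in\{0,1\}$, and therefore $\ep_\rho=(-1)^{o(\rho)}=(-1)^{(d-r(\rho))/2}$, completing the proof.
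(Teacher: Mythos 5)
Your proof is correct, and the computation at its core is the same one the paper uses: expand the generating series via the exponential formula into the product $\prod_i(1+x_i)\prod_{i<j}(1-x_ix_j)$, then apply a Littlewood-type identity expressing this product as a signed sum of Schur functions over self-conjugate partitions. The variation is in the bookkeeping. You compute $o(\rho)=\langle\Phi,s_\rho\rangle$ with $\Phi=(G_+-G_-)/2$, using the additional classical identity $G_+=\prod_i(1-x_i)^{-1}\prod_{i<j}(1-x_ix_j)^{-1}=\sum_\la s_\la$ (equivalently, that the Frobenius--Schur indicator of every irreducible of $S_d$ is $1$); this lets you avoid explicitly invoking the combinatorial identity \eqref{sum.sq=sum.self.conj} of Lemma~\ref{SFS}, and you get for free that $o(\rho)\in\{0,1\}$ exactly when $\rho=\rho'$. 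The paper instead works directly with $G_-=\sum_\al r_\al p_\al$ and identifies its Schur coefficients as $\ep_\rho$ via that lemma before comparing to the product formula. These buy essentially the same thing; yours is marginally more self-contained, the paper's marginally shorter.

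On the step you flag as the obstacle: the identity
\[
\prod_i(1+x_i)\prod_{i<j}(1-x_ix_j)=\sum_{\rho=\rho'}(-1)^{(|\rho|-r(\rho))/2}\,s_\rho(x)
\]
is classical and does not need a new Pfaffian proof. Under $x\mapsto -x$ and $s_\rho(-x)=(-1)^{|\rho|}s_\rho(x)$ it becomes
\[
\prod_i(1-x_i)\prod_{i<j}(1-x_ix_j)=\sum_{\rho=\rho'}(-1)^{(|\rho|+r(\rho))/2}\,s_\rho(x),
\]
which is exactly \cite[I.5, Ex.~9(c), p.~79]{mac} --- the identity the paper cites and derives from the Weyl denominator formula for $B_n$. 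So you may simply cite it (or run the Weyl $B_n$ argument), rather than attempting the Schur--Pfaffian route; your numerical checks for $d\le 4$ are reassuring but unnecessary. One last small point: $G_\pm$ should be defined as generating series over all $d$ (not a fixed $d$) for the exponential formula to apply, and when reducing to finitely many variables $x_1,\dots,x_n$ one should take $n\ge d$ so that the Schur functions $s_\rho$ with $\rho\vdash d$ remain linearly independent, as the paper notes.
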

 \begin{proof} Let $x=(x_1,\dots, x_n)$.
 	The power sum functions $p_k(x)$ and the Schur functions $s_\rho(x)$ are related by
 	$$
 	s_\rho(x)=\sum_\la \frac{\chi_\rho(\la)}{\zeta({\la})}p_\la(x)\qquad\text{and}\qquad p_\la(x)=\sum_\rho \chi_\rho(\la) s_\rho(x)
 	$$
 	and form basis for the space of symmetric polynomials on $n$ variables whenever $n>d$. By 
	\cite[9c, p79]{mac}, we have
 	$$
 	\sum_{\rho=\rho'} (-1)^{(d+r(\rho))/2}s_\rho(x_1,\dots,x_n) = \prod_i(1-x_i)\prod_{i<j}(1-x_ix_j),
 	$$
 	where $\rho'$ denotes the conjugate representation.
 	This equality follows from Weyl's identity for $B_n$.
 	
 	\smallskip
 	By Lemma \ref{L.triv.covers},
 	\bear\label{sum.r.alpha=exp}
 	\sum_\al \sum_{\lambda \atop sq(\lambda)=\al} \frac{(-1)^{d-\ell(\lambda)}}{\zeta(\lambda)}
 	 p_\al  = \exp \Big( \sum_{d \; \mathrm{odd}} \tfrac{1 } d p_d -
 	\tfrac 12 \sum_{d=1}^\infty \tfrac{1} d (p_d)^2  \Big). 
 	\eear
 	Substitute $p_d=p_d(x)$, and consider first the LHS of \eqref{sum.r.alpha=exp}:
 	\begin{gather*}
 	\sum_\al \sum_{\lambda \atop sq(\lambda)=\al} \frac{(-1)^{d-\ell(\lambda)}}{\zeta(\lambda)}
 	p_\al(x)=\sum_\al \sum_{\lambda \atop sq(\lambda)=\al} \frac{(-1)^{d-\ell(\lambda)}}{\zeta(\lambda)}
 	\sum_{\rho}\chi_\rho(\al)s_\rho(x)=\\
 	=
 	\sum_\rho \Big(\sum_\al \sum_{\lambda \atop sq(\lambda)=\al} \frac{(-1)^{d-\ell(\lambda)}}{\zeta(\lambda)}
 	\chi_\rho(\al)\Big)s_\rho(x)=
 	\sum_{\rho=\rho'} \ep(\rho) s_\rho(x).
 	\end{gather*}
 For the RHS of  \eqref{sum.r.alpha=exp}, we start with  
 	$$
 	 \sum_{d \; \mathrm{odd}} \tfrac{1} d p_d(x)=
 	  \sum_{d \; \mathrm{odd}} \tfrac{1 } d \sum_i x_i^d=
 	  \sum_i \log \l(\frac{1+x_i}{1-x_i}\r)^{1/2}
 	$$
 	and
 	$$
 	 -
 	\tfrac 12 \sum_{d=1}^\infty \tfrac{1} d (p_d(x))^2=
 	 -
 	\tfrac 12 \sum_{d=1}^\infty \tfrac{1} d \Big(\sum_i x_i^d\Big)^2=
 	 -
 	\tfrac 12 \sum_{d=1}^\infty \tfrac{1} d \sum_{i,j}x_i^d x_j^d 
 	 =
 	 \sum_{i,j}\log (1-x_ix_j)^{1/2}.
 	$$
 	Therefore,
 	\begin{gather*}
 	 \exp \l( \sum_{d \; \mathrm{odd}} \tfrac{1 } d p_d(x) -
 	\frac 12 \sum_{d=1}^\infty \tfrac{1} d p_d(x)^2  \r)=
 	\prod_i \Big(\frac{1+x_i}{1-x_i}\Big)^{1/2} \prod_{i,j}(1-x_ix_j)^{1/2} =\\
 	\prod_i \l(\frac{1+x_i}{1-x_i}\r)^{1/2}\prod_{i<j}(1-x_ix_j)\prod_{x_i=x_j}(1-x_ix_j)^{1/2} = \prod_i(1+x_i)\prod_{i<j}(1-x_ix_j).
 	\end{gather*}
 	Since $s_\rho(-x)=(-1)^d s_\rho(x)$ then
 	$$
 	\sum_{\rho=\rho'} (-1)^d\ep(\rho) s_\rho(x)=\sum_{\rho=\rho'} \ep_\rho s_\rho(-x)=\prod_i(1-x_i)\prod_{i<j}(1-x_ix_j)=\sum_{\rho=\rho'} (-1)^{(d+r(\rho))/2}s_\rho(x).
 	$$
 	But $s_\rho(x)$ is a basis so \eqref{ep=2} holds. 
	 \end{proof}
 
Lemma \ref{CCsign} and Proposition \ref{CC} then imply:
\begin{cor}\label{CCsign2} In the idempotent basis, the level 0 cross-cap $U$ is given by
\bear\label{U.idemp.basis}
	U=\sum_{\rho\vdash d
	\atop \rho=\rho'} (-1)^{(d-r(\rho))/2}\; t^d \frac{d!}{\dim \rho}v_\rho,
\eear
where $r(\rho)$ is the length of the main diagonal of the Young diagram of $\rho$.  
\end{cor}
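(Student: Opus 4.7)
The statement is a direct consequence of the two results immediately preceding it, so the proof proposal is essentially to combine them. The plan is to invoke Proposition~\ref{CC}, which gives the expansion
\[
U = \sum_{\rho = \rho'} \varepsilon_\rho\, t^d\, \frac{d!}{\dim\rho}\, v_\rho,
\]
with the sign $\varepsilon_\rho = (-1)^{o(\rho)}$ defined in \eqref{ep=}, and then substitute the closed form $\varepsilon_\rho = (-1)^{(d-r(\rho))/2}$ supplied by Lemma~\ref{CCsign}. Both facts are already established, so there is essentially nothing left to do beyond this substitution.

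Thus the proof I would write is a two-line derivation: start from \eqref{U=sum}, observe that the summation is over self-conjugate irreducible representations of $S_d$, and replace $\varepsilon_\rho$ by the Weyl-type evaluation \eqref{ep=2}. No further manipulation is required, and the result \eqref{U.idemp.basis} follows at once. Since $r(\rho)$ and $d$ have the same parity when $\rho = \rho'$ (because a self-conjugate Young diagram has a symmetric complement of the diagonal split into pairs), the exponent $(d-r(\rho))/2$ is a well-defined integer, so the sign is unambiguous.

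There is no substantive obstacle here: all the work has been front-loaded into Proposition~\ref{CC} (which reduces the computation of $U$ to the signed Frobenius--Schur indicator via the combinatorial identity of Lemma~\ref{L.triv.covers} and the trace identity \eqref{sum.sq=sum.self.conj}) and into Lemma~\ref{CCsign} (which identifies $\varepsilon_\rho$ using the Weyl denominator formula for $B_n$ applied to Schur functions). The corollary itself is just the packaging of these two outputs into the cleanest form, ready to be used in the closed formula of Theorem~\ref{MainCY}.
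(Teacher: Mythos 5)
Your proposal is correct and matches the paper exactly: the paper states the corollary immediately after Lemma~\ref{CCsign} with the single remark that it follows from Lemma~\ref{CCsign} and Proposition~\ref{CC}, which is precisely the two-line substitution you describe. Your added observation that $d - r(\rho)$ is even for self-conjugate $\rho$ (so the exponent is a genuine integer) is a correct and worthwhile sanity check, though the paper leaves it implicit.
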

 
\subsection{Local Calabi-Yau over a sphere}\label{S.local.CY.sphere} Consider next the local RGW invariants associated to the Real Calabi-Yau 3-fold $Y$ defined by \eqref{L.plus.twL} for $\Si=\P^1$   and $L=\O(-1)$. 
Note that $Y$ is biholomorphic to the total space of $\O(-1)\oplus \O(-1)\ra \P^1$, thus contains no holomorphic curves besides multiple covers of the zero section. In particular, the only real curves in $Y$ are the multiple covers of the zero section 
$\Si\subset Y$. Moreover, the discussion in the paragraph above \eqref{ind.E} implies that the zero section in $L\oplus c^*\ov L$ with $L= \O(-1)$ is super-rigid and therefore \eqref{defn.Z.real} is precisely the contribution of its multiple covers to the Real Gromov-Witten invariants of $Y.$
 
\begin{theorem}\label{T.CY.sphere} The generating function for the RGW invariants is
\bear \label{-1.cross.cap.exp}
	\sum_d RGW_d(0|-1)q^d
	&=&\sum_{\rho=\rho'}(-1)^{\frac 12 (|\rho|-r(\rho))} \prod_{\square \in \rho} \l( 2\sinh \tfrac{ h(\square)u}2 \r)^{-1} q^{|\rho|}
\\  \label{-1.cross.cap}
&=&
 \exp\l(\sum_{k\; \mathrm{odd}}  \tfrac 1k\l(2 \sinh \tfrac{ku}2\r)^{-1} q^{k}-\tfrac 12  \sum_k  \tfrac 1k\l(2 \sinh \tfrac{ku}2\r)^{-2} q^{2k} \r).
\eear
In particular, the generating function for the connected real invariants is
\bear \label{-1.cross.cap.conn}
	\sum_d CRGW_d(0|-1)q^d =\sum_{k\; \mathrm{odd}}  \tfrac 1k\l(2 \sinh \tfrac{ku}2\r)^{-1} q^{k}.
\eear
\end{theorem}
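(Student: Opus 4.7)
The plan is to apply the semisimple Klein TQFT structure of Theorem~\ref{ktqft} to the sphere at level $k=-1$, reduce the answer to a sum of specialized Schur functions, and then invoke Weyl's $B_n$--identity to convert it into the exponential form. First, by the KTQFT formula \eqref{Si.k} applied to a connected genus--$0$ target with $k=-1$,
\[
RGW_d(0|-1) \;=\; \sum_{\rho=\rho'} U_\rho^{-1}\eta_\rho.
\]
Substituting $U_\rho=(-1)^{(d-r(\rho))/2}t^d\,d!/\dim\rho$ from Corollary~\ref{CCsign2} and $\eta_\rho=t^d Q^{c_\rho/2}\dimh_Q\rho/\dim\rho$ from \eqref{la+eta}, and using the fact that the total content $c_\rho$ vanishes for self-conjugate $\rho$ (reflection across the main diagonal negates content, so off-diagonal cells cancel in pairs while diagonal cells have content zero), the factors of $t^d$ and $\dim\rho$ cancel, yielding
\[
RGW_d(0|-1)\;=\;\sum_{\rho=\rho',\,|\rho|=d}(-1)^{(|\rho|-r(\rho))/2}\prod_{\square\in\rho}\frac{1}{2\sinh(h(\square)u/2)},
\]
which is precisely \eqref{-1.cross.cap.exp}.

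For the exponential form \eqref{-1.cross.cap}, the idea is to evaluate both sides of the Weyl $B_n$--identity already used in Lemma~\ref{CCsign},
\[
\sum_{\rho=\rho'}(-1)^{(|\rho|+r(\rho))/2}s_\rho(x) \;=\; \prod_i(1-x_i)\prod_{i<j}(1-x_ix_j),
\]
at the principal specialization $x_i=q\,Q^{i-1/2}$ with $Q=e^u$. For the left-hand side, the Stanley hook-content formula together with the self-conjugate identity $\sum_{\square}h(\square)=|\rho|+2n(\rho)$ (which follows from $n(\rho)=n(\rho')$ and the general relation $\sum h=|\rho|+n(\rho)+n(\rho')$) gives
\[
s_\rho(Q^{1/2},Q^{3/2},\dots)\;=\;(-1)^{|\rho|}\prod_{\square\in\rho}(2\sinh(h(\square)u/2))^{-1}\qquad (\rho=\rho').
\]
Since $|\rho|-r(\rho)$ is even for self-conjugate $\rho$, the exponents $(|\rho|+r(\rho))/2+|\rho|$ and $(|\rho|-r(\rho))/2$ have the same parity, so after inserting the $q^{|\rho|}$ weight the left-hand side of Weyl's identity becomes exactly $\sum_d RGW_d(0|-1)\,q^d$.

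For the right-hand side, take the logarithm and expand $\log(1-z)=-\sum_{k\geq 1}z^k/k$. The geometric-series evaluations
\[
\sum_{i\geq 1}Q^{k(i-1/2)}=-\frac{1}{2\sinh(ku/2)},\qquad \sum_{i<j}Q^{k(i+j-1)}=\frac{1}{2}\left(\frac{1}{(2\sinh(ku/2))^2}+\frac{1}{2\sinh(ku)}\right)
\]
reduce the exponent to three explicit sums in $q$ and $u$. Re-indexing $k\mapsto 2k$ in the piece involving $2\sinh(ku)$ shows that its contribution cancels precisely the even-degree part of the first sum, leaving
\[
\sum_{k\text{ odd}}\frac{q^k}{k\cdot 2\sinh(ku/2)}\;-\;\frac{1}{2}\sum_{k\geq 1}\frac{q^{2k}}{k(2\sinh(ku/2))^2},
\]
which is \eqref{-1.cross.cap}. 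Finally, taking the logarithm of \eqref{rgw=exp} separates connected ($CRGW_d$) from doublet ($DRGW_{2d}$) contributions by the parity of $d$, so \eqref{-1.cross.cap.conn} is immediately the odd-$k$ part of the expression above. The main technical care is in the sign bookkeeping through Weyl's $B_n$--identity combined with the principal specialization and the hook-sum identity for self-conjugate partitions; once those signs are correctly aligned, the remaining manipulations are elementary.
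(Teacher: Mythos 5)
Your derivations of \eqref{-1.cross.cap.exp} and of the equivalence \eqref{-1.cross.cap.exp}$=$\eqref{-1.cross.cap} are correct, though the second takes a route genuinely different from the paper's. The paper substitutes $p_d\mapsto q^d(2\sinh(du/2))^{-1}$ into the power-sum exponential identity of Lemma~\ref{L.triv.covers}, then applies the character identity from Lemma~\ref{SFS}, the sign evaluation of Lemma~\ref{CCsign}, and the Schur specialization; you instead plug the principal specialization $x_i=qQ^{i-1/2}$ directly into the Weyl $B_n$ identity and compute both sides. Your geometric-series evaluations, the hook-length fact $\sum_\square h(\square)=|\rho|+2n(\rho)$ for self-conjugate $\rho$, and the parity bookkeeping between $(|\rho|+r(\rho))/2+|\rho|$ and $(|\rho|-r(\rho))/2$ all check out. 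This is a clean, self-contained alternative that bypasses the detour through Lemma~\ref{L.triv.covers}; it essentially repackages the same Weyl identity that the paper uses inside the proof of Lemma~\ref{CCsign}.

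The final step, however, has a real gap. You assert that taking the logarithm of \eqref{rgw=exp} ``separates connected from doublet contributions by the parity of $d$,'' and conclude that \eqref{-1.cross.cap.conn} is ``immediately the odd-$k$ part.'' That separation is false as a general principle: only the doublet series $\sum_d DRGW_{2d}q^{2d}$ is supported in even degree; the connected series $\sum_d CRGW_d q^d$ has no a priori reason to be supported in odd degree. Comparing odd-degree terms of the logarithm with the exponent in \eqref{-1.cross.cap} yields only $\sum_{d\ \mathrm{odd}}CRGW_d(0|-1)q^d=\sum_{k\ \mathrm{odd}}\tfrac1k(2\sinh\tfrac{ku}{2})^{-1}q^k$. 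To get \eqref{-1.cross.cap.conn} you must additionally show $CRGW_d(0|-1)=0$ for all even $d$. The paper does this implicitly by computing $\sum_d DRGW_{2d}(0|-1)q^{2d}=-\tfrac12\sum_k\tfrac1k(2\sinh\tfrac{ku}{2})^{-2}q^{2k}$ independently, via Corollary~\ref{C.Z.part.anti-diag-balanced} together with the Faber--Pandharipande evaluation of $GW^{conn}_d(0|-1,-1)$: since this accounts for the entire even-degree part of the exponent, the even-degree connected invariants are forced to vanish. You should add this computation (or some other direct vanishing argument for even-degree connected invariants of the sphere at level $-1$) to close the argument.
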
 
\begin{proof} Recall that $RGW(0|-1)=CAU$  cf. \eqref{Si.k} and the coefficients of $C$, $A$ and $U$ in the idempotent basis $\{ v_\rho\}$ are given by \eqref{la+eta} and \eqref{U.idemp.basis}. Since the content $c_\rho$ of a self-conjugate partition vanishes, this gives \eqref{-1.cross.cap.exp}.

Next, the invariants $RGW$ are related to the connected and doublet invariants by
	$$
	\sum_d RGW_d(0|-1)q^d =\exp\l( \sum_d CRGW_d(0|-1)q^d +\sum_d DRGW_{2d}(0|-1)q^{2d} \r)
	$$
cf. \eqref{rgw=exp}. 
Corollary \ref{C.Z.part.anti-diag-balanced} relates the doublet invariants to the connected $GW$ invariants and along with the classical  calculation of \cite{FP} we obtain 
	$$
	\sum_d DRGW_{2d}(0|-1)(u)q^{2d} = \tfrac 12 \sum_d GW^{conn}_{d}(0|-1,-1)(iu)q^{2d}= - \tfrac 12  \sum_k  \tfrac 1k\l(2 \sinh \tfrac{ku}2\r)^{-2} q^{2k}. 
	$$
It thus remains to prove \eqref{-1.cross.cap}. Substituting $p_d = (2\sinh\frac{du}{2})^{-1}$ in \eqref{r=trivial.cover} we obtain
	$$
	\exp\Big(\sum_{k\; \mathrm{odd}}  \tfrac 1k\l(2 \sinh \tfrac{ku}2\r)^{-1} q^{k}-\tfrac 12  \sum_k  \tfrac 1k\l(2 \sinh \tfrac{ku}2\r)^{-2} q^{2k} \Big)=
	\sum_\al\Big(\sum_{\la\atop sq(\lambda)=\al}\frac{(-1)^{d-l(\lambda)}}{\zeta(\lambda)}\Big)
	\frac{q^{|\al|}}{\ma\prod_{i}2\sinh \tfrac{\al_iu}2}	
	$$ 
	 Using \eqref{sum.sq=sum.self.conj} the coefficient of $q^d$ is equal to 
	$$
	\sum_{\al\vdash d}\sum_{\rho=\rho'}\ep_\rho\frac{\chi_\rho(\al)}{\zeta(\al)} \frac{1}{\ma\prod_{i}2\sinh\tfrac{\al_iu}2}
	=
	\sum_{\rho=\rho'}\ep_\rho Q^{d/2} \sum_\al\frac{\chi_\rho(\al)}{\zeta(\al)} \frac{(-1)^{\ell(\al)}}{\ma\prod_{i}(1-Q^{\al_i})}, 
	$$
	with $Q=e^u$. But  $\ep_\rho=(-1)^{\frac{d-r(\rho)}{2}}$ by Lemma \ref{CCsign}, and the sum over $\al$ in the above expression equals the Schur function $s_{\rho'}$ for the conjugate representation $\rho'$ times $(-1)^d$. Since
	$$
	s_{\rho'}=Q^{c_{\rho'}-d/2}(-1)^d\frac{1}{\ma\prod_{\square\in \rho'}(Q^{h(\square)/2}-Q^{-h(\square)/2})}
	$$ 
	and $\rho=\rho'$, $c_\rho=0$, we obtain  \eqref{-1.cross.cap}.
\end{proof}

\begin{rem}
	Note the similarity between \eqref{-1.cross.cap.conn} and the equivariant localization computation of the open GW invariants considered in \cite[Theorem 7.2]{KL} for the weight $a=0$ (see also \cite[\S6]{psw}). In this case the contributions of the graphs computing the invariants in the real and open case match in odd degree; for the real invariants in even degree, the graphs come in pairs depending on the type of the real structure, and there is a cancelation between open and crosscap contributions cf \cite[\S3.3]{Wal}. The $\sin$ vs $\sinh$ difference comes from the difference in orientation conventions, cf.  \cite[\S3.1]{GZ2}. 
	\end{rem}
\begin{rem} The right hand side of  \eqref{-1.cross.cap} has another expansion besides \eqref{-1.cross.cap.exp}. By   \cite[(4.5)]{Nak} with $t_1=-t^{-1}_2=e^u$
	$$
	\exp\Big(\sum_{k\; \mathrm{odd}}  \tfrac 1k\l(2 \sinh \tfrac{ku}2\r)^{-1} q^{k}-\tfrac 12  \sum_k  \tfrac 1k\l(2 \sinh \tfrac{ku}2\r)^{-2} q^{2k} \Big)
	=\sum_\rho\frac{(-1)^{a(\rho)}q^{|\rho|}}{\ma\prod_{\square\in\rho}2\sinh \tfrac{h(\square)u}2},	
	$$
	where $a(\rho)$ is the sum of the arm lengths of $\square\in \rho$. Note that this sum is over all representations, not only self-conjugate ones, and the  signs $(-1)^{a(\rho)}$ and $\ep_\rho=(-1)^{\frac{d-r(\rho)}{2}}$ are different in general. Nevertheless, the two sums are equal. 
\end{rem}

\subsection{Local Calabi-Yau over a torus} Consider next the local RGW invariants associated to the Real CY 3-fold $Y$ given by \eqref{L.plus.twL} for $\Si$  a torus (elliptic curve) and $L$   a degree 0 holomorphic line bundle.  When $L$ is not a torsion element in the Picard group, its total space contains no holomorphic curves other than the multiple covers of the zero section. Therefore as in \S\ref{S.local.CY.sphere}, the zero section of $Y$ is super-rigid and \eqref{defn.Z.real} is the contribution of its multiple covers to the Real Gromov-Witten invariants of the 3-fold $Y.$
\begin{theorem} The generating function for the $RGW$ invariants is
\bear\label{disconn.tori}
	\sum_d RGW_d(1|0)q^d=\sum_{\rho=\rho'} q^{|\rho|}   
	=\exp\l(\sum_d(-1)^{d-1}\sum_{k\; \mathrm{odd}}  \tfrac 1k  q^{dk}+\tfrac 12  \sum_{d,k}  \tfrac 1k q^{2dk} \r).
\eear
Moreover, the generating function for the connected RGW invariants is
\bear\label{conn.tori}
	\sum_d CRGW_d(1|0)q^d =\sum_d(-1)^{d-1}\sum_{k\; \mathrm{odd}}  \tfrac 1k q^{dk}.
\eear
\end{theorem}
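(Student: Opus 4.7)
The plan is to exploit the semisimple Klein TQFT structure of Theorem \ref{ktqft} to reduce the computation to a combinatorial identity. Concretely, $T^2$ decomposes in $\SymRiem$ as in \eqref{torus.split}, i.e.\ $T^2 = C\,K\,U$, so by the functoriality established in Theorem \ref{ktqft} we have $RGW_d(1|0)=CKU$ evaluated in the Frobenius algebra $H$ of \eqref{def.Frob.R}. By Proposition \ref{P.elem.cob}, in the idempotent basis $\{v_\rho\}_{\rho\vdash d}$ one has $K(v_\rho)=U_\rho v_\rho$, $U=\sum U_\rho v_\rho$ with $U_\rho^2=\la_\rho$ when $\rho=\rho^*$ and $U_\rho=0$ otherwise, and $C(v_\rho)=\la_\rho^{-1}$. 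Hence $KU=\sum_{\rho=\rho^*}\la_\rho v_\rho$ and $CKU=\sum_{\rho=\rho^*}1$, so $RGW_d(1|0)$ is the number of self-conjugate partitions of $d$. Summing over $d$ gives the first equality in \eqref{disconn.tori}. The second equality reduces to the classical bijection between self-conjugate partitions of $d$ and partitions of $d$ into distinct odd parts (reading off the symmetric hooks along the main diagonal), which yields $\sum_{\rho=\rho'}q^{|\rho|}=\prod_{k\text{ odd}}(1+q^k)$, together with the combinatorial identity discussed below.

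For the connected formula \eqref{conn.tori}, combine \eqref{rgw=exp} with Corollary \ref{C.Z.part.anti-diag-balanced}. With $g=1$, $k=0$, $r=0$ the sign in that corollary is trivial and the answer is independent of $u,t$, giving $DRGW_{2d}(1|0)=\tfrac12 GW^{conn}_d(1|0,0)$. The complex level $(0,0)$ Bryan--Pandharipande TQFT evaluated on a torus is $GW_d(1|0,0)=CG(1)=\sum_{\rho\vdash d}1=p(d)$, so
\[
\sum_d GW^{conn}_d(1|0,0)\,q^d=\log\prod_{n\ge 1}(1-q^n)^{-1}=\sum_{n,k\ge 1}\tfrac{q^{nk}}{k},
\]
and therefore $\sum_d DRGW_{2d}(1|0)\,q^{2d}=\tfrac12\sum_{d,k\ge 1}\tfrac{q^{2dk}}{k}$. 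Taking $\log$ of the first equality in \eqref{disconn.tori} and subtracting the doublet contribution yields
\[
\sum_d CRGW_d(1|0)\,q^d=\sum_{k\text{ odd}}\sum_{d\ge 1}\frac{(-1)^{d-1}}{d}q^{dk}-\tfrac12\sum_{d,k\ge 1}\frac{q^{2dk}}{k}.
\]

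The remaining step is to verify the purely formal identity
\[
\sum_{k\text{ odd}}\sum_{d\ge 1}\frac{(-1)^{d-1}}{d}q^{dk}-\tfrac12\sum_{d,k\ge 1}\frac{q^{2dk}}{k}=\sum_{d\ge 1}(-1)^{d-1}\sum_{k\text{ odd}}\frac{q^{dk}}{k},
\]
which gives \eqref{conn.tori} and, via exponentiation, the second equality of \eqref{disconn.tori}. This is an elementary coefficient comparison: for $m$ odd both sides yield $\sum_{d\mid m}1/d$ (each $d\mid m$ is automatically odd and the signs are all $+$), while for $m=2^a n_0$ with $n_0$ odd and $a\ge 1$ each side equals $-\sigma_{-1}(n_0)$, which is checked using $\sigma_1(2^{a-1}n_0)=(2^a-1)\sigma_1(n_0)$ to evaluate the three contributions $\sigma_1(n_0)/(2^a n_0)$, $\sigma_1(n_0)/(2^{a-1}n_0)$ and $(2^a-1)\sigma_1(n_0)/(2^a n_0)$.

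The only substantive input is Step 1 (the TQFT evaluation on the torus and the identification with self-conjugate partitions); the rest is routine series manipulation, and the main mild obstacle is bookkeeping in the last combinatorial identity.
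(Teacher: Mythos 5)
Your argument is correct and follows essentially the same route as the paper's: evaluate $RGW_d(1|0)=CKU$ in the semisimple Klein TQFT to get the number of self-conjugate partitions of $d$, and then isolate the doublet contribution via Corollary~\ref{C.Z.part.anti-diag-balanced} together with the classical level-$0$ torus computation. The only difference is cosmetic --- where you pass through $\prod_{k\;\mathrm{odd}}(1+q^k)$, take logarithms, and compare coefficients of $q^m$ by divisor sums, the paper instead uses $\sum_{k\;\mathrm{odd}}\tfrac{1}{k}x^k=\tfrac{1}{2}\log\tfrac{1+x}{1-x}$ to recognize the exponential on the right-hand side of \eqref{disconn.tori} directly as the product $\prod_d(1+(-q)^d)^{-1}$.
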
 
\begin{proof} By \eqref{torus.split}, 
$$
RGW_d(1|0)=CKU=\ma\sum_{\rho\vdash d \atop \rho=\rho'} 1
$$ 
giving the   first equality in \eqref{disconn.tori}. Note that the generating function of the self-conjugate partitions  is
$$
	\sum_{\rho=\rho'}q^{|\rho|}= \prod_d \frac{1}{1+(-q)^d}.
$$
As in the proof of Theorem~\ref{T.CY.sphere}, relation \eqref{rgw=exp} and Corollary \ref{C.Z.part.anti-diag-balanced} imply
$$
\sum_d RGW_d(1|0)q^d =\exp\l( \sum_d CRGW_d(1|0)q^d +\sum_d DRGW_{2d}(1|0)q^{2d} \r) \quad \mbox{ and }
$$
$$
\sum_d DRGW_{2d}(1|0)q^{2d} = \tfrac 12\sum_d GW^{conn}_d(1|0,0)q^{2d}=  \tfrac 12  \sum_{d,k}  \tfrac 1k  q^{2dk}. 
$$
  In the last equality we used the classical calculation 
\best
\exp\l( \sum_d  GW^{conn}_d(1|0,0) q^d\r)=\sum_d GW_d(1|0,0)q^{d} =\sum_{\rho} q^{|\rho|}=\prod_d \frac{1}{1-q^d},
\eest 
cf. \cite[Corollary~7.3]{bp1}. Since
	$$
 \exp\l(\sum_d(-1)^{d-1}\sum_{k\; \mathrm{odd}}  \tfrac 1k  q^{dk}+\tfrac 12  \sum_{d,k}  \tfrac 1k q^{2dk} \r)=\prod_d\l(\frac{1-(-q)^d}{1+(-q)^d}\r)^{1/2}\l(\frac{1}{1-q^{2d}}\r)^{1/2},
 $$
	 we obtain  the second equality in \eqref{disconn.tori} and therefore \eqref{conn.tori}.
\end{proof}

\begin{rem}
The connected invariants \eqref{conn.tori} can also be computed directly. By Lemma~\ref{L.level.0} and  \S\ref{S.dep.data}, it suffices to consider only real (unramified) covers of a torus without fixed locus by a torus; passing to the universal cover reduces this to a {\em signed} count of sub-lattices that are invariant under a lift of the complex conjugation. In fact, if we fix two separating crosscaps in the target, their inverse image consists of $d+d$ circles, each winding around the crosscap $k$ times. One can show that exactly two of the circles are preserved by the involution in the domain (thus are crosscaps) and in particular $k$ must be odd; $d$ could be either even or odd. If $d$ is odd, the two crosscaps in the domain 
map to the two crosscaps in the target; otherwise they map to a single crosscap in the target. Such a cover has degree $dk$, $k$ automorphisms, and its sign is determined by whether or not the induced orientation on the crosscaps on the domain from the orientation on the crosscaps on the target coincides with the boundary orientation when we cut along the domain crosscaps (since the canonical orientation corresponds to having the crosscaps oriented in this manner). In particular, when $d$ is odd we have +1 and when $d$ is even -1, therefore contributing $ (-1)^{d-1}\tfrac 1k q^{dk}$ to \eqref{conn.tori}.
\end{rem}

\subsection{The general case}  Consider next a local Real 3-fold  $(L\oplus c^*\ov L, c_{tw})\ra \Si$ over a connected surface. 
\begin{theorem}\label{T.str.gen} Assume $\Si$ is a connected genus $g$ symmetric surface and $L\ra \Si$ a holomorphic line bundle with $c_1(L)=k$.  Then the degree $d$ local RGW  invariants are equal to 
\bear\label{R-g.part}
	RGW_d (g| k) =\sum_{\rho=\rho'} \l( (-1)^{\frac{d-r(\rho)}{2}}t^d  \frac{d!}{\dim \rho}\r)^{g-1}   \l( t^d\; \frac{\dimh_Q \rho}{\dim \rho}\r)^{-k}.
\eear
Here the sum is over self-conjugate partitions $\rho$ of $d$, $r(\rho)$ is the rank \eqref{rank.la}, and $\dimh_Q \rho$ is \eqref{def.dimh}.
\end{theorem}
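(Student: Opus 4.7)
The plan is to deduce Theorem~\ref{T.str.gen} as a direct application of the semi-simple Klein TQFT structure $\mathbf{RGW}_d$ established in Theorem~\ref{ktqft}, together with the explicit computation of its generators in the idempotent basis $\{v_\rho\}_{\rho\vdash d}$ carried out in \S\ref{Solving}.

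First I would invoke Theorem~\ref{ktqft} to identify $RGW_d(g|k)$ with the image under the functor $\mathbf{RGW}_d$ of the morphism corresponding to a closed connected genus~$g$ symmetric surface equipped with a line bundle of degree~$k$. The general decomposition formula \eqref{Si.k} then expresses this image in the idempotent basis as
\begin{equation*}
RGW_d(g|k)\;=\;C\,A^{-k}\,K^g(U)\;=\;\sum_{\rho=\rho^*}U_\rho^{\,g-1}\,\eta_\rho^{-k},
\end{equation*}
where the reduction from the raw expression $\lambda_\rho^{-1}\eta_\rho^{-k}U_\rho^{g+1}$ to $U_\rho^{g-1}\eta_\rho^{-k}$ uses the identity $U_\rho^2=\lambda_\rho$ for $\rho=\rho^*$ from Proposition~\ref{P.elem.cob}(iii), and the sum is automatically restricted to self-conjugate representations because $U_\rho=0$ whenever $\rho\neq\rho^*$ by that same Proposition together with Proposition~\ref{L.Om=conj}.

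Next I would substitute the explicit eigenvalues computed in \S\ref{Solving}: the cross-cap coefficient $U_\rho=(-1)^{(d-r(\rho))/2}\,t^d\,d!/\dim\rho$ from Corollary~\ref{CCsign2}, and the level-decreasing eigenvalue $\eta_\rho=t^d\,Q^{c_\rho/2}\,\dimh_Q\rho/\dim\rho$ from \eqref{la+eta}. The crucial simplification is that for a self-conjugate Young diagram the total content $c_\rho$ vanishes, since reflection across the main diagonal sends a box with content $j-i$ to one with content $i-j$, so off-diagonal contents cancel in pairs while on-diagonal contents are zero. Consequently $\eta_\rho=t^d\,\dimh_Q\rho/\dim\rho$ for every $\rho=\rho'$, and substitution yields precisely the right-hand side of~\eqref{R-g.part}.

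The argument encounters no genuine obstacle because the difficult inputs have already been established earlier: the semi-simplicity of the level~$0$ theory, the sign identification $\ep_\rho=(-1)^{(d-r(\rho))/2}$ in Corollary~\ref{CCsign2} (which in turn relied on the signed Frobenius--Schur indicator and the Weyl identity for $B_n$), and the structural decomposition \eqref{Si.k}. As a consistency check worth recording, the formula uses only the $(-1,0)$-tube $A$ rather than $\bar A$; since $c_\rho=0$ on self-conjugate $\rho$ one has $\eta_\rho=\bar\eta_\rho$ on the support of $U$, so replacing $A$ by $\bar A$ (or by any mixture whose levels sum to $-k$) produces the same answer, reflecting that on a connected target $\Sigma$ the single integer $k=c_1(L)[\Sigma]$ encodes all level information.
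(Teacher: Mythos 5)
Your proposal is correct and follows essentially the same route as the paper: both invoke the semi-simple Klein TQFT structure, the decomposition formula \eqref{Si.k} giving $RGW_d(g|k)=CA^{-k}K^g(U)=\sum_{\rho=\rho^*}U_\rho^{g-1}\eta_\rho^{-k}$, and the explicit eigenvalues from \eqref{la+eta} and Corollary~\ref{CCsign2}, together with the vanishing of $c_\rho$ on self-conjugate partitions. You merely spell out the intermediate algebra (the use of $U_\rho^2=\lambda_\rho$ and the vanishing of $U_\rho$ off the self-conjugate locus) that the paper leaves implicit in its one-line citation of \eqref{Si.k}.
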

\begin{proof} The result follows as before from $RGW(g|k)=CK^{g}A^{-k}U$, cf.  \eqref{Si.k}. Note that when $g>1$, $RGW(g|k)$ can also be obtained as the trace of the composition of the diagonal operators $K^{g-1}A^{-k}$. 
 \end{proof}
 
\begin{cor} In the (Real) Calabi-Yau case, the contribution becomes 
	\bear\label{rgw=CY}
	RGW_d(g| g-1)&=&\sum_{\rho=\rho'}  \Big( (-1)^{\frac{d-r(\rho)}{2}}  \prod_{\square\in \rho} 2\sinh \tfrac{h(\square)u} 2 \Big)^{g-1}.
	\eear
\end{cor}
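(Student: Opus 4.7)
The plan is to obtain this as a direct specialization of Theorem~\ref{T.str.gen} with $k = g-1$, combined with the explicit formula \eqref{def.dimh} for $\dimh_Q \rho$. No new geometric input is needed; this is a pure algebraic simplification. The key observation is that when $k = g-1$, the two factors in \eqref{R-g.part} carry the same exponent $g-1$ (with opposite signs), so they can be combined into a single $(g-1)$-th power, and most of the factors cancel.

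More precisely, I would first set $k = g-1$ in \eqref{R-g.part} to obtain
\begin{equation*}
RGW_d(g \mid g-1) = \sum_{\rho = \rho'} \left((-1)^{\frac{d-r(\rho)}{2}} t^d \frac{d!}{\dim \rho}\right)^{g-1} \left(t^d \frac{\dimh_Q \rho}{\dim \rho}\right)^{-(g-1)}.
\end{equation*}
Next, I would combine the two $(g-1)$-th powers into one, noting that the factors of $t^d$ cancel and the $\dim \rho$ in the denominator of the first factor cancels with the $\dim \rho$ in the numerator coming from inverting the second factor. This yields
\begin{equation*}
RGW_d(g \mid g-1) = \sum_{\rho = \rho'} \left((-1)^{\frac{d-r(\rho)}{2}} \frac{d!}{\dimh_Q \rho}\right)^{g-1}.
\end{equation*}
Finally, I would substitute the explicit formula \eqref{def.dimh}, namely
\begin{equation*}
\dimh_Q \rho = d! \prod_{\square \in \rho} \left(2\sinh \tfrac{h(\square)u}{2}\right)^{-1},
\end{equation*}
which immediately gives $d!/\dimh_Q\rho = \prod_{\square\in\rho} 2\sinh\tfrac{h(\square)u}{2}$, and thus yields \eqref{rgw=CY}.

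There is no substantive obstacle here: the only thing to verify is that the powers of $t$ and of $\dim\rho$ cancel as claimed, which is a matter of careful bookkeeping. The corollary is essentially a reformulation of Theorem~\ref{T.str.gen} chosen to emphasize the Calabi-Yau condition $c_1(L) = g-1 = \tfrac{1}{2}c_1(T\Si^\vee)$ under which the local 3-fold in \eqref{L.plus.twL} becomes a Real Calabi-Yau. It is worth noting that this is precisely the formula stated in Theorem~\ref{MainCY} of the introduction.
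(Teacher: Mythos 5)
Your proof is correct and is exactly how the paper intends the corollary to follow: the paper states it without proof as an immediate consequence of Theorem~\ref{T.str.gen}, and your careful cancellation of the $t^d$ and $\dim\rho$ factors together with substituting \eqref{def.dimh} fills in precisely that routine calculation.
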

In the equivariant CY case, the (complex) GW invariants defined in \cite{bp1} are equal to
\bear\label{gw=CY}
GW_d(g| g-1, g-1)&=&\sum_{\rho\vdash d}  \Big( \prod_{\square\in \rho} 2\sin \tfrac{h(\square)u} 2 \Big)^{2g-2}
\eear
cf. \cite[Corollary~7.3]{bp1}. Note that here the sum is over all partitions of $d$, not just self-conjugate ones.

\begin{rem} In the level 0 case, the proof of Lemma~\ref{L.level.0} implies that $RGW_d(g|0)$ is  a {\em signed} count of degree $d$ unramified real covers of a genus $g$ Riemann surface (i.e. a real Hurwitz number), and \eqref{R-g.part} becomes: 
	\bear\label{rgw=level.0}
	RGW_d(g|0)&=&\sum_{\rho=\rho'}  \l( (-1)^{\frac{d-r(\rho)}{2}}t^d  \frac{d!}{\dim \rho}\r)^{g-1}. 	
	\eear
In contrast, the combinatorial count of real Hurwitz covers gives rise to a different KTQFT,  cf. \cite{AN, MP};  in this case, all covers count positively and the number of unramified real covers of a symmetric genus~$g$ surface $(\Si,c)$ with {\em empty} real locus is equal to
\best
H^\R_{(\Si,c)}  = \sum_{\rho \vdash d} 
\l( \frac{d!}{\dim \rho} \r)^{g-1},
\eest
where the sum is over {\em all} partitions $\rho$ of $d$. For this combinatorial KTQFT, the involution $\Omega$ is trivial and the   coefficients $U_\rho$ of the crosscap are equal to the {\em positive} square roots of the structure constants.  However, unlike RGW, $H^\R_{(\Si,c)}$ depends on the real structure $c$.
  
\end{rem}

\setcounter{equation}{0}
    \vskip.2in 
\section{The local Real Gopakumar-Vafa formula}\label{LocalGV}
    \vskip.1in

We are now ready to prove the real Gopakumar-Vafa formula  (cf. \cite[\S5]{Wal}) for the local RGW invariants   defined in this paper. The  local GV conjecture in the classical setting, proved in \cite[Proposition 3.4]{ip-gv}, states that the connected GW invariants defined in \cite{bp1} have the following structure:
 \bear\label{Cx.GV}
   	\sum_{d=1}^\infty \sum_\chi GW^{conn}_{d, \chi}(g|g-1, g-1)u^{-\chi}q^d = \sum_{d=1}^\infty\sum_{h} n^{\cx}_{d,h}(g)\sum_{k=1}^\infty\tfrac{1}{k}(2\sin(\tfrac{ku}{2}))^{2h-2}q^{kd},
  \eear	
 where the coefficients $n^{\cx}_{d,h}(g)$,  called the local BPS states, satisfy (i) $n^{\cx}_{d,h}(g)\in \Z$ and (ii)  for each $d$, 
	$n^{\cx}_{d,h}(g)=0$ for large $h$. 
 
In the real setting, the local real GV formula takes the following form. 
   \begin{theorem}[Local real GV formula]\label{T.local.GV}Fix a genus $g$ symmetric surface $\Si$ and consider the local real Calabi-Yau  
  3-fold $(L\oplus c^*\ov L, c_{tw})\ra \Si$.  Then the generating function for the connected local RGW invariants has the following structure: 
    	\bear\label{R.GV}
   	\sum_{d=1}^\infty \sum_{h=0}^\infty CRGW_{d, h}(g|g-1)u^{h-1}q^d = \sum_{d=1}^\infty\sum_{h=0}^\infty n^{\R}_{d,h}(g)
	\sum_{k \;\mathrm{odd}\atop k>0}\tfrac{1}{k}(2\sinh(\tfrac{ku}{2}))^{h-1}q^{kd},
   	\eear
   	where the coefficients $n^{\R}_{d,h}(g)$ satisfy (i) (integrality) $n^{\R}_{d,h}(g)\in \Z$, (ii) (finiteness) for each $d$, 
	$n^{\R}_{d,h}(g)=0$ for large $h$,  and  (iii) (parity) $n^\R_{d,h}(g)=n^\cx_{d,h}(g)\mod 2$. Moreover,
   	\begin{enumerate}[(a)]
   		\item for $g=0$, $n^{\R}_{d,h}(0)=1$ when $d=1$ and $h=0$ and vanish otherwise. 
   		\item for $g=1$, $n^{\R}_{d,h}(1)=(-1)^{d-1}$ when $h=1$ and vanish otherwise. 
		\item for any $g\ge 0$, $n^\R_{1, h}(g)=1$ when $h=g$ and vanish otherwise.
   	\end{enumerate} 
   \end{theorem}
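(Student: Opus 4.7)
The plan is to combine the closed formula from Theorem~\ref{MainCY} with the exponential identity \eqref{rgw=exp} and the doublet/complex identification of Corollary~\ref{C.Z.part.anti-diag-balanced} in order to reduce the theorem to a plethystic statement about self-conjugate partitions. First, I would write the partition function $Z^\R(u,q) = 1 + \sum_d RGW_d(g|g-1)\, q^d$ via Theorem~\ref{MainCY} as an explicit sum over self-conjugate partitions; taking logarithm and using \eqref{rgw=exp} yields
\best
\log Z^\R \;=\; \sum_{d \geq 1} CRGW_d(g|g-1)\, q^d \,+\, \sum_{d \geq 1} DRGW_{2d}(g|g-1)\, q^{2d}.
\eest
Corollary~\ref{C.Z.part.anti-diag-balanced} identifies $DRGW_{2d}$ with $\tfrac12\, GW^{conn}_d(g|g-1,g-1)(iu)$ in the Calabi--Yau case, so applying the complex local GV formula \eqref{Cx.GV} of \cite{ip-gv} gives an explicit expression for the doublet contribution in terms of the complex BPS integers $n^\cx_{d,h}(g)$. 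Subtraction isolates $\sum_d CRGW_d(g|g-1)\, q^d$, which is what must then be rearranged into the GV form \eqref{R.GV}.

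Second, I would define candidate coefficients $n^\R_{d,h}(g)$ through the inverse odd-plethystic (M\"obius) transform
\best
\sum_{d,h} n^\R_{d,h}(g)\, y^{h-1} q^d \;=\; \sum_{k\,\text{odd}} \tfrac{\mu(k)}{k}\, \Psi^k\!\Big(\sum_d CRGW_d(g|g-1)\, q^d\Big),
\eest
where $\Psi^k \colon (u,q)\mapsto(ku,q^k)$ is the Adams operator and $y = 2\sinh(u/2)$; the identity $\sum_{k\mid n,\,k\,\text{odd}}\mu(k) = \delta_{n,1}$ for odd $n$ shows this inverts the plethystic exponential on the right side of \eqref{R.GV}. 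Finiteness of $n^\R_{d,h}(g)$ in $h$ for fixed $d$ is then immediate, since Theorem~\ref{MainCY} expresses $RGW_d$ as a finite sum (over self-conjugate partitions of $d$) of polynomials in $y$ of bounded degree. The main obstacle is \emph{integrality}: one must show $n^\R_{d,h}(g) \in \Z$. I would approach this by paralleling the proof of complex GV integrality in \cite{ip-gv}, using a Nekrasov--Okounkov-type product identity for the sum over self-conjugate partitions to rewrite $Z^\R$ in a form where the odd-plethystic log manifestly produces integer coefficients; the factorization of hooks on a self-conjugate partition (diagonal hooks of distinct odd lengths together with paired off-diagonal hooks) should play a key role here.

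Finally, the parity statement $n^\R_{d,h}(g) \equiv n^\cx_{d,h}(g) \pmod 2$ is expected to follow by reducing the generating-function identities mod~$2$ in a suitable completion: the halving in the doublet contribution must be tracked, but since both sides ultimately reduce to sums over partitions that agree in a Frobenius-twisted sense (in characteristic~$2$, the squaring that distinguishes the complex $W_\rho^{2g-2}$ from the real $W_\rho^{g-1}$ becomes additive), the congruence should drop out. The special values (a)--(c) are verified by direct computation: (a) matches the explicit generating function from Theorem~\ref{T.CY.sphere}, (b) matches \eqref{conn.tori}, and (c) follows from the closed formula, which gives $RGW_1(g|g-1) = (2\sinh(u/2))^{g-1}$ (the unique partition of $1$ is self-conjugate with rank $1$ and a single box of hook length $1$) and $CRGW_1 = RGW_1$ in degree one.
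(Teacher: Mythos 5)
Your overall setup — isolating $\sum_d CRGW_d(g|g-1)\,q^d$ via $Z_g = \exp(C_g + D_g)$ and the doublet identification $D_g = \tfrac12\,GW^{conn}(iu)$, expanding in the basis $\{H_n(u,q^d)\}$, and verifying (a)--(c) against Theorems~\ref{T.CY.sphere}, \eqref{conn.tori}, and the coefficient of $q^1$ in \eqref{rgw=CY} — tracks the paper's proof accurately, and your M\"obius-inversion description of the candidate $n^\R_{d,h}(g)\in\Q$ is a valid reformulation of the basis expansion. Finiteness also follows as you say, since the coefficient of $q^d$ in \eqref{rgw=CY} is a Laurent polynomial in $Q=e^{u/2}$ of bounded degree.

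The gap is in how you propose to prove integrality and parity. You treat them as two separate steps: integrality via an unspecified Nekrasov--Okounkov-type product identity for the self-conjugate sum, then parity via a formal mod-$2$ reduction. The paper's actual argument shows these properties cannot be decoupled this way. Writing $f(Q)=Q-Q^{-1}$, $F(Q)=2-Q-Q^{-1}$ with Laurent expansions $f^s=\sum_l\phi^s_l Q^l$, $F^s=\sum_l\psi^s_l Q^l$, one finds
\begin{equation*}
[Z_g]_{q^p}=\sum_{h} n^\R_{p,h}(g)f(Q)^{h-1} +\left[\prod_{d\neq p}\prod_{h}\prod_{l}\frac{(1+Q^lq^d)^{\tfrac12(n^\R_{d,h}\phi^{h-1}_l-n^\cx_{d,h}\psi^{h-1}_l)}}{(1-Q^lq^d)^{\tfrac12(n^\R_{d,h}\phi^{h-1}_l+n^\cx_{d,h}\psi^{h-1}_l)}}\right]_{q^p},
\end{equation*}
and the half-integer exponents are integers precisely because $\phi^s_l\equiv\psi^s_l\bmod 2$ (from $f\equiv F\bmod 2$) together with the \emph{inductive} parity hypothesis $n^\R_{d,h}\equiv n^\cx_{d,h}\bmod 2$ for $d<p$. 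Integrality of $n^\R_{p,h}$ is then read off from the fact that $\{f(Q)^n\}$ has a triangular integral change of basis with the $Q$-powers, and the parity statement for $d=p$ is deduced from the same display reduced mod $2$, using \eqref{gw=CY}. In short, the paper runs a single induction on degree in which parity is an essential input to integrality at the next step; your proposal does not supply the mechanism (no product formula is actually produced, and the ``characteristic $2$'' heuristic for why $W_\rho^{2g-2}$ and $W_\rho^{g-1}$ should agree is not an argument). To repair the proposal you would need to prove the analogue of the displayed identity and run the coupled induction, rather than appeal to an unproved plethystic-exponential factorization.
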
 
   \begin{proof} The results for the genus $g\le 1$ cases are obtained in \eqref{-1.cross.cap.conn} and \eqref{conn.tori}. So it suffices to assume $g\ge 2$.  For  every integer  $n\ge 0$, let 
   	\bear\label{H.n}
   	H_n(u, q)=\sum_{k\;\rm odd }\tfrac{1}{k}(2\sinh(\tfrac{ku}{2}))^{n} q^k= u^n q(1 + \dots ).
   	\eear
	Then $\{ q^{-1} H_n(u, q^d)\}_{n\ge 0, d\ge 1} $ is a basis of the power series in $u$ and $q$. In particular, for $g\geq 2$, there exists an expansion of the connected invariant in the form \eqref{R.GV}, for some some coefficients $n^{\R}_{d,h}(g)\in \Q$, with $h\ge g$ (because there are no covers of a genus $g$ curve by a lower genus curve). 
\smallskip

Denote by $Z_g=Z_g(u)$ the $RGW$ invariants of the local real Calabi-Yau 3-fold and by $C_g=C_g(u)$ and   $D_g=D_g(u)$ the connected and  the doublet invariants, respectively. Then $Z_g=\exp(C_g+ D_g)$ cf. \eqref{rgw=exp}.

Corollary \ref{C.Z.part.anti-diag-balanced}, relating the doublet and the GW invariants of \cite{bp1}, and \eqref{Cx.GV} imply 
 \best
  \sum_{d=1}^\infty D_g(u)q^{2d} &=& \tfrac{1}{2}\sum_d   GW_d^{conn}(iu)q^{2d} =\quad  
	\\
   	&=&\tfrac 12
   	\sum_{d=1}^\infty\sum_{h> 0} n^{\cx}_{d,h}(g)(-1)^{h-1}\sum_{k=1}^\infty\tfrac{1}{k}(2\sinh(\tfrac{ku}{2}))^{2h-2}q^{2kd},
  \eest
where $n^{\cx}_{d,h}(g)$  are integers  and have the finiteness property. But $Z_g=\exp(C_g+ D_g)$, so combined with  \eqref{R.GV} this gives:
\bear\label{Z=exp.mess} 
Z_g=  \exp\Big(\sum_{d, h>0} n^{\R}_{d,h}(g)\sum_{k\;\rm odd }\tfrac{1}{k}f(Q^k)^{h-1}q^{kd}
   	+ \tfrac 12
   	\sum_{d, h>0}  n^{\cx}_{d,h}(g)\sum_{k>0}\tfrac{1}{k}F(Q^{2k})^{h-1}q^{2kd}\Big),  
\eear
where 
	$$
   	f(Q)=Q-Q^{-1}\, ,\qquad F(Q)=2-Q-Q^{-1},  \quad \mbox{ and} \quad Q=e^{u/2}.
   	$$
Note that for all $s\ge 0$, 
 \bear\label{f.and.F}
   	f(Q)^s=\sum_{l=-s}^s \phi^s_lQ^l\quad \phi^s_l\in \Z,\qquad F(Q)^s=\sum_{l=-s}^s\psi_l^sQ^l\quad  \psi_l^s\in\Z
 \eear
   	are Laurent polynomials in $Q$ with integer coefficients and with leading coefficients $\phi^s_{\pm s}$ and  $\psi^s_{\pm s}$ equal to $\pm1$.  Moreover, 
   	\bear\label{f=Fmod2}
   	f(Q)=F(Q)\mod 2 \quad \mbox{ and therefore } \phi_l^s=\psi_l^s \mod 2 
   	\eear
   	for all $-s\le l\le s$  and $s\ge 0$. 
	\smallskip

On the other hand, with this notation, \eqref{rgw=CY} becomes
\bear\label{Z=sum}
   	Z_g=1+\sum_{d=1}^\infty\sum_{\rho\vdash d\atop  \rho = \rho'}\Big(\epsilon_\rho\prod_{\square\in\rho}f(Q^{h(\square)})\Big)^{g-1}
	q^d
 \eear
 and therefore the coefficient of  $q^d$ is also a Laurent polynomial in $Q$ with integer coefficients.	

Comparing the coefficient of $q^1$ in \eqref{Z=exp.mess} and \eqref{Z=sum} gives 
   	$$
   	[Z_g]_{q^1}= \sum_{h> 0} n^\R_{1,h}(g)f(Q)^{h-1} = f(Q)^{g-1}.
   	$$
As before, $\{f(Q)^n\}_{n\geq 0}$ are linearly independent, therefore $n^\R_{1, h}(g)=1$ for $h=g$ and vanish otherwise, proving (c). In particular, $n^\R_{1, h}(g)\in \Z$. Recall also that 
   	$n^\cx_{1, h}(g)=1$ for $h=g$ and vanish otherwise, and therefore $n^\R_{1, h}(g)= n^\cx_{1, h}(g)$ for all $h$. 
\smallskip

We next proceed by induction on the degree, with initial step for $d=1$ just proved. So we fix a degree $p\ge 2$ and assume by induction that 
\bear\label{n.ind.step} 
n^\R_{d,h}(g) \in \Z  \quad \mbox{ and } \quad n^\R_{d, h}\equiv n^\cx_{d, h}(g)\mod 2, 
\eear
for all $d<p$. We also assume that for all $d<p$, $n^\R_{d,h}(g)=0$ for $h$ large.

 Note that the coefficient of $q^p$ in \eqref{Z=exp.mess} has the form 
   	$$
   	\sum_{h> 0} n^\R_{p,h}(g)f(Q)^{h-1}  \! \! +  \! \! 
	\l[ \exp\! \l(\!\! \sum_{d, h>0 \atop d\ne p } \! \! n^{\R}_{d,h}(g) \! \! 
	\sum_{k\;\rm odd\atop k>0 } \! \!  \tfrac{1}{k}f(Q^k)^{h-1}q^{kd}
   	+ \tfrac 12  \!  \sum_{d, h>0} \! \! n^{\cx}_{d,h}(g) \! 
	\sum_{k=1}^\infty\tfrac{1}{k}F(Q^{2k})^{h-1}q^{2kd}\r)\r]_{q^p}
   	$$
   	where the second term involves only $n^\R_{d,h}(g)$ with $d<p$.
	
Next, using the expansions \eqref{f.and.F}, note that 
   	$$
   	\sum_{k\, \text{odd}}\tfrac{1}{k}f(Q^k)^{h-1}q^{kd}=\sum_{k\, \text{odd}}\tfrac{1}{k}\sum_{l=1-h}^{h-1}\phi^{h-1}_lQ^{kl}q^{kd}= \sum_{l=1-h}^{h-1}\phi^{h-1}_l\log \left(\tfrac{1+Q^lq^d}{1-Q^lq^d}\right)^{1/2}
   	$$
   	and 
   	$$
   	\sum_{k=1}^\infty\tfrac{1}{k}F(Q^{2k})^{h-1}q^{2kd} =\sum_{k=1}^\infty\tfrac{1}{k}\sum_{l=1-h}^{h-1}\psi^{h-1}_lQ^{2kl}q^{2kd}= \sum_{l=1-h}^{h-1}\psi^{h-1}_l\log\tfrac{1}{1-Q^{2l}q^{2d}}. 
   	$$
Combining the last three displayed equations gives
\bear\label{Z=2nd}
   	[Z_g]_{q^p}=\sum_{h> 0} n^\R_{p,h}(g)f(Q)^{h-1} +\left[\prod_{d\neq p}\prod_{h> 0}\prod_{l=1-h}^{h-1}\frac{(1+Q^lq^d)^{\tfrac12(n^\R_{d,h}(g)\phi^{h-1}_l-n^\cx_{d,h}(g)\psi^{h-1}_l)}}{(1-Q^lq^d)^{\tfrac12(n^\R_{d,h}(g)\phi^{h-1}_l+n^\cx_{d,h}(g)\psi^{h-1}_l)}}  \right]_{q^p},
   	\eear
   	where the second summand is a Laurent polynomial in $Q$ with integer coefficients by the induction hypothesis \eqref{n.ind.step} and the fact that $\psi_l^s=\phi_l^s \mod 2$, cf.  \eqref{f=Fmod2}. Since $[Z_g]_{q^p}$ is a Laurent polynomial in $Q$ with integer coefficients by \eqref{Z=sum}, therefore so is 
   	\best
   	\sum_{h> 0} n^\R_{p,h}(g)f(Q)^{h-1}.
   	\eest
   	 Since the coefficients of its expansion and $n_{p,h}^\R(g)$ are related by an integral triangular transformation with 1's along the diagonal this implies $n^\R _{p,h}(g)\in \Z$ for all $h>0$. This also shows the finiteness property of $n^\R_{p,h}(g)$ i.e. that for fixed $g, p$ and large enough $h$ these numbers vanish.
	
\smallskip
   	   	
It remains to show that $n^\R_{p,h}(g)\equiv n^\cx_{p,h}(g)\mod 2$ for all $h$. Similar considerations for the complex GW invariants
\best
 \exp(GW^{conn}(iu))=GW(iu)
\eest
using \eqref{Cx.GV} and \eqref{gw=CY} imply
   	\begin{gather*}
   	\sum_{h> 0} n^\cx_{p,h}(g)F(Q)^{h-1} +  	\left[\prod_{d\neq p}\prod_{h> 0}\;\prod_{l=1-h}^{h-1}\frac{1}{(1-Q^lq^d)^{ n^\cx_{d,h}(g)\psi^{h-1}_l}}  \right]_{q^p}=\sum_{\rho}\left(\prod_{\square\in\rho}F(Q^{h(\square)})\right)^{g-1} q^{|\rho|}.
   	\end{gather*}
Using again \eqref{f=Fmod2}, we see that, mod 2, the Laurent series with integer coefficients 
   	$$
   	1+\sum_{\rho = \rho'}\left(\epsilon(\rho)\prod_{\square\in\rho}f(Q^{h(\square)})\right)^{g-1}q^{|\rho|}\;\; \equiv\; 
   	1+\sum_{\rho}\left(\prod_{\square\in\rho}F(Q^{h(\square)})\right)^{g-1} q^{|\rho|}\quad \text{mod} \,2
   	$$
   	are equal, keeping in mind that  the terms corresponding to $\rho$ and $\rho'$ on the RHS are equal, thus their contribution vanishes mod 2 unless $\rho$ is self-conjugate.   
   	
  The second inductive hypothesis \eqref{n.ind.step} implies that, mod 2, the second summand in \eqref{Z=2nd} equals 
   	$$
   	\left[\prod_{d\neq p}\prod_{h> 0}\;\prod_{l=1-h}^{h-1}\frac{1}{(1-Q^lq^d)^{ n^\cx_{d,h}(g)\psi^{h-1}_l}}  \right]_{q^p} \mod 2.
   	$$
   	  Together these imply that 
   	\best
   	\sum_{h> 0} n^\R_{p,h}(g)f(Q)^{h-1}\;\; \equiv\; \; \sum_{h> 0} n^\cx_{p,h}(g)F(Q)^{h-1}\quad \text{mod} \,2,
   	\eest
   	  which in turn implies  $n^\R_{p,h}(g)\equiv n^\cx_{p,h}(g) \mod 2$, completing the proof of the induction step.
   \end{proof}

\begin{cor} The coefficients $n^{\R}_{d,h}(g)$ vanish unless $d(g-1)+h-1\equiv 0 \mod 2$. In particular, $n^{\cx}_{d,h}(g)$ are even whenever  $d(g-1)+h-1\not\equiv 0 \mod 2$.
\end{cor}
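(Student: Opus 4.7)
The strategy is to extract both claims directly from structures already in place: the vanishing Corollary \ref{C.cong} for the connected RGW invariants, the GV expansion \eqref{R.GV}, and the parity congruence $n^\R_{d,h}(g)\equiv n^\cx_{d,h}(g)\!\mod 2$ from Theorem \ref{T.local.GV}(iii). No new analytic input is needed.

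The first step is to introduce the parity $P(d,h)=d(g-1)+h-1\bmod 2$ and decompose both sides of \eqref{R.GV} according to whether the monomial $u^m q^{d_0}$ lies in the ``good'' ($P=0$) or ``bad'' ($P=1$) class. On the left-hand side, Corollary \ref{C.cong} says $CRGW_{d,h}(g|g-1)=0$ whenever $P(d,h)=1$, so the entire LHS lies in the good class. On the right-hand side, I would observe that each building block $H_{h-1}(u,q^d):=\sum_{k\text{ odd}}\tfrac{1}{k}(2\sinh(ku/2))^{h-1}q^{kd}$ is parity-preserving: any monomial $u^m q^{d_0}$ appearing in $H_{h-1}(u,q^d)$ satisfies $d_0=kd$ with $k$ odd, hence $d_0\equiv d\!\mod 2$, and $m\equiv h-1\!\mod 2$ because $2\sinh$ is odd. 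Consequently $d_0(g-1)+m\equiv d(g-1)+h-1\!\mod 2$, so the parity of the source pair $(d,h)$ is preserved in every monomial of $H_{h-1}(u,q^d)$.

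Given this, the ``bad'' part of \eqref{R.GV} reads
\[
0 \;=\; \sum_{\substack{d\ge 1,\, h\ge 0 \\ d(g-1)+h-1 \text{ odd}}} n^\R_{d,h}(g)\,H_{h-1}(u,q^d).
\]
Since $\{q^{-1}H_n(u,q^d)\}_{n\ge 0,\,d\ge 1}$ is a basis of the power series ring in $u$ and $q$ (as noted just after \eqref{H.n} in the proof of Theorem \ref{T.local.GV}, because each $H_n(u,q^d)$ has distinct leading monomial $u^n q^d$), all coefficients must vanish, yielding $n^\R_{d,h}(g)=0$ whenever $d(g-1)+h-1\not\equiv 0\!\mod 2$.

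For the second assertion, the parity statement (iii) of Theorem \ref{T.local.GV} gives $n^\cx_{d,h}(g)\equiv n^\R_{d,h}(g)\!\mod 2$. Combining with the first claim, $n^\cx_{d,h}(g)\equiv 0\!\mod 2$ whenever $d(g-1)+h-1$ is odd, which is exactly the evenness asserted. There is no genuine obstacle in this argument; the only point requiring care is the parity-preservation of $H_{h-1}(u,q^d)$ above, which is a one-line check, after which the linear-independence of the GV basis does the rest of the work automatically.
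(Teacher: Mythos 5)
Your proof is correct and uses the same ingredients as the paper in essentially the same way: Corollary \ref{C.cong} to force a parity constraint on the left-hand side of \eqref{R.GV}, the observation that each $H_{h-1}(u,q^d)$ lives in a single parity class (which the paper packages as invariance under $(u,q)\mapsto(-u,(-1)^{g-1}q)$), linear independence of the $H_n(u,q^d)$, and Theorem \ref{T.local.GV}(iii) for the second assertion. The parity decomposition you use is just a rephrasing of the paper's change-of-variables argument, so the two proofs are essentially identical.
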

\begin{proof} By Corollary \ref{C.cong}, the connected real invariants $CRGW_{d,h}(g|g-1)$ vanish unless $d(g-1)+h-1\equiv 0 \mod 2$. Therefore the left hand side of \eqref{R.GV} is invariant under the change of variables $(u, q)\to(-u, (-1)^{g-1}q)$. Making this change of variables on the right hand side of \eqref{R.GV} and using the fact that the functions $\{ H_n(u, q^d)\} $ are linearly independent (cf. \eqref{H.n}), implies that $n_{d, h}(g)= (-1)^{h-1+d(g-1)} n_{d, h}(g)$. The result follows.
 \end{proof}

\vskip.2in  
\section{Signed Frobenius-Schur indicator}\label{SFSsection}
 \vskip.1in  
 
In this  final section, which is of independent interest, we introduce the notion of a signed Frobenius-Schur indicator and show that it takes values $0,\pm 1$ on any irreducible real representation of a finite group (unlike the classical FS indicator, which is +1). This was used in \S\ref{Solving} to determine signs the $\ep_\rho$ in the expression of the RGW-invariants.

The classical Frobenius-Schur indicator of a representation of finite group is the character evaluated at the sum of squares of the group elements divided by the order of the group; its possible values for an irreducible representation are 1, 0, and -1, corresponding  to the partition of the irreducible representations into real, complex, and quaternionic representations. 

Below we  construct a signed FS indicator for the symmetric group $S_d$, but these considerations remain valid for any real representation of a finite group $G$ with a sign homomorphism $G\ra \Z_2$. 
\begin{defn} The \textsf{signed Frobenius-Schur indicator} is defined by
\bear\label{FS.ind} 
SFS(\rho)\ma=^{\rm def} \frac{1}{d!}\sum_{g\in S_d} \chi_\rho(g^2)(-1)^{s(g)},
\eear  
where $s(g)$ is the parity of the permutation $g\in S_d$. 
\end{defn}
The sign morphism on $S_d$ descends to conjugacy classes, which are indexed by partitions of $d$, decomposing them into even and odd ones. If $\al$ is a conjugacy class, let $sq(\al)$ denote the conjugacy class of $g^2$, where $g$ is a representative of $\al$, cf. \eqref{sq.la}.
\begin{lemma} On irreducible representations, the signed Frobenius-Schur indicator takes values $0,\pm 1$. Specifically, 
	\best
	SFS(\rho)  = \begin{cases} 0,&\text{if}\, \rho\, \text{ is not self-conjugate},\\
		\pm 1,  &\text{if}\, \rho\, \text{ is   self-conjugate}.
		\end{cases}
	\eest
	Furthermore, when $\rho$ is self-conjugate, $SFS(\rho)$ is given by
	\bear\label{def.de.rho}
	\ep_\rho=(-1)^{o(\rho)}, \quad \text{with}\quad o(\rho) = \frac{1}{d!}\sum_{g\in S_d \atop g\; {\rm odd} } \chi_\rho(g^2)
	 =\sum_{\alpha \vdash d \atop \alpha \; {\rm odd}} \frac{\chi_\rho(sq(\alpha))}{\zeta(\alpha)}.
	\eear
	The expression $o(\rho)$ takes values $0$ or $1$ on a self-conjugate representation $\rho$.
\end{lemma}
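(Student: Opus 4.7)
The plan is to mimic the classical proof of the Frobenius--Schur theorem with the twist by $\mathrm{sgn}$ absorbed into the coefficient system. Let $V$ denote the representation space of $\rho$ and let $\tau\colon V\otimes V\to V\otimes V$ be the swap $v\otimes w\mapsto w\otimes v$. A direct matrix computation gives the well-known identity
\[
\chi_\rho(g^2)\;=\;\mathrm{tr}_{V\otimes V}\bigl(\tau\circ(\rho(g)\otimes \rho(g))\bigr),
\]
so that averaging over $S_d$ with the sign weight yields
\[
SFS(\rho)\;=\;\mathrm{tr}\bigl(\tau\,\big|\,W\bigr), \qquad
W\;:=\;\bigl(V\otimes V\otimes \mathrm{sgn}\bigr)^{S_d},
\]
since $\tfrac{1}{d!}\sum_g (-1)^{s(g)}(\rho(g)\otimes \rho(g))$ is exactly the projector onto $W$ viewed inside $V\otimes V$ (its $\mathrm{sgn}$-isotypic component).

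Next I would identify $W$. Using that every $S_d$-representation is self-dual, so $V^*\cong V$, and that $\rho\otimes\mathrm{sgn}\cong\rho'$ for $S_d$, I obtain
\[
W\;\cong\;\Hom_{S_d}(V^*\otimes\mathrm{sgn},V)\;\cong\;\Hom_{S_d}(\rho',\rho).
\]
Schur's lemma then gives $\dim W=0$ if $\rho\neq\rho'$ and $\dim W=1$ if $\rho=\rho'$. In the self-conjugate case $\tau$ commutes with the $S_d$-action, hence preserves $W$, and satisfies $\tau^2=\mathrm{id}$, so it acts on the line $W$ as $\pm 1$. This immediately yields $SFS(\rho)=0$ for $\rho\neq\rho'$ and $SFS(\rho)=\pm 1$ for $\rho=\rho'$.

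To deduce the sign formula, I would then invoke the classical fact that every irreducible representation of $S_d$ is real (in fact defined over $\mathbb{Q}$), so the ordinary Frobenius--Schur indicator is $\tfrac{1}{d!}\sum_g\chi_\rho(g^2)=1$. Splitting the sum according to the parity of $g$ and writing $E(\rho)$ for the even part and $O(\rho)=o(\rho)$ for the odd part, we get $E(\rho)+o(\rho)=1$ and $E(\rho)-o(\rho)=SFS(\rho)$, hence $o(\rho)=\tfrac12(1-SFS(\rho))$. Combined with $SFS(\rho)\in\{\pm 1\}$ for self-conjugate $\rho$, this forces $o(\rho)\in\{0,1\}$ and yields the clean formula $SFS(\rho)=(-1)^{o(\rho)}$. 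The alternative expression $o(\rho)=\sum_{\alpha\text{ odd}}\chi_\rho(sq(\alpha))/\zeta(\alpha)$ is the tautological conjugacy-class rewriting, since the class of cycle type $\alpha$ has size $d!/\zeta(\alpha)$ and all its elements square to an element of cycle type $sq(\alpha)$.

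The only mildly subtle step is the trace identification in the first paragraph; everything afterwards is routine character theory. In particular this lemma does not require identifying which self-conjugate $\rho$ give which sign---that finer computation is carried out later in Lemma~\ref{CCsign}.
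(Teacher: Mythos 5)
Your proof is correct and takes essentially the same approach as the paper: both run the classical Frobenius--Schur argument with the sign twist built in, reducing to the fact that the $\mathrm{sgn}$-isotypic component of $V\otimes V$ (equivalently, the space of sign-twisted invariant bilinear forms) is at most one-dimensional by Schur, and both extract the sign formula by comparing with the ordinary FS indicator $=1$. The paper phrases the middle step via the explicit $\mathrm{Sym}^2/\mathrm{Alt}^2$ decomposition of that space, whereas you compute the trace of the swap $\tau$ on the one-dimensional invariant line directly; these are the same computation packaged slightly differently.
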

\begin{proof} The proof is similar to that for the standard Frobenius-Schur indicator; see for example \cite[\S3.2.3]{D} (or \cite[\S5.1]{E}). 	By \cite[Lemma 3.2.17]{D}, 
	$$
	\chi_\rho(g^2)=\chi_{Sym^2 \rho}(g)-\chi_{Alt^2\rho}(g).
	$$
The space $B$ of bilinear forms on $V$ splits as a direct sum of symmetric and alternating forms 
	$$
	B=V^*\otimes V^*=Sym^2\oplus Alt^2  \quad \mbox { and }\quad  \Hom(V,V^*)=B.
	$$
	Given a representation $\rho$, consider the following $S_d$-action on B:
	$$
	g\cdot b(v,w)=b(\rho(g^{-1})v, (-1)^{s(g)}\rho(g^{-1})w) 
	$$
	and on $V^*\otimes V^*$: 
	$$
	g\cdot \lambda(v)\otimes \mu(w) = \lambda(\rho(g^{-1})v)\otimes \mu((-1)^{s(g)}\rho(g^{-1})w). 
	$$ 
	Then the isomorphism
	$$
	V^*\otimes V^* \longrightarrow B\, ,\quad \lambda\otimes\mu \mapsto b=\lambda\otimes\mu
	$$
	is $S_d$-equivariant. The induced action on $\Hom(V,V^*)$ gives rise to an isomorphism
	$$
	\Hom(V,V^*)^{S_d} = B^{S_d}.
	$$
	Thus, if $\rho$ is irreducible,
	$$
	\dim B^{S_d}\leq 1.
	$$
	Now, let $T^{2'}=T^{2'}(\rho)$ be the conjugate of the $2$-tensor representation of $\rho$
	$$
	T^{2'}(\rho): S_d\longrightarrow \End(B)\, ,\quad T^{2'}(\rho)(g)=(-1)^{s(g)}\rho(g)\otimes\rho(g)
	$$
	and
	$$
	\pi= \frac{1}{d!}\sum_{g\in S_d} g.
	$$
	For every $h\in S_d$, we have $h. \pi =\pi$ and thus, for every $b\in B$,
	$$
	T^{2'}(\pi)(b)\in B^{S_d}
	$$
	and for $b\in B^{S_d}$, $T^{2'}(b)=b$. Thus taking trace we obtain
	$$
	\frac{1}{d!}\sum_{g\in S_d} \chi_{T^{2'}}(g) = \dim B^{S_d}\leq 1.
	$$
	Furthermore,
	$$
	\frac{1}{d!}\sum_{g\in S_d} \chi_{T^{2'}}(g) = \frac{1}{d!}\sum_{g\in S_d} \chi^2_{\rho}(g)(-1)^{s(g)}.
	$$
By the   display above it,	the last expression is equal to $0$ or 1. Since $ \frac{1}{d!}\sum_{g\in S_d} \chi^2_{\rho}(g)=1$ and $\chi_\rho(g)\in\R$, the signed expression is  1 iff $\chi_\rho(g)=0$ for odd parity $g$, i.e. iff $\rho$ is self-conjugate.\\
	
	\noindent
	Similarly, we have induced actions on $Sym^2$ and $Alt^2$ and
	$$
	B^{S_d}=(Sym^2)^{S_d}\oplus (Alt^2)^{S_d}.
	$$
	Let $Sym^{2'}$ and $Alt^{2'}$ be the conjugates of the representations $Sym^2(\rho)$ and $Alt^2(\rho)$. As before 
	$$
	\frac{1}{d!}\sum_{g\in S_d} \chi_{Sym^{2'}}(g) = \dim (Sym^2)^{S_d} \quad \mbox{ and} \quad 
	\frac{1}{d!}\sum_{g\in S_d} \chi_{Alt^{2'}}(g) = \dim (Alt^2)^{S_d}.
	$$ 
	Since $\dim B^{S_d}\leq 1$,  the possible pairs of dimensions $(\dim(Sym^2)^{S_d}, \dim(Alt^2)^{S_d})$ are $(0,0)$, $(1,0)$, and $(0,1)$, with the latter two cases appearing only for self-conjugate representations.
	Finally, we have
	\best
	SFS(\rho)&=&\frac{1}{d!}\sum_{g\in S_d} \chi_\rho(g^2)(-1)^{s(g)}= \frac{1}{d!}\sum_{g\in S_d}  (-1)^{s(g)}\chi_{Sym^2 \rho}(g)-(-1)^{s(g)}\chi_{Alt^2\rho}(g)
	\\ 
	&=&\frac{1}{d!}\sum_{g\in S_d}  \chi_{Sym^{2'} }(g)-\chi_{Alt^{2'}}(g)
	= \dim(Sym^2)^{S_d}-\dim(Alt^2)^{S_d}.
	\eest
	By the previous paragraph, this expression thus equals $\pm 1$ iff $\rho$ is self-conjugate and is 0 otherwise. 
	
\smallskip
It remains to determine when is  the SFS indicator +1 and when -1. The standard FS indicator for $S_d$ is always +1 therefore
\bear\label{1=odd}
	1=\frac{1}{d!}\sum_{g\in S_d} \chi_{Sym^2\rho}(g)=\frac{1}{2d!}\sum_{g\in S_d} \chi^2_\rho(g)+\chi_\rho(g^2)
\\\label{2=even}
	0=\frac{1}{d!}\sum_{g\in S_d} \chi_{Alt^2\rho}(g)=\frac{1}{2d!}\sum_{g\in S_d} \chi^2_\rho(g)-\chi_\rho(g^2).
\eear
Fix a self-conjugate representation $\rho$ and let
	$$
	e=\frac{1}{d!}\sum_{g\in S_d  \atop g\; \text{even} }\chi_\rho(g^2)\, \qquad \mbox{ and } \quad 
	o=\frac{1}{d!}\sum_{g\in S_d \atop g\; \text{odd}} \chi_\rho(g^2).
	$$
 Subtracting the two equalities \eqref{2=even} and \eqref{1=odd} gives $e+o=1$.  Since $SFS(\rho)=e-o=\pm 1$ for the self conjugate representation $\rho$, then $2o=1-SFS(\rho)$ is either 0 or 2; in either case $SFS(\rho)=(-1)^{o}$ completing the proof.  
\end{proof}

The following lemma played a key role in \S \ref{Solving}. 

\begin{lemma}\label{SFS} Let $\al$ be a partition of $d$. With the notations above
\be
	\sum_{\beta \vdash d\atop sq(\beta)=\alpha} (-1)^{s(\beta)} \frac{\zeta(\alpha)}{\zeta(\beta)}=\sum_\rho SFS(\rho)\chi_\rho(\al)=
	\sum_{\rho=\rho'} \ep_\rho \chi_\rho(\alpha)
\ee
	where $\ep_\rho$ is given by \eqref{def.de.rho}, and $s(\beta)$ denotes the parity of $\beta$. 
	\end{lemma}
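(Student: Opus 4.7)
The plan is to deduce the first equality by direct expansion of $SFS(\rho)$ together with the second orthogonality relation for characters of $S_d$; the second equality then follows immediately from the preceding lemma, which showed that $SFS(\rho)=0$ unless $\rho=\rho'$, in which case $SFS(\rho)=\epsilon_\rho$.

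First I would substitute the definition \eqref{FS.ind} of $SFS(\rho)$ into $\sum_\rho SFS(\rho)\chi_\rho(\alpha)$ and interchange the order of summation. Grouping the sum over $g\in S_d$ by conjugacy class: if $g$ has cycle type $\beta$, then $g^2$ has cycle type $sq(\beta)$ by \eqref{sq.la}, while $(-1)^{s(g)}=(-1)^{s(\beta)}$. Since the conjugacy class of type $\beta$ has cardinality $d!/\zeta(\beta)$, this gives
$$
\sum_\rho SFS(\rho)\chi_\rho(\alpha)
= \frac{1}{d!}\sum_{\beta\vdash d}\frac{d!}{\zeta(\beta)}(-1)^{s(\beta)}\sum_\rho \chi_\rho(sq(\beta))\chi_\rho(\alpha).
$$

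Next I would invoke the second orthogonality of characters for $S_d$, which states that for conjugacy classes indexed by partitions $\mu,\nu$ of $d$,
$$
\sum_\rho \chi_\rho(\mu)\chi_\rho(\nu)=\zeta(\mu)\,\delta_{\mu,\nu}.
$$
Applied with $\mu=sq(\beta)$ and $\nu=\alpha$, this reduces the inner sum to $\zeta(\alpha)\delta_{sq(\beta),\alpha}$, so after cancellation of $d!$ the right-hand side becomes
$$
\sum_{\beta\vdash d,\; sq(\beta)=\alpha}(-1)^{s(\beta)}\frac{\zeta(\alpha)}{\zeta(\beta)},
$$
which is exactly the LHS of the claim. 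The second equality then follows by restricting the sum over $\rho$ to self-conjugate representations using the previous lemma.

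The manipulation is entirely routine; there is no real obstacle, since everything is a direct application of the character orthogonality formula once the definition of $SFS$ is unfolded and the sum is reorganized by conjugacy class.
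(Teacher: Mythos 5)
Your proof is correct, and it takes a mildly different route from the paper's. You begin with the middle term $\sum_\rho SFS(\rho)\chi_\rho(\alpha)$, unfold the definition of $SFS$, regroup the $g$-sum by conjugacy class, and close the computation with the second (column) orthogonality relation $\sum_\rho \chi_\rho(\mu)\chi_\rho(\nu)=\zeta(\mu)\delta_{\mu\nu}$, which is valid here because the characters of $S_d$ are real. The paper instead starts from the left-hand side, first identifies it with the class function $\theta(a)=\sum_{g^2=a}(-1)^{s(g)}$ (this step quietly uses the fact that the squaring map is conjugation-equivariant, so the fibre of $g\mapsto g^2$ over $a$ inside the $\beta$-class has exactly $\zeta(\alpha)/\zeta(\beta)$ elements), and then expands $\theta$ in the orthonormal basis of irreducible characters via the first orthogonality relation, computing $\langle\theta,\chi_\rho\rangle=SFS(\rho)$. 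Both arguments are elementary character-theoretic manipulations of the same length; yours has the small advantage of sidestepping the fibre-counting step, at the cost of invoking column orthogonality and the reality of $S_d$-characters (which you should state explicitly rather than leave implicit).
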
 

\begin{proof} The partition $\al$ corresponds to conjugacy class of $S_d$ and let $a \in S_d$ denote a representative of it. Then 
	$$
	\sum_{\beta\vdash d\atop sq(\beta)=\alpha} \frac{(-1)^{s(\beta)}\zeta(\alpha)}{\zeta(\beta)}=
	\sum_{g\in S_d \atop g^2=a} (-1)^{s(g)}.
	$$
	Consider the class function 
	$$
	\theta(a)=\sum_{g\in S_d \atop g^2=a} (-1)^{s(g)}.
	$$
	It has an expansion in the basis of irreducible characters $\{\chi_\rho\}$
	$$
	\theta = \sum_\rho \langle \theta, \chi_\rho\rangle \chi_\rho,
	$$
	where $\langle \, , \rangle$ is the inner product on the class functions. We have
	$$
	\langle \theta, \chi_\rho\rangle = \frac{1}{d!}\sum_{h\in S_d} \theta(h)\chi_\rho(h)=
	\frac{1}{d!}\sum_{h\in S_d} \sum_{g\in S_d \atop g^2=h}(-1)^{s(g)}\chi_\rho(h)=\frac{1}{d!}\sum_{g\in S_d} (-1)^{s(g)}\chi_\rho(g^2).
	$$
	Therefore 
	$$
	\sum_{g\in S_d \atop g^2=a} (-1)^{s(g)}=\theta(a)=
	\sum_\rho\frac{1}{d!}\sum_{g\in S_d} (-1)^{s(g)}\chi_\rho(g^2)\chi_\rho(a) 
	=\sum_\rho\chi_\rho(\alpha)\frac{1}{d!}\sum_{g\in S_d} (-1)^{s(g)} \chi_\rho(g^2).
	$$	
\end{proof}

 \vskip.2in 


\setcounter{equation}{0}
\setcounter{section}{0}
\setcounter{theorem}{0}

\renewcommand{\theequation}{{A}.\arabic{equation}}

\appendix
\section{Real Orientation and Twisted Orientation Data}\label{sectionA}
 \vskip.1in 


In this appendix we review the key ideas of \cite{gz} that provided a criterion for orienting the real moduli space. We then describe a small variant that covers additional cases. 

 Let $(X,\omega, \phi)$ be a Real symplectic manifold and consider the real moduli space  $\ov \M_{B,g,\ell}^{\phi}(X)$ of pseudo-holomorphic real maps $f:(C, \si) \ra (X, \phi)$ from a genus $g$ surface with $\ell$ pairs of conjugate marked points and representing the class $B\in H_2(X)$. 
 
We will use the setup of \cite[\S4.3]{gz}. For every real map $f:(C, \si)\ra (X, \phi)$ and Real vector bundle $(V,\Phi)\lra (X,\phi)$ with a $\Phi$-compatible connection $\nabla$, let 
 \best
 D_{(V, \Phi), f}: \Gamma( f^* V)^{\Phi} \ra \La^{0,1}(f^*V)^{\Phi}
 \eest
 be the restriction of the real Cauchy-Riemann operator $D_{V,f}=\dbar^{f^*\nabla}$ to the space of invariant sections. Denote by  
 $\det D_{(V, \Phi)}$ the determinant line bundle of the family of operators over the real moduli space; see  \cite[\S4.3]{gz}  for more details.

 Unlike the classical line bundle $\det D_{V}$, which is always canonically oriented (cf. proof of \cite[Theorem 3.1.5(i)]{MS}), the line bundle  $\det D_{(V, \Phi)}$ is not always orientable. 

The considerations of  \cite{gz} imply that, after stabilization of the domain if necessary, the 
 orientation sheaf of the real moduli space is canonically identified with 
\bear\label{or.moduli.abs.a} 
\det  T\ov  \M^{\phi}_{B, g, \ell}  (X) =\det D_{(TX, d\phi)} \otimes  \mathfrak{f}^*\det T \ov{ \R\M}_{g, \ell} 
\eear
 cf. \cite[(3.3)]{gz}. Here  $\mathfrak{f}$ is the map to the real Deligne-Mumford moduli space parametrizing real curves of Euler characteristic $\chi$ and $2\ell$ pairs of conjugate marked point. 
\smallskip

 A notion of \textsf{real orientation} on a Real bundle  $(V, \Phi)\ra (X, \phi)$ was introduced in \cite{gz}. For the tangent bundle $(TX,d\phi)$, a real orientation consists of a Real line bundle $(L,\wt\phi)\rightarrow (X,\phi)$ such that 
\bear\label{L2=TX}
\psi: (L,\wt\phi)^{\otimes 2}\cong \Lambda^{\text{top}}(TX,d\phi),
\eear
along with a choice of a homotopy class of such isomorphism and a spin structure on $TX^\phi\oplus 2(L^*)^{\wt\phi^*}$; see \cite[Definition 1.2]{gz}. When the complex dimension of $X$ is odd, such structure induces an orientation on all real moduli spaces $\ov \M_{B,g,l}^{\phi}(X)$, cf. \cite[Theorem 1.3]{gz}. The main ingredients in the proof are as follows.

One of the key results of \cite{gz} is Proposition 5.2 which states that a real orientation on a rank $n$ Real bundle $(V,\Phi)\ra(\Si,\si)$ determines a canonical class of isomorphisms 
\bear\label{isom.psi.2L}
\Psi: (V\oplus 2L^* , \Phi\oplus 2\wt{\phi}^*)\cong (\Si\times \cx^{n+2}, \si\times c_{std}),
\eear
Here $(L^*, \wt{\phi}^*)$ denotes the dual of the Real bundle $(L, \wt{\phi})$. In turn, \eqref{isom.psi.2L} induces a canonical isomorphism 
\bear\label{detV=cx.n}
\det D_{(V,\Phi)}\cong (\det\dbar_{(\cx, c_{std})})^{\otimes n},
\eear
using the fact that 
\best
\det D_{(2L^*,2\wt{\phi}^*)}=(\det (D_{(L^*,\wt{\phi}^*)})^{\otimes 2}
\eest 
is canonically oriented as twice a bundle. In \cite[\S5 and \S 6]{gz} family versions of this result are proved for families of (possibly nodal) surfaces.  In particular, if $f:(C,\si)\ra (X,\phi)$ is a point in the real moduli space, by \cite[Proposition 5.2]{gz} the real orientation on the target determines by pullback a homotopy class of isomorphisms 
\bear\label{oriso.a.2L}
f^*(TX\oplus 2L^* , d\phi\oplus 2\wt{\phi}^*)\cong (\Si\ti \cx^{n+2}, \si\ti c_{std})
\eear
which varies continuously with $f$. By the proof of \cite[Theorem 1.3]{gz} this induces canonical isomorphisms 
\bear\label{det=cx.n}
\det D_{(TX,d\phi)}\cong \det D_{(TX,d\phi)}\otimes (\det  D_{(L^*,\wt\phi^*)})^{\otimes 2}
\cong (\det\dbar_{(\cx,  c_{std})})^{\otimes (n+2)},
\eear
over the real moduli spaces $\ov \M_{B,g,l}^{\phi}(X)$. Here the first isomorphism is induced by the canonical orientation on twice 
a bundle, while the second one is induced by the isomorphism \eqref{oriso.a.2L}.  

Moreover by \cite[Theorem 1.3]{gz}, there is also a canonical isomorphism
\bear\label{or.DM.a} 
\det (T\ov{\R\mathcal{ M}}_{h, \ell})= \det \dbar_{(\cx, c_{std})},
\eear
where the forgetful morphism of a pair of marked points is oriented via the first elements in the pairs. Therefore \eqref{or.moduli.abs.a}, \eqref{det=cx.n} and \eqref{or.DM.a} combine to give a canonical isomorphism 
 \bear\label{det=real.gz}
\det  \ov  \M^{\phi}_{d, g, l}  (X) \cong (\det\dbar_{(\cx, c_{std})})^{\otimes (n+1)},
\eear
cf.  \cite[Theorem 1.3]{gz}, and therefore an orientation on all the real moduli spaces $\ov \M_{B,g,\ell}^{\phi}(X)$ when the complex dimension $n$ of $X$ is odd. 
\medskip

The   canonical isomorphism \eqref{isom.psi.2L} constructed in \cite[Proposition 5.2]{gz} only requires
\begin{enumerate}[(a)]
\item a homotopy class of isomorphisms 
$\Lambda^{\text{top}}(V\oplus 2L^*, \phi\oplus 2\wt\phi^*)\cong (\Si\times\cx, \sigma\times c_{std})$ and
\item a spin structure on $V^\phi\oplus 2(L^*)^{\wt\phi^*}$ 
\end{enumerate} 
and it does not depend on whether or not $E=2L^*$ is twice of a bundle, cf. \cite[Corollary 5.5]{gz}. The fact that $E$ is twice of a bundle is only used in the proof of \cite[Theorem 1.3]{gz} to argue that $\det\dbar_{(2L^*, 2\wt\phi^*)}$ is canonically oriented; cf. \eqref{det=cx.n}. Thus $(2L^*, 2\wt \phi^*)$ can be replaced by any real bundle pair $(E, \tilde \phi)$ for which we know that the determinant line is canonically oriented. 
\medskip

Such a choice $(E, \tilde \phi)$ can also be obtained as follows. Let $L\ra X$ be a complex line bundle and let $E=L\oplus \phi^* \ov L$ with the real structure $\tilde \phi=\phi_{tw}$ defined as in \eqref{L.plus.twL}. Then the projection onto the first factor induces a canonical isomorphism 
\bear\label{ind.gen.E}
\det D_{(E, \tilde  \phi)}= \det D_{(L\oplus \phi^*\ov L, \phi_{tw})}\ma\cong^{ \pi_1}\det D_{L} 
\eear
over the real moduli space, as in \eqref{ind.E}. The right hand side is the determinant line of a real Cauchy-Riemann operator and is thus canonically oriented (cf. proof of \cite[Theorem 3.1.5(i)]{MS}).  Therefore it induces a canonical orientation on the left-hand side. 

This motivates the following variant of \cite[Definition 1.2]{gz}. 
\begin{defn}\label{TRO.gen}  Assume $(X, \phi)$ is a Real symplectic manifold. A \textsf{twisted orientation}   
$\fo=(L, \psi,\mathfrak{s})$ for it consists of 
\begin{enumerate}[(i)]
	\item a complex line bundle $L\ra X$ such that the bundle pair $(E, \tilde  \phi)=(L\oplus \phi^* \ov L, \phi_{tw})$ satisfies: 
\bear\label{cond.tw.or}
w_2(TX^\phi)= w_2(E^{\tilde \phi}) \quad \mbox{ and } \quad 
\La^{\mathrm{top}} (TX^\phi, d\phi) \cong  \La^{\mathrm{top}} (E, \tilde \phi)
\eear	
	\item  a homotopy class $[\psi]$ of isomorphisms satisfying \eqref{cond.tw.or}. 
	\item  a spin structure $\mathfrak{s}$ on the real vector bundle $TX^\phi\oplus (E^*)^{\tilde \phi^*}$ over the real locus, compatible with the orientation induced by $\psi$.
\end{enumerate}	
\end{defn}

Then \cite[Theorem 1.3]{gz} extends to  give: 
\begin{lemma} A twisted orientation on $(X,\phi)$ induces a canonical orientation on the real moduli spaces $\ov \M_{B,g,\ell}^{\phi}(X)$ when the target $X$ is odd complex dimensional.
\end{lemma}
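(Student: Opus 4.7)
The plan is to follow the strategy of \cite[Theorem 1.3]{gz}, replacing the role of the doubled line bundle $(2L^\ast, 2\wt\phi^\ast)$ by the bundle pair $(E,\wt\phi) = (L\oplus\phi^\ast\ov L, \phi_{tw})$. First I would invoke the observation made just above Definition~\ref{TRO.gen}: the construction of the canonical isomorphism class in \cite[Proposition 5.2]{gz} uses the ``doubled'' structure of $2L^\ast$ only to orient $\det D_{(2L^\ast,2\wt\phi^\ast)}$, while the existence of $\Psi$ itself only requires a top--exterior power isomorphism compatible with the real structure and a spin structure on the real locus. Since these are precisely the pieces of data (ii) and (iii) in Definition~\ref{TRO.gen}, the proof of \cite[Proposition 5.2]{gz} goes through verbatim and produces, for any real map $f:(C,\si)\to(X,\phi)$, a homotopy class of isomorphisms
\[
\Psi_f: f^\ast(TX\oplus E,\; d\phi\oplus\wt\phi)\ \cong\ (C\times \cx^{n+2},\; \si\times c_{std})
\]
which varies continuously with $f$, including under nodal degenerations of the domain by the family version developed in \cite[\S5--6]{gz}.

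Next I would use $\Psi_f$ to obtain a canonical isomorphism of determinant lines
\[
\det D_{(TX,d\phi)}\otimes \det D_{(E,\wt\phi)}\ \cong\ \bigl(\det \dbar_{(\cx,c_{std})}\bigr)^{\otimes (n+2)}
\]
over the real moduli space (and its compactification, by the nodal version). The right-hand side is canonically oriented by the standard identification \eqref{or.DM.a}. The essential new input is that the second factor on the left, $\det D_{(E,\wt\phi)}$, is canonically oriented as well: by \eqref{ind.gen.E} the projection onto the first summand gives a continuous isomorphism
$\det D_{(E,\wt\phi)}\overset{\pi_1}{\cong} \det D_L$,
and the latter, being the determinant line of a complex Cauchy--Riemann operator, is canonically oriented (cf.\ \cite[Thm.~3.1.5(i)]{MS}). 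Tensoring these two orientations transfers a canonical orientation to $\det D_{(TX,d\phi)}$.

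Finally, I would plug this into the identification \eqref{or.moduli.abs.a} of the orientation sheaf of $\ov\M^\phi_{B,g,\ell}(X)$ (after domain stabilization if needed), combine with the canonical orientation \eqref{or.DM.a} on $\det T\ov{\R\M}_{g,\ell}$, and observe that the parity of $n+2$ is the same as that of $n$, so when $n$ is odd we get a canonical isomorphism
\[
\det T\ov\M^\phi_{B,g,\ell}(X)\ \cong\ \bigl(\det \dbar_{(\cx,c_{std})}\bigr)^{\otimes(n+1)}
\]
with an even power, giving a canonical orientation of the real moduli space.

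The main obstacle I anticipate is checking that the family/nodal version of the construction of $\Psi_f$ from \cite[\S5--6]{gz} really does only use properties (i)--(iii) of Definition~\ref{TRO.gen} and not any additional feature of the ``twice a bundle'' structure; in particular one has to verify that the compatibility of the spin structure at the boundary/nodal strata, which in \cite{gz} is phrased in terms of the spin structure on $TX^\phi\oplus 2(L^\ast)^{\wt\phi^\ast}$, behaves correctly when $2(L^\ast)^{\wt\phi^\ast}$ is replaced by $(E^\ast)^{\wt\phi^\ast}$. This amounts to verifying that the identification $\pi_1$ of \eqref{ind.gen.E} extends continuously across nodal strata and is compatible with the Spin structures of \cite[\S6]{gz}, which should follow from a direct check since $\pi_1$ is defined fiberwise by restricting an invariant section to its $L$-component.
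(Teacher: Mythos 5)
Your proof is correct and follows essentially the same route as the paper's: swap the "twice a bundle" orientation used in \cite[Theorem 1.3]{gz} for the complex orientation coming from the $\pi_1$-projection \eqref{ind.gen.E}, then run the rest of the argument unchanged. One small notational slip: Definition~\ref{TRO.gen}(i) trivializes $\Lambda^{\mathrm{top}}(TX\oplus E^*)$, so the isomorphism $\Psi_f$ should pair $TX$ with $E^*=L^*\oplus\phi^*\overline{L}^*$ rather than $E$; since $E^*$ has the same $\pi_1$-type structure, this does not affect the validity of the argument.
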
 
\begin{proof} As in \cite[Proposition 5.2]{gz}, a twisted orientation determines by pullback a canonical homotopy class of isomorphisms
\bear\label{oriso.a.tw}
f^*(TX\oplus E^*, d\phi\oplus \wt{\phi}^*)\cong (\Si\times \cx, \si\ti c_{std})^{\oplus (n+2)}
\eear
varying continuously with $f\in\ov \M_{B,g,\ell}^{\phi}(X)$.
The rest of the proof is the same as that of \cite[Theorem 1.3]{gz} except now   \eqref{det=cx.n} is replaced by  
\bear\label{det=tw.cx.n}
\det D_{(TX,d\phi)}\cong \det D_{(TX,d\phi)}\otimes \det D_{(E^*, \tilde  \phi^*)}
\cong (\det\dbar_{(\cx, c_{std})})^{\otimes (n+2)}
\eear
over the real moduli space $\ov \M_{B,g,\ell}^{\phi}(X)$. Here the first isomorphism is induced by \eqref{ind.gen.E}  and the complex orientation on $\det \dbar_{L^*}$ and the second isomorphism is induced by \eqref{oriso.a.tw}. Combined with \eqref{or.DM.a} and \eqref{or.moduli.abs.a} this determines a canonical homotopy class of isomorphisms \eqref{det=real.gz} as in \cite[Theorem 1.3]{gz}. 
\end{proof}
\medskip

These considerations similarly extend to the relative real moduli spaces considered in this paper, with the following modification. Assume the target $(\Si, c)$ is a symmetric Riemann surface with $r$ pairs of conjugate points, and consider the relative real moduli space  
$$\ov\M= \ov  \M^{\bullet, c}_{d, \chi} (\Si)_{\la^1, \dots, \la^r}$$ of Definition~\ref{D.rel,real}. The deformation-obstruction theory (with fixed domain) at a point $f\in \ov \M$ is determined by the linearization 
$\del_{f^*(T\Si, dc)}$ where $T\Si$ is the relative tangent bundle \eqref{T.punctured} to the {\em marked} curve 
$\Si=(S, j, \{x_i^\pm\}_{i=1,  \dots,r})$.  This is analogous with the situation for the complex moduli space and can be seen as follows. A point $f$ in the moduli space is a real map $f:(C, \si)\ra (\Si, c)$ which is ramified of order $\la_j^i$ at the points $y_{ij}^\pm$, where 
$f^{-1}(x_i^\pm)=\{ y_{ij}^\pm\}_{j=1, \dots, \ell(\la^i)}$, and $i=1, \dots r$. Variations in $f$ with fixed domain must vanish to order 
$\la_j^i$ at $y_{ij}^\pm$, i.e. correspond to sections of 
\best
(f^*T S)\otimes \O\Big( -\ma{\textstyle\sum}_{i, j} \la_j^i y_{ij}^+ - \ma{\textstyle\sum}_{i,j} \la_j^i y_{ij}^-\Big) = 
f^*\Big(T S \otimes \O\Big( -\ma{\textstyle\sum}_ix_i^+ -\ma{\textstyle\sum}_i x_i^-\Big)\Big)= f^* T\Si
\eest
which are invariant under the involutions on the domain and target. 
 Therefore the orientation sheaf of the relative real moduli space is canonically identified with 
\bear\label{or.moduli.rel.a} 
\det  T \ov  \M^{c, \bullet}_{d, \chi}  (\Si)_{\vec \la} =\det \del_{(T\Si, dc)} \otimes  \mathfrak{f}^*\det T \ov{ \R\M}^\bullet_{\chi, \ell},
\eear
where $\ell= \sum_i\ell(\la_i)$ is the total number of pairs of marked points on the domain, cf. \eqref{or.moduli.abs.a}.  A twisted orientation  on the {\em marked curve $(\Si,c)$} determines an orientation on these moduli spaces via the same procedure as in the absolute case. (Note that Definition \ref{TRO.gen} and Definition \ref{TRO} are equivalent, with $L^*=\Theta$.)

When $(\Si, c)$ is a connected Riemann surface a real orientation in the sense of \cite{gz} exists except in the case when $c$ is fixed-point free and $g(\Si)$ is  even.   A twisted orientation exists on any Real curve.  
\bigskip

 We end this appendix with the following observation. Assume $(X,\phi)$ is a Real symplectic manifold. For any Real line   bundle $(L, \wt \phi)\ra (X, \phi)$ there is an isomorphism 
\bear\label{Aisom.tw=double}
\theta: (L\oplus \phi^*\ov L, \phi_{tw}) \ra (L\oplus L, \wt \phi\oplus \wt \phi),
\eear
as in \eqref{isom.tw=double}; it induces an isomorphism
\best
\theta^\R: L_{|X^\phi} \cong 2 L^{\wt \phi}
\eest
along the fixed locus $X^\phi$. In particular, if a real orientation $(L, \wt \phi)$ exists, we can use either the real orientation or the twisted orientation $(L\oplus \phi^*\ov L, \phi_{tw}) $ to induce an orientation on the moduli space. The difference between the two orientation procedures is determined by Lemma~\ref{L.tw=2bd} as follows. 

\begin{lemma}\label{A.comp} Let  $\fo=((L, \wt\phi), \psi, \mathfrak{s})$  be a real orientation for   $(X,\phi)$.
Let $\fo'=(L\oplus c^*\ov L,\psi', \mathfrak{s}')$ denote the associated twisted orientation obtained from $\fo$ via the isomorphism \eqref{Aisom.tw=double}, where $ \mathfrak{s}'=(id\oplus \theta^\R)^*\mathfrak{s}$ and $\psi'= \psi\circ \theta$.
The difference between the orientation on the real moduli spaces $\ov\M^\phi(X)$ induced by $\fo$ and that induced by $\fo'$ is $(-1)^\iota $, where $\iota$ is the complex rank of $\Ind\dbar_{L^*}.$
\end{lemma}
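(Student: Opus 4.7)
The plan is to trace through both orientation procedures and show that all contributions to the moduli space orientation match up except one, which is handled by Lemma~\ref{L.tw=2bd}. Both $\fo$ and $\fo'$ orient the real moduli space through the canonical identification \eqref{or.moduli.abs.a} combined with a trivialization of $\det D_{(TX,d\phi)}$ obtained by adding an auxiliary factor and applying \cite[Proposition~5.2]{gz}: for $\fo$ the auxiliary factor is $(2L^*,2\wt\phi^*)$ oriented canonically as twice a bundle, while for $\fo'$ it is $(E^*,\phi_{tw}^*)=(L^*\oplus\phi^*\ov L^*,\phi_{tw}^*)$ oriented via the projection $\pi_1$ of \eqref{ind.gen.E} and the complex orientation of $\det\dbar_{L^*}$.

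The compatibility $\psi'=\psi\circ\theta$ implies that the trivialization
\[
(TX\oplus E^*,d\phi\oplus \phi_{tw}^*) \cong (X\times\cx^{n+2},\phi\times c_{std})
\]
used by $\fo'$ factors as $(id\oplus\theta^*)$ followed by the corresponding trivialization for $\fo$; the condition $\mathfrak{s}'=(id\oplus\theta^\R)^*\mathfrak{s}$ ensures that the induced spin structures agree along the real locus under $\theta^\R$. Applying the family version of \cite[Proposition~5.2]{gz} (cf. \cite[Corollary~5.5]{gz}) termwise to this factorization shows that the canonical isomorphisms
\[
\det D_{(TX,d\phi)}\otimes\det D_{(E^*,\phi_{tw}^*)} \cong (\det\dbar_{(\cx,c_{std})})^{\otimes(n+2)} \cong \det D_{(TX,d\phi)}\otimes\det D_{(2L^*,2\wt\phi^*)}
\]
coming from the two procedures agree after applying the induced $\theta^*_{ind}$ on the left-hand factor. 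Since the Deligne-Mumford identification \eqref{or.DM.a} is the same in both cases, the entire sign discrepancy between the induced orientations of $\ov\M^\phi(X)$ is concentrated in the two distinct orientations placed on $\det D_{(E^*,\phi_{tw}^*)}$.

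The final step is a direct application of Lemma~\ref{L.tw=2bd} to the dual Real bundle $(L^*,\wt\phi^*)$: one orientation on $\Ind\dbar_{(L^*\oplus\phi^*\ov L^*,\phi_{tw}^*)}$ comes from the projection $\pi_1$ onto the first factor (used by $\fo'$), and the other from $\theta^*_{ind}$ together with the natural orientation on $\Ind\dbar_{(2L^*,2\wt\phi^*)}$ as twice a bundle (used by $\fo$). By that lemma these orientations differ by $(-1)^\iota$ with $\iota=\mathrm{rank}_\cx\Ind\dbar_{L^*}$, giving the claimed sign. The principal obstacle in a careful write-up will be verifying that the spin-structure compatibility $\mathfrak{s}'=(id\oplus\theta^\R)^*\mathfrak{s}$ prevents any hidden sign from entering at the level of the real locus when \cite[Proposition~5.2]{gz} is invoked, i.e. checking the naturality of that proposition with respect to the bundle isomorphism $\theta$ in its input data.
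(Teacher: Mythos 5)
Your proposal is correct and follows essentially the same approach as the paper, which in its proof simply observes that the two orienting procedures differ only in how the auxiliary index bundle is oriented and then invokes Lemma~\ref{L.tw=2bd} applied to $(L^*,\wt\phi^*)$. Your version fills in the details the paper leaves implicit — in particular the factorization of $\psi'$ through $\theta$ and the role of $\mathfrak{s}'=(id\oplus\theta^\R)^*\mathfrak{s}$ in matching the spin-structure input to \cite[Proposition~5.2]{gz} — but the logical skeleton is identical.
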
 

\begin{proof}
	As explained at the beginning of the appendix, the two orienting procedures differ only in the way the auxiliary index bundle is oriented i.e. it is the difference in  how the index bundles of  $L^*\oplus \phi^*\ov L^*$ and $2L^*$ are oriented, in the first case via the projection onto the first factor and in the second as twice a bundle. This difference is given by Lemma \ref{L.tw=2bd}.
	\end{proof}
 
\vskip.1in

\end{document}